\documentclass[oneside,english,american]{amsart}
\usepackage[T1]{fontenc}
\usepackage[utf8]{inputenc}
\usepackage{prettyref}
\usepackage{float}
\usepackage{enumitem}
\usepackage{amstext}
\usepackage{amsthm}
\usepackage{amssymb}
\usepackage{geometry}
\makeatletter
\numberwithin{equation}{section}
\numberwithin{figure}{section}
\theoremstyle{plain}
\newtheorem{thm}{\protect\theoremname}[section]
\theoremstyle{definition}
\newtheorem{defn}[thm]{\protect\definitionname}
\theoremstyle{remark}
\newtheorem{rem}[thm]{\protect\remarkname}
\theoremstyle{plain}
\newtheorem{lem}[thm]{\protect\lemmaname}
\theoremstyle{plain}
\newtheorem{prop}[thm]{\protect\propositionname}
\theoremstyle{definition}
\newtheorem{example}[thm]{\protect\examplename}
\theoremstyle{plain}
\newtheorem{cor}[thm]{\protect\corollaryname}
\newlist{casenv}{enumerate}{4}
\setlist[casenv]{leftmargin=*,align=left,widest={iiii}}
\setlist[casenv,1]{label={{\itshape\ \casename} \arabic*.},ref=\arabic*}
\setlist[casenv,2]{label={{\itshape\ \casename} \roman*.},ref=\roman*}
\setlist[casenv,3]{label={{\itshape\ \casename\ \alph*.}},ref=\alph*}
\setlist[casenv,4]{label={{\itshape\ \casename} \arabic*.},ref=\arabic*}
\theoremstyle{remark}
\newtheorem*{claim*}{\protect\claimname}

\usepackage{tikz}
\usepackage{tikz-cd}
\usetikzlibrary{arrows.meta}
\usepackage{hyperref}

\newrefformat{fig}{\hyperref[#1]{Figure \ref{#1}}}
\newrefformat{tab}{\hyperref[#1]{Table \ref{#1}}}
\newrefformat{eq}{\hyperref[#1]{Equation (\ref{#1})}}
\newrefformat{def}{\hyperref[#1]{Definition \ref{#1}}}
\newrefformat{prop}{\hyperref[#1]{Proposition \ref{#1}}}
\newrefformat{exa}{\hyperref[#1]{Example \ref{#1}}}
\newrefformat{lem}{\hyperref[#1]{Lemma \ref{#1}}}
\newrefformat{cor}{\hyperref[#1]{Corollary \ref{#1}}}
\newrefformat{thm}{\hyperref[#1]{Theorem \ref{#1}}}
\newrefformat{assumptions}{\hyperref[#1]{Inductive Assumptions}}

\theoremstyle{plain}
\newtheorem*{assumptions*}{Inductive Assumptions}

\makeatother

\usepackage{babel}
\addto\captionsamerican{\renewcommand{\casename}{Case}}
\addto\captionsamerican{\renewcommand{\claimname}{Claim}}
\addto\captionsamerican{\renewcommand{\corollaryname}{Corollary}}
\addto\captionsamerican{\renewcommand{\definitionname}{Definition}}
\addto\captionsamerican{\renewcommand{\examplename}{Example}}
\addto\captionsamerican{\renewcommand{\lemmaname}{Lemma}}
\addto\captionsamerican{\renewcommand{\propositionname}{Proposition}}
\addto\captionsamerican{\renewcommand{\remarkname}{Remark}}
\addto\captionsamerican{\renewcommand{\theoremname}{Theorem}}
\addto\captionsenglish{\renewcommand{\casename}{Case}}
\addto\captionsenglish{\renewcommand{\claimname}{Claim}}
\addto\captionsenglish{\renewcommand{\corollaryname}{Corollary}}
\addto\captionsenglish{\renewcommand{\definitionname}{Definition}}
\addto\captionsenglish{\renewcommand{\examplename}{Example}}
\addto\captionsenglish{\renewcommand{\lemmaname}{Lemma}}
\addto\captionsenglish{\renewcommand{\propositionname}{Proposition}}
\addto\captionsenglish{\renewcommand{\remarkname}{Remark}}
\addto\captionsenglish{\renewcommand{\theoremname}{Theorem}}
\providecommand{\casename}{Case}
\providecommand{\claimname}{Claim}
\providecommand{\corollaryname}{Corollary}
\providecommand{\definitionname}{Definition}
\providecommand{\examplename}{Example}
\providecommand{\lemmaname}{Lemma}
\providecommand{\propositionname}{Proposition}
\providecommand{\remarkname}{Remark}
\providecommand{\theoremname}{Theorem}

\begin{document}
\title{$C^{*}$-correspondences for ordinal graphs}
\author{Benjamin Jones}
\date{February 16, 2026}
\subjclass[2000]{46L05}
\begin{abstract}
We introduce a family of $C^{*}$-correspondences $X_{\alpha}$ naturally
associated to every ordinal graph $\Lambda$. When $\Lambda$ is a
directed graph, $X_{0}$ is isomorphic to the usual $C^{*}$-correspondence
associated to a graph. We show that ordinal graphs satisfying a weak
assumption have the property that the $C^{*}$-algebra of $\Lambda_{\alpha+1}$
is isomorphic to the Cuntz-Pimsner algebra of $X_{\alpha}$. As a
consequence, the $C^{*}$-algebra of $\Lambda$ may be constructed
starting from $c_{0}\left(\Lambda_{0}\right)$ by iteratively applying
the Cuntz-Pimsner construction and inductive limits. We apply this
result to strengthen the author's previous Cuntz-Krieger uniqueness
theorem.
\end{abstract}

\maketitle

\section{Introduction}

Since the introduction of Cuntz-Krieger algebras in \cite{CUNTZ-KRIEGER,CUNTZ-KRIEGER2},
the study of these $C^{*}$-algebras and their generalizations has
flourished, largely due to the strong uniqueness theorems these algebras
possess. As Cuntz-Krieger algebras are universal for generators and
relations, creating representations is simply a matter of finding
examples of operators satisfying the relations. A priori, one would
not expect to easily prove representations of such a large family
of algebras defined this way are faithful, but as Cuntz and Krieger
proved in their seminal paper, it is often sufficient to check that
the image of only a handful generators is non-zero.

Shortly after the introduction of Cuntz-Krieger algebras, Fujii and
Watatani observed in \cite{GRAPH-CK,ADJOINT-GRAPHS} that the $0,1$-matrices
used to construct Cuntz-Krieger algebras may be interpreted as an
edge incidence matrix for a directed graph whose structure can be
used to study the corresponding algebras. Expanding on this idea,
Kumjian, Pask, and Raeburn eventually defined in \cite{DIRECTED-GRAPH-ALGS}
the $C^{*}$-algebra $C^{*}\left(E\right)$ of a row-finite directed
graph. Subsequently, most work generalizing Cuntz and Krieger's original
algebras regarded them as algebras of the associated graph.

Cuntz-Pimsner algebras, which were introduced in \cite{PIMSNER-ALG}
and refined in \cite{CP-CONSTRUCT}, provide ways of understanding
the uniqueness theorems for graph algebras in a more abstract setting.
In \cite{KATSURA1}, Katsura showed how a topological graph, a continuous
analogue of graphs for which edges and vertices are topological spaces,
may be assigned a $C^{*}$-algebra by constructing an appropriate
$C^{*}$-correspondence and computing the Cuntz-Pimsner algebra. Generalizing
the notion of a graph has been a popular method for expanding the
theory of graph $C^{*}$-algebras. As another example, Kumjian and
Pask in \cite{KGRAPH} defined the $C^{*}$-algebra of a $k$-graph
as the $C^{*}$-algebra of an associated groupoid. Instead of the
lengths of paths being a natural number as in a directed graph, a
$k$-graph is defined as its category of paths such that each path
has a degree, or length, valued in $\mathbb{N}^{k}$. Similar constructions
allow the category of paths to have lengths valued in a category \cite{CONDUCHEFIBRATIONS,LYDIADEWOLF}
or allow one to pick an arbitrary left-cancellative category as the
category of paths \cite{LCSC}. 

In \cite{ORDGRAPH}, the author initiated the study of $C^{*}$-algebras
of ordinal graphs, categories of paths for which the lengths of paths
are ordinals. The purpose of this paper is to improve on many of the
results in this previous work by constructing new $C^{*}$-correspondences
for ordinal graphs which the author believes are of independent interest.
Since ordinal addition is left-cancellative, ordinal graphs are automatically
left cancellative as well, so we study the Cuntz-Krieger algebras
defined using groupoids by Spielberg in \cite{LCSC,CAT-PATH}. 

The structure of ordinals has some interesting consequences for the
study of ordinal graph algebras. Most importantly, ordinal addition
is not right cancellative, hence many natural examples of ordinal
graphs lack right cancellation. This means that uniqueness theorems
such as \cite[Theorem 10.12, Theorem 10.13]{CAT-PATH} and \cite[Corollary 5.7]{CONDUCHEFIBRATIONS}
do not apply. In \prettyref{cor:cku}, we improve the author's previous
Cuntz-Krieger uniqueness theorem for ordinal graph algebras. The existence
of this theorem for ordinal graphs suggests that the assumption of
right cancellation in the other theorems could be weakened. Interestingly,
the techniques in this paper largely benefit from the lack of right
cancellation of ordinal addition. The fact that for $\alpha<\beta$,
$\omega^{\alpha}+\omega^{\beta}=\omega^{\beta}$ allows us to construct
in \prettyref{def:correspondences} a hierarchy of $C^{*}$-correspondences
$X_{\alpha}$ for $\alpha\in\mathrm{Ord}$, each of which generalizing
the usual correspondence defined for a directed graph. Simultaneously,
the proof of our main result, \prettyref{thm:main-thm}, relies primarily
on a weaker notion of right cancellation in \prettyref{def:cancellative}
satisfied by paths which determine the Katsura ideals. The idea of
using a tower of $C^{*}$-correspondences is also being applied by
Deaconu, Kaliszewski, Paulovicks, and Quigg to study $k$-graph algebras
in work which is in progress.

In section 2, we establish conventions and provide some preliminary
definitions and theorems. Then we cover some basic definitions and
results for ordinal graph algebras in section 3. Section 4 is devoted
to the definition of the $C^{*}$-correspondences $X_{\alpha}$ and
characterizing the intersection between the image of the left action
and the compact operators. Section 5 contains our main result, \prettyref{thm:main-thm},
as well as an application to \prettyref{cor:cku}, the Cuntz-Krieger
uniqueness theorem. The proof of \prettyref{thm:main-thm} leaves
out some key lemmas required for completing the inductive step, so
we devote section 6 to the proof of these more technical results.

\section{Preliminaries}

Throughout the paper, we use arithmetic of ordinals. Our main reference
for ordinals is \cite{CARDINALORDINAL}. We denote the class of ordinals
by $\mathrm{Ord}$. There are many possible ways of defining ordinals,
but one intuitive method is to regard each ordinal as an order isomorphism
class of a well-ordered set. If $[A]$ and $[B]$ are the ordinals
represented by well-ordered sets $A$ and $B$, then the sum of ordinals
$[A]+[B]$ is the ordinal $[A\sqcup B]$, where $A\sqcup B$ is ordered
such that $a<b$ for all $a\in A$ and $b\in B$. The product $[A]\cdot[B]$
is obtained by well-ordering $A\times B$ such that $\left(a_{1},b_{1}\right)\leq\left(a_{2},b_{2}\right)$
if $b_{1}\leq b_{2}$, or if $b_{1}=b_{2}$, $a_{1}\leq a_{2}$. The
last operation we define is the exponentiation $[A]^{[B]}$, which
we define as the order isomorphism class of
\[
Z\left([A],[B]\right)=\left\{ f\in A^{B}:\left\{ x\in B:f(x)\not=\min A\right\} \text{ is finite}\right\} 
\]
where for $f,g\in Z\left([A],[B]\right)$, $f\leq g$ if there is
$b_{0}\in B$ such that $f\left(b_{0}\right)\leq g\left(b_{0}\right)$
and for all $b_{1}>b_{0}$, $f\left(b_{1}\right)=g\left(b_{1}\right)$.

We use $\omega$ to denote $[\mathbb{N}]$, the order isomorphism
class of the natural numbers. We identify $\mathbb{N}$ with the set
of finite ordinals $[0,\omega)$. The division algorithm holds in
$\mathrm{Ord}$, and in particular, it may be used to expand an ordinal
in base $\omega$. In particular, every ordinal may be written in
Cantor normal form:
\begin{thm}[{Cantor Normal Form \foreignlanguage{english}{\cite[Chapter XIV 19.2]{CARDINALORDINAL}}}]
Every $\alpha\in\mathrm{Ord}$ may be expressed uniquely as
\[
\alpha=\omega^{\beta_{1}}\cdot\gamma_{1}+\omega^{\beta_{2}}\cdot\gamma_{2}+\dots+\omega^{\beta_{n}}\cdot\gamma_{n}
\]
for $n,\gamma_{k}\in[0,\omega)$ and $\beta_{k}\in\mathrm{Ord}$ satisfying
$\beta_{1}\geq\beta_{2}\geq\dots\geq\beta_{n}$.
\end{thm}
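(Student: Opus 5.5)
Existence goes by transfinite induction on $\alpha$; uniqueness goes by comparing leading terms. Note that the statement as written allows $\gamma_k=0$ and repeated exponents, so literal uniqueness fails. I would therefore prove it in the standard normalized form, with $\beta_1>\beta_2>\dots>\beta_n$ and $1\le\gamma_k<\omega$. Any expression of the stated form reduces to this one by deleting zero terms and merging equal exponents via the left distributive law $\omega^{\beta}\cdot\gamma+\omega^{\beta}\cdot\gamma'=\omega^{\beta}\cdot(\gamma+\gamma')$. The tools are the division algorithm and standard facts about ordinal exponentiation from \cite{CARDINALORDINAL}: $\beta\mapsto\omega^{\beta}$ is strictly increasing and continuous, and $\omega^{\beta}\ge\beta$.

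\emph{Existence.} The case $\alpha=0$ is the empty sum ($n=0$). For $\alpha>0$:
\begin{enumerate}
\item Let $\beta_1$ be the largest $\beta$ with $\omega^{\beta}\le\alpha$. This exists because the set of such $\beta$ is nonempty (it contains $0$) and bounded by $\alpha$, since $\omega^{\beta}\ge\beta$. Its supremum $\beta_1$ still satisfies $\omega^{\beta_1}\le\alpha$ by continuity.
\item Divide $\alpha$ by $\omega^{\beta_1}$ to get $\alpha=\omega^{\beta_1}\cdot\gamma_1+\rho$ with $\rho<\omega^{\beta_1}$.
\item Check that $1\le\gamma_1<\omega$. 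If $\gamma_1\ge\omega$, then $\alpha\ge\omega^{\beta_1}\cdot\omega=\omega^{\beta_1+1}$, contradicting maximality of $\beta_1$. And $\gamma_1\ge1$ because $\omega^{\beta_1}\le\alpha$.
\item Since $\rho<\omega^{\beta_1}\le\alpha$, the induction hypothesis gives a normal form for $\rho$. Every exponent in it is $<\beta_1$, because its leading term is at most $\rho<\omega^{\beta_1}$.
\item Concatenating $\omega^{\beta_1}\cdot\gamma_1$ with the normal form of $\rho$ gives a normal form for $\alpha$.
\end{enumerate}

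\emph{Uniqueness.} The key lemma is this: if $\beta_1>\dots>\beta_n$ and $\gamma_k<\omega$, then $\omega^{\beta_2}\cdot\gamma_2+\dots+\omega^{\beta_n}\cdot\gamma_n<\omega^{\beta_1}$. I would prove it by induction on $n$. The tail is $<\omega^{\beta_2}\cdot(\gamma_2+1)\le\omega^{\beta_2+1}\le\omega^{\beta_1}$.

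Granting the lemma, a normal form with leading term $\omega^{\beta_1}\cdot\gamma_1$ satisfies
\[
\omega^{\beta_1}\cdot\gamma_1\le\alpha<\omega^{\beta_1}\cdot(\gamma_1+1).
\]
This forces $\beta_1$ to be the maximal $\beta$ with $\omega^{\beta}\le\alpha$. It also forces $\gamma_1$ and the remainder to be the quotient and remainder from the division algorithm, which are unique. Left cancellation of ordinal addition then lets us strip the leading term, and induction on $\alpha$ applied to the remainder finishes the argument.

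The main obstacle is the absorption estimate in the key lemma, together with continuity of exponentiation in step 1. Both rest on the definition of $[A]^{[B]}$ via finitely supported functions. To handle them, I would first show directly from that definition that $\omega^{\beta}\cdot\omega=\omega^{\beta+1}$ and that $\omega^{\lambda}=\sup_{\beta<\lambda}\omega^{\beta}$ for limit $\lambda$.
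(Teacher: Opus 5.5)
Your argument is correct and is the standard division-algorithm proof (take the largest $\beta$ with $\omega^{\beta}\le\alpha$, divide, recurse, and get uniqueness by comparing leading terms with the absorption bound). The paper gives no proof of its own; it only remarks that the division algorithm expands an ordinal in base $\omega$ and cites the reference, so yours is essentially the approach it relies on. Your caveat is also right: as literally stated, with $\gamma_k=0$ allowed and only $\beta_1\geq\dots\geq\beta_n$ required, uniqueness fails (e.g.\ $\omega\cdot 2=\omega\cdot 1+\omega\cdot 1$), so proving it for $\beta_1>\dots>\beta_n$ and $1\le\gamma_k<\omega$ is the correct reading.
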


Applying this theorem and the fact that $\omega^{\alpha}+\omega^{\beta}=\omega^{\beta}$
for $\alpha<\beta$ allows one to compute the sum of arbitrary ordinals
written in Cantor normal form. For example, if $\alpha=\omega^{\omega}\cdot2+\omega\cdot3+2$
and $\beta=\omega^{\omega}+\omega^{3}$, then
\begin{align*}
\alpha+\beta & =\omega^{\omega}\cdot2+\omega\cdot3+\omega^{0}\cdot2+\omega^{\omega}+\omega^{3}\\
 & =\omega^{\omega}+\omega^{\omega}+\omega+\omega+\omega+\omega^{0}+\omega^{0}+\omega^{\omega}+\omega^{3}\\
 & =\omega^{\omega}+\omega^{\omega}+\omega^{\omega}+\omega^{3}\\
 & =\omega^{\omega}\cdot3+\omega^{3}
\end{align*}

As we make extensive use of $C^{*}$-correspondences, we now provide
the relevant definitions. For a reference on Hilbert modules, we suggest
\cite{HILMOD}.
\begin{defn}
A \emph{$C^{*}$-correspondence} over a $C^{*}$-algebra $A$ is a
pair $\left(X,\varphi\right)$ where $X$ is a right-Hilbert $A$-module
and $\varphi:A\rightarrow\mathcal{L}\left(X\right)$ is a {*}-homomorphism
into the adjointable operators of $X$. We denote by $a\cdot x$ the
vector $\varphi\left(a\right)x$, making $X$ into a left $A$ module.
The \emph{Katsura ideal} is the closed, two-sided ideal $J_{X}$ of
$A$ defined as
\[
J_{X}=\left(\ker\varphi\right)^{\perp}\cap\varphi^{-1}\left(\mathcal{K}\left(X\right)\right)
\]
\end{defn}

We will usually write $X$ instead of the full data $\left(X,\varphi\right)$
for a $C^{*}$-correspondence.
\begin{defn}
If $X$ is a $C^{*}$-correspondence over $A$, a \emph{representation}
into a $C^{*}$-algebra $B$ is a pair $\left(\psi,\pi\right)$ such
that $\psi:X\rightarrow B$ is linear, $\pi:A\rightarrow B$ is a
{*}-homomorphism, and
\begin{enumerate}
\item $\psi\left(x\right)\pi\left(a\right)=\psi\left(x\cdot a\right)$
\item $\pi\left(a\right)\psi\left(x\right)=\psi\left(\varphi\left(a\right)x\right)$
\item $\psi\left(x\right)^{*}\psi\left(y\right)=\pi\left(\left\langle x,y\right\rangle _{A}\right)$
\end{enumerate}
We write $\left(\psi,\pi\right):\left(X,A\right)\rightarrow B$ to
denote the fact that $\left(\psi,\pi\right)$ is a representation
of $X$ into $B$.
\end{defn}

\begin{defn}
For a representation $\left(\psi,\pi\right):\left(X,A\right)\rightarrow B$,
we denote by $\left(\psi,\pi\right)^{(1)}$ the {*}-homomorphism $\left(\psi,\pi\right)^{(1)}:\mathcal{K}\left(X\right)\rightarrow B$
uniquely determined by
\[
\left(\psi,\pi\right)^{(1)}\left(\theta_{x,y}\right)=\psi\left(x\right)\psi\left(y\right)^{*}
\]
\end{defn}

\begin{defn}
A representation $\left(\psi,\pi\right):\left(X,A\right)\rightarrow B$
is \emph{covariant} if for all $a\in J_{X}$, $\left(\psi,\pi\right)^{(1)}\left(\varphi\left(a\right)\right)=\pi\left(a\right)$.
\end{defn}

\begin{defn}[{\cite[Definition 3.5]{KATIDEAL2}}]
The \emph{Cuntz-Pimsner algebra} $\mathcal{O}\left(X\right)$ of
a $C^{*}$-correspondence $X$ is the $C^{*}$-algebra which is universal
for covariant representations of $X$. In particular, there exists
a covariant representation $\left(\psi_{u},\pi_{u}\right):\left(X,A\right)\rightarrow\mathcal{O}\left(X\right)$
such that for all covariant representations $\left(\psi,\pi\right):\left(X,A\right)\rightarrow B$
there exists a unique {*}-homomorphism $\psi\times\pi:\mathcal{O}\left(X\right)\rightarrow B$
such that $\psi=\psi\times\pi\circ\psi_{u}$ and $\pi=\psi\times\pi\circ\pi_{u}$. 
\end{defn}

\begin{rem}
One can check that if $\left(\psi,\pi\right)$ is a covariant representation
of $X$ and $z\in\mathbb{T}$, $\left(\psi_{z},\pi\right)$ is also
a representation of $X$ where $\psi_{z}\left(x\right)=z\psi\left(x\right)$.
Therefore there is an action $\gamma:\mathbb{T}\rightarrow\mathrm{Aut}\left(\mathcal{O}\left(X\right)\right)$
defined by $\gamma_{z}\left(\pi_{u}\left(a\right)\right)=\pi_{u}\left(a\right)$
and $\gamma_{z}\left(\psi_{u}\left(x\right)\right)=z\psi_{u}\left(x\right)$
called the \emph{gauge action}.
\end{rem}

For the rest of the paper, all homomorphisms will be {*}-homomorphisms,
and all ideals will be closed and two-sided. Finally, in section 4
we make brief use of multiplier algebras. Conveniently, multiplier
algebras may also be defined using the technology of Hilbert modules
(see \cite[Chapter 2]{HILMOD}).

\section{Ordinal Graphs}

In the following definition, we regard a category to be its collection
of morphisms. Then $s,r:\Lambda\rightarrow\Lambda$ map into the collection
of identity morphisms, which may be identified with objects in the
category. A small category is a category for which the morphisms are
small enough to be a set.
\begin{defn}
\label{def:ordinal-graph}An \emph{ordinal graph} is a pair $\left(\Lambda,d\right)$
where $\Lambda$ is a small category and $d:\Lambda\rightarrow\mathrm{Ord}$
is a functor into the ordinals with the following factorization property:
\begin{quote}
For every $e\in\Lambda$ and $\alpha\in\mathrm{Ord}$ with $\alpha\leq d\left(e\right)$,
there exist unique $f,g\in\Lambda$ such that $d\left(f\right)=\alpha$
and $e=fg$.
\end{quote}
If $e=fg$ and $d\left(f\right)=\alpha$, we denote $f$ by $e\left(\alpha\right)$
and $g$ by $e\left(\alpha\right)^{-1}e$. Then by definition, we
have $e\left(\alpha\right)e\left(\alpha\right)^{-1}e=e$ and $d\left(e\left(\alpha\right)\right)=\alpha$.
For each $\alpha\in\mathrm{Ord}$, define
\begin{align*}
\Lambda^{\alpha} & =\left\{ f\in\Lambda:d\left(f\right)=\alpha\right\} \\
\Lambda_{\alpha} & =\left\{ f\in\Lambda:d\left(f\right)<\omega^{\alpha}\right\} \\
\Lambda^{\omega^{*}} & =\left\{ f\in\Lambda:d\left(f\right)=\omega^{\alpha}\text{ for some }\alpha\in\mathrm{Ord}\right\} 
\end{align*}
\end{defn}

Note that if $\alpha,\beta<\omega^{\gamma}$, then the properties
of ordinal arithmetic imply $\alpha+\beta<\omega^{\gamma}$. Thus
if $f,g\in\Lambda_{\alpha}$ with $s\left(f\right)=r\left(g\right)$,
$fg\in\Lambda_{\alpha}$. This makes $\Lambda_{\alpha}$ into an ordinal
graph. On the other hand, $\Lambda^{\alpha}$ is not an ordinal graph
unless $\alpha=0$, in which case $\Lambda^{0}=\Lambda_{0}$ is simply
the set of vertices. The set of vertices is also the range of the
maps $s,r$. One direction follows by observing that for each $e\in\Lambda$,
$s\left(e\right)s\left(e\right)=s\left(e\right)$ and $r\left(e\right)r\left(e\right)=r\left(e\right)$.
Hence $d\left(s\left(e\right)\right)+d\left(s\left(e\right)\right)=d\left(s\left(e\right)\right)=0$,
and similarly for $r\left(e\right)$. Likewise, if $d\left(e\right)=0$,
then since $r\left(e\right)e=es\left(e\right)$, unique factorization
implies $e=r\left(e\right)=s\left(r\left(e\right)\right)$.

By the factorization property, each ordinal graph is a left cancellative
category: if $ef=eg$ are two factorizations of the same path, then
$f=g$. Then by \foreignlanguage{english}{\cite[Theorem 10.15]{LCSC}},
the $C^{*}$-algebra $\mathcal{O}\left(\Lambda\right)$ has a natural
presentation in terms of generators and relations. $\mathcal{O}\left(\Lambda\right)$
is generated by elements $\left\{ T_{e}:e\in\Lambda\right\} $, and
in particular, $T_{e}T_{f}=T_{ef}$ if $e$ and $f$ are composable
in $\Lambda$. Since each ordinal is a sum of powers of $\omega$,
each generator $T_{e}$ for which $d\left(e\right)\not=0$ is a product
of some generators $T_{e_{n}}$ where the length of each $e_{n}$
is a power of $\omega$. Thus $\mathcal{O}\left(\Lambda\right)$ is
generated by $\left\{ T_{e}:e\in\Lambda^{\omega^{\alpha}}\text{ for some }\alpha\in\mathrm{Ord}\right\} \cup\left\{ T_{v}:v\in\Lambda_{0}\right\} $,
and the relations for these generators are determined by the following
result, which will be convenient for us.
\begin{thm}[{\cite[Proposition 4.20]{ORDGRAPH}}]
\label{thm:gen-relations}$\mathcal{O}\left(\Lambda\right)$ is universal
for partial isometries
\[
\bigcup_{\alpha\in\mathrm{Ord}}\left\{ T_{e}:e\in\Lambda^{\omega^{\alpha}}\right\} \cup\left\{ T_{v}:v\in\Lambda_{0}\right\} 
\]
which satisfy the following relations:
\begin{enumerate}
\item $T_{e}^{*}T_{e}=T_{s\left(e\right)}$
\item $T_{e}T_{f}=T_{ef}$ if $s\left(e\right)=r\left(f\right)$ and $d\left(e\right)<d\left(f\right)$
\item $T_{e}^{*}T_{f}=0$ if $e\Lambda\cap f\Lambda=\emptyset$
\item $T_{v}=\sum_{e\in\Lambda^{\omega^{\alpha}}}T_{e}T_{e}^{*}$ if $v\in\Lambda_{0}$
is $\alpha$-regular
\end{enumerate}
\end{thm}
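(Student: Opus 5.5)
I would derive this presentation from Spielberg's presentation of $\mathcal{O}\left(\Lambda\right)$ for a left cancellative small category, \cite[Theorem 10.15]{LCSC}. That presentation has generators $\left\{ T_{e}:e\in\Lambda\right\}$ subject to four kinds of relations: $T_{e}^{*}T_{e}=T_{s(e)}$; $T_{e}T_{f}=T_{ef}$ for all composable pairs; $T_{e}^{*}T_{f}=\sum T_{g}T_{h}^{*}$ over a generating set for $e\Lambda\cap f\Lambda$ (in particular $0$ when the intersection is empty); and the ``gap'' relations $\prod_{i}\left(T_{v}-T_{e_{i}}T_{e_{i}}^{*}\right)=0$ for finite exhaustive sets at $v$.

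The plan is to build mutually inverse homomorphisms between $\mathcal{O}\left(\Lambda\right)$ and the universal algebra $B$ of the statement. One direction is easy. The elements $T_{e}$ with $d(e)$ a power of $\omega$, together with the vertex projections, satisfy (1)--(3) of the statement directly from Spielberg's relations. Relation (4) should follow from the gap relations: if $v$ is $\alpha$-regular, then $\Lambda^{\omega^{\alpha}}v$ is finite and exhaustive. So the universal property gives $B\to\mathcal{O}\left(\Lambda\right)$, and it is surjective by the remark preceding the statement.

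For the converse I would define, in $B$, an element $S_{e}$ for arbitrary $e\in\Lambda$. Write $d(e)$ in Cantor normal form as a nonincreasing sum $\omega^{\beta_{1}}+\dots+\omega^{\beta_{n}}$. Factor $e=e_{1}\cdots e_{n}$ accordingly using unique factorization, and set $S_{e}=T_{e_{1}}\cdots T_{e_{n}}$. I would then check the following.
\begin{enumerate}
\item $S_{e}S_{f}=S_{ef}$. Here relation (2) is used to absorb terms: if the last exponent of $d(e)$ is smaller than the first exponent of $d(f)$, the corresponding factors merge, because $\omega^{\beta}+\omega^{\gamma}=\omega^{\gamma}$ for $\beta<\gamma$. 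An induction on the number of Cantor terms handles repeated absorption and reduces to the normal form of $d(e)+d(f)$.
\item $S_{e}^{*}S_{e}=S_{s(e)}$. This follows by telescoping with (1).
\item Spielberg's relation for $S_{e}^{*}S_{f}$. For ordinal graphs, $e\Lambda\cap f\Lambda$ is either empty or equals $g\Lambda$ for the longer of $e$ and $f$, since by unique factorization one path extends the other. So I must show $S_{e}^{*}S_{f}=S_{e^{-1}f}$ when $e$ is an initial segment of $f$, and $S_{e}^{*}S_{f}=0$ otherwise. I would prove this by peeling off Cantor factors one at a time, using (1) and (3).
\item The gap relations from (4).
\end{enumerate}

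I expect item 4 to be the main obstacle. A general finite exhaustive set at $v$ consists of paths of varied ordinal lengths. I would first reduce to the case of a finite exhaustive set of paths whose lengths are powers of $\omega$: extend or truncate the paths, using the fact that exhaustiveness is preserved under refinement by (3). Then I would argue by transfinite induction on the maximal length. Presumably the definition of $\alpha$-regular in \cite{ORDGRAPH} is exactly the condition under which $\Lambda^{\omega^{\alpha}}v$ is finite and exhaustive, and minimal exhaustive sets reduce to this case. Combining these facts with relation (4) and orthogonality from (3) yields $\prod\left(T_{v}-S_{e_{i}}S_{e_{i}}^{*}\right)=0$. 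This gives a homomorphism $\mathcal{O}\left(\Lambda\right)\to B$, and on generators it is inverse to the first map.
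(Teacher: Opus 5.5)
The paper does not prove this statement. It quotes it from \cite[Proposition 4.20]{ORDGRAPH}, and the paragraph before it only indicates the route you also take: Spielberg's presentation \cite[Theorem 10.15]{LCSC} combined with Cantor-normal-form factorization of paths. Your map $B\to\mathcal{O}(\Lambda)$ and your items 1--3 are fine; note that the finite exhaustive set in relation (4) is $v\Lambda^{\omega^{\alpha}}$, not $\Lambda^{\omega^{\alpha}}v$.

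The gap is item 4, which carries all the content, and the route you sketch for it fails as stated.
\begin{itemize}
\item Truncating some $e\in F$ to a prefix $e(\mu)$ replaces $S_eS_e^*$ by the larger projection $S_{e(\mu)}S_{e(\mu)}^*$. The gap relation for the truncated set is therefore weaker and does not yield the one for $F$.
\item Extending to lengths that are powers of $\omega$ need not be possible. Suppose $v\Lambda$ consists exactly of the prefixes of one path $e$ with $d(e)=\omega+1$ and $s(e)\Lambda=\{s(e)\}$. Then $\{e\}$ is exhaustive at $v$, but $e$ has no proper extension.
\item Induction on the maximal length does not go through, because the natural refinement step leaves long paths unchanged; for example, $-\omega^{\alpha}+\omega^{\alpha+1}=\omega^{\alpha+1}$.
\end{itemize}

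The missing idea is that exhaustiveness by itself forces regularity at $v$, so no reduction is needed. Let $F\subseteq v\Lambda$ be finite exhaustive with $v\notin F$, and choose $\alpha$ with $\omega^{\alpha}\le\min_{e\in F}d(e)<\omega^{\alpha+1}$.
\begin{itemize}
\item Each $g\in v\Lambda_{\alpha}$ is comparable with some $e\in F$. Since $d(e)>d(g)$, this forces $e=gh$, and $d(h)\ge\omega^{\alpha}$ because $\omega^{\alpha}$ is additively indecomposable. Hence $v$ is $\alpha$-source-regular.
\item Each $g\in v\Lambda^{\omega^{\alpha}}$ equals $e(\omega^{\alpha})$ for some $e\in F$. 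Hence $v$ is $\alpha$-row-finite.
\end{itemize}
Relation (4) now gives $S_v=\sum_{g\in v\Lambda^{\omega^{\alpha}}}S_gS_g^*$. Each $e\in F$ extends exactly one such $g$, namely $e(\omega^{\alpha})$. Since the range projections commute by your item 3, you get $S_gS_g^*\prod_{e\in F}(S_v-S_eS_e^*)=S_g\bigl[\prod_{e'\in F_g}(S_{s(g)}-S_{e'}S_{e'}^*)\bigr]S_g^*$, where $F_g=\{g^{-1}e:e\in F\cap g\Lambda\}$ is finite exhaustive at $s(g)$.

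Now induct on the Hessenberg natural sum of the lengths of the elements of $F$; the base case $v\in F$ is trivial. The natural sum strictly drops at each step. Either some element of $F$ is discarded, or $F\subseteq g\Lambda$. In the latter case the shortest element, whose leading Cantor exponent is exactly $\alpha$, becomes strictly shorter, and no other element grows, since $-\omega^{\alpha}+d(e)\le d(e)$.
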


We call any family of operators $\left\{ S_{e}:e\in\Lambda^{\omega^{\alpha}}\text{ for }\alpha\in\mathrm{Ord}\right\} \cup\left\{ S_{v}:v\in\Lambda_{0}\right\} $
satisfying relations (1) through (4) a \emph{Cuntz-Krieger }family
for $\Lambda$. Relation (2) is well-defined because if $d\left(e\right)=\omega^{\alpha}<\omega^{\beta}=d\left(f\right)$,
$d\left(ef\right)=\omega^{\alpha}+\omega^{\beta}=\omega^{\beta}$.
We now proceed to define $\alpha$-regular vertices; note, in particular,
that the sum in relation (4) is non-empty and finite as a consequence
of the definitions.
\begin{defn}[{\cite[Definition 4.1]{ORDGRAPH}}]
A vertex $v\in\Lambda_{0}$ is \emph{$\alpha$-source-regular} for
$\alpha\in\mathrm{Ord}$ if for every $e\in v\Lambda_{\alpha}$, there
exists $f\in s\left(e\right)\Lambda^{\omega^{\alpha}}$.
\end{defn}

In particular, if $\Lambda$ is a directed graph (which is equivalent
to $d\left(\Lambda\right)\subseteq[0,\omega)$), then $v$ is $0$-source-regular
if $v$ is not a source. By a source, we mean a vertex $v$ such that
$r\left(f\right)=v$ implies $d\left(f\right)=0$.
\begin{defn}[{\cite[Definition 4.3]{ORDGRAPH}}]
A vertex $v\in\Lambda_{0}$ is \emph{$\alpha$-row-finite} for $\alpha\in\mathrm{Ord}$
if $\left|\left\{ f\in v\Lambda^{\omega^{\alpha}}\right\} \right|<\infty$.
We define $v$ to be \emph{$\alpha$-regular} if $v$ is $\alpha$-source-regular
and $\alpha$-row-finite.
\end{defn}

In fact, $\alpha$-regularity of a vertex $v\in\Lambda_{0}$ is quite
a strong condition, as the following lemma demonstrates.
\begin{lem}
\label{lem:regular-hereditary}If $v\in\Lambda_{0}$ is $\alpha$-regular,
then
\begin{enumerate}
\item $s\left(f\right)$ is $\alpha$-regular for each $f\in v\Lambda_{\alpha}$
\item $v$ is $\beta$-regular for all $\beta<\alpha$
\end{enumerate}
\end{lem}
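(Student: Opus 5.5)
Both parts follow directly from the factorization property together with two ordinal facts. The first is that if $\gamma<\omega^{\alpha}$ then $\gamma+\omega^{\alpha}=\omega^{\alpha}$; this follows from Cantor normal form, since every term of $\gamma$ is of the form $\omega^{\delta}$ with $\delta<\alpha$. The second is that $\Lambda_{\alpha}$ is closed under composition, as noted after \prettyref{def:ordinal-graph}. The plan is to handle source-regularity and row-finiteness separately in each part.

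For (1), fix $f\in v\Lambda_{\alpha}$ and set $w=s(f)$.

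For source-regularity of $w$, take $e\in w\Lambda_{\alpha}$. Then $fe\in v\Lambda_{\alpha}$, because $d(f)+d(e)<\omega^{\alpha}$. By $\alpha$-source-regularity of $v$ there is some element of $s(fe)\Lambda^{\omega^{\alpha}}=s(e)\Lambda^{\omega^{\alpha}}$, which is what we need.

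For row-finiteness of $w$, consider the map $w\Lambda^{\omega^{\alpha}}\to v\Lambda^{\omega^{\alpha}}$, $g\mapsto fg$. It is well defined because $d(fg)=d(f)+\omega^{\alpha}=\omega^{\alpha}$ by the first ordinal fact. It is injective by left cancellation, which is a consequence of unique factorization. Hence $|w\Lambda^{\omega^{\alpha}}|\le|v\Lambda^{\omega^{\alpha}}|<\infty$.

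For (2), fix $\beta<\alpha$.

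For source-regularity at level $\beta$, take $e\in v\Lambda_{\beta}\subseteq v\Lambda_{\alpha}$. By $\alpha$-source-regularity there is $g\in s(e)\Lambda^{\omega^{\alpha}}$. Since $\omega^{\beta}\le\omega^{\alpha}=d(g)$, the factorization property gives $g(\omega^{\beta})\in s(e)\Lambda^{\omega^{\beta}}$.

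Row-finiteness at level $\beta$ is the step needing an idea, since there is no obvious injection from $v\Lambda^{\omega^{\beta}}$ into $v\Lambda^{\omega^{\alpha}}$. Instead I will produce a surjection in the other direction, $v\Lambda^{\omega^{\alpha}}\to v\Lambda^{\omega^{\beta}}$, $k\mapsto k(\omega^{\beta})$.

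To see that it is surjective, let $h\in v\Lambda^{\omega^{\beta}}$. Since $\omega^{\beta}<\omega^{\alpha}$, we have $h\in v\Lambda_{\alpha}$. So $\alpha$-source-regularity gives some $g\in s(h)\Lambda^{\omega^{\alpha}}$. Then $k=hg$ has degree $\omega^{\beta}+\omega^{\alpha}=\omega^{\alpha}$, so $k\in v\Lambda^{\omega^{\alpha}}$. Uniqueness of factorization then forces $k(\omega^{\beta})=h$.

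Therefore $|v\Lambda^{\omega^{\beta}}|\le|v\Lambda^{\omega^{\alpha}}|<\infty$. Combining this with the source-regularity shown above gives that $v$ is $\beta$-regular.
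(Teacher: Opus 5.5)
Your proof is correct and follows the paper's argument essentially step for step. Part (1) and the $\beta$-source-regularity in part (2) are identical to the paper's. For $\beta$-row-finiteness, the paper does use an injection after all: it sends $h\mapsto hp_h$ from $v\Lambda^{\omega^{\beta}}$ into $v\Lambda^{\omega^{\alpha}}$, with $p_h\in s(h)\Lambda^{\omega^{\alpha}}$ chosen by source-regularity. That injection is just a section of your surjection $k\mapsto k(\omega^{\beta})$, so your version differs only in not having to choose the extensions $p_h$.
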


\begin{proof}
Let $\alpha$-regular $v\in\Lambda_{0}$ and $f\in v\Lambda_{\alpha}$
be given. If $g\in s\left(f\right)\Lambda_{\alpha}$, then $d\left(fg\right)=d\left(f\right)+d\left(g\right)<\omega^{\alpha}$,
hence by $\alpha$-source-regularity of $v$, $s\left(fg\right)\Lambda^{\omega^{\alpha}}=s\left(g\right)\Lambda^{\omega^{\alpha}}\not=\emptyset$,
and $s\left(f\right)$ is $\alpha$-source-regular. Moreover, if $h\in s\left(f\right)\Lambda^{\omega^{\alpha}}$,
then $d\left(fh\right)=d\left(f\right)+\omega^{\alpha}=\omega^{\alpha}$,
so $fh\in v\Lambda^{\omega^{\alpha}}$. By left cancellation, the
function $h\mapsto fh$ is injective, and since $v\Lambda^{\omega^{\alpha}}$
is finite, $s\left(f\right)\Lambda^{\omega^{\alpha}}$ is finite.
Thus $s\left(f\right)$ is $\alpha$-regular.

Now we will prove $v$ is $\beta$-regular if $\beta<\alpha$. If
$g\in v\Lambda_{\beta}\subseteq v\Lambda_{\alpha}$, there exists
$h\in s\left(g\right)\Lambda^{\omega^{\alpha}}$, and $h\left(\omega^{\beta}\right)\in s\left(g\right)\Lambda^{\omega^{\beta}}$.
Thus $v$ is $\beta$-source-regular. Moreover, if $h\in v\Lambda^{\omega^{\beta}}$,
then there exists $p_{h}\in s\left(h\right)\Lambda^{\omega^{\alpha}}$,
and $hp_{h}\in v\Lambda^{\omega^{\alpha}}$. By the factorization
property in \prettyref{def:ordinal-graph}, the map $h\mapsto hp_{h}$
from $v\Lambda^{\omega^{\beta}}$ to $v\Lambda^{\omega^{\alpha}}$
is injective, and thus $v\Lambda^{\beta}$ is finite. 
\end{proof}
The following facts are useful for calculations with relation (3).
\begin{prop}[{\cite[Lemma 3.17]{ORDGRAPH}}]
\label{prop:relation-3}For an ordinal graph $\Lambda$, $e\Lambda\cap f\Lambda\not=\emptyset$
if and only if $e\in f\Lambda$ or $f\in e\Lambda$.
\end{prop}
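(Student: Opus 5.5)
The reverse implication is immediate, so I would dispose of it first. If $e\in f\Lambda$, say $e=fh$, then $e=e\,s(e)\in e\Lambda\cap f\Lambda$. The case $f\in e\Lambda$ is symmetric.

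For the forward direction, suppose $g\in e\Lambda\cap f\Lambda$, so $g=ep=fq$ for some $p,q\in\Lambda$. Applying the functor $d$ gives $d(g)=d(e)+d(p)=d(f)+d(q)$, so $d(e)\leq d(g)$ and $d(f)\leq d(g)$. Since $\mathrm{Ord}$ is totally ordered, we may assume without loss of generality that $d(e)\leq d(f)$.

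The key step is to apply the uniqueness clause of the factorization property in \prettyref{def:ordinal-graph} to the path $g$ at the ordinal $\alpha=d(e)\leq d(g)$.
\begin{itemize}
\item The factorization $g=ep$ has first factor $e$ of degree $\alpha$.
\item Since $d(e)\leq d(f)\leq d(f)$, we can factor $f$ itself at $\alpha$: write $f=f(\alpha)\,f(\alpha)^{-1}f$ with $d(f(\alpha))=\alpha$.
\item Then $g=f(\alpha)\bigl(f(\alpha)^{-1}f\,q\bigr)$ is another factorization of $g$ whose first factor has degree $\alpha$.
\end{itemize}
By uniqueness, $e=f(\alpha)$. Hence $f=e\bigl(f(\alpha)^{-1}f\bigr)\in e\Lambda$. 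The case $d(f)\leq d(e)$ gives $e\in f\Lambda$ in the same way.

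The only point requiring care is composability. We need $s(f(\alpha))=r(f(\alpha)^{-1}f)$, which holds by definition of the factorization. We also need $s(f)=r(q)$, which holds because $fq$ is defined; this ensures the product $f(\alpha)^{-1}f\,q$ makes sense, and associativity in the category then identifies it with a factorization of $g$. I expect no real obstacle here: the whole argument is the uniqueness of factorization combined with the total ordering of ordinals.
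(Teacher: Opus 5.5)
Your proof is correct: comparing the two factorizations $g=ep=f(\alpha)\bigl(f(\alpha)^{-1}fq\bigr)$ of $g$ at $\alpha=d(e)$ and invoking uniqueness in \prettyref{def:ordinal-graph} is the standard argument for this fact. The paper does not reprove the statement here, quoting it from \cite{ORDGRAPH}, but it uses the same reasoning in \prettyref{lem:path-factor} and in checking relation (3) in \prettyref{prop:boundary-path-rep}, where $(fg)(d(e))=f(d(e))$ is used in the same way.
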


\begin{prop}[{\cite[Corollary 3.21]{ORDGRAPH}}]
\label{prop:ordgraph-closed-span}If $\Lambda$ is an ordinal graph,
then
\[
\mathcal{O}\left(\Lambda\right)=\overline{\mathrm{span}}\left\{ T_{e}T_{f}^{*}:e,f\in\Lambda\right\} 
\]
\end{prop}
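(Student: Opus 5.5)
The plan is to show that $\mathrm{span}\left\{ T_{e}T_{f}^{*}:e,f\in\Lambda\right\}$ is a $*$-subalgebra of $\mathcal{O}\left(\Lambda\right)$ that contains a generating set. Its closure is then a $C^{*}$-subalgebra containing the generators, so it equals $\mathcal{O}\left(\Lambda\right)$.

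I will work with the full family $\left\{ T_{e}:e\in\Lambda\right\}$ from the left cancellative presentation of \cite[Theorem 10.15]{LCSC}. In that family $T_{e}T_{f}=T_{ef}$ for all composable $e,f$. I also use three further facts, deriving them from \prettyref{thm:gen-relations} if they are not available directly:
\begin{itemize}
\item $T_{e}^{*}T_{e}=T_{s\left(e\right)}$ for every $e\in\Lambda$;
\item $T_{e}^{*}T_{f}=0$ whenever $e\Lambda\cap f\Lambda=\emptyset$;
\item $T_{r\left(e\right)}T_{e}=T_{e}=T_{e}T_{s\left(e\right)}$.
\end{itemize}
For the first fact, write $e=e_{1}\cdots e_{n}$ with each $d\left(e_{i}\right)$ a power of $\omega$, using Cantor normal form. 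Then $T_{e}^{*}T_{e}$ telescopes: each inner $T_{e_{i}}^{*}T_{e_{i}}=T_{s\left(e_{i}\right)}=T_{r\left(e_{i+1}\right)}$ is absorbed by $T_{e_{i+1}}$ via the third fact, leaving $T_{s\left(e_{n}\right)}=T_{s\left(e\right)}$. The third fact itself follows from $T_{vv}=T_{v}$ and the functoriality $T_{e}T_{f}=T_{ef}$.

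\textbf{Generators.} Every generator is in the span. For a vertex $v$, $T_{v}=T_{v}T_{v}^{*}$, since $T_{v}$ is a projection. For a path $e$, $T_{e}=T_{e}T_{s\left(e\right)}=T_{e}T_{s\left(e\right)}^{*}$. The span is closed under adjoints because $\left(T_{e}T_{f}^{*}\right)^{*}=T_{f}T_{e}^{*}$.

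\textbf{Products.} It remains to show the span is closed under multiplication, that is, that $T_{e}T_{f}^{*}T_{g}T_{h}^{*}$ lies in it. I analyze the middle factor $T_{f}^{*}T_{g}$ using \prettyref{prop:relation-3}:
\begin{itemize}
\item If $f\Lambda\cap g\Lambda=\emptyset$, then $T_{f}^{*}T_{g}=0$.
\item If $g=fg'$, then $T_{f}^{*}T_{g}=T_{f}^{*}T_{f}T_{g'}=T_{s\left(f\right)}T_{g'}=T_{g'}$, and the product becomes $T_{e}T_{g'}T_{h}^{*}=T_{eg'}T_{h}^{*}$. This is a single spanning element whenever $eg'$ is composable; if it is not, the product is $0$, because $T_{s\left(e\right)}T_{r\left(g'\right)}=0$ for distinct vertices by the orthogonality relation.
\item If $f=gf'$, the symmetric computation gives $T_{f}^{*}T_{g}=T_{f'}^{*}$. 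The product is then $T_{e}\left(T_{h}T_{f'}\right)^{*}=T_{e}T_{hf'}^{*}$, or $0$.
\end{itemize}
In every case the product is again of the form $T_{a}T_{b}^{*}$ or zero, so the span is a $*$-algebra.

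The main obstacle I expect is bookkeeping rather than substance. One must make sure the extended relations (the three facts above) genuinely hold for all paths, not just those of length a power of $\omega$. If only the generators of \prettyref{thm:gen-relations} are taken as primitive, one has to define $T_{e}$ for general $e$ as a product along the Cantor normal form decomposition. Relation (2) is then what shows this definition is multiplicative, $T_{e}T_{f}=T_{ef}$, because $\omega^{\alpha}+\omega^{\beta}=\omega^{\beta}$ for $\alpha<\beta$ is exactly what merges adjacent blocks. Relation (3) then extends to arbitrary $e,f$ by factoring off initial segments and using left cancellation.
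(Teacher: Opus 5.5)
Your proof is correct. The three facts you use are relations of Spielberg's presentation, so the fallback through \prettyref{thm:gen-relations} is not needed, and your case split on $T_f^*T_g$ via \prettyref{prop:relation-3} shows the span is a $*$-algebra containing the generators. The paper gives no proof of its own and cites \cite[Corollary 3.21]{ORDGRAPH}; reducing $T_f^*T_g$ to $0$, $T_{f^{-1}g}$, or $T_{g^{-1}f}^*$, as you do, is the standard argument behind that result.
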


Throughout this paper, we will determine relationships between the
algebras $\mathcal{O}\left(\Lambda_{\alpha}\right)$. Despite the
similar generators and relations, we will see shortly that these algebras
are not necessarily contained in one another; nevertheless, we denote
by $\left\{ T_{e}:e\in\Lambda_{\alpha}\right\} $ the generators of
$\mathcal{O}\left(\Lambda_{\alpha}\right)$, regardless of the value
of $\alpha$. We are careful to avoid the ambiguity of which algebra
$T_{e}$ belongs to by specifying domains and codomains for all involved
maps. 

We do, however, have connecting maps $\rho_{\alpha}^{\beta}:\mathcal{O}\left(\Lambda_{\alpha}\right)\rightarrow\mathcal{O}\left(\Lambda_{\beta}\right)$
between these algebras when $\alpha\leq\beta$. To see this, it suffices
to verify that every instance of relations (1)-(4) in $\Lambda_{\alpha}$
continues to hold in $\Lambda_{\beta}$. It is straightforward to
check that this is true for relations (1), (2), and (4), but there
is a subtlety with verifying relation (3). Suppose $e,f\in\Lambda_{\alpha}$
and $e\Lambda_{\alpha}\cap f\Lambda_{\alpha}=\emptyset$. We must
verify $e\Lambda_{\beta}\cap f\Lambda_{\beta}=\emptyset$. Using \prettyref{prop:relation-3},
Suppose for the sake of contradiction that $e=fg$ for some $g\in\Lambda$.
Then $d\left(e\right)=d\left(f\right)+d\left(g\right)$, hence $d\left(g\right)=-d\left(f\right)+d\left(e\right)<\omega^{\alpha}$,
and $g\in\Lambda_{\alpha}$. This contradicts our assumption, and
the case for $f\in e\Lambda$ is similar. We summarize these observations
and prove more in the following result.
\begin{prop}
\label{prop:inductive-limit}For each $\alpha\leq\beta$ there exists
a {*}-homomorphism $\rho_{\alpha}^{\beta}:\mathcal{O}\left(\Lambda_{\alpha}\right)\rightarrow\mathcal{O}\left(\Lambda_{\beta}\right)$
such that $\rho_{\alpha}^{\beta}\left(T_{e}\right)=T_{e}$. Moreover,
for each limit ordinal $\beta>0$, we have the following:
\[
\mathcal{O}\left(\Lambda_{\beta}\right)\cong\lim_{\rightarrow}\left(\mathcal{O}\left(\Lambda_{\alpha}\right),\rho_{\alpha}^{\gamma}\right)_{\alpha<\gamma<\beta}
\]
\end{prop}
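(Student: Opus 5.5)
The existence of $\rho_{\alpha}^{\beta}$ comes straight from the universal property in \prettyref{thm:gen-relations} applied to $\Lambda_{\alpha}$. The family $\{T_{e}:e\in\Lambda_{\alpha}\cap\Lambda^{\omega^{\gamma}}\}\cup\{T_{v}:v\in\Lambda_{0}\}$ inside $\mathcal{O}(\Lambda_{\beta})$ satisfies relations (1)--(4) for $\Lambda_{\alpha}$, so it induces the map.

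\textbf{Checking the relations.} Relations (1) and (2) hold because composition and source are the same in $\Lambda_{\alpha}$ and $\Lambda_{\beta}$. Relation (3) was handled in the discussion before the statement. For relation (4), take $v$ that is $\gamma$-regular in $\Lambda_{\alpha}$, which forces $\gamma<\alpha$.
\begin{itemize}
\item $v\Lambda_{\alpha}^{\omega^{\gamma}}=v\Lambda_{\beta}^{\omega^{\gamma}}$, since $\omega^{\gamma}<\omega^{\alpha}$.
\item Source-regularity only involves $v(\Lambda_{\alpha})_{\gamma}=v\Lambda_{\gamma}=v(\Lambda_{\beta})_{\gamma}$.
\end{itemize}
So $v$ is $\gamma$-regular in $\Lambda_{\beta}$ and the sum relation is the same. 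The same argument shows $\rho_{\gamma}^{\beta}\circ\rho_{\alpha}^{\gamma}=\rho_{\alpha}^{\beta}$, since both agree on generators. Hence $(\mathcal{O}(\Lambda_{\alpha}),\rho_{\alpha}^{\gamma})_{\alpha<\gamma<\beta}$ is an inductive system, and the maps $\rho_{\alpha}^{\beta}$ are compatible with it. Write $\mathcal{A}$ for the inductive limit and $\iota_{\alpha}:\mathcal{O}(\Lambda_{\alpha})\to\mathcal{A}$ for the canonical maps.

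\textbf{The map $\Phi:\mathcal{A}\to\mathcal{O}(\Lambda_{\beta})$.} The universal property of the inductive limit gives $\Phi$ with $\Phi\circ\iota_{\alpha}=\rho_{\alpha}^{\beta}$. Because $\beta$ is a limit ordinal, every $e\in\Lambda_{\beta}$ has $d(e)<\omega^{\beta}=\sup_{\alpha<\beta}\omega^{\alpha}$, so $e\in\Lambda_{\alpha}$ for some $\alpha<\beta$. Therefore every generator $T_{e}$ lies in the image of $\Phi$, and since that image is closed, $\Phi$ is surjective.

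\textbf{The inverse $\Psi:\mathcal{O}(\Lambda_{\beta})\to\mathcal{A}$.} For each generator, pick $\alpha<\beta$ with $e\in\Lambda_{\alpha}$ and set $S_{e}=\iota_{\alpha}(T_{e})$. This does not depend on $\alpha$ because the $\iota$'s are compatible. I then check that $\{S_{e}\}$ is a Cuntz-Krieger family for $\Lambda_{\beta}$.
\begin{itemize}
\item Relations (1) and (2) involve finitely many paths, so all of them lie in a single $\Lambda_{\alpha}$ with $\alpha<\beta$. The relation already holds in $\mathcal{O}(\Lambda_{\alpha})$, and we push it forward by $\iota_{\alpha}$.
\item For relation (3), if $e\Lambda_{\beta}\cap f\Lambda_{\beta}=\emptyset$ then $e\Lambda_{\alpha}\cap f\Lambda_{\alpha}=\emptyset$ as well, so the relation holds in $\mathcal{O}(\Lambda_{\alpha})$.
\item For relation (4), let $v$ be $\gamma$-regular in $\Lambda_{\beta}$, so $\gamma<\beta$. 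Choose $\alpha$ with $\gamma<\alpha<\beta$. By the same identifications as before, $v$ is $\gamma$-regular in $\Lambda_{\alpha}$ and the finite sums agree.
\end{itemize}
\prettyref{thm:gen-relations} then yields $\Psi$. Both $\Psi\circ\Phi$ and $\Phi\circ\Psi$ are the identity on generators: for $\Phi\circ\Psi$ these are the $T_{e}$, and for $\Psi\circ\Phi$ they are the $\iota_{\alpha}(T_{e})$, whose union generates $\mathcal{A}$. So $\Phi$ is an isomorphism.

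\textbf{Main obstacle.} The delicate part is relation (4): making sure that regularity, and the set $v\Lambda^{\omega^{\gamma}}$ being summed over, really do not change when passing between $\Lambda_{\alpha}$ and $\Lambda_{\beta}$. This reduces to the observation that both notions only involve paths of degree at most $\omega^{\gamma}<\omega^{\alpha}$, where $\Lambda_{\alpha}$ and $\Lambda_{\beta}$ agree. The endpoint needs separate care: paths of degree exactly $\omega^{\gamma}$ lie in $\Lambda_{\alpha}$ only when $\gamma<\alpha$, and this is why $\gamma$-regularity is only relevant for $\gamma<\alpha$ in $\Lambda_{\alpha}$.
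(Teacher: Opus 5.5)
Your proof is correct and follows essentially the same route as the paper. In both, the key step is that the images of the generators under a compatible family of maps form a Cuntz--Krieger family for $\Lambda_{\beta}$, because each relation involves only finitely many paths, all lying in some $\Lambda_{\alpha}$ with $\alpha<\beta$; the paper does this for an arbitrary cocone and checks the universal property directly, while you use the canonical cocone into the abstract inductive limit and build mutually inverse maps, which is only a difference in packaging (your explicit check that $\gamma$-regularity and the sum in relation (4) are the same in $\Lambda_{\alpha}$ and $\Lambda_{\beta}$ for $\gamma<\alpha$ usefully fills in a point the paper only calls straightforward).
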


\begin{proof}
The existence of $\rho_{\alpha}^{\beta}$ follows from the discussion
above, so we verify the isomorphism. Let $\beta$ be a limit ordinal,
that is, an ordinal not of the form $\gamma+1$. Let $\mathcal{A}$
be a $C^{*}$-algebra with {*}-homomorphisms $\epsilon_{\gamma}:\mathcal{O}\left(\Lambda_{\gamma}\right)\rightarrow\mathcal{A}$
for each $\gamma<\beta$. Suppose that for each $\alpha<\gamma<\beta$,
we have $\rho_{\alpha}^{\gamma}\circ\epsilon_{\gamma}=\epsilon_{\alpha}$.
Define
\[
S=\bigcup_{\gamma<\beta}\left\{ \epsilon_{\gamma}\left(T_{e}\right):e\in\Lambda^{\omega^{\alpha}}\cup\Lambda_{0}\text{ for some }\alpha\in\mathrm{Ord}\right\} 
\]
We claim $S$ is a Cuntz-Krieger family for $\Lambda_{\beta}$. Let
$S_{e}$ denote $\epsilon_{\gamma}\left(T_{e}\right)$. Any of the
relations in \prettyref{thm:gen-relations} involves only finitely
many generators, say $\left\{ S_{e_{1}},S_{e_{2}},\dots S_{e_{n}}\right\} $
for $e_{n}\in\Lambda_{\gamma_{n}}$ with $\gamma_{n}<\beta$. Then
$\gamma=\max\left\{ \gamma_{1},\dots\gamma_{n}\right\} <\beta$, and
the relation holds in $\mathcal{O}\left(\Lambda_{\gamma}\right)$
for generators $\left\{ T_{e_{1}},T_{e_{2}},\dots T_{e_{n}}\right\} $.
Since $\epsilon_{\gamma}$ is a homomorphism and $\epsilon_{\gamma_{k}}=\rho_{\gamma_{k}}^{\gamma}\circ\epsilon_{\gamma}$,
the relation also holds for $\left\{ S_{e_{1}},\dots S_{e_{n}}\right\} $
in $\mathcal{A}$. Therefore there exists a {*}-homomorphism $\epsilon:\mathcal{O}\left(\Lambda_{\beta}\right)\rightarrow\mathcal{A}$
such that $\epsilon\left(T_{e}\right)=S_{e}$. This implies, in particular,
$\rho_{\gamma}^{\beta}\circ\epsilon=\epsilon_{\gamma}$. If $e\in\Lambda_{\beta}$,
then $d\left(e\right)<\omega^{\beta}$, and in particular, $d\left(e\right)<\omega^{\gamma}<\omega^{\beta}$
for some $\gamma<\beta$. Then $\mathcal{O}\left(\Lambda_{\beta}\right)$
is the closure of $\cup_{\gamma<\beta}\:\rho_{\gamma}^{\beta}\left(\mathcal{O}\left(\Lambda_{\gamma}\right)\right)$,
and hence by continuity, $\epsilon$ is the only {*}-homomorphism
satisfying $\rho_{\gamma}^{\beta}\circ\epsilon=\epsilon_{\gamma}$
for each $\gamma<\beta$. Since $\mathcal{O}\left(\Lambda_{\beta}\right)$
satisfies the universal property for the inductive limit, $\mathcal{O}\left(\Lambda_{\beta}\right)$
is isomorphic to the inductive limit.
\end{proof}
Now we provide a concrete definition of the boundary paths of an ordinal
graph which in hindsight agrees with \cite[Definition 10.2]{LCSC}.
It is paramount that our definition of boundary path is weak enough
to allow us to build rich representations, and restricting ourselves
to maximal paths would prevent us from applying our strategy in the
proof of \prettyref{lem:injective}. The original definition of boundary
paths of a directed graph is attributed to folklore, but for a view
on how these definitions work for directed graphs, see for example
\cite[Definition 3.3]{NAIMARK-GRAPH}. In particular, boundary paths
of a directed graph are only allowed to terminate at a vertex which
is not regular. In the following definition, a boundary path which
encounters an $\alpha$-regular vertex is required to be at least
longer by $\omega^{\alpha}$. Hence a boundary path which encounters
a $0$-regular vertex is required to be longer by at least another
edge, and this definition is consistent in the case $\Lambda$ is
a directed graph.
\begin{defn}[{cf. \cite[Definition 10.2]{LCSC}}]
\label{def:boundary-def}If $\Lambda$ is an ordinal graph, define
\[
\Lambda^{*}=\left\{ f\in\prod_{\beta<\alpha}\Lambda^{\beta}:\alpha\in\mathrm{Ord},\alpha>0,\text{ and }f\left(\beta\right)\in f\left(\gamma\right)\Lambda\text{ for all }\gamma\leq\beta\right\} 
\]
For each $f\in\Lambda^{*}\cap\prod_{\beta<\alpha}\Lambda^{\beta}$,
define $L\left(f\right)=\alpha$. Also, define the set of \emph{boundary
paths} as
\[
\partial\Lambda=\left\{ f\in\Lambda^{*}:\text{for all }\gamma<L\left(f\right),\text{ if }s\left(f\left(\gamma\right)\right)\text{ is }\alpha\text{-regular, then }L\left(f\right)>\gamma+\omega^{\alpha}\right\} 
\]
\end{defn}

If $\Lambda$ is a directed graph, then $\Lambda^{*}$ can be regarded
as the set of all finite and infinite ``paths''. However, an ordinal
graph in general already contains paths with infinite lengh ($e\in\Lambda$
with $d\left(e\right)\geq\omega$). In this context, it's helpful
to instead regard $\Lambda^{*}$ to be the set of all ``paths''
which may or may not have a source vertex. Then we may naturally identify
$\Lambda$ as a subset of $\Lambda^{*}$ by mapping $f\in\Lambda$
to $g\in\Lambda^{*}$ defined by $g\left(\beta\right)=f\left(\beta\right)$
for all $\beta<L\left(g\right)=d\left(g\right)+1$. Since the notation
for $f\left(\beta\right)$ is consistent when $f\in\Lambda$ is viewed
as a member of $\Lambda^{*}$, we make no distinction members of $\Lambda$
and their image in $\Lambda^{*}$. 

We may define the range of a member $f$ in $\Lambda^{*}$ as $f\left(0\right)$.
Then, if $s\left(e\right)=r\left(f\right)$ and $e\in\Lambda$, we
define $ef\in\Lambda^{*}$ such that $L\left(ef\right)=d\left(e\right)+L\left(f\right)$
and $\left(ef\right)\left(\beta\right)=ef\left(-d\left(e\right)+\beta\right)$
for $\beta\geq d\left(e\right)$. Since $\left(ef\right)\left(\gamma\right)=\left(ef\right)\left(\beta\right)\left(\gamma\right)$
for $\gamma<\beta$, this uniquely determines $ef\in\Lambda^{*}$.
Likewise, if $f\in\Lambda^{*}$ and $f\left(d\left(e\right)\right)=e$
for some $e\in\Lambda$ with $d\left(e\right)<L\left(f\right)$, then
we define $e^{-1}f\in\Lambda^{*}$ by defining $L\left(e^{-1}f\right)=-d\left(e\right)+L\left(f\right)$
and $\left(e^{-1}f\right)\left(\beta\right)=e^{-1}f\left(d\left(e\right)+\beta\right)$.
This notation for members in $\Lambda^{*}$ is compatible with the
way the notation is used for paths in $\Lambda$. For example, it
follows that if $e,f\in\Lambda$ with $d\left(f\right)\geq d\left(e\right)$
and $g\in\Lambda^{*}$, $e^{-1}\left(fg\right)=\left(e^{-1}f\right)g$.

$\Lambda^{*}$ also has a natural partial order, where $f\leq g$
if $L\left(f\right)\leq L\left(g\right)$ and $f\left(\beta\right)=g\left(\beta\right)$
for each $\beta<L\left(f\right)$. As we see in the next result, this
partial order gives us a method for constructing boundary paths.
\begin{lem}
\label{lem:boundary-paths-exist}If $f\in\Lambda^{*}$ is maximal,
then $f\in\partial\Lambda$. Furthermore, for each $v\in\Lambda_{0}$
there exists $f\in v\partial\Lambda$ which is maximal in $\Lambda^{*}$.
\end{lem}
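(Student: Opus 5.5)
The plan is to prove the first assertion by contradiction, directly from the definitions of $\Lambda^{*}$ and $\partial\Lambda$. The second assertion will then follow from Zorn's lemma.

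\emph{Maximal elements are boundary paths.} Suppose $f\in\Lambda^{*}$ is maximal but $f\notin\partial\Lambda$. Then there exist $\gamma<L(f)$ and $\alpha$ such that $v:=s(f(\gamma))$ is $\alpha$-regular and $L(f)\leq\gamma+\omega^{\alpha}$. For each $\beta$ with $\gamma\leq\beta<L(f)$, put $h_{\beta}=f(\gamma)^{-1}f(\beta)$; this makes sense because $f(\beta)\in f(\gamma)\Lambda$. Since $\beta<\gamma+\omega^{\alpha}$, we get $d(h_{\beta})=-\gamma+\beta<\omega^{\alpha}$, so $h_{\beta}\in v\Lambda_{\alpha}$.

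Next define
\[
A_{\beta}=\left\{ k\in v\Lambda^{\omega^{\alpha}}:k\left(-\gamma+\beta\right)=h_{\beta}\right\}.
\]
Each $A_{\beta}$ is nonempty. Indeed, $\alpha$-source-regularity gives some $p\in s(h_{\beta})\Lambda^{\omega^{\alpha}}$, and $d(h_{\beta}p)=d(h_{\beta})+\omega^{\alpha}=\omega^{\alpha}$. Each $A_{\beta}$ is finite by $\alpha$-row-finiteness. The sets are decreasing in $\beta$: if $\beta\leq\beta'$ then $h_{\beta'}(-\gamma+\beta)=h_{\beta}$, because $f(\beta)\leq f(\beta')$ in the sense of the factorization property. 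A decreasing family of nonempty finite sets has nonempty intersection, so we may pick $k\in\bigcap_{\beta}A_{\beta}$.

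Then $e:=f(\gamma)k\in\Lambda$ satisfies $e(\beta)=f(\beta)$ for every $\beta<L(f)$. For $\beta\geq\gamma$ this is the choice of $k$; for $\beta<\gamma$ it follows from $f(\beta)=f(\gamma)(\beta)$. As an element of $\Lambda^{*}$, $e$ has length $\gamma+\omega^{\alpha}+1>L(f)$. Hence $f<e$, contradicting maximality.

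This argument treats successor and limit lengths $L(f)$ uniformly. The key step, and the main obstacle, is producing a single extension when $L(f)$ is a limit ordinal and $f$ has no last path. The finite intersection argument handles this case, and it is exactly where row-finiteness is needed.

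\emph{Existence of maximal elements.} Apply Zorn's lemma to $v\Lambda^{*}=\{f\in\Lambda^{*}:f(0)=v\}$. This set is nonempty, since $v$ itself has $L=1$.

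First check that $\Lambda^{*}$ is a set. For $f\in\Lambda^{*}$ the values $f(\beta)$, $\beta<L(f)$, have distinct degrees, so the map $\beta\mapsto f(\beta)$ is injective into $\Lambda$. Thus $L(f)$ is bounded by the Hartogs ordinal of $\Lambda$.

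Given a chain $(f_{i})$ in $v\Lambda^{*}$, define $f$ with $L(f)=\sup_{i}L(f_{i})$ and $f(\beta)=f_{i}(\beta)$ for any $i$ with $\beta<L(f_{i})$. This is well defined by the definition of the order, and the compatibility condition $f(\beta)\in f(\gamma)\Lambda$ holds because any two indices lie in a common $f_{i}$. So $f$ is an upper bound in $v\Lambda^{*}$.

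Zorn's lemma gives a maximal $f\in v\Lambda^{*}$. Any $g\geq f$ satisfies $g(0)=f(0)=v$, so $f$ is maximal in all of $\Lambda^{*}$. By the first part, $f\in v\partial\Lambda$.
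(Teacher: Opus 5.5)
Your proposal is correct and follows the paper's proof: the paper's eventually constant counting function $\epsilon\mapsto\left|f\left(\beta+\epsilon\right)\Lambda^{\omega^{\alpha}}\right|$ is the same idea as your nested nonempty finite sets $A_{\beta}$, which come from source-regularity and row-finiteness, and the second part is the same Zorn's lemma argument. Your extra checks, that $\Lambda^{*}$ is a set and that a maximal element of $v\Lambda^{*}$ is maximal in all of $\Lambda^{*}$, fill in details the paper leaves implicit.
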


\begin{proof}
Suppose $f\in\Lambda^{*}$ is maximal, $\beta<L\left(f\right)$, $w=s\left(f\left(\beta\right)\right)$
is $\alpha$-regular, and $L\left(f\right)\leq\beta+\omega^{\alpha}$.
Define a function $h:\left[0,-\beta+L\left(f\right)\right)\rightarrow\mathbb{N}$
such that $h\left(\epsilon\right)=\left|f\left(\beta+\epsilon\right)\Lambda^{\omega^{\alpha}}\right|$.
Then $h$ is decreasing because $p\Lambda^{\omega^{\alpha}}\subseteq q\Lambda^{\omega^{\alpha}}$
if $p,q\in\Lambda_{\alpha}$ and $p\in q\Lambda$. Thus $h$ is eventually
constant for inputs $\epsilon\geq\eta$. Then $\eta<-\beta+L\left(f\right)\leq\omega^{\alpha}$,
and $f\left(\beta\right)^{-1}f\left(\beta+\eta\right)\in w\Lambda_{\alpha}$.
Hence by $\alpha$-source-regularity of $w$, $f\left(\beta+\eta\right)\Lambda^{\omega^{\alpha}}$
is non-empty, and $h\left(\eta\right)\geq1$. Choose $g\in f\left(\beta+\eta\right)\Lambda^{\omega^{\alpha}}$
arbitrarily, and note that $f\leq g$ since $h\left(\epsilon\right)$
is constant for $\epsilon\geq\eta$. However, $L\left(g\right)=d\left(g\right)+1=\beta+\eta+\omega^{\alpha}+1=\beta+\omega^{\alpha}+1>L\left(f\right)$,
and this contradicts the maximality of $f$. Thus if $s\left(f\left(\beta\right)\right)$
is $\alpha$-regular, then $L\left(f\right)>\beta+\omega^{\alpha}$,
and $f\in\partial\Lambda$.

Now, given $v\in\Lambda_{0}$, we may construct an element of $v\partial\Lambda$
by choosing a maximal element of $v\Lambda^{*}$ using Zorn's lemma.
If $C\subseteq v\Lambda^{*}$ is a chain, then we let $\alpha=\sup_{f\in C}L\left(f\right)$.
Define $g\in\Lambda^{*}$ such that $L\left(g\right)=\alpha$ and
$g\left(\epsilon\right)=f\left(\epsilon\right)$ for arbitrary $f\in C$
with $L\left(f\right)>\epsilon$. Since $C$ is totally ordered, $g$
is well-defined, and we have constructed $g$ to be an upper bound
of $C$. Therefore, there exists a maximal element $f\in v\Lambda^{*}$,
which by the above argument must also belong to $\partial\Lambda$.
\end{proof}
\begin{lem}
\label{lem:compose-boundary}If $e\in\Lambda$ and $f\in\Lambda^{*}$
with $r\left(f\right)=s\left(e\right)$, then $ef\in\partial\Lambda$
if and only if $f\in\partial\Lambda$.
\end{lem}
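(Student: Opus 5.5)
Write $\delta=d(e)$, so that $L(ef)=\delta+L(f)$. For $\beta\ge\delta$ we have $(ef)(\beta)=e\,f(-\delta+\beta)$, hence $s((ef)(\beta))=s(f(-\delta+\beta))$. For $\beta<\delta$ we have $(ef)(\beta)=e(\beta)$. The plan is to translate the boundary condition of \prettyref{def:boundary-def} at each index of $ef$ into a condition at an index of $f$, using left cancellation of ordinal addition. Indices $\beta<\delta$ lying inside $e$ will be handled by \prettyref{lem:regular-hereditary}.

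For the ``only if'' direction, assume $ef\in\partial\Lambda$. Let $\gamma<L(f)$ with $s(f(\gamma))$ $\alpha$-regular. Since $s((ef)(\delta+\gamma))=s(f(\gamma))$, the hypothesis gives
\[
\delta+L(f)=L(ef)>\delta+\gamma+\omega^{\alpha}.
\]
Left cancellation, i.e.\ strict monotonicity of $\xi\mapsto\delta+\xi$, then yields $L(f)>\gamma+\omega^{\alpha}$. Hence $f\in\partial\Lambda$.

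For the ``if'' direction, assume $f\in\partial\Lambda$ and let $\beta<L(ef)$ with $s((ef)(\beta))$ $\alpha$-regular.

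If $\beta\ge\delta$, write $\beta=\delta+\gamma$ with $\gamma<L(f)$. Then $s(f(\gamma))$ is $\alpha$-regular, so $L(f)>\gamma+\omega^{\alpha}$. Adding $\delta$ on the left gives $L(ef)>\beta+\omega^{\alpha}$.

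If $\beta<\delta$, this is the main obstacle: the regular vertex $w=s(e(\beta))$ sits inside $e$, and we must show $\delta+L(f)>\beta+\omega^{\alpha}$.
\begin{itemize}
\item If already $\delta>\beta+\omega^{\alpha}$, we are done, since $L(f)\ge1$ gives $L(ef)>\delta$.
\item Otherwise $\delta\le\beta+\omega^{\alpha}$. Write $\delta=\beta+\eta$ with $\eta\le\omega^{\alpha}$.
\item If $\eta<\omega^{\alpha}$, then $g=e(\beta)^{-1}e$ lies in $w\Lambda_{\alpha}$. By \prettyref{lem:regular-hereditary}(1), $r(f)=s(g)=s(e)$ is $\alpha$-regular. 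Taking $\gamma=0$ in the boundary condition for $f$ (note $f(0)=r(f)$) gives $L(f)>\omega^{\alpha}$. Therefore
\[
L(ef)=\beta+\eta+L(f)\ge\beta+\eta+\omega^{\alpha}+1=\beta+\omega^{\alpha}+1>\beta+\omega^{\alpha},
\]
using $\eta+\omega^{\alpha}=\omega^{\alpha}$ for $\eta<\omega^{\alpha}$, which follows from Cantor normal form.
\item If $\eta=\omega^{\alpha}$, then $\delta=\beta+\omega^{\alpha}$, and $L(ef)=\delta+L(f)\ge\delta+1>\beta+\omega^{\alpha}$.
\end{itemize}
So in every case the boundary condition holds at $\beta$, and $ef\in\partial\Lambda$.

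The only delicate points are the absorption identity $\eta+\omega^{\alpha}=\omega^{\alpha}$ and the observation that the condition for $f$ at $\gamma=0$ concerns the vertex $r(f)$. Both are elementary, so I expect no further obstacles.
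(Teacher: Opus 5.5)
Your proof is correct. The ``only if'' direction is the same as the paper's; the ``if'' direction uses the same ingredients but organizes the cases differently.

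The paper first reduces to the case where $d(e)$ is a power of $\omega$, say $d(e)=\omega^{\mu}$, by factoring $e$ into finitely many pieces of power-of-$\omega$ length. For an index $\beta<d(e)$ it then splits on the exponent $\mu$ versus the regularity index $\alpha$:
\begin{itemize}
\item If $\mu\geq\alpha$, absorption gives $L(ef)\geq\omega^{\mu}+1=\beta+\omega^{\mu}+1>\beta+\omega^{\alpha}$.
\item If $\mu<\alpha$, then $e\in\Lambda_{\alpha}$, so \prettyref{lem:regular-hereditary} makes $r(f)$ $\alpha$-regular, and $L(ef)\geq L(f)>\omega^{\alpha}=\beta+\omega^{\alpha}$.
\end{itemize}

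You instead treat arbitrary $e$ directly, splitting on where $d(e)$ sits relative to $\beta+\omega^{\alpha}$. You invoke \prettyref{lem:regular-hereditary} only for the tail $e(\beta)^{-1}e$, and only when its length $\eta$ is below $\omega^{\alpha}$. The core mechanism is identical in both: a tail of $e$ lying in $\Lambda_{\alpha}$ forces $r(f)$ to be $\alpha$-regular, and the boundary condition for $f$ at index $0$ then yields $L(f)>\omega^{\alpha}$.

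What each route buys:
\begin{itemize}
\item The paper gets a clean two-way comparison of exponents. The price is the reduction step, which tacitly relies on $(e_{1}e_{2})f=e_{1}(e_{2}f)$ in $\Lambda^{*}$ and an induction on the number of factors.
\item Your argument avoids that reduction and is self-contained. The price is a slightly finer ordinal case split. Your last subcase $\eta=\omega^{\alpha}$ could be merged into the first by using $d(e)\geq\beta+\omega^{\alpha}$ there.
\end{itemize}
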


\begin{proof}
Since every ordinal is a finite sum of powers of $\omega$, every
path $e\in\Lambda$ is either a vertex or a composition of finitely
many paths whose lengths are powers of $\omega$. Therefore, it suffices
to consider the case where $d\left(e\right)=\omega^{\beta}$ for some
$\beta\in\mathrm{Ord}$. We will begin with the foward direction.
Then let $\gamma<L\left(ef\right)=d\left(e\right)+L\left(f\right)$
such that $v=s\left(\left(ef\right)\left(\gamma\right)\right)$ is
$\alpha$-regular. We wish to show $L\left(ef\right)>\gamma+\omega^{\alpha}$. 

We will verify this in cases. First suppose $\gamma\geq d\left(e\right)$.
Then $\left(ef\right)\left(\gamma\right)=f\left(-d\left(e\right)+\gamma\right)$,
so $s\left(f\left(-d\left(e\right)+\gamma\right)\right)$ is $\alpha$-regular.
Since $f\in\partial\Lambda$, this implies $L\left(f\right)>-d\left(e\right)+\gamma+\omega^{\alpha}$,
and adding $d\left(e\right)$ to both sides, we see $L\left(ef\right)>\gamma+\omega^{\alpha}$.
Next, consider the case where $\gamma<d\left(e\right)$ and $\beta\geq\alpha$.
Then $L\left(ef\right)\geq d\left(e\right)+1=\omega^{\beta}+1=\gamma+\omega^{\beta}+1>\gamma+\omega^{\alpha}$.
Finally, if $\gamma<d\left(e\right)$ and $\beta<\alpha$, then $\left(ef\right)\left(\gamma\right)=e\left(\gamma\right)$
and $v=s\left(e\left(\gamma\right)\right)$ is $\alpha$-regular.
Since $e\in\Lambda_{\alpha}$, \prettyref{lem:regular-hereditary}
implies $s\left(e\left(\gamma\right)^{-1}e\right)=s\left(e\right)=r\left(f\right)$
is $\alpha$-regular, and $f\in\partial\Lambda$ implies $L\left(ef\right)\geq L\left(f\right)>\omega^{\alpha}=\gamma+\omega^{\alpha}$.

For the other direction, we suppose $ef\in\partial\Lambda$ and $\gamma<L\left(f\right)$
with $s\left(f\left(\gamma\right)\right)$ $\alpha$-regular. Then
$s\left(f\left(\gamma\right)\right)=s\left(\left(ef\right)\left(d\left(e\right)+\gamma\right)\right)$,
so $L\left(ef\right)=d\left(e\right)+L\left(f\right)>d\left(e\right)+\gamma+\omega^{\alpha}$,
and subtracting $d\left(e\right)$ from both sides, $L\left(f\right)>\gamma+\omega^{\alpha}$.
\end{proof}
\begin{prop}
\label{prop:boundary-path-rep}For each ordinal graph $\Lambda$,
there is a representation $\tau:\mathcal{O}\left(\Lambda\right)\rightarrow B\left(\ell^{2}\left(\partial\Lambda\right)\right)$
defined by
\[
\tau\left(T_{e}\right)\xi_{f}=\begin{cases}
\xi_{ef} & s\left(e\right)=r\left(f\right)\\
0 & s\left(e\right)\not=r\left(f\right)
\end{cases}
\]
\end{prop}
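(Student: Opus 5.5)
We will use the universal property in \prettyref{thm:gen-relations}: it suffices to define operators $S_{e}=\tau(T_{e})$ for $e\in\Lambda^{\omega^{\alpha}}\cup\Lambda_{0}$ by the displayed formula and verify that they are partial isometries satisfying relations (1)--(4). First, by \prettyref{lem:compose-boundary}, $ef\in\partial\Lambda$ whenever $f\in s(e)\partial\Lambda$, so $S_{e}$ maps basis vectors to basis vectors. By left cancellation in $\Lambda^{*}$ (which follows from the definition of $ef$ and the factorization property), $f\mapsto ef$ is injective. Hence $S_{e}$ extends to a partial isometry with initial space $\overline{\mathrm{span}}\{\xi_{f}:r(f)=s(e)\}$ and final space $\overline{\mathrm{span}}\{\xi_{g}:g\in\partial\Lambda,\ L(g)>d(e),\ g(d(e))=e\}$. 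We also compute the adjoint: $S_{e}^{*}\xi_{g}=\xi_{e^{-1}g}$ if $L(g)>d(e)$ and $g(d(e))=e$, and $S_{e}^{*}\xi_{g}=0$ otherwise. Here \prettyref{lem:compose-boundary} again shows $e^{-1}g\in\partial\Lambda$.

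Relations (1)--(3) are then direct. Relation (1) holds because $S_{e}^{*}S_{e}$ is the projection onto $\{\xi_{f}:r(f)=s(e)\}$, which is $S_{s(e)}$, since a vertex acts by $\xi_{f}\mapsto\xi_{f}$ when $r(f)=v$. For relation (2), both sides send $\xi_{g}$ to $\xi_{efg}$, using associativity $(ef)g=e(fg)$ in $\Lambda^{*}$, which is immediate from the definition of composition. For relation (3), we compute $S_{e}^{*}S_{f}\xi_{g}$. It is nonzero only if $(fg)(d(e))=e$, which forces $e\in f\Lambda$ or $f\in e\Lambda$ according as $d(e)\le d(f)$ or $d(f)<d(e)$. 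By \prettyref{prop:relation-3} this contradicts $e\Lambda\cap f\Lambda=\emptyset$.

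The main obstacle is relation (4): for an $\alpha$-regular vertex $v$ we need $S_{v}=\sum_{e\in v\Lambda^{\omega^{\alpha}}}S_{e}S_{e}^{*}$. The sum is finite by row-finiteness. The ranges of the $S_{e}S_{e}^{*}$ are mutually orthogonal, since distinct $e\in\Lambda^{\omega^{\alpha}}$ give distinct $g(\omega^{\alpha})$. So the substantive point is that every $g\in v\partial\Lambda$ satisfies $L(g)>\omega^{\alpha}$, which then gives $g=g(\omega^{\alpha})\,(g(\omega^{\alpha})^{-1}g)$ with $g(\omega^{\alpha})\in v\Lambda^{\omega^{\alpha}}$. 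This is exactly the boundary condition of \prettyref{def:boundary-def} applied at $\gamma=0$: $s(g(0))=v$ is $\alpha$-regular, so $L(g)>0+\omega^{\alpha}$. Hence $\xi_{g}$ lies in the range of $S_{g(\omega^{\alpha})}S_{g(\omega^{\alpha})}^{*}$, and both sides agree on every basis vector. The care needed here is in the bookkeeping that $g(\omega^{\alpha})$ is defined and lies in $\Lambda^{\omega^{\alpha}}$, which holds because $g(\beta)\in\Lambda^{\beta}$ for $\beta<L(g)$.

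With all four relations verified, \prettyref{thm:gen-relations} yields a $*$-homomorphism $\tau$ with the stated formula on generators. The formula for general $e\in\Lambda$ then follows from multiplicativity, since $e$ is a finite composite of paths of length a power of $\omega$ and $T_{e}$ is the corresponding product.
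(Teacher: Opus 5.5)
Your proposal is correct and follows the same route as the paper. You invoke the universal property of \prettyref{thm:gen-relations}, check relations (1)--(3) directly from concatenation of boundary paths, and obtain relation (4) from the boundary condition at $\gamma=0$; your extra bookkeeping (well-definedness via \prettyref{lem:compose-boundary}, orthogonality of the ranges, extension to all $e\in\Lambda$) fills in details the paper leaves implicit. One harmless slip: in relation (3) the two cases are reversed, since $d(e)\le d(f)$ with $(fg)(d(e))=e$ gives $f\in e\Lambda$, while $d(f)<d(e)$ gives $e\in f\Lambda$, but the conclusion via \prettyref{prop:relation-3} is unaffected.
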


\begin{proof}
We verify that the operators $\left\{ \tau\left(T_{e}\right):e\in\Lambda\right\} $
satisfy the relations of \prettyref{thm:gen-relations}, which is
sufficient by universality. First note that the adjoints are defined
by
\[
\tau\left(T_{e}\right)^{*}\xi_{f}=\begin{cases}
\xi_{e^{-1}f} & f\in e\partial\Lambda\\
0 & f\not\in e\partial\Lambda
\end{cases}
\]
Relation (1) follows because $e^{-1}ef=f$ for all $f\in s\left(e\right)\partial\Lambda$.
Similarly, $\left(ef\right)g=e\left(fg\right)$ for $e\in r\left(f\right)\Lambda$,
$f\in r\left(g\right)\Lambda$, and $g\in\partial\Lambda$, so relation
(2) holds. For relation (3), note that if $e\not\in f\Lambda$ and
$d\left(e\right)\leq d\left(f\right)$, then for $g\in s\left(f\right)\partial\Lambda$,
$fg\not\in e\partial\Lambda$, otherwise $\left(fg\right)\left(d\left(e\right)\right)=f\left(d\left(e\right)\right)=e$.
Finally, relation (4) follows from the definition of $\partial\Lambda$.
If $v\in\Lambda_{0}$ is $\alpha$-regular and $f\in v\partial\Lambda$,
then $L\left(f\right)>\omega^{\alpha}$. Thus $f=f\left(\omega^{\alpha}\right)f\left(\omega^{\alpha}\right)^{-1}f$,
and $\tau\left(T_{f\left(\omega^{\alpha}\right)}\right)\tau\left(T_{f\left(\omega^{\alpha}\right)}\right)^{*}\xi_{f}=\xi_{f}$.
\end{proof}
Next we prove some identities involving the factorization of paths
in $\Lambda$.
\begin{lem}
\label{lem:path-factor}If $e,f\in\Lambda$, then
\begin{enumerate}
\item $\left(ef\right)\left(\alpha\right)=e\left(\alpha\right)$ if $\alpha\leq d\left(e\right)$.
\item $\left(ef\right)\left(\alpha\right)=ef\left(-d\left(e\right)+\alpha\right)$
if $\alpha\geq d\left(e\right)$.
\end{enumerate}
\end{lem}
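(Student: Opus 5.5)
Both identities will follow from the uniqueness part of the factorization property in \prettyref{def:ordinal-graph}, together with left cancellation of ordinal addition. In each case I would exhibit an explicit factorization of $ef$ whose first factor has the prescribed degree $\alpha$; uniqueness then identifies that first factor with $\left(ef\right)\left(\alpha\right)$.

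For (1), suppose $\alpha\leq d\left(e\right)$. Factor $e=e\left(\alpha\right)\left(e\left(\alpha\right)^{-1}e\right)$. Associativity in the category gives
\[
ef=e\left(\alpha\right)\left(\left(e\left(\alpha\right)^{-1}e\right)f\right).
\]
This composite is defined, since $s\left(e\left(\alpha\right)^{-1}e\right)=s\left(e\right)=r\left(f\right)$. The first factor has degree $\alpha$, and $\alpha\leq d\left(e\right)\leq d\left(ef\right)$. By uniqueness of factorization, $\left(ef\right)\left(\alpha\right)=e\left(\alpha\right)$.

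For (2), suppose $\alpha\geq d\left(e\right)$, and set $\beta=-d\left(e\right)+\alpha$. This is the unique ordinal with $d\left(e\right)+\beta=\alpha$, which exists by subtraction of ordinals on the left. We need $\beta\leq d\left(f\right)$ for $f\left(\beta\right)$ to be defined. This holds because $\alpha\leq d\left(ef\right)=d\left(e\right)+d\left(f\right)$, and left addition is strictly monotone, so it is order-reflecting. Now write $f=f\left(\beta\right)\left(f\left(\beta\right)^{-1}f\right)$, so that
\[
ef=\left(ef\left(\beta\right)\right)\left(f\left(\beta\right)^{-1}f\right).
\]
Functoriality of $d$ gives $d\left(ef\left(\beta\right)\right)=d\left(e\right)+\beta=\alpha$. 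Uniqueness of factorization then yields $\left(ef\right)\left(\alpha\right)=ef\left(-d\left(e\right)+\alpha\right)$.

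The only step that needs care is the ordinal arithmetic in (2). We need the existence and uniqueness of $-d\left(e\right)+\alpha$ when $\alpha\geq d\left(e\right)$, and the inequality $\beta\leq d\left(f\right)$. Both are standard consequences of left cancellativity and strict monotonicity of $\gamma\mapsto d\left(e\right)+\gamma$, and the paper already uses them implicitly. When $\alpha=d\left(e\right)$ the two formulas agree, since $f\left(0\right)=r\left(f\right)$, so the overlapping cases are consistent.
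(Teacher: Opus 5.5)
Your proposal is correct and follows the paper's proof: in both cases the paper writes down the factorization $ef=e(\alpha)\,(e(\alpha)^{-1}ef)$, respectively $ef=(ef(-d(e)+\alpha))\,(f(-d(e)+\alpha)^{-1}f)$, checks that the first factor has degree $\alpha$, and then applies uniqueness in the factorization property. You additionally verify that $-d(e)+\alpha\le d(f)$, using the implicit hypothesis $\alpha\le d(ef)$, so that $f(-d(e)+\alpha)$ is defined; the paper leaves this detail unstated.
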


\begin{proof}
Let $e,f\in\Lambda$ be given, and suppose $\alpha\leq d\left(e\right)$.
Then we have
\[
e\left(\alpha\right)e\left(\alpha\right)^{-1}ef=ef
\]
where $d\left(e\left(\alpha\right)\right)=\alpha$. By \prettyref{def:ordinal-graph},
this implies $\left(ef\right)\left(\alpha\right)=e\left(\alpha\right)$.
Now suppose $\alpha\geq d\left(e\right)$, in which case
\[
ef\left(-d\left(e\right)+\alpha\right)f\left(-d\left(e\right)+\alpha\right)^{-1}f=ef
\]
and 
\[
d\left(ef\left(-d\left(e\right)+\alpha\right)\right)=d\left(e\right)+d\left(f\left(-d\left(e\right)+\alpha\right)\right)=d\left(e\right)-d\left(e\right)+\alpha=\alpha
\]
Therefore $\left(ef\right)\left(\alpha\right)=ef\left(-d\left(e\right)+\alpha\right)$.
\end{proof}
We end this section by noting the existence of certain actions which
generalize the gauge action for graph algebras.
\begin{lem}
\label{lem:alg-auts}For $\alpha\in\mathrm{Ord}$, there is an action
$\Gamma_{\alpha}:\mathbb{T}\rightarrow\mathrm{Aut}\left(\mathcal{O}\left(\Lambda_{\alpha+1}\right)\right)$
such that
\[
\Gamma_{\alpha,z}\left(T_{e}\right)=\begin{cases}
zT_{e} & d\left(e\right)=\omega^{\alpha}\\
T_{e} & d\left(e\right)<\omega^{\alpha}
\end{cases}
\]
Moreover, $z\mapsto\Gamma_{\alpha,z}\left(a\right)$ is continuous
for each $a\in\mathcal{O}\left(\Lambda_{\alpha+1}\right)$.
\end{lem}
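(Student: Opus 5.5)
We use the universal property of $\mathcal{O}\left(\Lambda_{\alpha+1}\right)$ from \prettyref{thm:gen-relations}. Every generator $T_{e}$ with $e\in\Lambda_{\alpha+1}\cap\left(\Lambda^{\omega^{*}}\cup\Lambda_{0}\right)$ has either $d\left(e\right)=\omega^{\beta}$ with $\beta\leq\alpha$ or $e\in\Lambda_{0}$. Hence either $d\left(e\right)=\omega^{\alpha}$ or $d\left(e\right)<\omega^{\alpha}$, so the formula in the statement defines the candidate family
\[
S_{e}=\begin{cases}
zT_{e} & d\left(e\right)=\omega^{\alpha}\\
T_{e} & d\left(e\right)<\omega^{\alpha}
\end{cases}
\]
in $\mathcal{O}\left(\Lambda_{\alpha+1}\right)$ for fixed $z\in\mathbb{T}$. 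We check that this is a Cuntz-Krieger family for $\Lambda_{\alpha+1}$.

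\emph{Relation (1).} We have $S_{e}^{*}S_{e}=\left|z\right|^{2}T_{e}^{*}T_{e}=T_{s\left(e\right)}=S_{s\left(e\right)}$.

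\emph{Relation (2).} If $d\left(e\right)<d\left(f\right)$, then $d\left(ef\right)=d\left(f\right)$. So $ef$ carries the factor $z$ exactly when $f$ does, and $e$ cannot, since $d\left(e\right)<d\left(f\right)\leq\omega^{\alpha}$. Therefore $S_{e}S_{f}=S_{ef}$.

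\emph{Relation (3).} This is immediate, because scalars do not affect the vanishing of $T_{e}^{*}T_{f}$.

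\emph{Relation (4).} For $\beta$-regular $v$, the summands are $S_{e}S_{e}^{*}$ with all $e$ of the same length $\omega^{\beta}$. Each summand is multiplied by $\left|z\right|^{2}=1$ or by $1$, so the relation is preserved.

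Universality then gives a homomorphism $\Gamma_{\alpha,z}$ with the stated values on generators. Checking on generators, $\Gamma_{\alpha,z}\circ\Gamma_{\alpha,w}=\Gamma_{\alpha,zw}$ and $\Gamma_{\alpha,1}=\mathrm{id}$, and these identities extend to the whole algebra because the generators generate it. In particular $\Gamma_{\alpha,\bar{z}}$ is an inverse of $\Gamma_{\alpha,z}$, so each $\Gamma_{\alpha,z}$ is an automorphism and $\Gamma_{\alpha}$ is an action.

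For continuity, note that every $e\in\Lambda_{\alpha+1}$ has $d\left(e\right)=\omega^{\alpha}\cdot n+\delta$ with $\delta<\omega^{\alpha}$, by Cantor normal form. Thus $T_{e}$ is a product of $n$ generators of length $\omega^{\alpha}$ and generators of smaller length, which gives $\Gamma_{\alpha,z}\left(T_{e}T_{f}^{*}\right)=z^{n-m}T_{e}T_{f}^{*}$, where $m$ is the corresponding integer for $f$. So $z\mapsto\Gamma_{\alpha,z}\left(a\right)$ is continuous for $a$ in the span of the $T_{e}T_{f}^{*}$. By \prettyref{prop:ordgraph-closed-span} this span is dense in $\mathcal{O}\left(\Lambda_{\alpha+1}\right)$, and each $\Gamma_{\alpha,z}$ is contractive, being a homomorphism. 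A standard $\varepsilon/3$ argument therefore extends continuity to all $a$.

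The only delicate point is relation (2). There one needs the observation that in $\Lambda_{\alpha+1}$ two generators of length $\omega^{\alpha}$ never occur together in an instance of that relation, since the relation requires the strict inequality $d\left(e\right)<d\left(f\right)$.
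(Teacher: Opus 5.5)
Your proof is correct and follows the paper's argument: you multiply the length-$\omega^{\alpha}$ generators by $z$, check relations (1)--(4) using the same key observation for relation (2), namely that $d(e)<d(f)\leq\omega^{\alpha}$ forces $e$ to be unscaled and $d(ef)=d(f)$, and then invoke universality. The only differences are cosmetic: you spell out the group law and invertibility, which the paper leaves implicit, and you prove continuity on the dense span of the $T_{e}T_{f}^{*}$ via the explicit weight $z^{n-m}$ rather than on the $*$-subalgebra generated by the generators, finishing with the same $\varepsilon/3$ argument.
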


\begin{proof}
Let $\left\{ T_{e}:e\in\Lambda^{\omega^{\beta}},\beta<\alpha+1\right\} \cup\left\{ T_{v}:v\in\Lambda_{0}\right\} $
be the generators of $\mathcal{O}\left(\Lambda_{\alpha+1}\right)$
from \prettyref{thm:gen-relations}. Let $z\in\mathbb{T}$, and for
$e\in\Lambda^{\omega^{\alpha}}$, define $S_{e}=zT_{e}$. Likewise,
if $\beta<\alpha$ and $e\in\Lambda^{\omega^{\beta}}\cup\Lambda_{0}$,
define $S_{e}=T_{e}$. We claim $\left\{ S_{e}:e\in\Lambda^{\omega^{\beta}},\beta<\alpha+1\right\} \cup\left\{ S_{v}:v\in\Lambda_{0}\right\} $
is a Cuntz-Krieger family for $\Lambda_{\alpha+1}$. Relations (1),
(3), and (4) are immediate. Relation (2) follows because if $d\left(e\right)<d\left(f\right)$
for some $e\in\Lambda^{\omega^{\beta}}$ with $d\left(f\right)$ a
power of $\omega$, then $\beta<\alpha$ and $ef\in\Lambda^{\omega^{\alpha}}$
if and only if $f\in\Lambda^{\omega^{\alpha}}$. The universal property
then induces automorphisms $\Gamma_{\alpha,z}$ which satisfy the
necessary requirements.

To see that $z\mapsto\Gamma_{\alpha,z}\left(a\right)$ is continuous,
it suffices to see that $z\mapsto\Gamma_{\alpha,z}\left(b\right)$
is continuous for each $b$ in the $*$-subalgebra generated by $\left\{ T_{e}:e\in\Lambda^{\omega^{*}}\right\} \cup\left\{ T_{v}:v\in\Lambda_{0}\right\} $
and apply a standard $\varepsilon/3$ argument. Since this map is
continuous when $b$ is one of these generators and $\Gamma_{\alpha,z}$
is an automorphism, indeed each such $z\mapsto\Gamma_{\alpha,z}\left(b\right)$
is continuous.
\end{proof}

\section{$C^{*}$-correspondences}

For the rest of the paper, $\left(\Lambda,d\right)$ will be a fixed
ordinal graph. To each $\alpha\in\mathrm{Ord}$, there is an associated
$C^{*}$-correspondence we construct from the following Hilbert module.
\begin{defn}
\label{def:correspondences}Let $X_{\alpha}$ denote the Hilbert module
over $\mathcal{O}\left(\Lambda_{\alpha}\right)$ which is the completion
of the following module
\[
X_{\alpha}^{\circ}=\left\{ f\in c_{c}\left(\Lambda^{\omega^{\alpha}},\mathcal{O}\left(\Lambda_{\alpha}\right)\right):T_{s\left(e\right)}f\left(e\right)=f\left(e\right)\text{ for all }e\in\Lambda^{\omega^{\alpha}}\right\} 
\]
with the operations for $a\in\mathcal{O}\left(\Lambda_{\alpha}\right)$
and $x,y\in X_{\alpha}^{\circ}$ defined by
\[
\left(x\cdot a\right)\left(e\right)=x\left(e\right)a
\]
\[
\left\langle x,y\right\rangle =\sum_{e\in\Lambda^{\omega^{\alpha}}}x\left(e\right)^{*}y\left(e\right)
\]
For $e\in\Lambda^{\omega^{\alpha}}$, define $\delta_{e}\in X_{\alpha}^{\circ}$
by
\[
\delta_{e}\left(f\right)=\begin{cases}
T_{s\left(e\right)} & e=f\\
0 & e\not=f
\end{cases}
\]
\end{defn}

If $x\in X_{\alpha}^{\circ}$, then $\left\langle x,x\right\rangle $
is a sum of positive elements which is zero if and only if $x=0$.
Thus $\left\langle \cdot,\cdot\right\rangle $ is an inner product,
and the completion is indeed a Hilbert module. We can form a $C^{*}$-correspondence
by giving $X_{\alpha}$ the following left action by $\mathcal{O}\left(\Lambda_{\alpha}\right)$.
\begin{prop}
\label{prop:left-action}There exists a {*}-homomorphism $\varphi_{\alpha}:\mathcal{O}\left(\Lambda_{\alpha}\right)\rightarrow\mathcal{L}\left(X_{\alpha}\right)$
such that for $e\in\Lambda^{\omega^{\alpha}}$, $g\in\Lambda_{\alpha}$,
and $x\in X_{\alpha}^{\circ}$,
\[
\left(\varphi_{\alpha}\left(T_{g}\right)x\right)\left(e\right)=\begin{cases}
x\left(g^{-1}e\right) & e\in g\Lambda\\
0 & \text{otherwise}
\end{cases}
\]
\[
\left(\varphi_{\alpha}\left(T_{g}^{*}\right)x\right)\left(e\right)=\begin{cases}
x\left(ge\right) & s\left(g\right)=r\left(e\right)\\
0 & s\left(g\right)\not=r\left(e\right)
\end{cases}
\]
\end{prop}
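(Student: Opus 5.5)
Define operators on the generators and invoke the universal property of $\mathcal{O}(\Lambda_{\alpha})$ from \prettyref{thm:gen-relations}. For each $g\in\Lambda_{\alpha}$, let $V_{g}$ act on $X_{\alpha}^{\circ}$ by the first formula, $(V_{g}x)(e)=x(g^{-1}e)$ if $e\in g\Lambda$ and $0$ otherwise. Let $W_{g}$ act by the second formula, $(W_{g}x)(e)=x(ge)$ when $s(g)=r(e)$.

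First check that these maps preserve $X_{\alpha}^{\circ}$. Since $d(g)<\omega^{\alpha}$, we have $d(g)+\omega^{\alpha}=\omega^{\alpha}$, so $ge\in\Lambda^{\omega^{\alpha}}$ whenever $e\in s(g)\Lambda^{\omega^{\alpha}}$. Similarly $g^{-1}e\in\Lambda^{\omega^{\alpha}}$ whenever $e\in g\Lambda\cap\Lambda^{\omega^{\alpha}}$. Left cancellation makes $e\mapsto ge$ injective, so finite support is preserved. The condition $T_{s(e)}x(e)=x(e)$ is preserved because $s(ge)=s(e)$ and $s(g^{-1}e)=s(e)$.

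Next show $\langle V_{g}x,y\rangle=\langle x,W_{g}y\rangle$. Reindex the sum over $e\in g\Lambda^{\omega^{\alpha}}$ as a sum over $e'=g^{-1}e\in s(g)\Lambda^{\omega^{\alpha}}$; the map $e'\mapsto ge'$ is a bijection onto $g\Lambda\cap\Lambda^{\omega^{\alpha}}$. Then check that $V_{g}$ is isometric on its natural domain: it is a partial isometry with $\langle V_{g}x,V_{g}x\rangle=\sum_{e'\in s(g)\Lambda^{\omega^{\alpha}}}x(e')^{*}x(e')\leq\langle x,x\rangle$. So $V_{g}$ extends boundedly to $X_{\alpha}$, is adjointable with adjoint $W_{g}$, and commutes with the right action, since it acts only on the index $e$.

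Then verify relations (1)–(4) for $\{V_{e}\}$, with $V_{v}$ the projection onto functions supported on $v\Lambda^{\omega^{\alpha}}$.
\begin{itemize}
\item[(1)] $W_{e}V_{e}x(f)=x(f)$ if $r(f)=s(e)$, and $0$ otherwise. This equals $V_{s(e)}$.
\item[(2)] This reduces to $(ef)^{-1}h=f^{-1}(e^{-1}h)$ and the fact that $h\in ef\Lambda$ iff $h\in e\Lambda$ and $e^{-1}h\in f\Lambda$. Both follow from unique factorization and \prettyref{lem:path-factor}.
\item[(3)] If $e\Lambda\cap f\Lambda=\emptyset$, then $V_{e}$ and $V_{f}$ have orthogonal ranges, since their supports are disjoint. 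Hence $W_{e}V_{f}=0$.
\item[(4)] This is the main point. Suppose $v$ is $\beta$-regular for some $\beta<\alpha$ (all indices here lie in $\Lambda_{\alpha}$). We need every $h\in v\Lambda^{\omega^{\alpha}}$ to lie in exactly one $e\Lambda$ with $e\in v\Lambda^{\omega^{\beta}}$. Take $e=h(\omega^{\beta})$, which is legitimate because $\omega^{\beta}\leq\omega^{\alpha}$. Uniqueness holds because $h(\omega^{\beta})$ is unique by the factorization property. Hence $\sum_{e\in v\Lambda^{\omega^{\beta}}}V_{e}W_{e}$ is the identity on functions supported on $v\Lambda^{\omega^{\alpha}}$, i.e.\ it equals $V_{v}$. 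The sum is finite by row-finiteness.
\end{itemize}

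I expect the main obstacle to be bookkeeping rather than depth. We must make sure that the adjoint and boundedness arguments survive the completion. We must also handle the subtlety that relation (3) in $\Lambda_{\alpha}$ refers to $e\Lambda_{\alpha}\cap f\Lambda_{\alpha}$, which coincides with the condition in $\Lambda$ by the argument preceding \prettyref{prop:inductive-limit}.

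Universality then yields $\varphi_{\alpha}$ with $\varphi_{\alpha}(T_{g})=V_{g}$ on the generators. For a general $g\in\Lambda_{\alpha}$, write $g$ as a product of generators and use relation (2) together with the composition identity above. This shows that $\varphi_{\alpha}(T_{g})$ is given by the displayed formula, and that $\varphi_{\alpha}(T_{g}^{*})=W_{g}$.
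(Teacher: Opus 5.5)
Your proposal is correct and follows essentially the same route as the paper. Both define the operators by the displayed formulas, show they are contractive and mutually adjoint on $X_{\alpha}^{\circ}$, verify relations (1)--(4) of \prettyref{thm:gen-relations} (relation (4) via $h\in h(\omega^{\beta})\Lambda^{\omega^{\alpha}}$), and invoke universality; the only cosmetic difference is that the paper checks adjointness on the vectors $\delta_{e}$, whereas you reindex the inner-product sum for general $x,y$.
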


\begin{proof}
First, note that $\left\langle \varphi_{\alpha}\left(T_{g}\right)x,\varphi_{\alpha}\left(T_{g}\right)x\right\rangle \leq\left\langle x,x\right\rangle $,
so $\varphi_{\alpha}\left(T_{g}\right)$ is bounded on $X_{\alpha}^{\circ}$
and extends to $X_{\alpha}$. By the above formulas, we also have
\[
\varphi_{\alpha}\left(T_{g}\right)\delta_{e}=\begin{cases}
\delta_{ge} & s\left(g\right)=r\left(e\right)\\
0 & \text{otherwise}
\end{cases}
\]
\[
\varphi_{\alpha}\left(T_{g}^{*}\right)\delta_{e}=\begin{cases}
\delta_{g^{-1}e} & e\in g\Lambda\\
0 & \text{otherwise}
\end{cases}
\]
Now we check $\varphi_{\alpha}\left(T_{g}\right)^{*}=\varphi_{\alpha}\left(T_{g}^{*}\right)$.
Since $X_{\alpha}^{\circ}$ is the $\mathcal{O}\left(\Lambda_{\alpha}\right)$-span
of $\left\{ \delta_{e}:e\in\Lambda^{\omega^{\alpha}}\right\} $, it
suffices to check $\left\langle \varphi_{\alpha}\left(T_{g}\right)\delta_{e},\delta_{f}\right\rangle =\left\langle \delta_{e},\varphi_{\alpha}\left(T_{g}^{*}\right)\delta_{f}\right\rangle $
for all $e,f\in\Lambda^{\omega^{\alpha}}$. If $f=ge$, then indeed
\[
\left\langle \varphi_{\alpha}\left(T_{g}\right)\delta_{e},\delta_{f}\right\rangle =\left\langle \delta_{e},\varphi_{\alpha}\left(T_{g}^{*}\right)\delta_{f}\right\rangle =T_{s\left(e\right)}
\]
Otherwise, both inner products are zero. Thus each $\varphi_{\alpha}\left(T_{g}\right)$
is adjointable, and all that remains is to check that $\left\{ \varphi_{\alpha}\left(T_{g}\right):g\in\Lambda_{\alpha}\right\} $
is a Cuntz-Krieger $\Lambda_{\alpha}$-family, which we will do by
verifying the relations in \prettyref{thm:gen-relations}. For relation
(1), we must verify $\varphi_{\alpha}\left(T_{e}\right)^{*}\varphi_{\alpha}\left(T_{e}\right)=\varphi_{\alpha}\left(T_{s(e)}\right)$.
If $g\in s\left(e\right)\Lambda^{\omega^{\alpha}}$, then
\[
\varphi_{\alpha}\left(T_{e}\right)^{*}\varphi_{\alpha}\left(T_{e}\right)\delta_{g}=\varphi_{\alpha}\left(T_{e}\right)^{*}\delta_{eg}=\delta_{g}=\varphi_{\alpha}\left(T_{s(e)}\right)\delta_{g}
\]
and the relation is satisfied. Relation (2) follows, since if $s\left(e\right)=r\left(f\right)$
and $g\in s\left(f\right)\Lambda^{\omega^{\alpha}}$,
\[
\varphi_{\alpha}\left(T_{e}\right)\varphi_{\alpha}\left(T_{f}\right)\delta_{g}=\varphi_{\alpha}\left(T_{e}\right)\delta_{fg}=\delta_{efg}=\varphi_{\alpha}\left(T_{ef}\right)\delta_{g}
\]
Now we verify relation (3). Let $e,f\in\Lambda_{\alpha}$ with $e\Lambda_{\alpha}\cap f\Lambda_{\alpha}=0$.
By taking adjoints if necessary and applying \prettyref{prop:relation-3},
we may assume without loss of generality that $d\left(e\right)\leq d\left(f\right)$.
Also, $\varphi_{\alpha}\left(T_{e}\right)^{*}\varphi_{\alpha}\left(T_{f}\right)$
is non-zero only if there exists $g\in s\left(g\right)\Lambda^{\omega^{\alpha}}$
and $fg\in e\Lambda$, in which case
\[
\varphi_{\alpha}\left(T_{e}\right)^{*}\varphi_{\alpha}\left(T_{f}\right)\delta_{g}=\delta_{e^{-1}fg}
\]
Since $d\left(e\right)\leq d\left(f\right)$, this implies $f\in e\Lambda_{\alpha}\cap f\Lambda_{\alpha}$,
which is a contradiction. Thus $\varphi_{\alpha}\left(T_{e}\right)^{*}\varphi_{\alpha}\left(T_{f}\right)=0$,
and relation (3) is satisfied. Finally, we show relation (4) is satisfied.
Suppose $v\in\Lambda_{0}$ is $\beta$-regular for some $\beta<\alpha$.
Then for $e\in\Lambda^{\omega^{\beta}}$ and $g\in e\Lambda^{\omega^{\alpha}}$,
\[
\varphi_{\alpha}\left(T_{e}\right)\varphi_{\alpha}\left(T_{e}\right)^{*}\delta_{g}=\varphi_{\alpha}\left(T_{e}\right)\delta_{e^{-1}g}=\delta_{g}
\]
If $g\not\in e\Lambda^{\omega^{\alpha}}$, then $\varphi_{\alpha}\left(T_{e}\right)\varphi_{\alpha}\left(T_{e}\right)^{*}\delta_{g}=0$.
Since each $g\in\Lambda^{\omega^{\alpha}}$ belongs to $g\left(\omega^{\beta}\right)\Lambda^{\omega^{\alpha}}$,
we have
\[
\sum_{e\in v\Lambda^{\omega^{\beta}}}\varphi_{\alpha}\left(T_{e}\right)\varphi_{\alpha}\left(T_{e}\right)^{*}=\varphi_{\alpha}\left(T_{v}\right)
\]
and relation (4) is satisfied.
\end{proof}
\begin{example}
\label{exa:graph-example}Suppose $\Lambda=\Lambda_{1}$, i.e. $\Lambda$
is a directed graph. Then $X_{0}$ is a $C^{*}$-correspondence over
$\mathcal{O}\left(\Lambda_{0}\right)$. $\Lambda_{0}$ is the set
of vertices of $\Lambda$, so $\mathcal{O}\left(\Lambda_{0}\right)$
is the $C^{*}$-algebra which is universal for a family of mutually
orthogonal projections, one for each vertex. Hence $\mathcal{O}\left(\Lambda_{0}\right)$
is isomorphic to $c_{0}\left(\Lambda_{0}\right)$. Moreover, $X_{0}^{\circ}$
is spanned by functions $f\in c_{c}\left(\Lambda^{1},c_{0}\left(\Lambda_{0}\right)\right)$
satisfying $f\left(e\right)=T_{s\left(e\right)}f\left(e\right)$.
Thus the support of $f\left(e\right)$ is contained in $\left\{ e\right\} $.
Define a $c_{0}\left(\Lambda_{0}\right)$-correspondence $Y$ as the
closure of $c_{c}\left(\Lambda^{1}\right)$ with the following operations
defined for $f,g\in c_{c}\left(\Lambda^{1}\right)$ and $h\in c_{0}\left(\Lambda_{0}\right)$
\begin{align*}
\left(h\cdot f\right)\left(e\right) & =h\left(r\left(e\right)\right)f\left(e\right)\\
\left(f\cdot h\right)\left(e\right) & =f\left(e\right)h\left(s\left(e\right)\right)\\
\left\langle f,g\right\rangle \left(v\right) & =\sum_{s\left(e\right)=v}\overline{f\left(e\right)}g\left(e\right)
\end{align*}
Define $\psi:X_{0}\rightarrow Y$ by
\[
\psi\left(\delta_{e}\right)\left(f\right)=\begin{cases}
1 & e=f\\
0 & \text{otherwise}
\end{cases}
\]
Applying the condition $f\left(e\right)=T_{s\left(e\right)}f\left(e\right)$,
it is not hard to see that $\psi$ is an isomorphism of $c_{0}\left(\Lambda_{0}\right)$-correspondences.
In particular, $X_{0}$ is isomorphic to the usual $C^{*}$-correspondence
defined for directed graphs, and by \cite[Proposition 3.10]{CP-CONSTRUCT},
$\mathcal{O}\left(X_{0}\right)$ is isomorphic to the graph algebra
$\mathcal{O}\left(\Lambda\right)$.
\end{example}

The correspondences $X_{\alpha}$ are constructed so that there exists
the following representation of $X_{\alpha}$ into $\mathcal{O}\left(\Lambda_{\alpha+1}\right)$.
\begin{prop}
\label{prop:correspondence-representation}There exist maps $\psi_{\alpha}:X_{\alpha}\rightarrow\mathcal{O}\left(\Lambda_{\alpha+1}\right)$
such that for $x\in X_{\alpha}^{\circ}$,
\[
\psi_{\alpha}\left(x\right)=\sum_{e\in\Lambda^{\omega^{\alpha}}}T_{e}\rho_{\alpha}^{\alpha+1}\left(x\left(e\right)\right)
\]
Moreover, $\left(\psi_{\alpha},\rho_{\alpha}^{\alpha+1}\right):X_{\alpha}\rightarrow\mathcal{O}\left(\Lambda_{\alpha+1}\right)$
is a representation of the correspondence $\left(X_{\alpha},\varphi_{\alpha}\right)$.
\end{prop}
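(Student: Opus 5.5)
We first define $\psi_{\alpha}$ on $X_{\alpha}^{\circ}$ by the displayed formula, which is a finite sum since $x$ is finitely supported. We then verify the three representation identities on $X_{\alpha}^{\circ}$. The inner product identity also shows that $\psi_{\alpha}$ is isometric in the sense $\|\psi_{\alpha}(x)\|^{2}=\|\rho_{\alpha}^{\alpha+1}(\langle x,x\rangle)\|\leq\|\langle x,x\rangle\|=\|x\|^{2}$. So $\psi_{\alpha}$ is bounded on $X_{\alpha}^{\circ}$ and extends by continuity to $X_{\alpha}$. All three identities then pass to the completion by continuity, since both sides are continuous in $x$ (and in $a$).

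\emph{Right action.} This is immediate from $\rho_{\alpha}^{\alpha+1}$ being a homomorphism:
\[
\psi_{\alpha}(x\cdot a)=\sum_{e}T_{e}\rho_{\alpha}^{\alpha+1}(x(e))\rho_{\alpha}^{\alpha+1}(a)=\psi_{\alpha}(x)\rho_{\alpha}^{\alpha+1}(a).
\]

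\emph{Inner product.} We compute
\[
\psi_{\alpha}(x)^{*}\psi_{\alpha}(y)=\sum_{e,f\in\Lambda^{\omega^{\alpha}}}\rho_{\alpha}^{\alpha+1}(x(e))^{*}T_{e}^{*}T_{f}\rho_{\alpha}^{\alpha+1}(y(f)).
\]
For $e\neq f$ with $d(e)=d(f)$, neither of $e,f$ extends the other, so Proposition \ref{prop:relation-3} gives $e\Lambda_{\alpha+1}\cap f\Lambda_{\alpha+1}=\emptyset$. Relation (3) then gives $T_{e}^{*}T_{f}=0$. For $e=f$, relation (1) gives $T_{e}^{*}T_{e}=T_{s(e)}$. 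Using $T_{s(e)}x(e)=x(e)$, so that $x(e)^{*}T_{s(e)}=x(e)^{*}$, the sum collapses to $\rho_{\alpha}^{\alpha+1}\bigl(\sum_{e}x(e)^{*}y(e)\bigr)=\rho_{\alpha}^{\alpha+1}(\langle x,y\rangle)$.

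\emph{Left action.} This is the main step. By linearity and continuity it suffices to check $\rho_{\alpha}^{\alpha+1}(a)\psi_{\alpha}(x)=\psi_{\alpha}(\varphi_{\alpha}(a)x)$ for $a$ in a generating set. Indeed, the set of $a$ satisfying the identity for all $x$ is a closed subalgebra. It is also $*$-closed once we check both $T_{g}$ and $T_{g}^{*}$, for $g\in\Lambda^{\omega^{\beta}}$ with $\beta<\alpha$ and for $g$ a vertex. Since $X_{\alpha}^{\circ}$ is the right $\mathcal{O}(\Lambda_{\alpha})$-span of the $\delta_{e}$, and both sides are right module maps by the first identity, it is enough to take $x=\delta_{e}$, where $\psi_{\alpha}(\delta_{e})=T_{e}T_{s(e)}=T_{e}$.

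For $T_{g}$, relation (2) applies because $d(g)<\omega^{\alpha}=d(e)$. It gives $T_{g}T_{e}=T_{ge}$ when $s(g)=r(e)$, and this equals $\psi_{\alpha}(\delta_{ge})=\psi_{\alpha}(\varphi_{\alpha}(T_{g})\delta_{e})$. When $s(g)\neq r(e)$, we have $T_{g}T_{e}=T_{g}T_{s(g)}T_{r(e)}T_{e}=0$, since distinct vertex projections are orthogonal by relation (3).

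For $T_{g}^{*}$ we need $T_{g}^{*}T_{e}=T_{g^{-1}e}$ if $e\in g\Lambda$, and $0$ otherwise. If $e=gh$, then $d(h)=\omega^{\alpha}$, so $T_{e}=T_{g}T_{h}$ and $T_{g}^{*}T_{g}T_{h}=T_{s(g)}T_{h}=T_{h}$. If $e\notin g\Lambda$, then since $d(g)<d(e)$ we cannot have $g\in e\Lambda$. Proposition \ref{prop:relation-3} then gives $g\Lambda\cap e\Lambda=\emptyset$, and relation (3) gives $T_{g}^{*}T_{e}=0$. Care is needed in reading relation (2) for composite $g$: when $g$ is a finite product of generators it is applied iteratively using $d(g)<\omega^{\alpha}$, which is legitimate since $\omega^{\gamma}+\omega^{\alpha}=\omega^{\alpha}$ for all $\gamma<\alpha$.
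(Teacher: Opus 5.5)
Your proposal is correct and follows essentially the same route as the paper. You define $\psi_{\alpha}$ on $X_{\alpha}^{\circ}$ and use relations (1) and (3) to get $\psi_{\alpha}(x)^{*}\psi_{\alpha}(y)=\rho_{\alpha}^{\alpha+1}(\langle x,y\rangle)$, hence contractivity; you then check the right action directly and the left action on the vectors $\delta_{e}$. Your separate treatment of $T_{g}^{*}$ and of the case $s(g)\neq r(e)$ fills in details the paper covers only with ``by linearity,'' so your argument is, if anything, more complete.
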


\begin{proof}
We define $\psi_{\alpha}$ first on $X_{\alpha}^{\circ}$ by the formula
above. To extend the domain of $\psi_{\alpha}$ to $X_{\alpha}$,
we show $\psi_{\alpha}$ is continuous. If $x,y\in X_{\alpha}^{\circ}$,
then
\[
\left\Vert \psi_{\alpha}\left(x\right)^{*}\psi_{\alpha}\left(y\right)\right\Vert =\left\Vert \sum_{e,f\in\Lambda^{\omega^{\alpha}}}\rho_{\alpha}^{\alpha+1}\left(x\left(e\right)\right)^{*}T_{e}^{*}T_{f}\rho_{\alpha}^{\alpha+1}\left(y\left(f\right)\right)\right\Vert =\left\Vert \sum_{e\in\Lambda^{\omega^{\alpha}}}\rho_{\alpha}\left(x\left(e\right)\right)^{*}T_{s\left(e\right)}\rho_{\alpha}\left(y\left(e\right)\right)\right\Vert 
\]
\[
=\left\Vert \rho_{\alpha}^{\alpha+1}\left(\left\langle x,y\right\rangle \right)\right\Vert \leq\left\Vert \left\langle x,y\right\rangle \right\Vert 
\]
In particular, $\left\Vert \psi_{\alpha}\left(x\right)\right\Vert \leq\left\Vert x\right\Vert $,
and $\psi_{\alpha}$ extends by continuity to a map $\psi_{\alpha}:X_{\alpha}\rightarrow\mathcal{O}\left(\Lambda_{\alpha+1}\right)$.
Now we show $\left(\psi_{\alpha},\rho_{\alpha}^{\alpha+1}\right)$
is a representation. If $x\in X_{\alpha}$ and $a\in\mathcal{O}\left(\Lambda_{\alpha}\right)$,
then
\[
\psi_{\alpha}\left(x\right)\rho_{\alpha}^{\alpha+1}\left(a\right)=\sum_{e\in\Lambda^{\omega^{\alpha}}}T_{e}\rho_{\alpha}^{\alpha+1}\left(x\left(e\right)a\right)=\psi_{\alpha}\left(x\cdot a\right)
\]
\[
\rho_{\alpha}^{\alpha+1}\left(T_{e}\right)\psi_{\alpha}\left(\delta_{f}\right)=\rho_{\alpha}^{\alpha+1}\left(T_{e}\right)T_{f}=\psi_{\alpha}\left(\varphi_{\alpha}\left(T_{e}\right)\delta_{f}\right)
\]
Thus by linearity, $\psi_{\alpha}$ respects the left action.
\end{proof}
These representations of the correspondences will be a valuable tool
for us later. We wish to know when $\left(\psi_{\alpha},\rho_{\alpha}^{\alpha+1}\right)$
is the universal covariant representation for $X_{\alpha}$. In particular,
this would imply $\mathcal{O}\left(\Lambda_{\alpha+1}\right)$ is
isomorphic to the Cuntz-Pimsner algebra $\mathcal{O}\left(X_{\alpha}\right)$.
The following example shows that this isn't always the case.

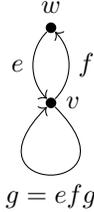
\begin{figure}[H]

\begin{center}
\begin{tikzpicture}[every loop/.style={looseness=40}]
	\draw[->] node[circle, fill, inner sep=0, minimum size=4pt, label=right:$v$] (v) {} edge[in=225, out=315, loop] node[below] {$g=efg$} ();
	\draw (0, 1) node[circle, fill, inner sep=0, minimum size=4pt, label=above:$w$] (w) {};
	\draw[->] (v) edge[in=315, out=45] node[right] {$f$} (w);
	\draw[->] (w) edge[out=225, in=135] node[left] {$e$} (v);
\end{tikzpicture}
\end{center}\caption{A category generated by two objects $v,w$, two morphisms $e,f$,
and a morphism $g$ such that $g=efg$}\label{fig:example1-fig}

\end{figure}

\begin{example}
Consider the category $\Lambda$ pictured in \prettyref{fig:example1-fig};
it is generated by two objects $v$ and $w$, two morphisms $e$ and
$f$, and a morphism $g$ satisfying $g=efg$. $\Lambda$ is an ordinal
graph when we define $d\left(v\right)=d\left(w\right)=0$, $d\left(e\right)=d\left(f\right)=1$,
and $d\left(g\right)=\omega$. Then the only two paths of length $\omega$
are $g$ and $fg$, so $v$ and $w$ are $1$-regular. Since $g$
is the only member of $v\Lambda^{\omega}$, we have $T_{g}T_{g}^{*}=T_{v}$
in $\mathcal{O}\left(\Lambda\right)$. Thus 
\[
T_{ef}=T_{ef}T_{g}T_{g}^{*}=T_{efg}T_{g}^{*}=T_{g}T_{g}^{*}=T_{v}
\]

On the other hand, $\Lambda_{1}$ is a directed graph with a single
cycle $ef$ of length $2$. Let $S_{e},S_{f},S_{v}$, and $S_{w}$
be the generators for $\mathcal{O}\left(\Lambda_{1}\right)$. Then
$\mathcal{O}\left(\Lambda_{1}\right)\cong M_{2}\left(C\left(\mathbb{T}\right)\right)$,
and there is a gauge action $\gamma:\mathbb{T}\rightarrow\mathrm{Aut}\left(\mathcal{O}\left(\Lambda_{1}\right)\right)$
for which $\gamma_{z}\left(S_{e}\right)=zS_{e}$, $\gamma_{z}\left(S_{f}\right)=zS_{f}$,
and $\gamma_{z}\left(S_{v}\right)=S_{v}$. One may directly verify
by calculating in $M_{2}\left(C\left(\mathbb{T}\right)\right)$ that
$S_{e}S_{f}\not=S_{v}$ in $\mathcal{O}\left(\Lambda_{1}\right)$,
but we can also see this using the gauge action. If $S_{e}S_{f}=S_{v}$,
then $\gamma_{z}\left(S_{e}S_{f}\right)=z^{2}S_{e}S_{f}=\gamma_{z}\left(S_{v}\right)=S_{v}$
for all $z\in\mathbb{T}$. In particular, $S_{e}S_{f}=-S_{e}S_{f}$,
which would imply $S_{e}S_{f}=0$. However, a generator $T_{h}$ of
an ordinal graph algebra is never zero, so this is impossible. We
may conclude that $S_{e}S_{f}-S_{v}\in\ker\rho_{1}^{2}\backslash\left\{ 0\right\} $,
and in particular, $\rho_{1}^{2}$ is not injective. Then the representation
$\left(\psi_{1},\rho_{1}^{2}\right)$ is not injective, and therefore
not the universal covariant representation. 

Note that the relation $T_{ef}=T_{v}$ implies $T_{e}^{*}T_{ef}=T_{f}=T_{e}^{*}$,
so $\mathcal{O}\left(\Lambda\right)$ is in fact generated by $T_{f}$
and $T_{g}$. Moreover, the relation $T_{e}T_{f}T_{g}=T_{g}$ follows
from $T_{e}T_{f}=T_{f}^{*}T_{f}=T_{v}$, hence $\mathcal{O}\left(\Lambda\right)$
is isomorphic to $C^{*}\left(F\right)\cong M_{2}\left(C\left(\mathbb{T}\right)\right)$
pictured in \prettyref{fig:example1-fig-1}. We have explicit isomorphisms
from $\mathcal{O}\left(\Lambda_{1}\right)$ and $\mathcal{O}\left(\Lambda\right)$
into $M_{2}\left(C\left(\mathbb{T}\right)\right)$ defined by
\begin{align*}
S_{e} & \mapsto\begin{pmatrix}0 & 0\\
z & 0
\end{pmatrix} & S_{f} & \mapsto\begin{pmatrix}0 & 1\\
0 & 0
\end{pmatrix}\\
T_{e} & \mapsto\begin{pmatrix}0 & 0\\
1 & 0
\end{pmatrix} & T_{f} & \mapsto\begin{pmatrix}0 & 1\\
0 & 0
\end{pmatrix} & T_{g} & \mapsto\begin{pmatrix}z & 0\\
0 & 0
\end{pmatrix}
\end{align*}
where $z$ denotes the identity function in $C\left(\mathbb{T}\right)$.
Under these isomorphisms, $\rho_{1}:\mathcal{O}\left(\Lambda_{1}\right)\rightarrow\mathcal{O}\left(\Lambda\right)$
is the map
\[
\begin{pmatrix}f_{1,1} & f_{1,2}\\
f_{2,1} & f_{2,2}
\end{pmatrix}\mapsto\begin{pmatrix}f_{1,1}\left(1\right)1_{\mathbb{T}} & f_{1,2}\left(1\right)1_{\mathbb{T}}\\
f_{2,1}\left(1\right)1_{\mathbb{T}} & f_{2,2}\left(1\right)1_{\mathbb{T}}
\end{pmatrix}
\]

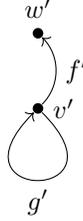
\begin{figure}[H]
\begin{center}
\begin{tikzpicture}[every loop/.style={looseness=40}]
	\draw[->] node[circle, fill, inner sep=0, minimum size=4pt, label=right:$v'$] (v) {} edge[in=225, out=315, loop] node[below] {$g'$} ();
	\draw (0, 1) node[circle, fill, inner sep=0, minimum size=4pt, label=above:$w'$] (w) {};
	\draw[->] (v) edge[in=315, out=45] node[right] {$f'$} (w);
\end{tikzpicture}
\end{center}\caption{A directed graph $F$ with vertices $v'$, $w'$ and edges $f'$,
$g'$}\label{fig:example1-fig-1}
\end{figure}
\end{example}

In the previous example, $\rho_{1}$ is not injective because of the
existence of the path $g$, which is a path of length $\omega$ with
$1$-regular range that factors through the cycle $ef$. More generally,
paths of length $\omega^{\alpha+1}$ factoring through a non-trivial
cycle and ending with a $\alpha+1$-regular vertex obstruct injectivity
of $\rho_{\alpha+1}$, as we show in the following result.
\begin{prop}
\label{prop:hom-injective}$\rho_{\alpha+1}:\mathcal{O}\left(\Lambda_{\alpha+1}\right)\rightarrow\mathcal{O}\left(\Lambda\right)$
is injective only if there are no paths $f\in\Lambda^{\omega^{\alpha+1}}$
such that $r\left(f\right)$ is $\alpha+1$-regular and $f=gf$ for
some $g\in\Lambda_{\alpha+1}\backslash\Lambda_{\alpha}$.
\end{prop}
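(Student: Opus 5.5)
We argue by contraposition, following the pattern of the preceding example. Suppose there is $f\in\Lambda^{\omega^{\alpha+1}}$ with $v=r\left(f\right)$ $\left(\alpha+1\right)$-regular and $f=gf$ for some $g\in\Lambda_{\alpha+1}\backslash\Lambda_{\alpha}$. Then $g\in v\Lambda v$, and $\omega^{\alpha}\leq d\left(g\right)<\omega^{\alpha+1}$. We will produce a nonzero element of $\ker\rho_{\alpha+1}$. The candidate is $T_{g}-T_{v}\in\mathcal{O}\left(\Lambda_{\alpha+1}\right)$, or possibly $T_{g}T_{g}^{*}-T_{v}$. The first step is to show it dies in $\mathcal{O}\left(\Lambda\right)$.

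Since $v$ is $\left(\alpha+1\right)$-regular, relation (4) of \prettyref{thm:gen-relations} gives $T_{v}=\sum_{h\in v\Lambda^{\omega^{\alpha+1}}}T_{h}T_{h}^{*}$ in $\mathcal{O}\left(\Lambda\right)$. For each such $h$, the product $gh$ again lies in $v\Lambda^{\omega^{\alpha+1}}$, because $d\left(g\right)+\omega^{\alpha+1}=\omega^{\alpha+1}$. By left cancellation, $h\mapsto gh$ is an injective self-map of the finite set $v\Lambda^{\omega^{\alpha+1}}$, hence a bijection. Therefore
\[
T_{g}T_{v}=\sum_{h}T_{gh}T_{h}^{*}\quad\text{and}\quad T_{v}T_{g}^{*}=\sum_{h}T_{h}T_{gh}^{*}.
\]
I will use the bijection to check that $T_{g}T_{g}^{*}=\sum_{h}T_{gh}T_{gh}^{*}=T_{v}$ in $\mathcal{O}\left(\Lambda\right)$. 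With this, $T_{g}$ is a unitary in the corner $T_{v}\mathcal{O}\left(\Lambda\right)T_{v}$, using $T_{g}^{*}T_{g}=T_{s\left(g\right)}=T_{v}$.

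To get equality $T_{g}=T_{v}$ rather than mere unitarity, I would follow the example and aim for $T_{g}T_{f}T_{f}^{*}=T_{gf}T_{f}^{*}=T_{f}T_{f}^{*}$. The real goal is to show $T_{g}T_{h}=T_{h}$ for every $h\in v\Lambda^{\omega^{\alpha+1}}$. Then $T_{g}=T_{g}T_{v}=\sum_{h}T_{g}T_{h}T_{h}^{*}=T_{v}$. This step is the main obstacle, since in general $gh\neq h$ for $h\neq f$. If it fails, I fall back on the unitary $T_{g}T_{v}$ and test it against the gauge action $\Gamma_{\alpha}$ of \prettyref{lem:alg-auts}.

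The second step is that the element is nonzero in $\mathcal{O}\left(\Lambda_{\alpha+1}\right)$. Write $g=g_{1}g_{2}\cdots g_{n}$ with each $d\left(g_{i}\right)$ a power of $\omega$. Exactly $m\geq1$ of the factors have length $\omega^{\alpha}$, since $d\left(g\right)\geq\omega^{\alpha}$. Then $\Gamma_{\alpha,z}\left(T_{g}\right)=z^{m}T_{g}$ while $\Gamma_{\alpha,z}\left(T_{v}\right)=T_{v}$. If $T_{g}=T_{v}$ held in $\mathcal{O}\left(\Lambda_{\alpha+1}\right)$, choosing $z$ with $z^{m}\neq1$ would give $T_{g}=0$. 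But generators are nonzero, as the boundary path representation of \prettyref{prop:boundary-path-rep} applied to $\Lambda_{\alpha+1}$ shows, using \prettyref{lem:boundary-paths-exist} to obtain a boundary path at $s\left(g\right)$. So $T_{g}-T_{v}$ is a nonzero element of $\ker\rho_{\alpha+1}$, and $\rho_{\alpha+1}$ is not injective.

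If the first step only yields that the image of $T_{g}$ is a unitary, I instead use this: an injective $\rho_{\alpha+1}$ would give a $\Gamma_{\alpha}$-compatible embedding. In the image, however, every $\Gamma_{\alpha}$-twist of $T_{g}T_{g}^{*}-T_{v}$ coincides. I would need to handle this case carefully, and I expect to settle it via the identity $T_{g}T_{f}=T_{f}$ together with relation (3), by showing that $T_{f}T_{f}^{*}$ absorbs the other summands.
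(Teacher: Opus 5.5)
\textbf{There is a genuine gap: your first step fails in general, and neither fallback repairs it.}

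The identity $T_gT_h=T_h$ for every $h\in v\Lambda^{\omega^{\alpha+1}}$ is false as soon as the bijection $h\mapsto gh$ is not the identity. Suppose $gh\neq h$ for some $h$. Since $d(gh)=d(h)$, \prettyref{prop:relation-3} gives $gh\Lambda\cap h\Lambda=\emptyset$. Hence
\[
T_h^*T_gT_h=T_h^*T_{gh}=0\neq T_{s(h)}=T_h^*T_vT_h ,
\]
so $T_g\neq T_v$ in $\mathcal{O}(\Lambda)$, and $T_g-T_v$ is not in $\ker\rho_{\alpha+1}$ at all.

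The alternative candidate $T_gT_g^*-T_v$ does not work either. It is $\Gamma_\alpha$-invariant, so the gauge argument cannot show it is nonzero. It can in fact already vanish in $\mathcal{O}(\Lambda_{\alpha+1})$: in the example preceding the proposition, with $ef$ playing the role of $g$, one has $S_{ef}S_{ef}^*=S_eS_wS_e^*=S_v$ in $\mathcal{O}(\Lambda_1)$.

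Unitarity of $T_g$ in the corner $T_v\mathcal{O}(\Lambda)T_v$ does not by itself distinguish $\mathcal{O}(\Lambda)$ from $\mathcal{O}(\Lambda_{\alpha+1})$. Your closing idea that $T_fT_f^*$ ``absorbs'' the other summands has no mechanism behind it: $T_fT_f^*$ is just one summand of the decomposition of $T_v$.

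\textbf{The missing idea is one step away from what you already have.} The map $h\mapsto gh$ is a permutation of the finite set $v\Lambda^{\omega^{\alpha+1}}$, so some power of it is the identity. That is, there is $n\geq 1$ with $g^nh=h$ for every $h$. The paper obtains $n$ as a common multiple of the individual periods $n_h$, each coming from finiteness of $\{g^kh:k\in\mathbb{N}\}$ and left cancellation. Relation (4) then gives
\[
T_g^n=T_{g^n}T_v=\sum_h T_{g^nh}T_h^*=\sum_h T_hT_h^*=T_v \quad\text{in } \mathcal{O}(\Lambda),
\]
so the correct kernel element is $S_g^n-S_v$.

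Your second step carries over essentially verbatim. We have $\Gamma_{\alpha,z}(S_g^n)=z^{nm}S_g^n$ while $\Gamma_{\alpha,z}(S_v)=S_v$. So $S_g^n=S_v$ would force $S_{g^n}=0$ after choosing $z$ with $z^{nm}\neq 1$, contradicting that generators are nonzero. With this fix, your argument becomes exactly the paper's proof.
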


\begin{proof}
Let $f\in\Lambda^{\omega^{\alpha+1}}$ and $g\in\Lambda_{\alpha+1}\backslash\Lambda_{\alpha}$
such that $r\left(f\right)$ is $\alpha+1$-regular and $f=gf$. Then
in particular, $g$ is a cycle based at an $\alpha+1$-regular vertex
$v=r\left(f\right)$. Since $v$ is $\alpha+1$-row-finite, for each
$h\in v\Lambda^{\omega^{\alpha+1}}$, the set $\left\{ h,gh,g^{2}h,\dots\right\} $
is finite. In particular, there exist distinct $m_{h},k_{h}\in\mathbb{N}$
with $m_{h}>k_{h}$ and $g^{k_{h}}h=g^{m_{h}}h$. Cancelling on the
left, we see $g^{n_{h}}h=h$ for $n_{h}=m_{h}-k_{h}$. Because $v\Lambda^{\omega^{\alpha+1}}$
is finite, we may choose a common multiple $n$ of $\left\{ n_{h}:h\in\Lambda^{\omega^{\alpha+1}}\right\} $.
Then for each $h\in\Lambda^{\omega^{\alpha+1}}$, we have $g^{n}h=h$.
By relation (4), this implies
\[
T_{g}^{n}=T_{g}^{n}T_{v}=T_{g}^{n}\sum_{h\in v\Lambda^{\omega^{\alpha}}}T_{h}T_{h}^{*}=\sum_{h\in v\Lambda^{\omega^{\alpha}}}T_{h}T_{h}^{*}=T_{v}
\]

Since $g\in\Lambda_{\alpha+1}\backslash\Lambda_{\alpha}$, there is
some $m\in\mathbb{N}$ such that $m>0$ and $d\left(g\right)\in\left[\omega^{\alpha}\cdot m,\omega^{\alpha}\cdot\left(m+1\right)\right)$
by the Cantor normal form. Let $\left\{ S_{e}:e\in\Lambda_{\alpha+1}\right\} $
be the generators in $\mathcal{O}\left(\Lambda_{\alpha+1}\right)$.
By \prettyref{lem:alg-auts}, there are automorphisms $\Gamma_{\alpha,z}\in\mathrm{Aut}\left(\mathcal{O}\left(\Lambda_{\alpha+1}\right)\right)$
for each $z\in\mathbb{T}$ such that $\Gamma_{\alpha,z}\left(S_{g}\right)=z^{m}S_{g}$
and $\Gamma_{\alpha,z}\left(S_{v}\right)=S_{v}$. We note that $S_{g}^{n}\not=S_{v}$
in $\mathcal{O}\left(\Lambda_{\alpha+1}\right)$, otherwise $\Gamma_{\alpha,z}\left(S_{g}^{n}\right)=z^{nm}S_{g}=\Gamma_{\alpha,z}\left(S_{v}\right)=S_{v}$
for all $z\in\mathbb{T}$. Selecting a $2nm$-th root of unity, this
would imply $S_{g}=-S_{g}$, or $S_{g}=0$, which is not the case.
Hence $S_{g}^{n}-S_{v}\in\ker\rho_{\alpha+1}\backslash\left\{ 0\right\} $,
and $\rho_{\alpha+1}$ is not injective.
\end{proof}
For the rest of this section, we focus on developing a characterization
of the compact operators $\mathcal{K}\left(X_{\alpha}\right)$ in
the image of $\varphi_{\alpha}$ that we will need later. We wish
to construct a representation of $\sigma$ of $\mathcal{O}\left(\Lambda_{\alpha}\right)$
on a Hilbert space $H$ so that there are vectors $\xi_{e}$ for $e\in\Lambda^{\omega^{\alpha}}$
and $\sigma\left(T_{e}\right)\xi_{f}=\xi_{ef}$ if $f\in s\left(e\right)\Lambda^{\omega^{\alpha}}$.
Since this resembles the fact that $\varphi_{\alpha}\left(T_{e}\right)\delta_{f}=\delta_{ef}$
when $f\in s\left(e\right)\Lambda^{\omega^{\alpha}}$, we might expect
that $\varphi_{\alpha}\left(a\right)\in\mathcal{K}\left(X_{\alpha}\right)$
iff $\sigma\left(a\right)\in K\left(H\right)$. Below we make all
of this precise and show this is the case.
\begin{prop}
\label{prop:left-action-factors}For each $v\in\Lambda_{0}$, let
$H_{\alpha,v}=\ell^{2}\left(\Lambda^{\omega^{\alpha}}v\right)$. There
is a faithful representation $\kappa_{\alpha}:\bigoplus_{v\in\Lambda_{0}}^{\ell^{\infty}}B\left(H_{\alpha,v}\right)\rightarrow\mathcal{L}\left(X_{\alpha}\right)$
and a representation $\sigma_{\alpha}:\mathcal{O}\left(\Lambda_{\alpha}\right)\rightarrow\bigoplus_{v\in\Lambda_{0}}^{\ell^{\infty}}B\left(H_{\alpha,v}\right)$
such that
\begin{enumerate}
\item For every $f,g\in\Lambda^{\omega^{\alpha}}v$,
\[
\kappa_{\alpha}\left(\theta_{\xi_{f},\xi_{g}}\right)=\theta_{\delta_{f},\delta_{g}}
\]
where $\xi_{f},\xi_{g}$ are the canonical orthonormal basis vectors
of $H_{\alpha,v}$ and $\theta_{\xi_{f},\xi_{g}}$ is the rank-one
operator mapping $\xi_{g}$ to $\xi_{f}$. In particular, $\kappa_{\alpha}\left(K\left(H_{\alpha,v}\right)\right)\subseteq\mathcal{K}\left(X_{\alpha}\right)$.
\item The diagram commutes:
\[\begin{tikzcd}
	{\mathcal{O}\left(\Lambda_\alpha\right)} && {\bigoplus_{v\in \Lambda_0}^{\ell^\infty}B\left(H_{\alpha,v}\right)} \\
	\\
	&& {\mathcal{L}\left(X_\alpha\right)}
	\arrow["{\sigma_\alpha}", from=1-1, to=1-3]
	\arrow["{\varphi_\alpha}"', from=1-1, to=3-3]
	\arrow["{\kappa_\alpha}", from=1-3, to=3-3]
\end{tikzcd}\]
\end{enumerate}
\end{prop}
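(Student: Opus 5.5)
The plan is to realize $X_{\alpha}$ as a direct sum of exterior tensor products of Hilbert spaces with corner modules of $\mathcal{O}\left(\Lambda_{\alpha}\right)$; then $\kappa_{\alpha}$ is simply amplification by the identity. For $v\in\Lambda_{0}$, let $E_{v}=T_{v}\mathcal{O}\left(\Lambda_{\alpha}\right)$, a right Hilbert $\mathcal{O}\left(\Lambda_{\alpha}\right)$-module with inner product $\left\langle a,b\right\rangle =a^{*}b$. The condition $T_{s\left(e\right)}x\left(e\right)=x\left(e\right)$ in the definition of $X_{\alpha}^{\circ}$ says exactly that $x\left(e\right)\in E_{s\left(e\right)}$.

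The first step is to define
\[
U:\bigoplus_{v\in\Lambda_{0}}H_{\alpha,v}\otimes E_{v}\rightarrow X_{\alpha},\qquad\xi_{e}\otimes a\mapsto\delta_{e}\cdot a ,
\]
where the tensor products are exterior tensor products of Hilbert modules over $\mathbb{C}$ and $\mathcal{O}\left(\Lambda_{\alpha}\right)$. One checks on elementary tensors that $U$ preserves inner products:
\[
\left\langle \delta_{e}a,\delta_{f}b\right\rangle =\delta_{e,f}\,a^{*}T_{s\left(e\right)}b=\left\langle \xi_{e},\xi_{f}\right\rangle a^{*}b .
\]
Its image contains $X_{\alpha}^{\circ}$, since $x=\sum_{e}\delta_{e}\cdot x\left(e\right)$ for $x\in X_{\alpha}^{\circ}$. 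Hence $U$ is a unitary of Hilbert modules.

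The second step defines $\kappa_{\alpha}\left(\left(b_{v}\right)_{v}\right)=U\left(\bigoplus_{v}b_{v}\otimes1_{E_{v}}\right)U^{*}$. Each $b\otimes1$ is adjointable on $H\otimes E$ with $\left\Vert b\otimes1\right\Vert =\left\Vert b\right\Vert $ whenever $E\neq0$. Here $E_{v}\neq0$ because $T_{v}\neq0$, as noted in the paper, since generators are never zero. The direct sum of a uniformly bounded family is adjointable with norm $\sup_{v}\left\Vert b_{v}\right\Vert $, so $\kappa_{\alpha}$ is an isometric, hence faithful, $*$-homomorphism on the $\ell^{\infty}$-sum. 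For property (1), take $f,g,h\in\Lambda^{\omega^{\alpha}}v$. Then
\[
\theta_{\delta_{f},\delta_{g}}\left(\delta_{h}a\right)=\delta_{f}\left\langle \delta_{g},\delta_{h}\right\rangle a=\delta_{g,h}\delta_{f}T_{v}a=\delta_{g,h}\delta_{f}a,
\]
and this agrees with $U\left(\theta_{\xi_{f},\xi_{g}}\otimes1\right)U^{*}\left(\delta_{h}a\right)$. Both operators vanish on the summands indexed by $w\neq v$. Since $K\left(H_{\alpha,v}\right)$ is the closed span of such rank-one operators, we get $\kappa_{\alpha}\left(K\left(H_{\alpha,v}\right)\right)\subseteq\mathcal{K}\left(X_{\alpha}\right)$.

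The third step constructs $\sigma_{\alpha}$ on $\ell^{2}\left(\Lambda^{\omega^{\alpha}}\right)=\bigoplus_{v}H_{\alpha,v}$ by setting $\sigma_{\alpha}\left(T_{g}\right)\xi_{f}=\xi_{gf}$ if $s\left(g\right)=r\left(f\right)$ and $0$ otherwise, for $g\in\Lambda_{\alpha}$. Because $s\left(gf\right)=s\left(f\right)$, these operators preserve each $H_{\alpha,v}$, so they land in the $\ell^{\infty}$-sum. The adjoint formula and the verification of relations (1)–(4) of \prettyref{thm:gen-relations} are word-for-word the computation in the proof of \prettyref{prop:left-action}, with $\delta_{e}$ replaced by $\xi_{e}$. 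The only nontrivial case is relation (4), which uses that every $h\in\Lambda^{\omega^{\alpha}}$ factors through $h\left(\omega^{\beta}\right)$. Universality then gives $\sigma_{\alpha}$.

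Finally, for commutativity of the diagram, $\kappa_{\alpha}\left(\sigma_{\alpha}\left(T_{g}\right)\right)\left(\delta_{f}a\right)=\delta_{gf}a=\varphi_{\alpha}\left(T_{g}\right)\left(\delta_{f}a\right)$, or both sides are $0$ when $s\left(g\right)\neq r\left(f\right)$. So the two homomorphisms agree on the generators $T_{g}$ and on a dense submodule, hence everywhere. The main obstacle I expect is the first step: checking carefully that $U$ is well defined and surjective onto the completion, and that $b\otimes1$ is a bounded adjointable operator. I would handle this by citing the standard exterior tensor product facts from \cite{HILMOD} rather than trying to bound $\kappa_{\alpha}$ directly on $X_{\alpha}^{\circ}$ by a formula like $\left(\kappa_{\alpha}\left(b\right)x\right)\left(e\right)=\sum_{f}\left\langle \xi_{e},b\xi_{f}\right\rangle x\left(f\right)$.
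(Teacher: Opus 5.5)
Your proof is correct, but it constructs $\kappa_{\alpha}$ by a different route than the paper. The paper never decomposes $X_{\alpha}$. It obtains $\kappa_{\alpha,v}$ on $K(H_{\alpha,v})$ from the universal property of the matrix units $\theta_{\delta_f,\delta_g}$, gets injectivity from simplicity, and sums over $v$. It then shows non-degeneracy using the approximate unit $s_F=\sum_{f\in F}\theta_{\delta_f,\delta_f}$ of $\mathcal{K}(X_\alpha)$. Finally it extends to $M\left(\bigoplus_v K(H_{\alpha,v})\right)=\bigoplus^{\ell^\infty}_v B(H_{\alpha,v})\to M(\mathcal{K}(X_\alpha))=\mathcal{L}(X_\alpha)$, and injectivity of the extension comes from essentiality of the compacts. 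Commutativity of the diagram is then checked through the trick $\delta_f=\theta_{\delta_f,\delta_f}\delta_f$.

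Your unitary $X_\alpha\cong\bigoplus_v H_{\alpha,v}\otimes T_v\mathcal{O}(\Lambda_\alpha)$ replaces the non-degeneracy and multiplier step with standard exterior-tensor-product facts. The lower bound $\|b\otimes 1\|\geq\|b\|$ is a one-line computation on $\xi\otimes e$. In exchange you get an explicit formula for $\kappa_\alpha$ on non-compact elements, so (1) and (2) become direct computations on $\delta_h a$. Your framework buys two further things:

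\begin{enumerate}
\item Restricting $U$ to $H_{\alpha,v}\otimes\mathbb{C}T_v$ is exactly $j_{\alpha,v}^{-1}$. So \prettyref{lem:left-action-isometry}, including the identity $\langle\eta,\eta\rangle=\|\eta\|^2T_v$ and the intertwining relation, falls out for free.
\item Because $\kappa_\alpha$ is isometric, its range is a $C^*$-subalgebra of $\mathcal{L}(X_\alpha)$. Your last computation already shows $\varphi_\alpha(T_g)=\kappa_\alpha(W_g)$ for the partial isometry $W_g:\xi_f\mapsto\xi_{gf}$, which is bounded by left cancellation. Hence you could simply set $\sigma_\alpha=\kappa_\alpha^{-1}\circ\varphi_\alpha$ and skip re-verifying the Cuntz--Krieger relations.
\end{enumerate}

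The paper's route, by contrast, needs only the universal property of $K(H)$ and the standard multiplier extension. Both arguments rely on $T_v\neq 0$: you use it for $\|b_v\otimes 1\|=\|b_v\|$, and the paper uses it implicitly to know $\kappa_{\alpha,v}\neq 0$ before invoking simplicity.
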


\begin{proof}
First we construct $\kappa_{\alpha,v}:K\left(H_{\alpha,v}\right)\rightarrow\mathcal{L}\left(X_{\alpha}\right)$,
and then we extend using multipliers to build $\kappa_{\alpha}$.
Recall that $K\left(H_{\alpha,v}\right)$ is universal for (rank one)
operators $\left\{ \theta_{\xi_{f},\xi_{g}}:f,g\in\Lambda^{\omega^{\alpha}}v\right\} $
satisfying the relations
\begin{align*}
\theta_{\xi_{f},\xi_{g}}\theta_{\xi_{h},\xi_{k}} & =\begin{cases}
\theta_{\xi_{f},\xi_{k}} & g=h\\
0 & g\not=h
\end{cases}\\
\theta_{\xi_{f},\xi_{g}}^{*} & =\theta_{\xi_{g},\xi_{f}}
\end{align*}
Therefore it suffices to identify such a collection of operators in
$\mathcal{L}\left(X_{\alpha}\right)$. Notice that the operators $\theta_{\delta_{f},\delta_{g}}$
for $f,g\in\Lambda^{\omega^{\alpha}}v$ satisfy the necessary relations,
as we may calculate: 
\[
\theta_{\delta_{f},\delta_{g}}\theta_{\delta_{h},\delta_{k}}x=\delta_{f}\left\langle \delta_{g},\delta_{h}\left\langle \delta_{k},x\right\rangle \right\rangle =\delta_{f}\left\langle \delta_{g},\delta_{h}\right\rangle \left\langle \delta_{k},x\right\rangle 
\]
If $g=h$, then $\left\langle \delta_{g},\delta_{h}\right\rangle =T_{v}\in\mathcal{O}\left(\Lambda_{\alpha}\right)$,
and since $\delta_{f}T_{v}=\delta_{f}$, we conclude $\theta_{\delta_{f},\delta_{g}}\theta_{\delta_{h},\delta_{k}}=\theta_{\delta_{f},\delta_{k}}$.
Otherwise $\left\langle \delta_{g},\delta_{h}\right\rangle =0$, and
$\theta_{\delta_{f},\delta_{g}}\theta_{\delta_{h},\delta_{k}}=0$.
Thus we have constructed $\kappa_{\alpha,v}:K\left(H_{\alpha,v}\right)\rightarrow\mathcal{L}\left(X_{\alpha}\right)$,
which is automatically injective due to simplicity of $K\left(H_{\alpha,v}\right)$,
satisfying $\kappa_{\alpha,v}\left(\theta_{\xi_{f},\xi_{g}}\right)=\theta_{\delta_{f},\delta_{g}}$. 

If $\kappa_{\alpha,v}$ were non-degenerate, then we could immediately
extend $\kappa_{\alpha,v}$ to a faithful representation of $B\left(H_{\alpha,v}\right)$;
however, $\kappa_{\alpha,v}$ is usually degenerate. Instead, note
that for each $f,g,h,k\in\Lambda^{\omega^{\alpha}}$ with $s\left(g\right)\not=s\left(h\right)$,
$\theta_{\delta_{f},\delta_{g}}\theta_{\delta_{h},\delta_{k}}=0$
since $\left\langle \delta_{g},\delta_{h}\right\rangle =0$. Therefore,
we have an injective $*$-homomorphism
\[
\kappa_{\alpha}:\bigoplus_{w\in\Lambda_{0}}K\left(H_{\alpha,w}\right)\rightarrow\mathcal{L}\left(X_{\alpha}\right)
\]
given by $\kappa_{\alpha,v}$ on each summand. We claim $\kappa_{\alpha}$
is non-degenerate. Note that for each $f,g\in\Lambda^{\omega^{\alpha}}v$
and $a,b\in\mathcal{O}\left(\Lambda_{\alpha}\right)$,
\[
\theta_{\delta_{f},\delta_{f}}\theta_{\delta_{f}a,\delta_{g}b}=\theta_{\delta_{f}a,\delta_{g}b}
\]
Since $X_{\alpha}^{\circ}$ is dense in $X_{\alpha}$, the net of
elements $s_{F}$ for finite $F\subseteq\Lambda^{\omega^{\alpha}}$
defined by
\[
s_{F}=\sum_{f\in F}\theta_{\delta_{f},\delta_{f}}
\]
is an approximate identity for $\mathcal{K}\left(X_{\alpha}\right)$
contained in the image of $\kappa_{\alpha}$, proving non-degeneracy.
As $\kappa_{\alpha}$ is injective, the extension to multipliers
\[
\kappa_{\alpha}:\bigoplus_{w\in\Lambda_{0}}^{\ell^{\infty}}B\left(H_{\alpha,w}\right)\rightarrow\mathcal{L}\left(X_{\alpha}\right)
\]
 is also injective.

Now we construct representations $\sigma_{\alpha,v}:\mathcal{O}\left(\Lambda_{\alpha}\right)\rightarrow B\left(H_{\alpha,v}\right)$.
For each $e\in\Lambda_{\alpha}$ and $f\in\Lambda^{\omega^{\alpha}}$,
define
\[
\sigma_{\alpha,v}\left(T_{e}\right)\xi_{f}=\begin{cases}
\xi_{ef} & s\left(e\right)=r\left(f\right)\\
0 & s\left(e\right)\not=r\left(f\right)
\end{cases}
\]
Then it suffices to verify the relations in \prettyref{thm:gen-relations}.
The calculations are virtually identical to \prettyref{prop:left-action},
so we omit them this time. We then define $\sigma_{\alpha}=\bigoplus_{v\in\Lambda_{0}}^{\ell^{\infty}}\sigma_{\alpha,v}$.

All that is left is to verify the equality in statement (2). Since
$\kappa_{\alpha}\circ\sigma_{\alpha}$ and $\varphi_{\alpha}$ are
$*$-homomorphisms, we need only to check equality for $a=T_{e}$
with $e\in\Lambda_{\alpha}$ to show $\left(\kappa_{\alpha}\circ\sigma_{\alpha}\right)\left(a\right)=\varphi_{\alpha}\left(a\right)$.
Let $f\in\Lambda^{\omega^{\alpha}}$ and $e\in\Lambda_{\alpha}$ be
fixed. Then if $s\left(e\right)=r\left(f\right)$,
\begin{align*}
\left(\kappa_{\alpha}\circ\sigma_{\alpha}\right)\left(T_{e}\right)\delta_{f} & =\left(\kappa_{\alpha}\circ\sigma_{\alpha}\right)\left(T_{e}\right)\theta_{\delta_{f},\delta_{f}}\delta_{f}\\
 & =\kappa_{\alpha}\left(\sigma_{\alpha}\left(T_{e}\right)\theta_{\xi_{f},\xi_{f}}\right)\delta_{f}\\
 & =\kappa_{\alpha}\left(\theta_{\xi_{ef},\xi_{f}}\right)\delta_{f}\\
 & =\delta_{ef}\\
 & =\varphi_{\alpha}\left(T_{e}\right)\delta_{f}
\end{align*}
If $s\left(e\right)\not=r\left(f\right)$, then similarly $\left(\kappa_{\alpha}\circ\sigma_{\alpha}\right)\left(T_{e}\right)\delta_{f}=0=\varphi_{\alpha}\left(T_{e}\right)\delta_{f}$.
\end{proof}
\begin{lem}
\label{lem:left-action-isometry}For each $v\in\Lambda_{0}$, define
$C_{v}=\overline{\mathrm{span}}_{\mathbb{C}}\left\{ \delta_{f}:f\in\Lambda^{\omega^{\alpha}}v\right\} $.
For each $\eta\in C_{v}$, we have
\begin{equation}
\left\langle \eta,\eta\right\rangle =\left\Vert \eta\right\Vert ^{2}T_{v}\label{eq:cv_norm}
\end{equation}
Moreover, there exists an isometry $j_{\alpha,v}:C_{v}\rightarrow H_{\alpha,v}$
defined by $j_{\alpha,v}\left(\delta_{f}\right)=\xi_{f}$ intertwining
$\varphi_{\alpha}$ and the representation $\sigma_{\alpha}$ defined
in \prettyref{prop:left-action-factors}:
\begin{equation}
j_{\alpha,v}\left(\varphi_{\alpha}\left(a\right)\eta\right)=\sigma_{\alpha}\left(a\right)j_{\alpha,v}\left(\eta\right)\label{eq:j_intertwines}
\end{equation}
\end{lem}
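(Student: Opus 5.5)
We first compute the inner product on the algebraic span and then extend by continuity. For a finite linear combination $\eta=\sum_{f\in F}c_{f}\delta_{f}$ with $F\subseteq\Lambda^{\omega^{\alpha}}v$ finite, the definition of the inner product in \prettyref{def:correspondences} gives $\left\langle \delta_{f},\delta_{g}\right\rangle =0$ for $f\neq g$ and $\left\langle \delta_{f},\delta_{f}\right\rangle =T_{s(f)}^{*}T_{s(f)}=T_{v}$. Hence
\[
\left\langle \eta,\eta\right\rangle =\sum_{f\in F}\left|c_{f}\right|^{2}T_{v}.
\]
Since $T_{v}$ is a nonzero projection (as in the example, a generator is never zero), we get $\left\Vert \eta\right\Vert ^{2}=\left\Vert \left\langle \eta,\eta\right\rangle \right\Vert =\sum\left|c_{f}\right|^{2}$. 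Thus $\left\langle \eta,\eta\right\rangle =\left\Vert \eta\right\Vert ^{2}T_{v}$ on the dense span. Both sides of \prettyref{eq:cv_norm} are continuous in $\eta$, so the identity passes to all of $C_{v}$.

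The same computation shows that $\delta_{f}\mapsto\xi_{f}$ preserves norms on finite linear combinations: $\left\Vert \sum c_{f}\xi_{f}\right\Vert ^{2}=\sum\left|c_{f}\right|^{2}$. It is therefore a well-defined linear isometry on the span and extends to an isometry $j_{\alpha,v}:C_{v}\rightarrow H_{\alpha,v}$. Its range is all of $H_{\alpha,v}$, although only isometry is needed.

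For the intertwining relation \prettyref{eq:j_intertwines}, the main obstacle is that $C_{v}$ is only a complex subspace, so we must first check that $\varphi_{\alpha}(a)$ maps $C_{v}$ into itself. By the formulas in the proof of \prettyref{prop:left-action}, $\varphi_{\alpha}(T_{g})\delta_{f}$ is either $\delta_{gf}$ or $0$, and $\varphi_{\alpha}(T_{g}^{*})\delta_{f}$ is either $\delta_{g^{-1}f}$ or $0$. In both cases the new path still lies in $\Lambda^{\omega^{\alpha}}v$: the source is unchanged and the length is still $\omega^{\alpha}$, because $d(g)<\omega^{\alpha}$. By \prettyref{prop:ordgraph-closed-span}, $\mathcal{O}(\Lambda_{\alpha})$ is the closed span of the $T_{g}T_{h}^{*}$, and $\varphi_{\alpha}$ is contractive. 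So $C_{v}$ is invariant under $\varphi_{\alpha}(a)$ for every $a$.

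Next we check the intertwining on generators. From the definition of $\sigma_{\alpha,v}$ in \prettyref{prop:left-action-factors},
\[
j_{\alpha,v}\left(\varphi_{\alpha}(T_{g})\delta_{f}\right)=\sigma_{\alpha}(T_{g})\xi_{f}.
\]
Both sides equal $\xi_{gf}$ when $s(g)=r(f)$ and $0$ otherwise. For the adjoint $T_{g}^{*}$ we argue as follows. Since $j_{\alpha,v}$ is an isometry, it preserves the scalar inner product defined by polarization, and \prettyref{eq:cv_norm} identifies this with the coefficient of $T_{v}$ in the Hilbert module inner product. For basis vectors $\delta_{f},\delta_{h}$ we then get
\[
\left(j\varphi_{\alpha}(T_{g}^{*})\delta_{f}\mid\xi_{h}\right)=\left(\xi_{f}\mid\sigma(T_{g})\xi_{h}\right)=\left(\sigma(T_{g})^{*}\xi_{f}\mid\xi_{h}\right).
\]
This gives agreement on $T_{g}^{*}$. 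Alternatively, one can verify directly that $\sigma(T_{g}^{*})\xi_{f}=\xi_{g^{-1}f}$ or $0$.

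Finally, the set of $a$ satisfying \prettyref{eq:j_intertwines} for all $\eta\in C_{v}$ is closed under products, sums, adjoints and norm limits, and it contains the generators. It is therefore all of $\mathcal{O}(\Lambda_{\alpha})$. Here $\sigma_{\alpha}(a)$ acts on $H_{\alpha,v}$ through its $v$-component.
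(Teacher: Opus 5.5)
Your proposal is correct and follows essentially the paper's route: compute $\langle\eta,\eta\rangle$ on the algebraic span, deduce the isometry $\delta_f\mapsto\xi_f$, check the intertwining on spanning elements, and extend by linearity and continuity. The only differences are cosmetic. You check $T_g$ and $T_g^*$ separately and then close up under products and limits, whereas the paper checks $T_gT_h^*$ directly using the closed-span description of $\mathcal{O}(\Lambda_\alpha)$. You also make explicit two facts the paper uses tacitly: that $T_v\neq 0$, and that $C_v$ is invariant under $\varphi_\alpha(a)$, which is needed for $j_{\alpha,v}(\varphi_\alpha(a)\eta)$ to be defined.
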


\begin{proof}
We do our calculations in $\mathrm{span}_{\mathbb{C}}\left\{ \delta_{f}:f\in\Lambda^{\omega^{\alpha}}v\right\} $,
and then the identities follow by continuity. Let $\eta$ in the algebraic
span be given. Then for some $\lambda_{f}\in\mathbb{C}$, we have
\begin{align*}
\left\langle \eta,\eta\right\rangle  & =\left\langle \sum_{f\in\Lambda^{\omega^{\alpha}}v}\lambda_{f}\delta_{f},\sum_{f\in\Lambda^{\omega^{\alpha}}v}\lambda_{f}\delta_{f}\right\rangle \\
 & =\sum_{f,g\in\Lambda^{\omega^{\alpha}}v}\overline{\lambda_{f}}\lambda_{g}\left\langle \delta_{f},\delta_{g}\right\rangle \\
 & =\sum_{f\in\Lambda^{\omega^{\alpha}}v}\left|\lambda_{f}\right|^{2}\left\langle \delta_{f},\delta_{f}\right\rangle \\
 & =\sum_{f\in\Lambda^{\omega^{\alpha}}v}\left|\lambda_{f}\right|^{2}T_{v}\\
 & =\left\Vert \eta\right\Vert ^{2}T_{v}
\end{align*}
It follows that
\[
\left\Vert j_{\alpha,v}\left(\eta\right)\right\Vert ^{2}=\sum_{f\in\Lambda^{\omega^{\alpha}}}\left|\lambda_{f}\right|^{2}=\left\Vert \eta\right\Vert ^{2}
\]
and hence $j_{\alpha,v}$ is an isometry. By \prettyref{prop:ordgraph-closed-span},
it suffices to verify \prettyref{eq:j_intertwines} in the case $a=T_{g}T_{h}^{*}$
and $\eta=\delta_{f}$ for some $g,h\in\Lambda_{\alpha}$ and $f\in\Lambda^{\omega^{\alpha}}v$.
If $s\left(g\right)=s\left(h\right)$ and $f\in h\Lambda$, then
\begin{align*}
j_{\alpha,v}\left(\varphi_{\alpha}\left(T_{g}T_{h}^{*}\right)\delta_{f}\right) & =j_{\alpha,v}\left(\delta_{gh^{-1}f}\right)\\
 & =\xi_{gh^{-1}f}\\
 & =\sigma_{\alpha}\left(T_{g}T_{h}^{*}\right)j_{\alpha,v}\left(\delta_{f}\right)
\end{align*}
\end{proof}
We will frequently employ the following results when we wish to use
or show $\varphi_{\alpha}\left(a\right)\in\mathcal{K}\left(X_{\alpha}\right)$.
\begin{cor}[Criteria for compactness]
\label{cor:compactness}For each $v\in\Lambda_{0}$, let $Q_{v}$
be the projection of $\bigoplus_{w\in\Lambda_{0}}^{\ell^{\infty}}B\left(H_{\alpha,w}\right)$
onto the $B\left(H_{\alpha,v}\right)$ summand and $\sigma_{\alpha}$
be the representation defined in \prettyref{prop:left-action-factors}.
Define a projection $P_{v}\in\mathcal{L}\left(X_{\alpha}\right)$
by
\[
P_{v}\delta_{f}=\begin{cases}
\delta_{f} & s\left(f\right)=v\\
0 & s\left(f\right)\not=v
\end{cases}
\]
Then for each $a\in\mathcal{O}\left(\Lambda_{\alpha}\right)$ we have
the following:
\begin{enumerate}
\item For all $v\in\Lambda_{0}$, $\varphi_{\alpha}\left(a\right)P_{v}\in\mathcal{K}\left(X_{\alpha}\right)$
if and only if $\sigma_{\alpha}\left(a\right)Q_{v}\in K\left(H_{\alpha,v}\right)$.
\item $\varphi_{\alpha}\left(a\right)\in\mathcal{K}\left(X_{\alpha}\right)$
if and only if for each $v\in\Lambda_{0}$, $\varphi_{\alpha}\left(a\right)P_{v}\in\mathcal{K}\left(X_{\alpha}\right)$
and the map $v\mapsto\left\Vert \varphi_{\alpha}\left(a\right)P_{v}\right\Vert $
is in $c_{0}\left(\Lambda_{0}\right)$.
\item If $T=\varphi_{\alpha}\left(a\right)$ or $T=\varphi_{\alpha}\left(a\right)P_{v}$,
$T\in\mathcal{K}\left(X_{\alpha}\right)$ if and only if for every
$\varepsilon>0$ there exists finite $F\subseteq\Lambda^{\omega^{\alpha}}$such
that if $\eta\in\overline{\mathrm{span}}_{\mathbb{C}}\left\{ \delta_{f}:f\in\Lambda^{\omega^{\alpha}}\backslash F\right\} $
and $\left\Vert \eta\right\Vert \leq1$, $\left\Vert \varphi_{\alpha}\left(a\right)\eta\right\Vert <\varepsilon$.
\end{enumerate}
\end{cor}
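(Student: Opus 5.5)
The whole argument runs through the faithful map $\kappa_{\alpha}$ of \prettyref{prop:left-action-factors}. That map carries $\bigoplus_{v}^{\ell^{\infty}}B\left(H_{\alpha,v}\right)$ onto a subalgebra of $\mathcal{L}\left(X_{\alpha}\right)$ containing $\mathcal{K}\left(X_{\alpha}\right)$. First I would identify $P_{v}=\kappa_{\alpha}\left(Q_{v}\right)$. Both sides are projections, and $\kappa_{\alpha}\left(Q_{v}\right)$ is the strict limit of $\kappa_{\alpha}\left(\sum_{f\in F}\theta_{\xi_{f},\xi_{f}}\right)=\sum_{f\in F}\theta_{\delta_{f},\delta_{f}}$ over finite $F\subseteq\Lambda^{\omega^{\alpha}}v$. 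These operators agree with $P_{v}$ on each $\delta_{g}$, so $P_{v}=\kappa_{\alpha}\left(Q_{v}\right)$. By the commuting diagram, $\varphi_{\alpha}\left(a\right)P_{v}=\kappa_{\alpha}\left(\sigma_{\alpha}\left(a\right)Q_{v}\right)$.

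The central step is to show that $\kappa_{\alpha}^{-1}\left(\mathcal{K}\left(X_{\alpha}\right)\right)=\bigoplus_{v}K\left(H_{\alpha,v}\right)$, the $c_{0}$-direct sum. The inclusion $\supseteq$ is part (1) of \prettyref{prop:left-action-factors}, together with closedness of $\mathcal{K}\left(X_{\alpha}\right)$ and faithfulness (hence isometry) of $\kappa_{\alpha}$.

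For $\subseteq$, suppose $\kappa_{\alpha}\left(b\right)\in\mathcal{K}\left(X_{\alpha}\right)$. Since $s_{F}=\kappa_{\alpha}\left(p_{F}\right)$, where $p_{F}=\sum_{f\in F}\theta_{\xi_{f},\xi_{f}}$, is an approximate identity for $\mathcal{K}\left(X_{\alpha}\right)$, we have $\kappa_{\alpha}\left(p_{F}b\right)\rightarrow\kappa_{\alpha}\left(b\right)$ in norm. Because $\kappa_{\alpha}$ is isometric, $p_{F}b\rightarrow b$ in norm. Each $p_{F}b$ lies in the finite direct sum of finite-rank operators, so $b$ lies in the closed ideal $\bigoplus_{v}K\left(H_{\alpha,v}\right)$.

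Part (1) then follows, since $\sigma_{\alpha}\left(a\right)Q_{v}$ lives in a single summand. For part (2), write $b=\sigma_{\alpha}\left(a\right)$. Then $b\in\bigoplus_{v}K\left(H_{\alpha,v}\right)$ exactly when each $bQ_{v}$ is compact and $v\mapsto\left\Vert bQ_{v}\right\Vert$ is $c_{0}$. Since $\kappa_{\alpha}$ is isometric, $\left\Vert bQ_{v}\right\Vert =\left\Vert \varphi_{\alpha}\left(a\right)P_{v}\right\Vert$, which gives (2).

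For part (3), I would apply the same characterization on the Hilbert space side. There the condition becomes $\left\Vert \left(1-p_{F}\right)\right.$-type tails being small, and I would transport it through the isometries $j_{\alpha,v}$ of \prettyref{lem:left-action-isometry}. By \prettyref{eq:j_intertwines}, for $\eta$ in a $C_{v}$ we get $\left\Vert \varphi_{\alpha}\left(a\right)\eta\right\Vert =\left\Vert \sigma_{\alpha}\left(a\right)j_{\alpha,v}\eta\right\Vert$.

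An operator $b$ in the $\ell^{\infty}$ sum lies in the $c_{0}$ sum of compacts iff for every $\varepsilon$ there is a finite $F$ with $\left\Vert b\left(1-p_{F}\right)\right\Vert <\varepsilon$. For the forward direction, approximate $b$ by finite-rank operators supported on finitely many basis vectors. For the converse, use $b=bp_{F}+b\left(1-p_{F}\right)$.

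The main obstacle is matching the vectors $\eta$. They lie in $\overline{\mathrm{span}}_{\mathbb{C}}$ of the $\delta_{f}$ with $f\notin F$ but may mix several sources $v$, so they are not in a single $C_{v}$. I would handle this as follows. Write $\eta=\sum_{v}\eta_{v}$ with $\eta_{v}=P_{v}\eta\in C_{v}$. By \prettyref{eq:cv_norm}, $\left\langle \eta,\eta\right\rangle =\sum_{v}\left\Vert \eta_{v}\right\Vert ^{2}T_{v}$, which is a sum against orthogonal projections. Hence $\left\Vert \eta\right\Vert =\sup_{v}\left\Vert \eta_{v}\right\Vert$. The elements $\varphi_{\alpha}\left(a\right)\eta_{v}$ have mutually orthogonal inner products indexed by $v$, so $\left\Vert \varphi_{\alpha}\left(a\right)\eta\right\Vert =\sup_{v}\left\Vert \sigma_{\alpha}\left(a\right)Q_{v}j_{\alpha,v}\eta_{v}\right\Vert$. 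This reduces the Hilbert module condition in (3) to the tail condition $\left\Vert \sigma_{\alpha}\left(a\right)\left(1-p_{F}\right)\right\Vert <\varepsilon$ on the Hilbert space side, and the case $T=\varphi_{\alpha}\left(a\right)P_{v}$ is the same argument restricted to a single $v$.
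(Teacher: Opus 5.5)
Your argument is correct, but it takes a different route from the paper. The paper never isolates the identity $\kappa_{\alpha}^{-1}(\mathcal{K}(X_{\alpha}))=\bigoplus_{v}K(H_{\alpha,v})$. Instead it works directly with finite-rank module operators $S=\sum_{k}\theta_{\delta_{f_{k}}a_{k},\delta_{g_{k}}b_{k}}$ approximating $\varphi_{\alpha}(a)P_{v}$ or $\varphi_{\alpha}(a)$. Such an $S$ kills every $\mathbb{C}$-span vector supported off $\{g_{k}\}$; transported through $j_{\alpha,v}$, this gives the converse in (1). It also satisfies $SP_{v}=0$ for $v\notin\{s(g_{k})\}$, which gives the forward direction of (2). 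Part (3) is then assembled from (1), (2) and the identity $\|\varphi_{\alpha}(a)\eta\|^{2}=\sup_{v}\|\varphi_{\alpha}(a)P_{v}\eta\|^{2}$ obtained from \prettyref{eq:cv_norm}.

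You replace all of these approximations by one structural fact, proved with the approximate unit $s_{F}=\kappa_{\alpha}(p_{F})$ and the isometry of the multiplier extension of $\kappa_{\alpha}$. After that, (1)--(3) become statements about block-diagonal operators on $\bigoplus_{v}H_{\alpha,v}$. The module side enters only through $P_{v}=\kappa_{\alpha}(Q_{v})$, which you justify by strict continuity (the paper uses it tacitly in (1)), and through the same sup-norm transport via $j_{\alpha,v}$ in (3). Your version makes the logic more transparent and avoids the $\mathcal{O}(\Lambda_{\alpha})$-coefficients of module rank-one operators. The paper's version is more hands-on and produces the finite set $F$ directly from an approximant.

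One inaccuracy, which your argument never uses: the image of $\kappa_{\alpha}$ does not contain $\mathcal{K}(X_{\alpha})$ in general. Every $\kappa_{\alpha}(b)$ maps $\delta_{g}$ into $C_{s(g)}$. By contrast, $\theta_{\delta_{g}c,\delta_{g}}\in\mathcal{K}(X_{\alpha})$ maps $\delta_{g}$ to $\delta_{g}c$, which lies outside $C_{s(g)}$ whenever $c\in T_{s(g)}\mathcal{O}(\Lambda_{\alpha})T_{s(g)}\setminus\mathbb{C}T_{s(g)}$. Your inclusion $\kappa_{\alpha}^{-1}(\mathcal{K}(X_{\alpha}))\subseteq\bigoplus_{v}K(H_{\alpha,v})$ only needs $s_{F}$ to be an approximate unit for $\mathcal{K}(X_{\alpha})$, so the conclusion stands.
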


\begin{rem}
Later it will be important that $\eta$ in part (3) belongs specifically
to the $\mathbb{C}$-span of the vectors $\left\{ \delta_{f}:f\in\Lambda^{\omega^{\alpha}}\backslash F\right\} $,
not the $\mathcal{O}\left(\Lambda_{\alpha}\right)$-span of such vectors.
\end{rem}

\begin{proof}
We begin by proving (1). Suppose $\sigma_{\alpha}\left(a\right)Q_{v}\in K\left(H_{\alpha,v}\right)$.
Then by \prettyref{prop:left-action-factors}, $\kappa_{\alpha}\left(K\left(H_{\alpha,v}\right)\right)\subseteq\mathcal{K}\left(X_{\alpha}\right)$,
and hence
\[
\varphi_{\alpha}\left(a\right)P_{v}=\kappa_{\alpha,v}\left(\sigma_{\alpha,v}\left(a\right)Q_{v}\right)\in\mathcal{K}\left(X_{\alpha}\right)
\]
For the other direction, suppose $\varphi_{\alpha}\left(a\right)P_{v}\in\mathcal{K}\left(X_{\alpha}\right)$
and let $\varepsilon>0$. Since $X_{\alpha}^{\circ}$ is dense in
$X_{\alpha}$, we may choose finite rank 
\[
S=\sum_{k=1}^{n}\theta_{\delta_{f_{k}}a_{k},\delta_{g_{k}}b_{k}}
\]
such that $\left\Vert \varphi_{\alpha}\left(a\right)P_{v}-S\right\Vert <\varepsilon$.
Define $F=\left\{ g_{k}:1\leq k\leq n,s\left(g_{k}\right)=v\right\} $
and note that for $\eta\in\overline{\mathrm{span}}_{\mathbb{C}}\left\{ \delta_{f}:f\in\Lambda^{\omega^{\alpha}}v\backslash F\right\} $,
$S\eta=0$. Thus if $\left\Vert \eta\right\Vert \leq1$, then
\[
\left\Vert \varphi_{\alpha}\left(a\right)\eta\right\Vert =\left\Vert \left(\varphi_{\alpha}\left(a\right)-S\right)\eta\right\Vert <\varepsilon
\]
Hence we may apply \prettyref{eq:j_intertwines} to see
\[
\left\Vert \sigma_{\alpha}\left(a\right)j_{\alpha,v}\left(\eta\right)\right\Vert <\varepsilon
\]
Therefore, we have proven that for all $\varepsilon>0$ there exists
finite $F\subseteq\Lambda^{\omega^{\alpha}}v$ such that if $\xi\in\overline{\mathrm{span}}\left\{ \xi_{f}:f\in\Lambda^{\omega^{\alpha}}v\backslash F\right\} $
with $\left\Vert \xi\right\Vert \leq1$, $\left\Vert \sigma_{\alpha,v}\left(a\right)\xi\right\Vert <\varepsilon$.
This is equivalent to $\sigma_{\alpha}\left(a\right)Q_{v}\in K\left(H_{\alpha,v}\right)$.

For (2), suppose first $\varphi_{\alpha}\left(a\right)\in\mathcal{K}\left(X_{\alpha}\right)$.
Then clearly $\varphi_{\alpha}\left(a\right)P_{v}\in\mathcal{K}\left(X_{\alpha}\right)$,
so all we must show is $v\mapsto\left\Vert \varphi_{\alpha}\left(a\right)P_{v}\right\Vert $
is a $c_{0}\left(\Lambda_{0}\right)$ function. Let $\varepsilon>0$
be given, and since $\varphi_{\alpha}\left(a\right)\in\mathcal{K}\left(X_{\alpha}\right)$
and $X_{\alpha}^{\circ}$ is dense in $X_{\alpha}$, select finite
rank
\[
S=\sum_{k=1}^{n}\theta_{\delta_{f_{k}a_{k}},\delta_{g_{k}b_{k}}}
\]
such that $\left\Vert \varphi_{\alpha}\left(a\right)-S\right\Vert <\varepsilon$.
Define finite $F=\left\{ s\left(g_{k}\right):1\leq k\leq n\right\} \subseteq\Lambda_{0}$,
and note that for all $1\leq k\leq n$ and $f\in\Lambda^{\omega^{\alpha}}$
with $s\left(f\right)\not\in F$, $f\not=g_{k}$, and hence $\left\langle \delta_{g_{k}},\delta_{f}\right\rangle =0$.
Therefore, if $v\in\Lambda_{0}\backslash F$, then $SP_{v}=0$, and
hence
\[
\left\Vert \varphi_{\alpha}\left(a\right)P_{v}\right\Vert =\left\Vert \left(\varphi_{\alpha}\left(a\right)-S\right)P_{v}\right\Vert <\varepsilon
\]
This proves one direction for statement (2). For the other direction,
suppose each $\varphi_{\alpha}\left(a\right)P_{v}$ is compact and
$v\mapsto\left\Vert \varphi_{\alpha}\left(a\right)P_{v}\right\Vert $
is $c_{0}\left(\Lambda_{0}\right)$. Then injectivity of $\kappa_{\alpha}$
from \prettyref{prop:left-action-factors} implies $\left\Vert \varphi_{\alpha}\left(a\right)P_{v}\right\Vert =\left\Vert \kappa_{\alpha}\left(\sigma_{\alpha}\left(a\right)Q_{v}\right)\right\Vert =\left\Vert \sigma_{\alpha}\left(a\right)Q_{v}\right\Vert $,
and hence $\sigma_{\alpha}\left(a\right)\in\bigoplus_{v\in\Lambda_{0}}K\left(H_{\alpha,v}\right)$.
Again applying \prettyref{prop:left-action-factors}, we have $\varphi_{\alpha}\left(a\right)=\left(\kappa_{\alpha}\circ\sigma_{\alpha}\right)\left(a\right)\in\mathcal{K}\left(X_{\alpha}\right)$.

To prove (3), first suppose $\varphi_{\alpha}\left(a\right)\in\mathcal{K}\left(X_{\alpha}\right)$
(the proof for the case $T=\varphi_{\alpha}\left(a\right)P_{v}$ is
exactly the same). Let $\varepsilon>0$, and note that by (2) there
exists finite $F\subseteq\Lambda_{0}$ such that for all $v\in\Lambda_{0}\backslash F$,
$\left\Vert \varphi_{\alpha}\left(a\right)P_{v}\right\Vert ^{2}<\varepsilon$.
By (1), each $\sigma_{\alpha}\left(a\right)Q_{v}$ is compact, so
by \prettyref{eq:j_intertwines}, for each $v\in F$ there exists
finite $G_{v}\subseteq\Lambda^{\omega^{\alpha}}v$ such that for all
$\eta\in\overline{\mathrm{span}}_{\mathbb{C}}\left\{ \delta_{f}:f\in\Lambda^{\omega^{\alpha}}v\backslash G_{v}\right\} $
with $\left\Vert \eta\right\Vert \leq1$, $\left\Vert \varphi_{\alpha}\left(a\right)\eta\right\Vert ^{2}=\left\Vert \varphi_{\alpha}\left(a\right)P_{v}\eta\right\Vert ^{2}<\varepsilon$.
Define $G=\cup_{v\in F}G_{v}$, which is finite. Then if $\eta\in\mathrm{span}_{\mathbb{C}}\left\{ \delta_{f}:f\in\Lambda^{\omega^{\alpha}}\backslash G\right\} $
with $\left\Vert \eta\right\Vert \leq1$,
\begin{align*}
\left\Vert \varphi_{\alpha}\left(a\right)\eta\right\Vert ^{2} & =\left\Vert \left\langle \sum_{v\in\Lambda_{0}}\varphi_{\alpha}\left(a\right)P_{v}\eta,\sum_{v\in\Lambda_{0}}\varphi_{\alpha}\left(a\right)P_{v}\eta\right\rangle \right\Vert \\
 & =\left\Vert \left\langle \sum_{v\in\Lambda_{0}}P_{v}\varphi_{\alpha}\left(a\right)P_{v}\eta,\sum_{v\in\Lambda_{0}}P_{v}\varphi_{\alpha}\left(a\right)P_{v}\eta\right\rangle \right\Vert \\
 & =\left\Vert \sum_{v\in\Lambda_{0}}\left\langle \varphi_{\alpha}\left(a\right)P_{v}\eta,\varphi_{\alpha}\left(a\right)P_{v}\eta\right\rangle \right\Vert \\
 & =\left\Vert \sum_{v\in\Lambda_{0}}\left\Vert \varphi_{\alpha}\left(a\right)P_{v}\eta\right\Vert ^{2}T_{v}\right\Vert  & \text{by \prettyref{eq:cv_norm}}\\
 & =\sup_{v\in\Lambda_{0}}\left\Vert \varphi_{\alpha}\left(a\right)P_{v}\eta\right\Vert ^{2}
\end{align*}
For each $v\in\Lambda_{0}$, either $v\not\in F$, in which case $\left\Vert \varphi_{\alpha}\left(a\right)P_{v}\eta\right\Vert ^{2}\leq\left\Vert \varphi_{\alpha}\left(a\right)P_{v}\right\Vert ^{2}<\varepsilon$,
or $v\in F$. For $v\in F$, $P_{v}\eta\in\mathrm{span}_{\mathbb{C}}\left\{ \delta_{f}:f\in\Lambda^{\omega^{\alpha}}v\backslash G_{v}\right\} $,
so $\left\Vert \varphi_{\alpha}\left(a\right)P_{v}\eta\right\Vert ^{2}<\varepsilon$.
In any case, we see $\left\Vert \varphi_{\alpha}\left(a\right)\eta\right\Vert ^{2}<\varepsilon$.
To prove the converse, suppose $\varepsilon>0$ and there exists finite
$F\subseteq\Lambda^{\omega^{\alpha}}$ such that if $\eta\in\overline{\mathrm{span}}_{\mathbb{C}}\left\{ \delta_{f}:f\in\Lambda^{\omega^{\alpha}}\backslash F\right\} $
and $\left\Vert \eta\right\Vert \leq1$, $\left\Vert \varphi_{\alpha}\left(a\right)\eta\right\Vert <\varepsilon$.
If $\eta\in\overline{\mathrm{span}}_{\mathbb{C}}\left\{ \delta_{f}:\Lambda^{\omega^{\alpha}}v\backslash F\right\} $
with $\left\Vert \eta\right\Vert \leq1$, then $\left\Vert \varphi_{\alpha}\left(a\right)\eta\right\Vert =\left\Vert \sigma_{\alpha}\left(a\right)Q_{v}j_{\alpha,v}\left(\eta\right)\right\Vert <\varepsilon$,
so by (1) each $\varphi_{\alpha}\left(a\right)P_{v}$ is compact.
Define finite $G=\left\{ s\left(f\right):f\in F\right\} $. Then for
each $v\in\Lambda_{0}\backslash G$ and $\eta\in\overline{\mathrm{span}}_{\mathbb{C}}\left\{ \delta_{f}:f\in\Lambda^{\omega^{\alpha}}v\right\} $
with $\left\Vert \eta\right\Vert \leq1$, $\left\Vert \varphi_{\alpha}\left(a\right)\eta\right\Vert <\varepsilon$,
so $\left\Vert \varphi_{\alpha}\left(a\right)P_{v}\right\Vert \leq\varepsilon$,
and by (2), $\varphi_{\alpha}\left(a\right)\in\mathcal{K}\left(X_{\alpha}\right)$.
\end{proof}
\begin{cor}
\label{cor:compact-loop}Suppose $\alpha<\zeta$, $a\in\mathcal{O}\left(\Lambda_{\alpha+1}\right)$,
$\left(\varphi_{\zeta}\circ\rho_{\alpha+1}^{\zeta}\right)\left(a\right)\in\mathcal{K}\left(X_{\zeta}\right)$,
$h\in\Lambda_{\alpha+1}\backslash\Lambda_{\alpha}$ with $s\left(h\right)=r\left(h\right)$,
and $g\in\bigcap_{n\in\mathbb{N}}h^{n}\Lambda^{\omega^{\zeta}}$.
Then $\left(\varphi_{\zeta}\circ\rho_{\alpha+1}^{\zeta}\right)\left(a\right)\delta_{g}=0$
or there exists $n>0$ such that $g=h^{n}g$.
\end{cor}

\begin{proof}
Suppose $g\in\bigcap_{n\in\mathbb{N}}h^{n}\Lambda^{\omega^{\zeta}}$,
and let $a\in\mathcal{O}\left(\Lambda_{\alpha+1}\right)$ be a finite
sum of the form
\[
a=\sum_{p,q}\lambda_{p,q}T_{p}T_{q}^{*}
\]
for $p,q\in\Lambda_{\alpha+1}$ with $s\left(p\right)=s\left(q\right)$.
Then for $g\in\Lambda^{\omega^{\zeta}}$ with $s\left(g\right)=v$,
\begin{align*}
\left\Vert \left(\sigma_{\zeta,v}\circ\rho_{\alpha+1}^{\zeta}\right)\left(a\right)\xi_{g}\right\Vert ^{2} & =\left\Vert \sum_{g\in q\Lambda,p,q}\lambda_{p,q}\xi_{pq^{-1}g}\right\Vert ^{2}
\end{align*}
Suppose $p_{1}q_{1}^{-1}g=p_{2}q_{2}^{-1}g$ for indices $p_{1},q_{1},p_{2},q_{2}\in\Lambda_{\alpha+1}$.
Then $g=q_{1}p_{1}^{-1}p_{2}q_{2}^{-1}g$, and since $d\left(h\right)\geq\omega^{\alpha}$,
there exists positive $m\in\mathbb{N}$ such that $d\left(h^{m}\right)=d\left(h\right)\cdot m$
is larger than $d\left(q_{2}\right)+d\left(p_{1}\right)$, and in
particular $u=q_{1}p_{1}^{-1}p_{2}q_{2}^{-1}h^{m}$ is a well-defined
path. If $\beta=d\left(u\right)$, then since $d\left(h\right)\geq\omega^{\alpha}$,
$\beta+d\left(h\right)=d\left(h\right)\cdot k$ for some $k\in\mathbb{N}$.
Hence $g\left(\beta+d\left(h\right)\right)=h^{k}=q_{1}p_{1}^{-1}p_{2}q_{2}^{-1}h^{m+1}$,
and
\begin{align*}
h^{-1}q_{1}p_{1}^{-1}p_{2}q_{2}^{-1}hg & =h^{-1}q_{1}p_{1}^{-1}p_{2}q_{2}^{-1}h^{m+1}hh^{-m-1}g\\
 & =q_{1}p_{1}^{-1}p_{2}q_{2}^{-1}h^{m+1}h^{-m-1}g\\
 & =q_{1}p_{1}^{-1}p_{2}q_{2}^{-1}g\\
 & =g
\end{align*}
Therefore, $p_{1}q_{1}^{-1}hg=p_{2}q_{2}^{-1}hg$. The same equations
yield the reverse implication, so we have $p_{1}q_{1}^{-1}g=p_{2}q_{2}^{-1}g$
if and only if $p_{1}q_{1}^{-1}hg=p_{2}q_{2}^{-1}hg$. In particular,
$\left\Vert \left(\sigma_{\zeta,v}\circ\rho_{\alpha+1}^{\zeta}\right)\left(a\right)\xi_{g}\right\Vert =\left\Vert \left(\sigma_{\zeta,v}\circ\rho_{\alpha+1}^{\zeta}\right)\left(a\right)\xi_{h^{n}g}\right\Vert $
for all $n\in\mathbb{N}$ and $a\in\mathcal{O}\left(\Lambda_{\alpha+1}\right)$
by continuity. Now assume $\left(\varphi_{\zeta}\circ\rho_{\alpha+1}^{\zeta}\right)\left(a\right)\in\mathcal{K}\left(X_{\zeta}\right)$
and that $\varepsilon=\left\Vert \left(\varphi_{\zeta}\circ\rho_{\alpha+1}^{\zeta}\right)\left(a\right)\delta_{g}\right\Vert $
is positive. Choose finite $F\subseteq\Lambda^{\omega^{\zeta}}$ such
that for all $\eta\in\mathrm{span}_{\mathbb{C}}\left\{ \delta_{f}:f\in\Lambda^{\omega^{\zeta}}\backslash F\right\} $
with $\left\Vert \eta\right\Vert \leq1$, $\left\Vert \left(\varphi_{\zeta}\circ\rho_{\alpha+1}^{\zeta}\right)\left(a\right)\eta\right\Vert <\frac{\varepsilon}{2}$.
If $\left\{ h^{m}g:m\in\mathbb{N}\right\} $ is finite, then $h^{m_{1}}g=h^{m_{2}}g$
for some $m_{1}>m_{2}$, from which it follows $h^{m_{1}-m_{2}}g=g$.
Therefore, if no $n>0$ exists such that $h^{n}g=g$, $\left\{ h^{m}g:m\in\mathbb{N}\right\} $
is infinite, and there exists $m\in\mathbb{N}$ such that $h^{m}g\not\in F$.
Then \prettyref{lem:left-action-isometry} and the definition of $F$
imply
\[
\varepsilon=\left\Vert \left(\varphi_{\zeta}\circ\rho_{\alpha+1}^{\zeta}\right)\left(a\right)\delta_{g}\right\Vert =\left\Vert \left(\sigma_{\zeta,v}\circ\rho_{\alpha+1}^{\zeta}\right)\left(a\right)\xi_{g}\right\Vert =\left\Vert \left(\sigma_{\zeta,v}\circ\rho_{\alpha+1}^{\zeta}\right)\left(a\right)\xi_{h^{m}g}\right\Vert <\frac{\varepsilon}{2}
\]
This is a contradiction, so $\varepsilon=0$.
\end{proof}

\section{Main Results}

Our main results are for the case in which $\rho_{\alpha}^{\alpha+1}$
is injective for each $\alpha\in\mathrm{Ord}$. One may guess that
this happens when for each $\alpha$, $\Lambda$ doesn't satisfy the
hypotheses of \prettyref{prop:hom-injective}. We will say such ordinal
graphs satisfy condition (C).
\begin{defn}
\label{def:normal}An ordinal graph $\Lambda$ satisfies \emph{condition
(C)} if for every $\alpha\in\mathrm{Ord}$, $f\in\Lambda^{\omega^{\alpha}}$,
and $g\in\Lambda$ such that $r\left(f\right)$ is $\alpha$-regular,
$gf=f$ implies $d\left(g\right)=0$.
\end{defn}

\begin{rem}
If $\Lambda=\Lambda_{1}$, i.e. $\Lambda$ is a directed graph, then
$\Lambda$ automatically satisfies condition (C). This is because
$gf=f$ implies $d\left(g\right)+d\left(f\right)=d\left(f\right)$,
which for finite $d\left(f\right),d\left(g\right)$, implies $d\left(g\right)=0$.
\end{rem}

\begin{thm}
\label{thm:main-thm}If $\Lambda$ is an ordinal graph satisfying
condition (C), then for each $\alpha\in\mathrm{Ord}$ the following
hold
\begin{enumerate}
\item $\rho_{\alpha}$ is injective
\item $J_{\alpha}$ is the smallest ideal in $\mathcal{O}\left(\Lambda_{\alpha}\right)$
containing $\left\{ T_{v}:v\in\Lambda_{0},v\text{ is }\alpha\text{-regular}\right\} $
\item $\left(\psi_{\alpha},\rho_{\alpha}^{\alpha+1}\right)$ is a covariant
representation of $X_{\alpha}$
\item $\psi_{\alpha}\times\rho_{\alpha}^{\alpha+1}:\mathcal{O}\left(X_{\alpha}\right)\rightarrow\mathcal{O}\left(\Lambda_{\alpha+1}\right)$
is an isomorphism
\end{enumerate}
\end{thm}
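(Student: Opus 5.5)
The plan is a transfinite induction on $\alpha$, proving (1)--(4) together. Here $\rho_\alpha$ denotes the canonical map $\mathcal{O}(\Lambda_\alpha)\to\mathcal{O}(\Lambda)$ and $J_\alpha=J_{X_\alpha}$.

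\emph{Base case and limit steps for (1).} For $\alpha=0$, $\mathcal{O}(\Lambda_0)\cong c_0(\Lambda_0)$ and $\rho_0$ is injective because each $T_v\neq 0$; this is witnessed by the boundary path representation of \prettyref{prop:boundary-path-rep}, since $v\partial\Lambda\neq\emptyset$ by \prettyref{lem:boundary-paths-exist}. If $\beta$ is a limit ordinal and every $\rho_\alpha$ with $\alpha<\beta$ is injective, then \prettyref{prop:inductive-limit} exhibits $\mathcal{O}(\Lambda_\beta)$ as an inductive limit of injective maps. A homomorphism out of such a limit that is isometric on each stage is isometric on a dense subalgebra, hence injective, so $\rho_\beta$ is injective.

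\emph{Successor steps, order of proof.} At a successor I would prove (2), then (3), then (4), and finally deduce (1) for $\alpha+1$. Injectivity of $\rho_{\alpha+1}$ factors as $\rho_{\alpha+1}=\rho_{\alpha+2}\circ\rho_{\alpha+1}^{\alpha+2}$, which is awkward. A cleaner route is to show directly that $\rho_\alpha$ is injective for all $\alpha$: apply the gauge-invariant uniqueness theorem for Cuntz--Pimsner algebras (Katsura) to the representation $(\rho_{\alpha+1}\circ\psi_\alpha,\rho_\alpha)$ of $X_\alpha$ into $\mathcal{O}(\Lambda)$.
\begin{itemize}
\item By induction $\rho_\alpha$ is injective.
\item The covariance in (3) transfers to this composite representation.
\item Katsura's theorem then also requires a gauge action and the condition $\{a:\rho_\alpha(a)\in\text{image of compacts}\}=J_\alpha$.
\end{itemize}
The gauge action is a problem on $\mathcal{O}(\Lambda)$ itself, since $\Gamma_\alpha$ lives on $\mathcal{O}(\Lambda_{\alpha+1})$. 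So I would instead use the boundary path representation $\tau$ of \prettyref{prop:boundary-path-rep}, restricted to $\mathcal{O}(\Lambda_{\alpha+1})$ via $\rho_{\alpha+1}$, and check faithfulness of $\tau\circ\rho_\alpha$ on $\mathcal{O}(\Lambda_\alpha)$ directly. This is where condition (C) enters: it prevents the collapse exhibited in \prettyref{prop:hom-injective}. Concretely, I would prove by induction that $\tau\circ\rho_\alpha$ is faithful. The isomorphism (4) then transports the gauge action $\Gamma_\alpha$, and a conditional expectation argument reduces faithfulness to the fixed-point algebra, which is an inductive limit of algebras of the form $\mathcal{K}(X_\alpha^{\otimes n})$ plus $\mathcal{O}(\Lambda_\alpha)$.

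\emph{Proof of (2).} This uses \prettyref{cor:compactness}. First, $T_v\in J_\alpha$ for each $\alpha$-regular $v$. Row-finiteness gives
\[
\varphi_\alpha(T_v)=\sum_{e\in v\Lambda^{\omega^\alpha}}\theta_{\delta_e,\delta_e},
\]
which is compact. Moreover $T_v\perp\ker\varphi_\alpha$, because source-regularity propagates along $v\Lambda_\alpha$ by \prettyref{lem:regular-hereditary}, so every $T_pT_q^*$ with range $v$ acts nontrivially.

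Conversely, suppose $a\in J_\alpha$. I would show $a$ lies in the ideal generated by the $\alpha$-regular $T_v$ by examining $\sigma_\alpha(a)$ through \prettyref{cor:compactness}(3). Compactness forces $a$ to be concentrated on vertices with finitely many $\omega^\alpha$-paths reachable. Orthogonality to $\ker\varphi_\alpha$ rules out vertices from which some $e\in v\Lambda_\alpha$ has no $\omega^\alpha$-extension, since such $T_v$-corners act as zero.

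The genuinely hard part is handling cycles. An element like $T_h-T_v$ with $h$ a cycle of length $\geq\omega^{\alpha'}$ can act compactly. \prettyref{cor:compact-loop} is exactly the tool for this: under condition (C), a compact $\varphi_\zeta(a)$ must vanish on paths passing through the infinite powers $h^n$, which excludes such cycle phenomena.

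\emph{Proof of (3) and (4).} Covariance (3) then reduces by (2) to checking $a=T_v$ with $v$ $\alpha$-regular. In that case
\[
(\psi_\alpha,\rho_\alpha^{\alpha+1})^{(1)}(\varphi_\alpha(T_v))=\sum_{e\in v\Lambda^{\omega^\alpha}} T_eT_e^*=T_v
\]
by relation (4). For (4), surjectivity holds since the images contain all generators. Injectivity follows from the gauge-invariant uniqueness theorem, using the following inputs:
\begin{itemize}
\item $\Gamma_\alpha$ from \prettyref{lem:alg-auts} intertwines the gauge action.
\item $\rho_\alpha^{\alpha+1}$ is injective, as a consequence of injectivity of $\rho_\alpha$ in (1).
\item $\rho_\alpha^{\alpha+1}(a)\in\psi^{(1)}(\mathcal{K})$ forces $a\in J_\alpha$. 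I would prove this via the boundary path representation: a path $f\in\partial\Lambda$ starting at a non-$\alpha$-regular vertex can stop before length $\omega^\alpha$, and $\psi^{(1)}$-compacts kill such $\xi_f$.
\end{itemize}

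I expect the main obstacle to be the reverse inclusion in (2), together with this last compactness-to-$J_\alpha$ step, since both hinge on using condition (C) through \prettyref{cor:compact-loop}.
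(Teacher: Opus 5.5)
Your outline of (3), (4), the base case and the limit step for (1) matches the paper. The two substantive steps, however, do not go through as proposed: injectivity of $\rho_{\alpha}$ at successor stages, and the reverse inclusion in (2).

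\textbf{Injectivity (1).} The claim that $\tau\circ\rho_{\alpha}$ is faithful is false in general, even under condition (C). Take $\Lambda$ to be one vertex $v$ with one loop $e$, $d(e)=1$. Then:
\begin{itemize}
\item $\partial\Lambda$ consists of the single path of length $\omega$;
\item $\tau(T_{e})$ is the identity on $\ell^{2}(\partial\Lambda)\cong\mathbb{C}$;
\item so $\tau\circ\rho_{1}$ is evaluation at $1$ on $\mathcal{O}(\Lambda_{1})\cong C(\mathbb{T})$.
\end{itemize}
No conditional-expectation argument repairs this, because the image of $\tau$ carries no action implementing $\Gamma_{\alpha}$.

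The missing idea is the amplified representation $\pi$ on $\ell^{2}(\partial\Lambda\times\mathbb{Z}^{[0,\zeta)})$. There $T_{e}$ also shifts an integer coordinate by the ``winding number'' $v(\cdot)_{\alpha}$ of Lemma~\ref{prop:shift-func}, which is additive only along $\alpha$-cancellative paths (Definition~\ref{def:cancellative}). One therefore restricts to the invariant subspaces $H_{\alpha}$. On these, $U_{z}\xi_{f,n}=z^{n_{\alpha}}\xi_{f,n}$ implements $\Gamma_{\alpha}$, so Katsura's gauge-invariant uniqueness theorem can be run inductively in $\alpha<\zeta$. Condition (C) enters here through Lemma~\ref{lem:loop-reg} and Proposition~\ref{prop:good-boundary-paths-exist}, which guarantee that every $T_{v}$ acts nontrivially on $H_{0}$.

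\textbf{The Katsura ideal (2).} Your argument that $T_{v}\perp\ker\varphi_{\alpha}$ never uses condition (C), and it fails for the graph of Figure~\ref{fig:example1-fig}. There $v$ is $1$-regular, yet $S_{e}S_{f}-S_{v}$ kills both $\delta_{g}$ and $\delta_{fg}$, so it lies in $\ker\varphi_{1}$, while $(S_{e}S_{f}-S_{v})S_{v}\neq0$. Knowing that each $T_{p}T_{q}^{*}$ acts nontrivially says nothing about linear combinations. The correct argument uses injectivity, which your ordering already provides at that point:
\[
\rho_{\alpha}^{\alpha+1}(bT_{v})=\sum_{e}\psi_{\alpha}(\varphi_{\alpha}(b)\delta_{e})T_{e}^{*}=0 .
\]

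More seriously, the inclusion of $J_{\alpha}$ in the ideal generated by the regular vertex projections is not actually argued. Elements of the noncommutative algebra $\mathcal{O}(\Lambda_{\alpha})$ are not diagonal, so ``concentrated on vertices'' has no content. Also, \prettyref{cor:compact-loop} alone only controls $\varphi_{\zeta}(a)$ on individual vectors $\delta_{g}$. The paper's mechanism has two parts:
\begin{itemize}
\item It proves that $J_{\zeta}\cap\mathcal{O}(\Lambda_{\alpha+1})$ is $\Gamma_{\alpha}$-invariant for every $\alpha<\zeta$. This feeds \prettyref{cor:compact-loop} through Lemmas~\ref{lem:awesome-lemma} and~\ref{lem:non-returning-zero} (a Riemann--Lebesgue averaging), and through the faithful expectations $E_{\gamma}^{\beta}$.
\item It then identifies the ideal via Katsura's classification of gauge-invariant ideals, together with the containment $\mathcal{O}(\Lambda_{\alpha})\cap(J_{\zeta}+\psi_{\alpha}^{(1)}(\mathcal{K}(X_{\alpha})))\subseteq J_{\alpha}$. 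This containment must be carried as an extra inductive hypothesis.
\end{itemize}

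(Your third bullet for (4) is unnecessary. For an injective representation, $\rho_{\alpha}^{\alpha+1}(a)\in\psi_{\alpha}^{(1)}(\mathcal{K}(X_{\alpha}))$ automatically forces $a\in J_{\alpha}$. So the gauge-invariant uniqueness theorem needs only injectivity of $\rho_{\alpha}^{\alpha+1}$ and $\Gamma_{\alpha}$.)
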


We delegate lemmas used for the inductive step in the following proof
to the next section.
\begin{proof}
We prove this by transfinite induction and a base case for $\alpha=0$.
If $\alpha=0$, then $\mathcal{O}\left(\Lambda_{\alpha}\right)\cong c_{0}\left(\Lambda_{0}\right)$.
Since $\rho_{0}\left(T_{v}\right)\not=0$, this implies $\rho_{0}$
is injective. Then (2) through (4) follow from \prettyref{exa:graph-example}
since $\Lambda_{1}$ is a directed graph.

Now suppose we've proven this result for all $\alpha<\delta$. By
\prettyref{lem:injective}, we have that $\rho_{\delta}$ is injective,
and since $\rho_{\delta}=\rho_{\delta+1}\circ\rho_{\delta}^{\delta+1}$,
$\rho_{\delta}^{\delta+1}$ is injective. Then (2) is proven by \prettyref{prop:katsura-ideal-generators}.
For (3), we must show that for each $\alpha\in J_{\delta}$, $\left(\psi_{\delta},\rho_{\delta}^{\delta+1}\right)^{(1)}\left(\varphi_{\delta}\left(a\right)\right)=\rho_{\delta}^{\delta+1}\left(a\right)$.
Define
\[
\mathcal{I}=\left\{ a\in J_{\delta}:\left(\psi_{\delta},\rho_{\delta}^{\delta+1}\right)^{(1)}\left(\varphi_{\delta}\left(a\right)\right)=\rho_{\delta}^{\delta+1}\left(a\right)\right\} 
\]
Then if $a\in\mathcal{I}$ and $b\in\mathcal{O}\left(\Lambda_{\delta}\right)$,
\[
\left(\psi_{\delta},\rho_{\delta}^{\delta+1}\right)^{(1)}\left(\varphi_{\delta}\left(ab\right)\right)=\left(\psi_{\delta},\rho_{\delta}^{\delta+1}\right)^{(1)}\left(\varphi_{\delta}\left(a\right)\right)\rho_{\delta}^{\delta+1}\left(b\right)=\rho_{\delta}^{\delta+1}\left(ab\right)
\]
Hence $\mathcal{I}$ is a closed, two-sided ideal. By (2), it suffices
to prove $T_{v}\in\mathcal{I}$ for each $v\in\Lambda_{0}$ which
is $\delta$-regular in order to show covariance. This is a consequence
of relation 4 in \prettyref{thm:gen-relations}. If we let $\left\{ S_{e}:e\in\Lambda_{\delta+1}\right\} $
be the generators of $\mathcal{O}\left(\Lambda_{\delta+1}\right)$,
then we have
\[
\varphi_{\delta}\left(T_{v}\right)=\sum_{e\in\Lambda^{\omega^{\delta}}}\theta_{\delta_{e},\delta_{e}}
\]
\[
\left(\psi_{\delta},\rho_{\delta}^{\delta+1}\right)^{(1)}\left(\varphi_{\delta}\left(T_{v}\right)\right)=\sum_{e\in\Lambda^{\omega^{\delta}}}\psi_{\delta}\left(\delta_{e}\right)\psi_{\delta}\left(\delta_{e}\right)^{*}=\sum_{e\in\Lambda^{\omega^{\delta}}}S_{e}S_{e}^{*}=S_{v}=\rho_{\delta}^{\delta+1}\left(T_{v}\right)
\]

By covariance, we have a homomorphism $\psi_{\delta}\times\rho_{\delta}^{\delta+1}:\mathcal{O}\left(X_{\delta}\right)\rightarrow\mathcal{O}\left(\Lambda_{\delta+1}\right)$,
which we now show is an isomorphism. Surjectivity is not hard to see;
all we need is that the image contains $\left\{ S_{e}:e\in\Lambda_{\delta+1}\right\} $.
By Cantor's normal form, each $e\in\Lambda_{\delta+1}$ is a finite
composition of paths in $\Lambda^{\omega^{\delta}}$ and $\Lambda_{\delta}$.
If $e\in\Lambda^{\omega^{\delta}}$, then $\psi_{\delta}\left(\delta_{e}\right)=S_{e}$,
and if $e\in\Lambda_{\delta}$, $\rho_{\delta}^{\delta+1}\left(T_{e}\right)=S_{e}$.
For injectivity, note that by (1) $\rho_{\delta}^{\delta+1}$ is injective,
and by \prettyref{lem:alg-auts}, there is an action $\Gamma_{\delta}:\mathbb{T}\rightarrow\mathrm{Aut}\left(\mathcal{O}\left(\Lambda_{\delta+1}\right)\right)$.
If $\gamma_{\delta}$ is the gauge action on $\mathcal{O}\left(X_{\delta}\right)$,
$\left(\iota,\pi\right)$ is the universal covariant representation
for $X_{\delta}$, $z\in\mathbb{T}$, and $e\in\Lambda^{\omega^{\delta}}$,
then
\[
\left(\psi_{\delta}\times\rho_{\delta}^{\delta+1}\circ\gamma_{\delta,z}\right)\left(\iota\left(\delta_{e}\right)\right)=zS_{e}=\Gamma_{\delta,z}\left(S_{e}\right)=\left(\Gamma_{\delta,z}\circ\psi_{\delta}\times\rho_{\delta}^{\delta+1}\right)\left(\iota\left(\delta_{e}\right)\right)
\]
Likewise, if $e\in\Lambda_{\delta}$, then
\[
\left(\psi_{\delta}\times\rho_{\delta}^{\delta+1}\circ\gamma_{\delta,z}\right)\left(\pi\left(\delta_{e}\right)\right)=S_{e}=\Gamma_{\delta,z}\left(S_{e}\right)=\left(\Gamma_{\delta,z}\circ\psi_{\delta}\times\rho_{\delta}^{\delta+1}\right)\left(\pi\left(\delta_{e}\right)\right)
\]
Thus the gauge-invariant uniqueness theorem \cite[Theorem 6.4]{KATIDEAL2}
implies $\psi_{\delta}\times\rho_{\delta}^{\delta+1}$ is injective.
\end{proof}
As a corollary, we will generalize \cite[Theorem 7.15]{ORDGRAPH}
to allow $\Lambda$ to have 1-regular vertices. To do so, we make
the following definitions.
\begin{defn}[{\cite[Definition 7.2]{ORDGRAPH}}]
If $e\in\Lambda^{\omega^{\alpha}\cdot n}$ for some $n\in[0,\omega)$,
then $e$ is \emph{non-returning} if for all $\gamma\in[\omega^{\alpha},\omega^{\alpha}\cdot n)$
and $\beta\in[0,\omega^{\alpha})$, $e\left(\gamma\right)e\left(\beta\right)^{-1}e\not\in e\Lambda$.
\end{defn}

\begin{rem}
This is equivalent to \cite[Definition 7.2]{ORDGRAPH} because if
$fe\left(\beta\right)^{-1}e\in e\Lambda$ and $d\left(f\right)<d\left(e\right)$,
then $f=e\left(d\left(f\right)\right)$. Setting $\gamma=d\left(f\right)$,
we have $e\left(\gamma\right)e\left(\beta\right)^{-1}e\in e\Lambda$.
It's also important to note that the definition of non-returning depends
on $\alpha$ and $n$, which is uniquely determined by the path $e$.
\end{rem}

\begin{defn}
Let $\sim$ be the minimal equivalence relation on $\Lambda$ satisfying
$f\sim g$ for all $f,g\in\Lambda$ with $s\left(f\right)=r\left(g\right)$.
A \emph{connected component} of $\Lambda$ is an equivalence class
in $\Lambda/\sim$.
\end{defn}

\begin{defn}[{\cite[Definition 7.11]{ORDGRAPH}}]
A path $e\in\Lambda$ is \emph{$\alpha$-full} if for every $v\in\Lambda_{0}$
in the same connected component of $\Lambda_{0}$ as $r\left(e\right)$,
there exist $\beta\in[0,\omega^{\alpha})$ and $f\in\Lambda_{\alpha}$
such that $s\left(f\right)=r\left(e\left(\beta\right)\right)$ and
$r\left(f\right)=v$.
\end{defn}

\begin{defn}
$\Lambda$ satisfies \emph{condition (S)} if for every $\alpha\in\mathrm{Ord}$
such that $\Lambda^{\omega^{\alpha}}\not=\emptyset$, every connected
component $F$ of $\Lambda_{\alpha}$, and every $n\in\mathbb{N}$
there exists non-returning, $\alpha$-full $f\in\Lambda$ such that
$r\left(f\right)\in F$ and $d\left(f\right)\in[\omega^{\alpha}\cdot n,\omega^{\alpha+1})$.
\end{defn}

The proof of \cite[Theorem 7.15]{ORDGRAPH} requires us to know $\mathcal{O}\left(X_{\alpha}\right)\cong\mathcal{O}\left(\Lambda_{\alpha+1}\right)$.
Without 1-regular vertices, the author was able to prove $J_{\alpha}=0$
using \cite[Theorem 3.9]{CKU4CPA}, and this was sufficient to prove
$\mathcal{O}\left(X_{\alpha}\right)\cong\mathcal{O}\left(\Lambda_{\alpha+1}\right)$.
Using \prettyref{thm:main-thm}, we can prove $\mathcal{O}\left(X_{\alpha}\right)\cong\mathcal{O}\left(\Lambda_{\alpha+1}\right)$
by showing that an ordinal graph $\Lambda$ satisfying condition (S)
satisfies condition (C). We accomplish this in the next lemma, and
then we obtain a generalization of \cite[Theorem 7.15]{ORDGRAPH}
as a corollary.
\begin{lem}
\label{lem:condition-S}If $\Lambda$ satisfies condition (S), then
$\Lambda$ is satisfies condition (C).
\end{lem}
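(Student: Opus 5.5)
The plan is to argue by contradiction: assume condition (S) holds but condition (C) fails, and produce a path that condition (S) says is non-returning but which in fact returns.

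\textbf{Setup.} Suppose $f\in\Lambda^{\omega^{\alpha}}$ with $v=r\left(f\right)$ $\alpha$-regular, and $g\in\Lambda$ with $gf=f$ and $d\left(g\right)>0$. From $d\left(g\right)+\omega^{\alpha}=\omega^{\alpha}$ we get $d\left(g\right)<\omega^{\alpha}$. Hence $g\in v\Lambda_{\alpha}v$ is a nontrivial cycle. By Cantor normal form, $d\left(g\right)\in[\omega^{\beta}\cdot c,\omega^{\beta}\cdot\left(c+1\right))$ for some $\beta<\alpha$ and some $c\geq1$.

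\textbf{Step 1: every long path from $v$ begins with every power of $g$.} I will run the argument of \prettyref{prop:hom-injective} with $\alpha$ in place of $\alpha+1$.
\begin{itemize}
\item The map $h\mapsto gh$ sends $v\Lambda^{\omega^{\alpha}}$ into itself, since $d\left(g\right)+\omega^{\alpha}=\omega^{\alpha}$.
\item This map is injective by left cancellation.
\item The set $v\Lambda^{\omega^{\alpha}}$ is finite by $\alpha$-row-finiteness, so the map is a permutation.
\end{itemize}
Hence some $n>0$ satisfies $g^{n}h=h$ for all $h\in v\Lambda^{\omega^{\alpha}}$. Iterating gives $h=g^{nm}h$, so $h\left(d\left(g^{nm}\right)\right)=g^{nm}$ for every $m$. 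Now take any $p\in v\Lambda_{\alpha}$. By $\alpha$-source-regularity of $v$ there is $q\in s\left(p\right)\Lambda^{\omega^{\alpha}}$, and $pq\in v\Lambda^{\omega^{\alpha}}$. Applying the above to $h=pq$ and comparing lengths, every $p\in v\Lambda_{\alpha}$ is an initial segment of $g^{nm}$ for $m$ large. Note that $d\left(g^{nm}\right)\geq\omega^{\beta}\cdot cnm$, which is unbounded below $\omega^{\beta+1}$.

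\textbf{Step 2: apply condition (S) at level $\beta$.} The set $\Lambda^{\omega^{\beta}}$ is nonempty, since it contains $g\left(\omega^{\beta}\right)$. Let $F$ be the connected component of $\Lambda_{\beta}$ containing $v$, and choose $N$ larger than $2cn$. Condition (S) gives a non-returning, $\beta$-full $e$ with $r\left(e\right)\in F$ and $d\left(e\right)\in[\omega^{\beta}\cdot N,\omega^{\beta+1})$. Replacing $e$ by its prefix of length $\omega^{\beta}\cdot N'$ for the appropriate $N'$ fits the non-returning definition; I would check that this truncation preserves both properties.

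I then want to transport $e$ so that it starts at $v$. Fullness provides $\beta'<\omega^{\beta}$ and a path in $\Lambda_{\beta}$ joining $v$ to $r\left(e\left(\beta'\right)\right)$. This is the step I expect to be the main obstacle. The definition of $\alpha$-full as written has the connecting path ending at $r\left(e\left(\beta\right)\right)$, and its direction must be matched with the direction needed here, namely a path $k\in v\Lambda_{\beta}$ with $s\left(k\right)$ on $e$. I would first try to use fullness to obtain such a $k$ with $s\left(k\right)=s\left(e\left(\beta'\right)\right)$. Then $p=k\,e\left(\beta'\right)^{-1}e$ lies in $v\Lambda_{\alpha}$, because $\beta<\alpha$.

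\textbf{Step 3: derive the contradiction.} By Step 1, $p$ is a prefix of some $g^{nm}$. So, from the point where $e$ enters $p$ onward, the tail of $e$ is a segment of the periodic word $g^{nm}$. Shifting by one period, i.e. by $d\left(g^{n}\right)$, which lies in $[\omega^{\beta},\omega^{\beta}\cdot N)$ because $N$ is large, should produce $\gamma\in[\omega^{\beta},\omega^{\beta}\cdot N)$ and $\beta''<\omega^{\beta}$ with
\[
e\left(\gamma\right)e\left(\beta''\right)^{-1}e\in e\Lambda .
\]
This contradicts non-returning. The bookkeeping with ordinal lengths has to be done carefully, in particular checking that the $\beta''$ produced is below $\omega^{\beta}$. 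For this I would use \prettyref{lem:path-factor} and the identity $\beta''+\omega^{\beta}\cdot j=\omega^{\beta}\cdot j$ for $j\geq1$.
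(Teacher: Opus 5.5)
Your proposal is correct and is essentially the paper's argument. The shared steps are: contradiction; (S) applied at the leading exponent $\beta$ of $d(g)$; fullness read exactly as the paper's proof reads it, i.e.\ $k\in v\Lambda_{\beta}$ with $s(k)=s(e(\beta'))$, so the direction is no obstacle; row-finiteness forcing a power of $g$ to fix the extended path; and a violation of non-returning. Working with the leading term of $d(g)$, with the period $n$ fixed before invoking (S), also spares you the paper's reduction to $d(g)=\omega^{\beta}\cdot c$ and its $m=xn+z$ computation, and your Step 3 does close. Take $d(e)=\omega^{\beta}\cdot M$ with $M\geq cn+1$ and $q\in s(e)\Lambda^{\omega^{\alpha}}$ as in Step 1. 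Comparing prefixes of length $d(g^{n}k)<d(p)$ in $g^{n}pq=pq$ gives $g^{n}k=k\,e(\beta')^{-1}e(\gamma)$ with $\gamma=\beta'+d(g^{n})+d(k)\in[\omega^{\beta}\cdot cn,\omega^{\beta}\cdot(cn+1))\subseteq[\omega^{\beta},\omega^{\beta}\cdot M)$. Cancelling $k$ and prefixing $e(\beta')$ then gives $e(\gamma)e(\beta')^{-1}eq=eq$, hence $e(\gamma)e(\beta')^{-1}e\in e\Lambda$ with $\beta''=\beta'$.

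Two small fixes. First, the prefix claim at the end of Step 1 holds only for $p\in v\Lambda_{\beta+1}$, which is all you use. Second, drop the truncation of $e$: prefixes of non-returning paths need not be non-returning (for a loop $a$ at $v$ and an edge $b$ with $r(b)=v$, $aab$ is non-returning while $aa$ is not). Instead, take $e\in\Lambda^{\omega^{\beta}\cdot M}$ directly from (S), as the paper does.
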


\begin{proof}
Let $\Lambda$ be an ordinal graph not satisfying condition (C). Then
there exist some $f\in\Lambda^{\omega^{\beta}}$ and $g\in\Lambda_{\beta}$
such that $r\left(f\right)$ is $\beta$-regular and $gf=f$. We will
simplify the problem before showing $\Lambda$ does not satisfy condition
(S). First we will see that it suffices to consider the case where
$d\left(g\right)$ is a multiple of a power of $\omega$. Suppose
$d\left(g\right)=\omega^{\alpha}\cdot n+\varepsilon$ for some $\varepsilon<\omega^{\alpha}$
and $n\in[1,\omega)$. Then $g=pq$ for $p=g\left(\omega^{\alpha}\cdot n\right)$
and $q\in\Lambda_{\alpha}$. Moreover, $gf=g^{2}f=pqf=pqpqf=f$, and
hence $qf=qpqf$. In particular, $d\left(qp\right)=\varepsilon+\omega^{\alpha}\cdot n=\omega^{\alpha}\cdot n$.
Additionally, we have by \prettyref{lem:regular-hereditary} that
$r\left(qf\right)=s\left(p\right)$ is $\beta$-regular, since $r\left(p\right)$
is $\beta$-regular and $d\left(p\right)<\omega^{\alpha}$. Therefore,
we assume from this point without loss of generality that $d\left(g\right)=\omega^{\alpha}\cdot n$. 

Suppose towards a contradiction that $\Lambda$ satisfies condition
(S). Then there exists $m\in[1,\omega)$ and non-returning, $\alpha$-full
$h\in\Lambda^{\omega^{\alpha}\cdot m}$ such that $m\geq n$ and $r\left(h\right)$
belongs to the same connected component of $\Lambda_{\alpha}$ as
$r\left(g\right)$. Since $h$ is $\alpha$-full, there exist $u\in r\left(g\right)\Lambda_{\alpha}$
and $\delta<\omega^{\alpha}$ such that $s\left(u\right)=s\left(h\left(\delta\right)\right)$.
Since $gf=f$, we have $\omega^{\alpha}\cdot n\leq\omega^{\alpha}\cdot m<\omega^{\beta}$.
Because $r\left(g\right)$ is $\beta$-regular and $uh\left(\delta\right)^{-1}h\in\Lambda_{\beta}$,
\prettyref{lem:regular-hereditary} implies $s\left(h\right)$ is
$\beta$-regular. Thus there exists $y\in s\left(h\right)\Lambda^{\omega^{\beta}}$,
and
\[
\left\{ g^{k}uh\left(\delta\right)^{-1}hy:k\in\mathbb{N}\right\} 
\]
is finite. Define $w=uh\left(\delta\right)^{-1}hy$, and choose distinct
$k_{0},k_{1}\in\mathbb{N}$ such that $g^{k_{0}}w=g^{k_{1}}w$. Then
by left cancellation, $g^{k}w=w$ for some $k>0$. Since $d\left(uh\left(\delta\right)^{-1}h\right)=d\left(u\right)-\delta+d\left(h\right)=\omega^{\alpha}\cdot m$,
\prettyref{lem:path-factor} implies 
\[
w\left(\omega^{\alpha}\cdot n\right)=\left(uh\left(\delta\right)^{-1}h\right)\left(\omega^{\alpha}\cdot n\right)=u\left(h\left(\delta\right)^{-1}h\right)\left(-d\left(u\right)+\omega^{\alpha}\cdot n\right)=u\left(h\left(\delta\right)^{-1}h\right)\left(\omega^{\alpha}\cdot n\right)
\]
\[
=uh\left(\delta\right)^{-1}\left(h\left(\delta\right)h\left(\delta\right)^{-1}h\right)\left(\delta+\omega^{\alpha}\cdot n\right)=uh\left(\delta\right)^{-1}h\left(\omega^{\alpha}\cdot n\right)
\]
Hence $uh\left(\delta\right)^{-1}h\left(\omega^{\alpha}\cdot n\right)=g$.
Our contradiction will follow from the observation that 
\[
h\left(\omega^{\alpha}\cdot n\right)uh\left(\delta\right)^{-1}h\in h\Lambda
\]
which contradicts the fact that $h$ is non-returning. By possibly
replacing $k$ with a larger multiple, suppose without loss of generality
that $d\left(g^{k}\right)=\omega^{\alpha}\cdot nk>d\left(h\right)=\omega^{\alpha}\cdot m$.
Then since $g^{k}w=w$,
\[
\left(uh\left(\delta\right)^{-1}h\left(\omega^{\alpha}\cdot n\right)\right)^{k}w=uh\left(\delta\right)^{-1}hy
\]
\[
h\left(\delta\right)u^{-1}uh\left(\delta\right)^{-1}h\left(\omega^{\alpha}\cdot n\right)g^{k-1}w=hy
\]
\[
h\left(\omega^{\alpha}\cdot n\right)g^{k-1}w\in h\Lambda
\]
Since $d\left(h\left(\omega^{\alpha}\cdot n\right)g^{k-1}\right)=\omega^{\alpha}\cdot nk>d\left(h\right)$,
\prettyref{lem:path-factor} implies $h\left(\omega^{\alpha}\cdot n\right)g^{k-1}\in h\Lambda$.
Let $x,z\in\mathbb{N}$ such that $m=xn+z$, and note that $x>0$
since $m\geq n$. Then
\[
h\left(\omega^{\alpha}\cdot n\right)g^{k-1}=h\left(\omega^{\alpha}\cdot n\right)g^{x-1}g\left(\omega^{\alpha}\cdot z\right)g\left(\omega^{\alpha}\cdot z\right)^{-1}g^{k-x}
\]
\[
d\left(h\left(\omega^{\alpha}\cdot n\right)g^{x-1}g\left(\omega^{\alpha}\cdot z\right)\right)=\omega^{\alpha}\cdot n+\omega^{\alpha}\cdot n\left(x-1\right)+\omega^{\alpha}\cdot z
\]
\[
=\omega^{\alpha}\cdot\left(nx+z\right)=\omega^{\alpha}\cdot m=d\left(h\right)
\]
Thus unique factorization in \prettyref{def:ordinal-graph} implies
$h\left(\omega^{\alpha}\cdot n\right)g^{x-1}g\left(\omega^{\alpha}\cdot z\right)=h$.
Finally, we directly calculate:
\[
h\left(\omega^{\alpha}\cdot n\right)uh\left(\delta\right)^{-1}h=h\left(\omega^{\alpha}\cdot n\right)uh\left(\delta\right)^{-1}h\left(\omega^{\alpha}\cdot n\right)g^{x-1}g\left(\omega^{\alpha}\cdot z\right)
\]
\[
=h\left(\omega^{\alpha}\cdot n\right)g^{x}g\left(\omega^{\alpha}\cdot z\right)=h\left(\omega^{\alpha}\cdot n\right)g^{x-1}g\left(\omega^{\alpha}\cdot z\right)g\left(\omega^{\alpha}\cdot z\right)^{-1}gg\left(\omega^{\alpha}\cdot z\right)
\]
\[
=hg\left(\omega^{\alpha}\cdot z\right)^{-1}gg\left(\omega^{\alpha}\cdot z\right)\in h\Lambda
\]
And this directly contradicts the assertion that $h$ is non-returning.
\end{proof}
\begin{cor}
\label{cor:cku}If $\Lambda$ satisfies condition (S) then each correspondence
$X_{\alpha}$ satisfies condition (S) as defined in \cite{CKU4CPA}.
Moreover, if $\pi:\mathcal{O}\left(\Lambda\right)\rightarrow\mathcal{A}$
is a homomorphism into a $C^{*}$-algebra $\mathcal{A}$, then $\pi$
is injective iff for each $v\in\Lambda_{0}$, $\pi\left(T_{v}\right)\not=0$. 
\end{cor}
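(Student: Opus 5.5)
The plan has two parts: first show that each $X_{\alpha}$ satisfies condition (S) of \cite{CKU4CPA}; then run a transfinite induction on the Cuntz-Krieger uniqueness theorem for Cuntz-Pimsner algebras \cite{CKU4CPA}, using \prettyref{thm:main-thm} and \prettyref{prop:inductive-limit}.

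\textbf{Condition (S) for $X_{\alpha}$.} By \prettyref{lem:condition-S}, $\Lambda$ satisfies condition (C). Hence \prettyref{thm:main-thm} applies, giving for every $\alpha$:
\begin{itemize}
\item[(a)] $\rho_{\alpha}$ and $\rho_{\alpha}^{\alpha+1}$ are injective;
\item[(b)] $\psi_{\alpha}\times\rho_{\alpha}^{\alpha+1}:\mathcal{O}(X_{\alpha})\to\mathcal{O}(\Lambda_{\alpha+1})$ is an isomorphism.
\end{itemize}
The condition (S) in \cite{CKU4CPA} is, as I recall, a density-type requirement: for every hereditary subalgebra, or every nonzero positive $a\in\mathcal{O}(\Lambda_{\alpha})$, and every $n$, there are vectors in $X_{\alpha}^{\otimes m}$ ($m\ge n$) which compress $a$ in an "aperiodic" way. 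I will verify this via the non-returning, $\alpha$-full paths $f$ supplied by condition (S) of $\Lambda$, with $d(f)\in[\omega^{\alpha}\cdot n,\omega^{\alpha+1})$.
\begin{itemize}
\item Since $f$ is $\alpha$-full, any vertex projection $T_{v}$ in the connected component can be connected by some $u\in\Lambda_{\alpha}$ to an initial segment $f(\beta)$.
\item Since $f$ is non-returning, $T_{f}^{*}T_{g}T_{f}$ vanishes for the relevant off-diagonal pieces of elements $a=\sum T_{p}T_{q}^{*}$. This is exactly the argument of \cite[Theorem 7.15]{ORDGRAPH}, which I expect carries over verbatim.
\item The earlier proof only used the absence of $1$-regular vertices to deduce $J_{\alpha}=0$. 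That step is now replaced by \prettyref{thm:main-thm}(2)--(4).
\end{itemize}
So the first claim is essentially the earlier verification, with the isomorphism $\mathcal{O}(X_{\alpha})\cong\mathcal{O}(\Lambda_{\alpha+1})$ now supplied by \prettyref{thm:main-thm}.

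\textbf{Uniqueness by induction.} Suppose $\pi(T_{v})\neq0$ for all $v\in\Lambda_{0}$. I prove by transfinite induction that $\pi\circ\rho_{\alpha}$ is injective on $\mathcal{O}(\Lambda_{\alpha})$ for every $\alpha$.
\begin{itemize}
\item \emph{Base case.} $\mathcal{O}(\Lambda_{0})\cong c_{0}(\Lambda_{0})$, whose ideals are determined by the $T_{v}$; since each $\pi(T_{v})\neq0$, $\pi\circ\rho_{0}$ is injective.
\item \emph{Successor step.} Via (b), $\pi\circ\rho_{\alpha+1}$ corresponds to a covariant representation $(\pi\circ\rho_{\alpha+1}\circ\psi_{\alpha},\;\pi\circ\rho_{\alpha})$ of $X_{\alpha}$. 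Its coefficient map is injective by the inductive hypothesis. The Cuntz-Krieger uniqueness theorem of \cite[Theorem 3.9]{CKU4CPA}, valid for correspondences satisfying condition (S), then gives injectivity on $\mathcal{O}(X_{\alpha})\cong\mathcal{O}(\Lambda_{\alpha+1})$.
\item \emph{Limit step.} By \prettyref{prop:inductive-limit}, $\mathcal{O}(\Lambda_{\beta})$ is the inductive limit of the $\mathcal{O}(\Lambda_{\alpha})$, $\alpha<\beta$. A homomorphism out of an inductive limit is injective iff it is injective on each stage. The connecting maps are injective by (a), so this applies.
\end{itemize}
Finally, choose $\alpha$ with $d(\Lambda)\subseteq[0,\omega^{\alpha})$ (possible since $\Lambda$ is a set); then $\Lambda_{\alpha}=\Lambda$ and $\pi$ is injective. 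The converse direction is trivial, since generators $T_{v}$ are nonzero.

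\textbf{Main obstacle.} The hard step is matching condition (S) of $\Lambda$ to the precise formulation of condition (S) in \cite{CKU4CPA}. In particular, I must check that the tensor powers $X_{\alpha}^{\otimes n}$ can be identified with the analogous modules built on paths of length $\omega^{\alpha}\cdot n$. I would first try to prove $X_{\alpha}^{\otimes n}\cong$ the completion of functions on $\Lambda^{\omega^{\alpha}\cdot n}$ via $\delta_{e_{1}}\otimes\cdots\otimes\delta_{e_{n}}\mapsto\delta_{e_{1}\cdots e_{n}}$, using unique factorization. The compression estimates would then be computed in the representation $\sigma_{\alpha}$ of \prettyref{prop:left-action-factors}.
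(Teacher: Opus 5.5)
Your outline is the paper's proof. \prettyref{lem:condition-S} gives condition (C), and \prettyref{thm:main-thm} then supplies $\mathcal{O}(X_{\alpha})\cong\mathcal{O}(\Lambda_{\alpha+1})$, which was the only place the absence of $1$-regular vertices entered \cite[Theorem 7.15]{ORDGRAPH]}. Your transfinite induction is just that proof written out: base case, successor step via \cite[Theorem 3.9]{CKU4CPA}, and limit step via \prettyref{prop:inductive-limit}.

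The one link you leave open is why the earlier verification of condition (S) applies to $X_{\alpha}$ at all. Closing it also removes your ``main obstacle.'' The correspondences used in \cite[Theorem 6.1]{ORDGRAPH} are the images $\left(\psi_{\alpha}(X_{\alpha}),\rho_{\alpha}^{\alpha+1}(\mathcal{O}(\Lambda_{\alpha}))\right)$ inside $\mathcal{O}(\Lambda_{\alpha+1})$. Since $\rho_{\alpha}^{\alpha+1}$ is injective by your item (a), the identity $\psi_{\alpha}(x)^{*}\psi_{\alpha}(x)=\rho_{\alpha}^{\alpha+1}(\langle x,x\rangle)$ shows $\psi_{\alpha}$ is isometric. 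So $(\psi_{\alpha},\rho_{\alpha}^{\alpha+1})$ is an isomorphism of $X_{\alpha}$ onto that image correspondence; this is exactly the observation the paper makes. Condition (S) therefore transfers directly, and you do not need to identify the tensor powers $X_{\alpha}^{\otimes n}$ or redo compression estimates in $\sigma_{\alpha}$.
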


\begin{proof}
If $\Lambda$ satisfies condition (S), then by \prettyref{lem:condition-S},
$\Lambda$ satisfies condition (C). Hence by \prettyref{thm:main-thm},
$\mathcal{O}\left(X_{\alpha}\right)\cong\mathcal{O}\left(\Lambda_{\alpha+1}\right)$
for every $\alpha\in\mathrm{Ord}$. The proof of \cite[Theorem 7.15]{ORDGRAPH}
only uses the hypothesis that $\Lambda$ has no 1-regular vertices
in order to prove $\mathcal{O}\left(X_{\alpha}\right)\cong\mathcal{O}\left(\Lambda_{\alpha+1}\right)$,
and therefore the same proof works more generally in this case. At
first glance, the correspondences defined in \cite[Theorem 6.1]{ORDGRAPH}
which are used in the proof of \cite[Theorem 7.15]{ORDGRAPH} appear
to be different than those we've defined in \prettyref{def:correspondences},
but they are actually isomorphic. Indeed, the correspondences defined
in \cite[Theorem 6.1]{ORDGRAPH} are the images $\left(\psi_{\alpha}\left(X_{\alpha}\right),\rho_{\alpha}^{\alpha+1}\left(\mathcal{O}\left(\Lambda_{\alpha}\right)\right)\right)$,
and since $\left(\psi_{\alpha},\rho_{\alpha}^{\alpha+1}\right)$ is
an injective representation of $X_{\alpha}$, these images are isomorphic
to $X_{\alpha}$.
\end{proof}

\section{Inductive Step}

In this section, we complete the inductive step for the proof of \prettyref{thm:main-thm}.
We let $\Lambda$ be a fixed ordinal graph and assume the following
for the rest of this section.

\begin{assumptions*}\label{assumptions:inductive-assumptions}

Assume we have the following:
\begin{enumerate}
\item $\Lambda$ satisfies condition (C) of \prettyref{def:normal}.
\item For each $\alpha<\zeta$, $\rho_{\alpha}=\rho_{\zeta}\circ\rho_{\alpha}^{\zeta}$
is injective. In particular, each $\rho_{\alpha}^{\zeta}:\mathcal{O}\left(\Lambda_{\alpha}\right)\rightarrow\mathcal{O}\left(\Lambda_{\zeta}\right)$
is injective, and we regard $\mathcal{O}\left(\Lambda_{\alpha}\right)$
as a $C^{*}$-subalgebra of $\mathcal{O}\left(\Lambda_{\zeta}\right)$.
\item For each $\alpha<\zeta$, the representation $\left(\psi_{\alpha},\rho_{\alpha}^{\alpha+1}\right):\left(X_{\alpha},\mathcal{O}\left(\Lambda_{\alpha}\right)\right)\rightarrow\mathcal{O}\left(\Lambda_{\alpha+1}\right)$
defined in \prettyref{prop:correspondence-representation} is universal
for covariant representations of $X_{\alpha}$. In particular, we
have an isomorphism $\psi_{\alpha}\times\rho_{\alpha}^{\alpha+1}:\mathcal{O}\left(X_{\alpha}\right)\rightarrow\mathcal{O}\left(\Lambda_{\alpha+1}\right)$
defined by
\begin{align*}
\psi_{\alpha}\left(\delta_{e}\right) & =T_{e} & \text{for }e\in\Lambda^{\omega^{\alpha}}\\
\rho_{\alpha}^{\alpha+1}\left(T_{f}\right) & =T_{f} & \text{for }f\in\Lambda_{\alpha}
\end{align*}
Thus the gauge action on $\mathcal{O}\left(X_{\alpha}\right)$ under
these isomorphisms is $\Gamma_{\alpha}$ defined in \prettyref{lem:alg-auts}.
\item For each $\alpha<\zeta$, the Katsura ideal $J_{\alpha}=\left(\ker\varphi_{\alpha}\right)^{\perp}\cap\varphi_{\alpha}^{-1}\left(\mathcal{K}\left(X_{\alpha}\right)\right)$
is generated in $\mathcal{O}\left(\Lambda_{\alpha}\right)$ by $\left\{ T_{v}:v\in\Lambda_{0}\text{ is }\alpha\text{-regular}\right\} $.
In particular, $J_{\alpha+1}$ is invariant under the gauge action
$\Gamma_{\alpha}$ when $\alpha+1<\zeta$. Moreover, for each $\beta<\alpha<\zeta$,
$\mathcal{O}\left(\Lambda_{\beta}\right)\cap\left(J_{\alpha}+\psi_{\beta}^{(1)}\left(\mathcal{K}\left(X_{\beta}\right)\right)\right)\subseteq J_{\beta}$,
and $\mathcal{O}\left(\Lambda_{0}\right)\cap J_{\alpha}=\left\{ T_{v}:v\in\Lambda_{0}\text{ is }\alpha\text{-regular}\right\} $.
\end{enumerate}
\end{assumptions*}
\begin{defn}
\label{def:cancellative}If $f\in\Lambda^{*}$, then $f$ is \emph{$\alpha$-cancellative}
if for every $0\leq\varepsilon\leq\beta<\omega^{\alpha+1}$, $f=f\left(\beta\right)f\left(\varepsilon\right)^{-1}f$
implies $-\varepsilon+\beta<\omega^{\alpha}$. Define $C_{\alpha}=\left\{ f\in\Lambda^{*}:f\text{ is }\alpha\text{-cancellative}\right\} $.
\end{defn}

We define $\alpha$-cancellative for members of $\Lambda^{*}$ so
that we can apply the definition for paths in $\Lambda$ and elements
of $\partial\Lambda$, which are all elements of $\Lambda^{*}$. We
choose this name because $\alpha$-cancellative is a weak form of
right cancellation. For example, if $f$ is $0$-cancellative, then
for every finite length path $g,h\in\Lambda_{1}r\left(f\right)$,
$gf=hf$ implies $g=h$. Also, $f$ cancels on the right if and only
if $f$ is $\alpha$-cancellative for all $\alpha\in\mathrm{Ord}$.
We won't make use of either of these facts, though.
\begin{lem}
\label{lem:cancellative-length-independent}For each $f\in\Lambda^{*}$,
we have the following implications:
\begin{enumerate}
\item If $L\left(f\right)>\omega^{\alpha+1}$, then $f$ is $\alpha$-cancellative
iff $f\left(\omega^{\alpha+1}\right)$ is $\alpha$-cancellative.
\item If $L\left(f\right)<\omega^{\alpha+1}$, then $f$ is $\alpha$-cancellative.
\end{enumerate}
\end{lem}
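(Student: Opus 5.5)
The plan is to treat both parts directly from \prettyref{def:cancellative}, using only ordinal arithmetic and \prettyref{lem:path-factor}. Throughout, I read $f\left(\beta\right)f\left(\varepsilon\right)^{-1}f$ as the composition of $f\left(\beta\right)\in\Lambda$ with $f\left(\varepsilon\right)^{-1}f\in\Lambda^{*}$. This requires $s\left(f\left(\beta\right)\right)=s\left(f\left(\varepsilon\right)\right)$, and the result has length $\beta+\left(-\varepsilon+L\left(f\right)\right)$. Equality in $\Lambda^{*}$ means equality of lengths and of all values at every $\eta<L\left(f\right)$.

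For (2), suppose $L\left(f\right)<\omega^{\alpha+1}$ and $f=f\left(\beta\right)f\left(\varepsilon\right)^{-1}f$ for some $\varepsilon\leq\beta<\omega^{\alpha+1}$. Put $\gamma=-\varepsilon+\beta$ and $M=-\varepsilon+L\left(f\right)$. Comparing lengths gives $\varepsilon+M=L\left(f\right)=\beta+M=\varepsilon+\gamma+M$. Left cancellation of ordinal addition then yields $M=\gamma+M$. Iterating, $M=\gamma\cdot k+M\geq\gamma\cdot k$ for every $k\in\mathbb{N}$, so $M\geq\gamma\cdot\omega$. If $\gamma\geq\omega^{\alpha}$, this gives $M\geq\omega^{\alpha}\cdot\omega=\omega^{\alpha+1}$. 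That contradicts $M\leq L\left(f\right)<\omega^{\alpha+1}$. Hence $\gamma<\omega^{\alpha}$, and $f$ is $\alpha$-cancellative.

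For (1), let $g=f\left(\omega^{\alpha+1}\right)$, viewed in $\Lambda^{*}$ with $L\left(g\right)=\omega^{\alpha+1}+1$. Note that $g\left(\beta\right)=f\left(\beta\right)$ and $g\left(\varepsilon\right)=f\left(\varepsilon\right)$ for $\varepsilon\leq\beta<\omega^{\alpha+1}$. It suffices to show that, for each such pair, $f=f\left(\beta\right)f\left(\varepsilon\right)^{-1}f$ holds iff $g=g\left(\beta\right)g\left(\varepsilon\right)^{-1}g$ holds. The key arithmetic fact is absorption: for $\beta,\varepsilon<\omega^{\alpha+1}$ and $\eta\geq\omega^{\alpha+1}$ we have $-\beta+\eta=\eta$ and $\varepsilon+\eta=\eta$. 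In particular both sides of each equation have the same length, since $\beta+\left(-\varepsilon+L\right)=L$ when $L>\omega^{\alpha+1}$, and likewise for $L\left(g\right)$.

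For the forward direction, evaluate at $\omega^{\alpha+1}$ using the composition rule in $\Lambda^{*}$. This gives
\[
f\left(\omega^{\alpha+1}\right)=f\left(\beta\right)\left(f\left(\varepsilon\right)^{-1}f\right)\left(-\beta+\omega^{\alpha+1}\right)=f\left(\beta\right)f\left(\varepsilon\right)^{-1}f\left(\varepsilon+\omega^{\alpha+1}\right)=f\left(\beta\right)f\left(\varepsilon\right)^{-1}f\left(\omega^{\alpha+1}\right),
\]
which is exactly $g=g\left(\beta\right)g\left(\varepsilon\right)^{-1}g$.

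For the converse, assume $g=f\left(\beta\right)f\left(\varepsilon\right)^{-1}g$ and check the values at every $\eta<L\left(f\right)$. If $\eta\geq\omega^{\alpha+1}$, the same absorption gives $\left(f\left(\beta\right)f\left(\varepsilon\right)^{-1}f\right)\left(\eta\right)=f\left(\beta\right)f\left(\varepsilon\right)^{-1}f\left(\eta\right)$. Writing $f\left(\eta\right)=g\,g^{-1}f\left(\eta\right)$ and using the hypothesis on $g$, this equals $g\,g^{-1}f\left(\eta\right)=f\left(\eta\right)$. If $\eta<\omega^{\alpha+1}$, both sides at $\eta$ are the initial segments of length $\eta$ of their values at $\omega^{\alpha+1}$, by \prettyref{lem:path-factor}(1) and the prefix-compatibility built into $\Lambda^{*}$. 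Those values at $\omega^{\alpha+1}$ agree by the previous case, so the values at $\eta$ agree too.

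The main obstacle is bookkeeping rather than ideas. One must interpret the notation $f\left(\beta\right)f\left(\varepsilon\right)^{-1}f$ correctly inside $\Lambda^{*}$ and apply the absorption $-\beta+\eta=\eta=\varepsilon+\eta$ at the right places; once that is pinned down, each step is a direct computation.
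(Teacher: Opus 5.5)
Your proof is correct. Part (1) follows the paper's route. The paper simply asserts that, for $\varepsilon\leq\beta<\omega^{\alpha+1}$, the equation $f=f(\beta)f(\varepsilon)^{-1}f$ is equivalent to $f(\omega^{\alpha+1})=f(\beta)f(\varepsilon)^{-1}f(\omega^{\alpha+1})$. You actually verify this equivalence:
\begin{itemize}
\item the absorption identities $\varepsilon+\eta=\eta=-\beta+\eta$ for $\eta\geq\omega^{\alpha+1}$ give equality of lengths;
\item evaluating at $\omega^{\alpha+1}$ gives the forward direction;
\item for the converse, you write $f(\eta)=g\,g^{-1}f(\eta)$ for $\eta\geq\omega^{\alpha+1}$, and use the prefix property of $\Lambda^{*}$ for $\eta<\omega^{\alpha+1}$.
\end{itemize}

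Part (2) is where your route genuinely differs. The paper compares the $\omega^{\alpha}$-coefficients in the Cantor normal forms of the two sides of $L(f)=\beta+(-\varepsilon+L(f))$. From this it reads off that $\beta$ and $\varepsilon$ have the same $\omega^{\alpha}$-coefficient. You instead set $M=-\varepsilon+L(f)$ and $\gamma=-\varepsilon+\beta$, cancel $\varepsilon$ on the left to get $M=\gamma+M$, and iterate to obtain $M\geq\gamma\cdot\omega$. Then $\gamma\geq\omega^{\alpha}$ would force $M\geq\omega^{\alpha+1}$, contradicting $M\leq L(f)<\omega^{\alpha+1}$.

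What each buys: your argument never uses normal forms. It avoids the small computation of the $\omega^{\alpha}$-coefficient of $-\varepsilon+L(f)$, which differs according to whether $\varepsilon$ and $L(f)$ share the same $\omega^{\alpha}$-coefficient. It also yields a uniform bound $(-\varepsilon+\beta)\cdot\omega\leq L(f)$, valid with no assumption on $L(f)$. The paper's coefficient comparison instead gives the conclusion directly in the form ``$\beta$ and $\varepsilon$ lie in the same block $[\omega^{\alpha}\cdot n,\omega^{\alpha}\cdot(n+1))$''. That is the bookkeeping the paper uses again when it defines the shift function $v$.
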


\begin{proof}
To prove (1), note that for all $\varepsilon\leq\beta<\omega^{\alpha+1}$,
$f=f\left(\beta\right)f\left(\varepsilon\right)^{-1}f$ is equivalent
to $f\left(\omega^{\alpha+1}\right)=f\left(\beta\right)f\left(\varepsilon\right)^{-1}f\left(\omega^{\alpha+1}\right)$.
For (2), suppose $f=f\left(\beta\right)f\left(\varepsilon\right)^{-1}f$.
Then $L\left(f\right)=\beta-\varepsilon+L\left(f\right)$, and if
all values are smaller than $\omega^{\alpha+1}$, then the coefficients
for $\omega^{\alpha}$ in Cantor's normal form on both sides must
be equal. In particular, the coefficient for $\beta$ must equal the
coefficient for $\varepsilon$, and $-\varepsilon+\beta<\omega^{\alpha}$.
\end{proof}
\begin{lem}
\label{lem:returning-loops}If $f\in\Lambda^{*}$ is not $\alpha$-cancellative,
then there exist $g,h\in\Lambda_{\alpha+1}$ such that $r\left(g\right)=r\left(f\right)$,
$s\left(g\right)=r\left(h\right)=s\left(h\right)$, $d\left(h\right)\geq\omega^{\alpha}$,
and $hg^{-1}f=g^{-1}f$.
\end{lem}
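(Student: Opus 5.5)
The plan is to unwind the definition of $\alpha$-cancellative directly and take $g$ to be an initial segment of $f$ and $h$ the segment between the two factorization points. Since $f$ is not $\alpha$-cancellative, \prettyref{def:cancellative} gives ordinals $0\leq\varepsilon\leq\beta<\omega^{\alpha+1}$ with
\[
f=f\left(\beta\right)f\left(\varepsilon\right)^{-1}f
\]
and $-\varepsilon+\beta\geq\omega^{\alpha}$. Implicit here is $\beta<L\left(f\right)$, since $f\left(\beta\right)$ must be defined.

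Set $g=f\left(\varepsilon\right)$. By the factorization property in \prettyref{def:ordinal-graph}, applied to $f\left(\beta\right)$ at $\varepsilon\leq d\left(f\left(\beta\right)\right)$, there is a unique $h$ with $f\left(\beta\right)=f\left(\beta\right)\left(\varepsilon\right)h$ and $d\left(h\right)=-\varepsilon+\beta$. Since $f\left(\beta\right)\in f\left(\varepsilon\right)\Lambda$ by the definition of $\Lambda^{*}$, we have $f\left(\beta\right)\left(\varepsilon\right)=g$, so $f\left(\beta\right)=gh$.

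The routine checks are then as follows.
\begin{enumerate}
\item $d\left(g\right)=\varepsilon<\omega^{\alpha+1}$ and $d\left(h\right)=-\varepsilon+\beta\leq\beta<\omega^{\alpha+1}$, so $g,h\in\Lambda_{\alpha+1}$. Also $d\left(h\right)\geq\omega^{\alpha}$ by the choice of $\varepsilon,\beta$.
\item $r\left(g\right)=r\left(f\left(\varepsilon\right)\right)=f\left(0\right)=r\left(f\right)$.
\item $r\left(h\right)=s\left(g\right)$, because $gh$ is a composition.
\end{enumerate}
The identity $f=f\left(\beta\right)f\left(\varepsilon\right)^{-1}f$ then reads
\[
g\left(g^{-1}f\right)=f=gh\left(g^{-1}f\right),
\]
where $g^{-1}f\in\Lambda^{*}$ is defined since $f\left(d\left(g\right)\right)=g$. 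In particular $h\left(g^{-1}f\right)$ is a defined composition, so $s\left(h\right)=r\left(g^{-1}f\right)=s\left(g\right)$.

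It remains to cancel $g$ on the left inside $\Lambda^{*}$ to get $hg^{-1}f=g^{-1}f$. This is the one step needing some care, since left cancellation is so far stated only for $\Lambda$. I would prove it coordinatewise. If $gx=gy$ for $x,y\in\Lambda^{*}$, then $L\left(x\right)=L\left(y\right)$ by left cancellation of ordinal addition. For each $\gamma<L\left(x\right)$, the coordinate at $d\left(g\right)+\gamma$ gives $gx\left(\gamma\right)=gy\left(\gamma\right)$ in $\Lambda$, hence $x\left(\gamma\right)=y\left(\gamma\right)$. Alternatively, the notation $g^{-1}\left(\cdot\right)$ on $\Lambda^{*}$ from Section 3 gives it directly: apply $g^{-1}$ to both sides and use $g^{-1}\left(gz\right)=z$.

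I expect no real obstacle. The only points to watch are this extension of left cancellation to $\Lambda^{*}$ and the bookkeeping $d\left(h\right)=-\varepsilon+\beta$ (left subtraction of ordinals).
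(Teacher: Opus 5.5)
Your proof is correct and is essentially the paper's argument. The paper uses the same loop $h=f(\varepsilon)^{-1}f(\beta)$ but takes $g=f(\beta)$ instead of $f(\varepsilon)$, getting $g^{-1}f=f(\beta)^{-1}f=f(\varepsilon)^{-1}f=hg^{-1}f$ by applying $f(\beta)^{-1}$ to $f=f(\beta)f(\varepsilon)^{-1}f$; this is the same left cancellation you perform, and either prefix works because $s(f(\varepsilon))=s(f(\beta))$.
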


\begin{proof}
Suppose $f$ is not $\alpha$-cancellative. Choose $0\leq\varepsilon\leq\beta<\omega^{\alpha+1}$
such that $f\left(\beta\right)f\left(\varepsilon\right)^{-1}f=f$
yet $-\varepsilon+\beta\geq\omega^{\alpha}$. Define
\begin{align*}
g & =f\left(\beta\right) & h & =f\left(\varepsilon\right)^{-1}f\left(\beta\right)
\end{align*}
Then we have
\begin{align*}
g^{-1}f & =f\left(\beta\right)^{-1}f\\
 & =f\left(\beta\right)^{-1}f\left(\beta\right)f\left(\varepsilon\right)^{-1}f\\
 & =f\left(\varepsilon\right)^{-1}f\\
 & =f\left(\varepsilon\right)^{-1}f\left(\beta\right)f\left(\beta\right)^{-1}f\\
 & =hg^{-1}f
\end{align*}
Also, $d\left(h\right)=-\varepsilon+\beta\geq\omega^{\alpha}$, as
desired.
\end{proof}
\begin{cor}
\label{cor:non-returning-equiv}For $f\in\Lambda^{*}$ and $p\in\Lambda_{\alpha+1}r\left(f\right)$,
$pf$ is $\alpha$-cancellative if and only if $f$ is $\alpha$-cancellative.
\end{cor}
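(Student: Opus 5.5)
We prove both implications by contraposition, working directly with \prettyref{def:cancellative} and the factorization identities of \prettyref{lem:path-factor}, extended to $\Lambda^{*}$ via the conventions for $pf$ and $p^{-1}(pf)$. Set $\pi=d(p)<\omega^{\alpha+1}$.

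\emph{If $f$ is not $\alpha$-cancellative, then neither is $pf$.} Choose $\varepsilon\leq\beta<\omega^{\alpha+1}$ with $f=f(\beta)f(\varepsilon)^{-1}f$ and $-\varepsilon+\beta\geq\omega^{\alpha}$. Put $\beta'=\pi+\beta$ and $\varepsilon'=\pi+\varepsilon$. Both are still below $\omega^{\alpha+1}$, since ordinals below $\omega^{\alpha+1}$ are closed under addition. Moreover $-\varepsilon'+\beta'=-\varepsilon+\beta\geq\omega^{\alpha}$ by left cancellation of ordinal addition. By \prettyref{lem:path-factor}(2), $(pf)(\beta')=pf(\beta)$ and $(pf)(\varepsilon')=pf(\varepsilon)$. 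Therefore
\[
(pf)(\beta')(pf)(\varepsilon')^{-1}(pf)=pf(\beta)f(\varepsilon)^{-1}f=pf,
\]
so $pf$ is not $\alpha$-cancellative.

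\emph{If $pf$ is not $\alpha$-cancellative, then neither is $f$.} This is the harder direction, because the witnesses $\varepsilon\leq\beta$ for $pf$ may be smaller than $\pi$, so the returning segment could lie partly inside $p$. I will use \prettyref{lem:returning-loops} applied to $pf$. It gives $g,h\in\Lambda_{\alpha+1}$ with $r(g)=r(p)$, $h$ a loop at $s(g)$, $d(h)\geq\omega^{\alpha}$, and $hg^{-1}(pf)=g^{-1}(pf)$. Iterating, $h^{n}g^{-1}pf=g^{-1}pf$ for all $n$.

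Pick $n$ with $d(h^{n})=d(h)\cdot n$ large enough that $d(g)+d(h^{n})\geq\pi$. This is possible because $d(h)\geq\omega^{\alpha}$, so the $\omega^{\alpha}$-coefficient of $d(h)\cdot n$ grows without bound. Let $y=g^{-1}pf$. Comparing initial segments of length $\pi$ in $pf=gy=gh^{n}y$ shows $gh^{n}\in p\Lambda$. Write $gh^{n}=pq$ with $q\in\Lambda_{\alpha+1}$. Cancelling $p$ on the left in $pf=gh^{n}y$ gives $f=qy$.

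Now set $\varepsilon=d(q)$ and $\beta=d(qh)=d(q)+d(h)$, both below $\omega^{\alpha+1}$, with $-\varepsilon+\beta=d(h)\geq\omega^{\alpha}$. Since $hy=y$, we have $f=qy=qhy$. Hence $f(\beta)=qh$ and $f(\varepsilon)=q$, and so $f(\beta)f(\varepsilon)^{-1}f=qh\,y=f$. Thus $f$ is not $\alpha$-cancellative.

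The main obstacle is making the manipulations in $\Lambda^{*}$ rigorous, specifically extracting $gh^{n}=pq$ from equality of infinite-length objects. I expect to handle this by restricting to a genuine path: take a large ordinal $\mu<L$ and compare $(pf)(\mu)$ using \prettyref{lem:path-factor} and unique factorization. If $L(pf)$ is too short for this, \prettyref{lem:cancellative-length-independent}(2) makes both statements trivially true.
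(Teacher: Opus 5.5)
Your proof is correct and follows the paper's argument. The direction ``$f$ not $\alpha$-cancellative $\Rightarrow$ $pf$ not'' is the same shift of the witnesses by $d(p)$, and the converse applies \prettyref{lem:returning-loops} to $pf$ and passes to a large power of $h$, exactly as the paper does; the only difference is that choosing $n$ with $d(gh^{n})\geq d(p)$ and writing $gh^{n}=pq$ merges the paper's two cases ($d(g)\geq d(p)$ versus $d(g)<d(p)$) into one, and the fallback you sketch at the end is unnecessary since the paper's conventions for $ef$ and $e^{-1}f$ in $\Lambda^{*}$ already make each step rigorous.
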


\begin{proof}
Suppose $pf$ is not $\alpha$-cancellative. By \prettyref{lem:returning-loops},
choose $g,h\in\Lambda_{\alpha+1}$ such that $hg^{-1}pf=g^{-1}pf$.
Since we may choose $d\left(h\right)\geq\omega^{\alpha}$, we may
assume without loss of generality by replacing $h$ with a larger
power $h^{n}$ that $d\left(h\right)\geq d\left(p\right)+\omega^{\alpha}$.
Then $p^{-1}ghg^{-1}pf=f$. We now split into two cases.
\begin{casenv}
\item If $d\left(g\right)\geq d\left(p\right)$, define $\beta=d\left(p^{-1}gh\right)=-d\left(p\right)+d\left(g\right)+d\left(h\right)$
and $\varepsilon=-d\left(p\right)+d\left(g\right)$. Then $f\left(\beta\right)f\left(\varepsilon\right)^{-1}f=p^{-1}ghg^{-1}pf=f$,
yet $-\varepsilon+\beta=d\left(h\right)\geq\omega^{\alpha}$.
\item If $d\left(g\right)<d\left(p\right)$, then $p^{-1}ghg^{-1}p$ is
a well-defined path since $d\left(p\right)<d\left(p\right)+\omega^{\alpha}\leq d\left(h\right)\leq d\left(gh\right)$.
Define $\beta=p^{-1}ghg^{-1}p$ and $\varepsilon=0$. Then $f\left(\beta\right)f\left(\varepsilon\right)^{-1}f=f$,
but $-\varepsilon+\beta=\beta\geq-d\left(p\right)+d\left(h\right)\geq\omega^{\alpha}$.
\end{casenv}
In any case, we see $f$ is not $\alpha$-cancellative.

On the other hand, if $pf$ is $\alpha$-cancellative and $0\leq\varepsilon\leq\beta<\omega^{\alpha+1}$
such that $f\left(\beta\right)f\left(\varepsilon\right)^{-1}f=f$,
then
\begin{align*}
pf & =pf\left(\beta\right)f\left(\varepsilon\right)^{-1}p^{-1}pf\\
 & =\left(pf\right)\left(d\left(p\right)+\beta\right)\left(pf\right)\left(d\left(p\right)+\varepsilon\right)^{-1}pf
\end{align*}
Thus $-\varepsilon-d\left(p\right)+d\left(p\right)+\beta=-\varepsilon+\beta<\omega^{\alpha}$,
and $f$ is $\alpha$-cancellative.
\end{proof}
\begin{lem}
\label{lem:loop-reg}If $v\in\Lambda_{0}$ is $\alpha$-regular, then
for each $g\in v\Lambda_{\alpha}\backslash\Lambda_{0}$, $s\left(g\right)\not=v$.
\end{lem}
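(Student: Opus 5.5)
The plan is a pigeonhole argument on the finite set $v\Lambda^{\omega^{\alpha}}$, reducing to a violation of condition (C), which holds by the first of the \hyperref[assumptions:inductive-assumptions]{Inductive Assumptions}. The argument is the same as in the first part of the proof of \prettyref{prop:hom-injective}, adapted to $g\in\Lambda_{\alpha}$.

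Suppose for contradiction that $v$ is $\alpha$-regular and $g\in v\Lambda_{\alpha}\backslash\Lambda_{0}$ satisfies $s\left(g\right)=v$. The steps, in order, are:

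\begin{enumerate}
\item \emph{$v\Lambda^{\omega^{\alpha}}$ is finite and non-empty.} Finiteness is $\alpha$-row-finiteness of $v$. Non-emptiness follows from $\alpha$-source-regularity applied to the vertex $v\in v\Lambda_{\alpha}$: there is some $f\in s\left(v\right)\Lambda^{\omega^{\alpha}}=v\Lambda^{\omega^{\alpha}}$.
\item \emph{$g$ permutes $v\Lambda^{\omega^{\alpha}}$.} Since $0<d\left(g\right)<\omega^{\alpha}$, for $h\in v\Lambda^{\omega^{\alpha}}$ we have $d\left(gh\right)=d\left(g\right)+\omega^{\alpha}=\omega^{\alpha}$ and $r\left(gh\right)=v$. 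So $h\mapsto gh$ maps $v\Lambda^{\omega^{\alpha}}$ into itself. It is injective by left cancellation, hence a bijection of a finite set.
\item \emph{A power of $g$ fixes every element.} Some power of this permutation is the identity, so there is $n\geq1$ with $g^{n}h=h$ for all $h\in v\Lambda^{\omega^{\alpha}}$. Alternatively, as in \prettyref{prop:hom-injective}, pigeonhole on $\left\{ h,gh,g^{2}h,\dots\right\}$ and cancel on the left.
\item \emph{Contradiction with condition (C).} Fix one $f\in v\Lambda^{\omega^{\alpha}}$ from step 1. Then $g^{n}f=f$, $f\in\Lambda^{\omega^{\alpha}}$, and $r\left(f\right)=v$ is $\alpha$-regular. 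Condition (C) forces $d\left(g^{n}\right)=0$. But $d\left(g^{n}\right)=d\left(g\right)\cdot n\geq d\left(g\right)>0$, a contradiction.
\end{enumerate}

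Hence $s\left(g\right)\not=v$.

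There is no serious obstacle. The only points needing care are that $d\left(g\right)+\omega^{\alpha}=\omega^{\alpha}$, which uses $d\left(g\right)<\omega^{\alpha}$ (that is, $g\in\Lambda_{\alpha}$), and that non-emptiness of $v\Lambda^{\omega^{\alpha}}$ really follows from source-regularity applied to the trivial path $v$.
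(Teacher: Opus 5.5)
Your proposal is correct and follows the paper's argument: pigeonhole on the orbit $\{g^{k}f\}$ inside the finite set $v\Lambda^{\omega^{\alpha}}$, then left cancellation to get $g^{n}f=f$ with $n>0$, which contradicts condition (C). You also make explicit two points the paper leaves implicit: that $v\Lambda^{\omega^{\alpha}}\neq\emptyset$ (by source-regularity applied to $v\in v\Lambda_{\alpha}$), and that $d(g^{n})=d(g)\cdot n>0$.
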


\begin{proof}
Suppose $v\in\Lambda_{0}$ is $\alpha$-regular and $g\in v\Lambda_{\alpha}\backslash\Lambda_{0}$
such that $s\left(g\right)=r\left(g\right)=v$. Let $f\in v\Lambda^{\omega^{\alpha}}$
be arbitrary. Since $v\Lambda^{\omega^{\alpha}}$ is finite, $\left\{ g^{n}f:n\in\mathbb{N}\right\} $
is finite. Therefore, there exist distinct $n_{0},n_{1}\in\mathbb{N}$
such that $g^{n_{0}}f=g^{n_{1}}f$. Cancelling on the left, we have
$g^{n}f=f$ for some $n\in\mathbb{N}$. This would imply $\Lambda$
does not satisfy condition (C), contradicting \prettyref{assumptions:inductive-assumptions}.
Thus such $g$ does not exist.
\end{proof}
\begin{cor}
\label{cor:alpha-reg-non-returning}If $v\in\Lambda_{0}$ is $\alpha+1$-regular,
then every $f\in v\Lambda^{*}$ is $\alpha$-cancellative.
\end{cor}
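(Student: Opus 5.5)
We argue by contradiction, combining \prettyref{lem:returning-loops} with \prettyref{lem:loop-reg} applied at level $\alpha+1$. Suppose $v$ is $\alpha+1$-regular and $f\in v\Lambda^{*}$ is not $\alpha$-cancellative. By \prettyref{lem:returning-loops} there exist $g,h\in\Lambda_{\alpha+1}$ with $r\left(g\right)=r\left(f\right)=v$, $s\left(g\right)=r\left(h\right)=s\left(h\right)$, $d\left(h\right)\geq\omega^{\alpha}$ and $hg^{-1}f=g^{-1}f$. The aim is to produce a nontrivial cycle of length less than $\omega^{\alpha+1}$ based at an $\alpha+1$-regular vertex.

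First, since $g\in v\Lambda_{\alpha+1}$ and $v$ is $\alpha+1$-regular, part (1) of \prettyref{lem:regular-hereditary} shows that $w=s\left(g\right)$ is $\alpha+1$-regular. Next, $h$ lies in $w\Lambda_{\alpha+1}$, has source $w$, and satisfies $d\left(h\right)\geq\omega^{\alpha}>0$, so $h\notin\Lambda_{0}$. Then \prettyref{lem:loop-reg}, applied with $\alpha+1$ in place of $\alpha$, says that no $h\in w\Lambda_{\alpha+1}\backslash\Lambda_{0}$ can have $s\left(h\right)=w$. This is the desired contradiction, so every $f\in v\Lambda^{*}$ is $\alpha$-cancellative.

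The only point needing care is that \prettyref{lem:loop-reg} is valid at level $\alpha+1$. Its proof uses only finiteness of $w\Lambda^{\omega^{\alpha+1}}$, nonemptiness of this set (from $\alpha+1$-source-regularity), and condition (C), which holds for every ordinal by \prettyref{assumptions:inductive-assumptions}. So the lemma applies at any level; in particular it needs no inductive hypothesis about $\alpha+1<\zeta$, and I expect no obstacle here.

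Alternatively, one could avoid \prettyref{lem:loop-reg} and argue directly. Pick $y\in w\Lambda^{\omega^{\alpha+1}}$; by finiteness of this set, $\left\{ h^{n}y:n\in\mathbb{N}\right\}$ is finite, so $h^{n}y=h^{m}y$ for some $n\neq m$. Left cancellation then gives $h^{k}y=y$ with $k>0$ and $d\left(h^{k}\right)>0$, contradicting condition (C).
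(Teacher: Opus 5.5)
Your proof is correct and is essentially the paper's argument: \prettyref{lem:returning-loops} produces the loop $h$, \prettyref{lem:regular-hereditary} makes $s\left(g\right)=r\left(h\right)=s\left(h\right)$ $\alpha+1$-regular, and \prettyref{lem:loop-reg} at level $\alpha+1$ gives the contradiction. The paper leaves two points implicit that you check explicitly: that $d\left(h\right)\geq\omega^{\alpha}>0$ puts $h$ outside $\Lambda_{0}$, and that \prettyref{lem:loop-reg} holds at every level because it relies only on condition (C).
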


\begin{proof}
If $f\in v\Lambda^{*}$ is not $\alpha$-cancellative, there are $g,h\in\Lambda_{\alpha+1}$
such that $hg^{-1}f=g^{-1}f$. Since $r\left(g\right)=r\left(f\right)$
is $\alpha+1$-regular, \prettyref{lem:regular-hereditary} implies
$s\left(g\right)=s\left(h\right)=r\left(h\right)$ is $\alpha+1$-regular,
but this contradicts \prettyref{lem:loop-reg}.
\end{proof}
\begin{prop}
\label{prop:good-boundary-paths-exist}For each $v\in\Lambda_{0}$,
there exists $f\in v\partial\Lambda$ such that for each $\alpha\in\mathrm{Ord}$,
either $L\left(f\right)\leq\omega^{\alpha+1}$ or $f$ is $\alpha$-cancellative.
\end{prop}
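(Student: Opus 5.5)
The plan is a Zorn's lemma argument, modelled on \prettyref{lem:boundary-paths-exist}, carried out inside the subset of $v\Lambda^{*}$ that already has the required property. Let
\[
P=\left\{ f\in v\Lambda^{*}:\text{for each }\alpha\in\mathrm{Ord},\ L\left(f\right)\leq\omega^{\alpha+1}\text{ or }f\text{ is }\alpha\text{-cancellative}\right\},
\]
partially ordered by the order on $\Lambda^{*}$. The vertex $v$, viewed in $\Lambda^{*}$ with $L\left(v\right)=1$, lies in $P$, since $1\leq\omega^{\alpha+1}$ for every $\alpha$. I will show that chains in $P$ have upper bounds in $P$, take a maximal element $f$ of $P$, and then show $f\in\partial\Lambda$.

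\textbf{Chains.} Let $C\subseteq P$ be a chain and let $g$ be the upper bound built in \prettyref{lem:boundary-paths-exist}, so $L\left(g\right)=\sup_{f\in C}L\left(f\right)$. Fix $\alpha$ with $L\left(g\right)>\omega^{\alpha+1}$. Some $f\in C$ has $L\left(f\right)>\omega^{\alpha+1}$, and since $f\in P$ this $f$ is $\alpha$-cancellative. Also $g\left(\omega^{\alpha+1}\right)=f\left(\omega^{\alpha+1}\right)$. By \prettyref{lem:cancellative-length-independent}(1), applied once to $f$ and once to $g$, it follows that $g$ is $\alpha$-cancellative. Hence $g\in P$, and Zorn's lemma gives a maximal $f\in P$.

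\textbf{Maximal elements are boundary paths.} This is the main step. Suppose, for contradiction, that there are $\gamma<L\left(f\right)$ and $\alpha$ such that $w=s\left(f\left(\gamma\right)\right)$ is $\alpha$-regular and $L\left(f\right)\leq\gamma+\omega^{\alpha}$. Exactly as in \prettyref{lem:boundary-paths-exist}, take the point $\eta$ after which $\epsilon\mapsto\left|f\left(\gamma+\epsilon\right)\Lambda^{\omega^{\alpha}}\right|$ is constant, and choose $g\in f\left(\gamma+\eta\right)\Lambda^{\omega^{\alpha}}$. Then $f<g$ and $L\left(g\right)=\gamma+\omega^{\alpha}+1$. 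The remaining task is to prove $g\in P$, which contradicts maximality.

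Fix $\beta$ with $L\left(g\right)>\omega^{\beta+1}$. There are two cases.
\begin{casenv}
\item If $L\left(f\right)>\omega^{\beta+1}$, then $g\left(\omega^{\beta+1}\right)=f\left(\omega^{\beta+1}\right)$. So $g$ is $\beta$-cancellative by \prettyref{lem:cancellative-length-independent}(1), because $f$ is.
\item If $L\left(f\right)\leq\omega^{\beta+1}$, then $\gamma<\omega^{\beta+1}$ and $\omega^{\beta+1}\leq\gamma+\omega^{\alpha}$. If $\beta\geq\alpha$, both $\gamma$ and $\omega^{\alpha}$ are below $\omega^{\beta+1}$, so $\gamma+\omega^{\alpha}<\omega^{\beta+1}$, which is impossible. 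Therefore $\beta+1\leq\alpha$. By \prettyref{lem:regular-hereditary}(2), $w$ is $\left(\beta+1\right)$-regular, so by \prettyref{cor:alpha-reg-non-returning} the element $f\left(\gamma\right)^{-1}g\in w\Lambda^{*}$ is $\beta$-cancellative. Since $d\left(f\left(\gamma\right)\right)=\gamma<\omega^{\beta+1}$, we have $f\left(\gamma\right)\in\Lambda_{\beta+1}$. Then \prettyref{cor:non-returning-equiv} shows that $g=f\left(\gamma\right)\left(f\left(\gamma\right)^{-1}g\right)$ is $\beta$-cancellative.
\end{casenv}

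In both cases $g$ is $\beta$-cancellative, so $g\in P$ and $f<g$, contradicting the maximality of $f$. Hence $f\in v\partial\Lambda$, and $f\in P$ is exactly the statement to be proved. I expect the ordinal bookkeeping in the second case to be the delicate point, specifically ruling out $\beta\geq\alpha$.
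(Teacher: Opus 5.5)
Your argument is correct, and it takes a different route from the paper's.

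\textbf{The paper's route.} The paper builds the witness explicitly. It lets $S$ be the set of levels $\alpha$ at which some $g\in v\Lambda_{\alpha+1}$ leads to a returning loop $h\in\Lambda_{\alpha+1}$ with $d(h)\geq\omega^{\alpha}$.
\begin{itemize}
\item If $S=\emptyset$, any boundary path supplied by \prettyref{lem:boundary-paths-exist} is cancellative at every level, by \prettyref{lem:returning-loops}.
\item Otherwise it takes the least $\alpha\in S$ and uses the explicit path $gh^{\omega}$, which has length exactly $\omega^{\alpha+1}$. It checks $h^{\omega}\in\partial\Lambda$ via \prettyref{lem:loop-reg} and obtains cancellativity at levels below $\alpha$ from the minimality of $\alpha$.
\end{itemize}

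\textbf{Your route.} You run Zorn's lemma inside the good set $P$, so the work moves into two closure properties.
\begin{itemize}
\item Closure under suprema of chains, which follows from \prettyref{lem:cancellative-length-independent}.
\item Closure under the one-step extension of \prettyref{lem:boundary-paths-exist}. That extension does not use maximality, so reusing it is legitimate.
\end{itemize}
The key observation in the second property is this: extending past an $\alpha$-regular vertex $w$ can only create new requirements at levels $\beta<\alpha$. Those requirements hold automatically at $w$ by \prettyref{cor:alpha-reg-non-returning}, and they transfer back along $f(\gamma)$ by \prettyref{cor:non-returning-equiv}.

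\textbf{Comparison.} Your version avoids the case split on $S$ and the construction of $h^{\omega}$. It also explains conceptually why the regularity constraints defining $\partial\Lambda$ never conflict with the cancellativity constraints. The paper's version gives a concrete witness and locates the obstruction at the least $\alpha\in S$, at the cost of some ordinal bookkeeping for $h^{\omega}$. Both proofs rely on condition (C) at the same point, through \prettyref{lem:loop-reg}.
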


\begin{proof}
Define the following set of ordinals:
\[
S=\left\{ \alpha\in\mathrm{Ord}:\exists g,h\in\Lambda_{\alpha+1}\text{ such that }r\left(g\right)=v,s\left(g\right)=r\left(h\right)=s\left(h\right),\text{ and }d\left(h\right)\geq\omega^{\alpha}\right\} 
\]
If $S$ is empty, apply \prettyref{lem:boundary-paths-exist} to construct
$f\in v\partial\Lambda$. Then by \prettyref{lem:returning-loops},
$f$ is $\alpha$-cancellative for every $\alpha\in\mathrm{Ord}$,
and we are finished. Otherwise, $S$ has a least element $\alpha$,
and there exist $g,h\in\Lambda_{\alpha+1}$ satisfying the conditions
which define $S$. Now we define $h^{\omega}\in\Lambda^{*}$ such
that $L\left(h^{\omega}\right)=d\left(h\right)\cdot\omega=\omega^{\alpha+1}$.
If $\beta<\omega^{\alpha+1}$, then there is $n\in\mathbb{N}$ such
that $\beta<\omega^{\alpha}\cdot n$, and we define
\[
h^{\omega}\left(\beta\right)=h^{n}\left(\beta\right)
\]
Then for all $m\in\mathbb{N}$, $h^{n+m}\left(\beta\right)=h^{n}\left(\beta\right)$,
and $h^{\omega}$ is a well-defined element of $\Lambda^{*}$.

Next we prove $h^{\omega}\in\partial\Lambda$. Suppose $\beta<\omega^{\alpha+1}$
and $w=s\left(h^{\omega}\left(\beta\right)\right)=s\left(h^{n}\left(\beta\right)\right)$
is $\epsilon$-regular. We must show $L\left(h^{\omega}\right)=\omega^{\alpha+1}>\beta+\omega^{\epsilon}$.
If $\epsilon\leq\alpha$, then $\beta+\omega^{\epsilon}<\omega^{\alpha+1}$.
Thus we assume $\epsilon\geq\alpha+1$, and by \prettyref{lem:regular-hereditary},
$w$ is $\alpha+1$-regular. By choosing $n$ to be minimal, we may
assume without loss of generality there exist $\eta<d\left(h\right)$
such that $\beta=d\left(h\right)\cdot\left(n-1\right)+\eta$. Then
\[
h^{n}\left(\beta\right)=h^{n}\left(d\left(h\right)\cdot\left(n-1\right)+\eta\right)=\left(h^{n-1}h\right)\left(d\left(h^{n-1}\right)+\eta\right)=h^{n-1}h\left(\eta\right)
\]
Therefore $w=s\left(h\left(\eta\right)\right)=r\left(h\left(\eta\right)^{-1}h\right)$.
By \prettyref{lem:regular-hereditary}, since $h\left(\eta\right)^{-1}h\in\Lambda_{\alpha+1}$,
$s\left(h\left(\eta\right)^{-1}h\right)=s\left(h\right)$ is $\alpha+1$-regular.
Since $s\left(h\right)=r\left(h\right)$ and $h\in\Lambda_{\alpha+1}$,
this contradicts \prettyref{lem:loop-reg}. Therefore $\epsilon\leq\alpha$
and $h^{\omega}\in\partial\Lambda$.

By \prettyref{lem:compose-boundary}, $gh^{\omega}\in v\partial\Lambda$.
Moreover, $L\left(gh^{\omega}\right)=d\left(g\right)+L\left(h^{\omega}\right)=d\left(g\right)+\omega^{\alpha+1}=\omega^{\alpha+1}$.
If $\beta<\alpha$ and $gh^{\omega}$ is not $\beta$-cancellative,
then by \prettyref{lem:returning-loops}, $\beta\in S$; however,
this would contradict the minimality of $\alpha$. Therefore, $f=gh^{\omega}$
satisfies the desired requirements.
\end{proof}
\begin{lem}
\label{prop:shift-func}There exists a function $v:\Lambda^{*}\rightarrow\mathbb{Z}^{[0,\zeta)}$
such that for all $\alpha\in[0,\zeta)$,
\begin{enumerate}
\item If $f\in\Lambda^{*}$ and $L\left(f\right)>\omega^{\alpha+1}$, $v\left(f\right)_{\alpha}=v\left(f\left(\omega^{\alpha+1}\right)\right)_{\alpha}$.
\item If $f\in\Lambda$ and $d\left(f\right)\in\left[\omega^{\alpha}\cdot n,\omega^{\alpha}\cdot\left(n+1\right)\right)$,
$v\left(f\right)_{\alpha}=n$. In particular, $v\left(f\right)_{\alpha}=0$
if $f\in\Lambda_{\alpha}$.
\item If $f\in\Lambda^{*}$ is $\alpha$-cancellative, then for each $\beta<\omega^{\alpha+1}$,
$v\left(f\right)_{\alpha}=v\left(f\left(\beta\right)\right)_{\alpha}+v\left(f\left(\beta\right)^{-1}f\right)_{\alpha}$.
\end{enumerate}
\end{lem}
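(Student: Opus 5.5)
We will construct $v$ coordinatewise: for each fixed $\alpha\in[0,\zeta)$ we define $v(\cdot)_{\alpha}:\Lambda^{*}\rightarrow\mathbb{Z}$ and then assemble these into $v$. The natural candidate is the $\omega^{\alpha}$-coefficient of the length. For $f\in\Lambda^{*}$ with $L(f)\leq\omega^{\alpha+1}$, we write $L(f)$ (or $L(f)=d(f)+1$ when $f\in\Lambda$) in Cantor normal form. We then let $v(f)_{\alpha}=n$ where $\omega^{\alpha}\cdot n\leq\lambda<\omega^{\alpha}\cdot(n+1)$. Here $\lambda=d(f)$ if $f\in\Lambda$, and $\lambda=L(f)$ otherwise.

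The case $L(f)=\omega^{\alpha+1}$ needs care, since there the coefficient is not finite. For such $f$ we set $v(f)_{\alpha}=0$ by convention. For $f$ with $L(f)>\omega^{\alpha+1}$ we define $v(f)_{\alpha}=v(f(\omega^{\alpha+1}))_{\alpha}$. Since $f(\omega^{\alpha+1})\in\Lambda^{\omega^{\alpha+1}}$, we get $v(f)_{\alpha}=0$ for all such $f$. This is consistent with (2), because paths of length $\geq\omega^{\alpha+1}$ have a zero $\omega^{\alpha}$-coefficient modulo higher terms. Property (1) then holds by definition, and (2) holds by construction. When $f\in\Lambda$ with $d(f)<\omega^{\alpha+1}$, the value comes directly from the Cantor normal form of $d(f)$. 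When $d(f)\geq\omega^{\alpha+1}$, any expansion with $d(f)\in[\omega^{\alpha}n,\omega^{\alpha}(n+1))$ forces $d(f)<\omega^{\alpha+1}$, so (2) is vacuous there or gives $n=0$. This needs checking against the statement's interpretation.

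The substantive part is (3), which is where $\alpha$-cancellativity must enter. Suppose first that $f\in\Lambda^{*}$ has $L(f)<\omega^{\alpha+1}$, or $f\in\Lambda_{\alpha+1}$. For $\beta<\omega^{\alpha+1}$ we have $d(f)=\beta+d(f(\beta)^{-1}f)$. For ordinals below $\omega^{\alpha+1}$, the $\omega^{\alpha}$-coefficient is additive: if $\beta=\omega^{\alpha}n+\beta'$ and $\gamma=\omega^{\alpha}m+\gamma'$ with $m\geq1$, then $\beta+\gamma=\omega^{\alpha}(n+m)+\gamma'$. If instead $m=0$, then $\beta+\gamma<\omega^{\alpha}(n+1)$. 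So additivity holds in all cases. Now suppose $L(f)\geq\omega^{\alpha+1}$. The identity $v(f)_{\alpha}=0=n+v(f(\beta)^{-1}f)_{\alpha}$ fails as soon as $n>0$, because $f(\beta)^{-1}f$ also has length $\geq\omega^{\alpha+1}$ and so gets value $0$.

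The naive definition is therefore wrong for long paths, and this is the main obstacle. My revised plan is to define $v(g)_{\alpha}$ for $g\in\Lambda^{\omega^{\alpha+1}}$ (and for $g$ with $L(g)=\omega^{\alpha+1}$) as a genuinely integer-valued ``height'' function on the set $\Lambda^{\omega^{\alpha+1}}$. The right-shift relation $g\mapsto g(\beta)^{-1}g$ with $\beta<\omega^{\alpha+1}$ generates an equivalence relation on these paths. On each equivalence class of $\alpha$-cancellative elements, we pick a base point $g_{0}$ and set $v(g_{0})_{\alpha}=0$. We then propagate the value by $v(g(\beta)^{-1}g)_{\alpha}=v(g)_{\alpha}-n_{\beta}$, where $n_{\beta}$ is the $\omega^{\alpha}$-coefficient of $\beta$, and extend this to zigzags. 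On classes containing non-cancellative elements we set the value arbitrarily, for instance $0$. Well-definedness requires that any closed zigzag has total coefficient $0$. By Lemma \ref{lem:returning-loops} and Corollary \ref{cor:non-returning-equiv}, a nonzero loop would produce $g(\beta)g(\varepsilon)^{-1}g=g$ with $-\varepsilon+\beta\geq\omega^{\alpha}$. Cancellativity is preserved along the class by Corollary \ref{cor:non-returning-equiv}, so such a loop contradicts $\alpha$-cancellativity. We also need zigzags to reduce to a single move $g\mapsto g(\beta)g(\varepsilon)^{-1}g$, which follows by composing shifts and using Proposition \ref{prop:relation-3}.

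Finally, we extend this definition to all of $\Lambda^{*}$. For $L(f)>\omega^{\alpha+1}$ we use (1). For $L(f)<\omega^{\alpha+1}$ we use the Cantor coefficient. For $L(f)=\omega^{\alpha+1}$ we treat $f$ the same way as elements of $\Lambda^{\omega^{\alpha+1}}$. Property (3) for long $f$ then reduces, via Lemma \ref{lem:cancellative-length-independent}(1), to the defining propagation rule. Verifying that the equivalence-class bookkeeping is consistent, in particular the loop argument, is the step I expect to be hardest.
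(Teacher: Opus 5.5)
Your proposal is correct and is essentially the paper's argument. The paper also defines $v(\cdot)_{\alpha}$ through the tail equivalence $f\sim g$ iff $f(\beta)^{-1}f=g(\gamma)^{-1}g$ for some $\beta,\gamma<\omega^{\alpha+1}$, which is your zigzag relation after reduction; it fixes one representative per class, sets $v$ to the difference of $\omega^{\alpha}$-coefficients, and proves well-definedness from $\alpha$-cancellativity by the same loop argument. The only difference is bookkeeping: the paper runs the class construction for every $f$ with $L(f)\leq\omega^{\alpha+1}$ and obtains (2) by choosing minimal-length representatives (so $c(f)=s(f)$ for $f\in\Lambda_{\alpha+1}$), whereas you use Cantor coefficients directly for short paths and reserve the classes for $\Lambda^{\omega^{\alpha+1}}$ and $L=\omega^{\alpha+1}$, which also avoids the paper's circular use of rule (1) on paths in $\Lambda^{\omega^{\alpha+1}}$ themselves.
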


\begin{proof}
Let $\zeta$ and $\alpha$ be given. We define an equivalence relation
$\sim$ on $\Lambda^{*}$ by $f\sim g$ iff there exist $\beta,\gamma<\omega^{\alpha+1}$
such that $f\left(\beta\right)^{-1}f=g\left(\gamma\right)^{-1}g$.
If $f\sim g$ and $g\sim h$, there exist $\beta_{0},\beta_{1},\gamma_{0},\gamma_{1}<\omega^{\alpha+1}$
such that $f\left(\beta_{0}\right)^{-1}f=g\left(\gamma_{0}\right)^{-1}g$
and $g\left(\gamma_{1}\right)^{-1}g=h\left(\beta_{1}\right)^{-1}h$.
Assuming without loss of generality that $\gamma_{0}<\gamma_{1}$,
this gives
\[
g\left(\gamma_{0}\right)f\left(\beta_{0}\right)^{-1}f=g\left(\gamma_{1}\right)h\left(\beta_{1}\right)^{-1}h
\]
\[
f=f\left(\beta_{0}\right)g\left(\gamma_{0}\right)^{-1}g\left(\gamma_{1}\right)h\left(\beta_{1}\right)^{-1}h
\]
\[
f\left(\beta_{0}-\gamma_{0}+\gamma_{1}\right)^{-1}f=h\left(\beta_{1}\right)^{-1}h
\]
Thus $\sim$ is transitive, and both reflexivity and symmetry are
clear. For each $f\in\partial\Lambda$, choose a representative $c\left(f\right)$
of minimal length\emph{ }$L\left(c\left(f\right)\right)$\emph{ }such
that $f\sim c\left(f\right)$ and $c\left(f\right)=c\left(g\right)$
if $f\sim g$. In particular, for every $f\in\Lambda_{\alpha+1}$
we have $c\left(f\right)=s\left(f\right)$.

Now, let $f\in\Lambda^{*}$ be $\alpha$-cancellative. Note that this
includes the case for which $f\in\Lambda_{\alpha+1}$. We will proceed
to define $v\left(f\right)_{\alpha}$. If $L\left(f\right)>\omega^{\alpha+1}$,
define $v\left(f\right)_{\alpha}=v\left(f\left(\omega^{\alpha+1}\right)\right)_{\alpha}$
so that $v$ satisfies property 1. Otherwise, define $g=c\left(f\right)$,
and note that $g\sim f$. Thus we choose $\beta,\gamma<\omega^{\alpha+1}$
such that $g\left(\beta\right)^{-1}g=f\left(\gamma\right)^{-1}f$.
Choose $n,m\in\mathbb{N}$ such that $\beta\in\left[\omega^{\alpha}\cdot n,\omega^{\alpha}\cdot\left(n+1\right)\right)$
and $\gamma\in\left[\omega^{\alpha}\cdot m,\omega^{\alpha}\cdot\left(m+1\right)\right)$,
and define $v\left(f\right)_{\alpha}=m-n$. Now we show $v\left(f\right)_{\alpha}$
is well-defined. Suppose $g\left(\beta_{0}\right)^{-1}g=f\left(\gamma_{0}\right)^{-1}f$
and $g\left(\beta_{1}\right)^{-1}g=f\left(\gamma_{1}\right)^{-1}f$
for $\gamma_{0}<\gamma_{1}$. Then
\[
g\left(\beta_{0}\right)f\left(\gamma_{0}\right)^{-1}f=g\left(\beta_{1}\right)f\left(\gamma_{1}\right)^{-1}f
\]
\[
g\left(\beta_{1}\right)^{-1}g\left(\beta_{0}\right)f\left(\gamma_{0}\right)^{-1}f\left(\gamma_{1}\right)f\left(\gamma_{1}\right)^{-1}f=f\left(\gamma_{1}\right)^{-1}f
\]
If $\beta_{1}\leq\beta_{0}-\gamma_{0}+\gamma_{1}$, then $g\left(\beta_{1}\right)^{-1}g\left(\beta_{0}\right)f\left(\gamma_{0}\right)^{-1}f\left(\gamma_{1}\right)$
is a well-defined path. Thus if we set $p=g\left(\beta_{1}\right)^{-1}g\left(\beta_{0}\right)f\left(\gamma_{0}\right)^{-1}f\left(\gamma_{1}\right)$
and $q=f\left(\gamma_{1}\right)^{-1}f$, we have $pq=q$. Since $d\left(p\right)$,$\gamma_{1}<\omega^{\alpha+1}$,
\prettyref{cor:non-returning-equiv} and \prettyref{def:cancellative}
imply $d\left(p\right)=-\beta_{1}+\beta_{0}-\gamma_{0}+\gamma_{1}<\omega^{\alpha}$.
If $\beta_{j}\in\left[\omega^{\alpha}\cdot n_{j},\omega^{\alpha}\cdot\left(n_{j}+1\right)\right)$
and $\gamma_{j}\in\left[\omega^{\alpha}\cdot m_{j},\omega^{\alpha}\cdot\left(m_{j}+1\right)\right)$,
then we have $d\left(p\right)\geq\omega^{\alpha}\cdot\left(-m_{1}+m_{0}-n_{0}+n_{1}\right)$.
Hence $-m_{1}+m_{0}-n_{0}+n_{1}=0$, and $m_{0}-n_{0}=m_{1}-n_{1}$.
If $\beta_{1}>\beta_{0}-\gamma_{0}+\gamma_{1}$, then $p=f\left(\gamma_{1}\right)^{-1}f\left(\gamma_{0}\right)g\left(\beta_{0}\right)^{-1}g\left(\beta_{1}\right)$
is well-defined, $pq=q$, and the same argument implies $-m_{1}+m_{0}-n_{0}+n_{1}=0$.
Thus $v\left(f\right)_{\alpha}$ is well-defined.

To see property 2, let $f\in\Lambda_{\alpha+1}$ with $d\left(f\right)\in\left[\omega^{\alpha}\cdot m,\omega^{\alpha}\cdot\left(m+1\right)\right)$.
Then $g=c\left(f\right)=s\left(f\right)$, and $g\left(\beta\right)^{-1}g=f\left(\gamma\right)^{-1}f$
only if $\beta=0$ and $\gamma=d\left(f\right)$. Hence $v\left(f\right)_{\alpha}=m$,
as desired.

For property 3, let $f\in\Lambda^{*}$ such that $f$ is $\alpha$-cancellative.
For $\beta<\omega^{\alpha+1}$, let $e=f\left(\beta\right)$ and $g=f\left(\beta\right)^{-1}f$.
Suppose $\beta\in\left[\omega^{\alpha}\cdot n,\omega^{\alpha}\cdot\left(n+1\right)\right)$
for $n\in\mathbb{N}$. If $h=c\left(f\right)$, there are $\gamma,\delta<\omega^{\alpha+1}$
such that
\[
h\left(\gamma\right)^{-1}h=f\left(\delta\right)^{-1}f
\]
\[
h\left(\gamma\right)^{-1}h=f\left(\delta\right)^{-1}f\left(\beta\right)g
\]
If $\beta<\delta$, this gives
\[
h\left(\gamma\right)^{-1}h=g\left(-\beta+\delta\right)^{-1}g
\]
Thus if $\gamma\in\left[\omega^{\alpha}\cdot m,\omega^{\alpha}\cdot\left(m+1\right)\right)$
and $\delta\in\left[\omega^{\alpha}\cdot k,\omega^{\alpha}\cdot\left(k+1\right)\right)$,
then
\[
v\left(g\right)_{\alpha}=-n+k-m=-v\left(e\right)_{\alpha}+v\left(f\right)_{\alpha}
\]
as desired. When $\beta\geq\delta$, we have
\[
h=h\left(\gamma\right)f\left(\delta\right)^{-1}f\left(\beta\right)g
\]
\[
h\left(\gamma-\delta+\beta\right)^{-1}h=g
\]
Therefore
\[
v\left(g\right)_{\alpha}=-m+k-n=-v\left(e\right)_{\alpha}+v\left(f\right)_{\alpha}
\]

Finally, note that we have not defined $v\left(f\right)_{\alpha}$
for $f\in\Lambda^{*}$ which is not $\alpha$-cancellative. However,
properties 2 and 3 only apply when $f$ is $\alpha$-cancellative.
Hence we may arbitrarily define $v\left(f\right)_{\alpha}$ when $f$
is not $\alpha$-cancellative, as long as we ensure $v\left(f\right)_{\alpha}=v\left(f\left(\omega^{\alpha}\right)\right)_{\alpha}$
when $L\left(f\right)>\omega^{\alpha}$.
\end{proof}
\begin{prop}
\label{prop:shift-rep}Let $v$ denote the function constructed in
\prettyref{prop:shift-func}. For each ordinal graph $\Lambda$ and
$\zeta\in\mathrm{Ord}$, there exists a representation
\[
\pi:\mathcal{O}\left(\Lambda\right)\rightarrow B\left(\ell^{2}\left(\partial\Lambda\times\mathbb{Z}^{[0,\zeta)}\right)\right)
\]
defined such that for $e\in\Lambda^{\omega^{*}}$,
\[
\pi\left(T_{e}\right)\xi_{f,n}=\begin{cases}
\xi_{ef,m} & s\left(e\right)=r\left(f\right)\\
0 & \text{otherwise}
\end{cases}
\]
where
\[
m_{\alpha}=\begin{cases}
n_{\alpha}+v\left(e\right)_{\alpha} & L\left(ef\right)\leq\omega^{\alpha+1}\\
n_{\alpha}+v\left(ef\right)_{\alpha}-v\left(f\right)_{\alpha} & L\left(ef\right)>\omega^{\alpha+1}
\end{cases}
\]
\end{prop}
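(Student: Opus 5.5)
We define the operators $S_{e}=\pi(T_{e})$ directly on the basis $\{\xi_{f,n}\}$ by the displayed formula, for $e\in\Lambda^{\omega^{*}}\cup\Lambda_{0}$. We then check that they form a Cuntz-Krieger family for $\Lambda$ in the sense of \prettyref{thm:gen-relations}, and conclude by universality.

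\textbf{Well-definedness and adjoints.} By \prettyref{lem:compose-boundary}, $ef\in\partial\Lambda$ whenever $f\in s(e)\partial\Lambda$, so $S_{e}$ maps basis vectors to basis vectors. By left cancellation, the map $(f,n)\mapsto(ef,m)$ is injective on $s(e)\partial\Lambda\times\mathbb{Z}^{[0,\zeta)}$. Hence $S_{e}$ is a partial isometry, with
\[
S_{e}^{*}\xi_{g,m}=\xi_{e^{-1}g,n}\quad\text{if }g\in e\partial\Lambda,\qquad S_{e}^{*}\xi_{g,m}=0\quad\text{otherwise},
\]
where $n$ is recovered by subtracting the shift. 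The shift depends only on $(e,f)$ and not on $n$, so this inverse is well defined.

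\textbf{Relations (1), (3), (4).} These follow exactly as in \prettyref{prop:boundary-path-rep}, because the $\mathbb{Z}^{[0,\zeta)}$-coordinate is just carried along.
\begin{itemize}
\item For $e=v$ a vertex, \prettyref{prop:shift-func}(2) gives $v(v)_{\alpha}=0$. In the case $L(vf)>\omega^{\alpha+1}$ the shift is $v(f)_{\alpha}-v(f)_{\alpha}=0$. So $S_{v}$ is the projection onto $v\partial\Lambda\times\mathbb{Z}^{[0,\zeta)}$.
\item Relation (1) follows, since $S_{e}^{*}S_{e}$ is the projection onto the domain $s(e)\partial\Lambda\times\mathbb{Z}^{[0,\zeta)}$.
\item Relation (3) follows because the ranges of $S_{e}$ and $S_{f}$ are supported on $e\partial\Lambda$ and $f\partial\Lambda$, which are disjoint when $e\Lambda\cap f\Lambda=\emptyset$, by the argument of \prettyref{prop:boundary-path-rep}.
\item Relation (4) follows because the ranges of $S_{e}S_{e}^{*}$ for $e\in v\Lambda^{\omega^{\alpha}}$ are $e\partial\Lambda\times\mathbb{Z}^{[0,\zeta)}$. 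For $\alpha$-regular $v$, every $f\in v\partial\Lambda$ has $L(f)>\omega^{\alpha}$, so these sets partition $v\partial\Lambda\times\mathbb{Z}^{[0,\zeta)}$.
\end{itemize}

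\textbf{Relation (2), the main obstacle.} Take $d(e)=\omega^{\beta}<\omega^{\gamma}=d(f)$ with $ef\in\Lambda^{\omega^{\gamma}}$. We must show that, for each coordinate $\alpha<\zeta$, the shift for $S_{e}S_{f}$ applied to $\xi_{g,n}$ equals the shift for $S_{ef}$. Write $\sigma_{\alpha}(p,g)$ for the shift of $S_{p}$ on $\xi_{g,\cdot}$. The required cocycle identity is
\[
\sigma_{\alpha}(e,fg)+\sigma_{\alpha}(f,g)=\sigma_{\alpha}(ef,g).
\]
Three observations drive the case analysis.
\begin{itemize}
\item $L(efg)=L(fg)$, since $d(e)+d(f)=d(f)$.
\item When $L\ge$ lengths are $\le\omega^{\alpha+1}$, the identity reads $v(e)_{\alpha}+v(f)_{\alpha}=v(ef)_{\alpha}$. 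This follows from \prettyref{prop:shift-func}(2), since $d(e)+d(f)=d(f)$ and $\Lambda^{\omega^{*}}$ lengths have leading coefficient $0$ or $1$. Explicitly: if $\alpha<\gamma$ both sides are $0$ except possibly $v(e)_{\alpha}$ with $\beta=\alpha$. But then $v(e)_{\alpha}=1$, $v(f)_{\alpha}=0=v(ef)_{\alpha}$, which would fail. In that case, however, $\gamma>\alpha$ forces $L(fg)>\omega^{\gamma}\ge\omega^{\alpha+1}$, so we are not in the short case. I must check this carefully, including the edge case $L=\omega^{\alpha+1}$ exactly.
\item When $L(efg)>\omega^{\alpha+1}$, both sides telescope to $v(efg)_{\alpha}-v(g)_{\alpha}$, provided the middle case is consistent. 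The mixed case is $L(fg)>\omega^{\alpha+1}\ge L(g)$ for $S_{f}$. Here I would use \prettyref{prop:shift-func}(3), applied to $fg$ at $\beta=d(f)<\omega^{\alpha+1}$, to convert $v(fg)_{\alpha}$ into $v(f)_{\alpha}+v(g)_{\alpha}$ when $d(f)<\omega^{\alpha+1}$. This requires $fg$ to be $\alpha$-cancellative.
\end{itemize}

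I expect the real difficulty to be the cases where $g$ or $fg$ is not $\alpha$-cancellative. Here I would use that the non-cancellative definition of $v$ was chosen compatibly with truncation, via \prettyref{prop:shift-func}(1). When $d(f)\ge\omega^{\alpha+1}$, truncation at $\omega^{\alpha+1}$ sees only a prefix of $f$, or of $ef$, which has the same truncation. So $v(efg)_{\alpha}=v(fg)_{\alpha}$ and the identity is trivial. When $d(f)<\omega^{\alpha+1}$, \prettyref{cor:non-returning-equiv} shows that $g$, $fg$ and $efg$ are simultaneously $\alpha$-cancellative or not. In the cancellative case I apply \prettyref{prop:shift-func}(3). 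In the non-cancellative case I would fall back on the requirement that the arbitrary choice of $v$ make property (3) hold along $\Lambda_{\alpha+1}$-prefixes. If the existing construction does not guarantee this, I would adjust $v$ on each $\sim$-class of non-cancellative paths, for instance by setting it via a fixed representative as in the cancellative case, so that the cocycle identity holds.

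Having verified relations (1)–(4), the universal property in \prettyref{thm:gen-relations} yields the representation $\pi$.
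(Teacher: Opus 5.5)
Your overall plan is the paper's: define the operators on the basis $\xi_{f,n}$, verify the relations of \prettyref{thm:gen-relations}, and invoke universality. Your treatment of well-definedness, the adjoints, and relations (1), (3), (4) matches the paper.

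The gap is in relation (2), which you do not actually close. You posit a ``mixed case'' $L(fg)>\omega^{\alpha+1}\ge L(g)$ and call the non-$\alpha$-cancellative situation the real difficulty. You then leave it open, with the fallback of redefining $v$ on non-cancellative classes. That fallback is not allowed, because the statement fixes $v$ to be the function of \prettyref{prop:shift-func}. Your claim that for $d(f)\ge\omega^{\alpha+1}$ the truncations at $\omega^{\alpha+1}$ agree, so that $v(efg)_{\alpha}=v(fg)_{\alpha}$, is also false in general. If $d(e)=\omega^{\alpha}$, then $(efg)(\omega^{\alpha+1})=e\,f(\omega^{\alpha+1})\neq f(\omega^{\alpha+1})$. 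When $efg$ is $\alpha$-cancellative, \prettyref{prop:shift-func}(3) applied to $efg$ at the cut point $\omega^{\alpha}$ gives $v(efg)_{\alpha}=v(e)_{\alpha}+v(fg)_{\alpha}=1+v(fg)_{\alpha}$.

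The missing observation is that the defining formula selects its case only by the length of the output path ($L(ef)$ in the statement's notation), never by the length of the input. In your identity $\sigma_{\alpha}(e,fg)+\sigma_{\alpha}(f,g)=\sigma_{\alpha}(ef,g)$, the first and third shifts are governed by $L(efg)$ and the second by $L(fg)$. You already observed $L(efg)=L(fg)$, so all three shifts fall in the same case and no mixed case exists.

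If $L(efg)>\omega^{\alpha+1}$, the identity is the literal telescoping
\[
\bigl(v(efg)_{\alpha}-v(fg)_{\alpha}\bigr)+\bigl(v(fg)_{\alpha}-v(g)_{\alpha}\bigr)=v(efg)_{\alpha}-v(g)_{\alpha}.
\]
This holds for any function $v$ whatsoever, so cancellativity of $g$ or $fg$ never enters.

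If $L(efg)\le\omega^{\alpha+1}$, then $d(f)<d(f)+L(g)=L(fg)\le\omega^{\alpha+1}$, so $e,f,ef\in\Lambda_{\alpha+1}$. Then \prettyref{prop:shift-func}(2) gives $v(ef)_{\alpha}=v(e)_{\alpha}+v(f)_{\alpha}$ directly.

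This two-case check is exactly the paper's proof of relation (2). With it, you can drop the rest of your case analysis and the proposed modification of $v$.
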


\begin{proof}
We proceed by showing $\left\{ \pi\left(T_{e}\right):e\in\Lambda^{\omega^{*}}\right\} $
satisfies the relations in \prettyref{thm:gen-relations}. First we
compute the adjoints. Note that the function $\left(f,n\right)\mapsto\left(ef,m\right)$
is injective, hence we have
\[
\pi\left(T_{e}\right)^{*}\xi_{g,m}=\begin{cases}
\xi_{e^{-1}g,n} & g\in e\partial\Lambda\\
0 & \text{otherwise}
\end{cases}
\]
where
\[
n_{\alpha}=\begin{cases}
m_{\alpha}-v\left(e\right)_{\alpha} & L\left(g\right)\leq\omega^{\alpha+1}\\
m_{\alpha}-v\left(g\right)_{\alpha}+v\left(e^{-1}g\right)_{\alpha} & L\left(g\right)>\omega^{\alpha+1}
\end{cases}
\]

Let $f\in\partial\Lambda$, $n\in\mathbb{Z}^{[0,\zeta)}$, and $\alpha<\zeta$
be fixed. Assuming that $s\left(e\right)=r\left(f\right)$, we see
$\pi\left(T_{e}\right)^{*}\pi\left(T_{e}\right)\xi_{f,n}=\xi_{ef,m}=\xi_{f,n}$.
Therefore $\pi\left(T_{e}\right)^{*}\pi\left(T_{e}\right)=\pi\left(T_{s\left(e\right)}\right)$,
and relation (1) is satisfied. For relation (2), let $e,g\in\Lambda^{\omega^{*}}$
such that $s\left(g\right)=r\left(e\right)$, $s\left(e\right)=r\left(f\right)$,
and $d\left(g\right)<d\left(e\right)$. Then $\pi\left(T_{g}\right)\pi\left(T_{e}\right)\xi_{f,n}=\pi\left(T_{g}\right)\xi_{ef,m}=\xi_{gef,k}$
for some $m,k\in\mathbb{Z}^{[0,\zeta)}$. If $L\left(ef\right)>\omega^{\alpha+1}$,
then $L\left(gef\right)>\omega^{\alpha+1}$, and
\[
k_{\alpha}=m_{\alpha}+v\left(gef\right)_{\alpha}-v\left(ef\right)_{\alpha}
\]
\[
=n_{\alpha}+v\left(ef\right)_{\alpha}-v\left(f\right)_{\alpha}+v\left(gef\right)_{\alpha}-v\left(ef\right)_{\alpha}
\]
Hence if $\pi\left(T_{ge}\right)\xi_{f,n}=\xi_{gef,p}$, we have $p_{\alpha}=k_{\alpha}$.
If, on the other hand, $L\left(ef\right)\leq\omega^{\alpha+1}$, then
$g,e\in\Lambda_{\alpha+1}$ and
\[
k_{\alpha}=m_{\alpha}+v\left(g\right)_{\alpha}=n_{\alpha}+v\left(e\right)_{\alpha}+v\left(g\right)_{\alpha}
\]
Since $e,g\in\Lambda_{\alpha+1}$, $e$ and $g$ are $\alpha$-cancellative.
Hence by \prettyref{prop:shift-func}, $v\left(ge\right)_{\alpha}=v\left(g\right)_{\alpha}+v\left(e\right)_{\alpha}$,
and $p_{\alpha}=k_{\alpha}$. Therefore relation (2) is satisfied.
For relation (3), let $e,g\in\Lambda^{\omega^{*}}$ with $e\Lambda\cap g\Lambda=\emptyset$
and $s\left(e\right)=r\left(f\right)$. Then $ef\not\in g\partial\Lambda$,
hence $\pi\left(T_{g}\right)^{*}\pi\left(T_{e}\right)=0$. Finally,
let $v\in\Lambda_{0}$ be $\alpha$-regular. Then by \prettyref{def:boundary-def},
for each $f\in v\partial\Lambda$, $L\left(f\right)>\omega^{\alpha}$
and $f=f\left(\omega^{\alpha}\right)f\left(\omega^{\alpha}\right)^{-1}f$.
Therefore
\[
\pi\left(T_{v}\right)=\sum_{e\in v\Lambda^{\omega^{\alpha}}}\pi\left(T_{e}\right)\pi\left(T_{e}\right)^{*}
\]
and relation (4) is satisfied.
\end{proof}
\begin{lem}
\label{lem:injective}$\pi\circ\rho_{\zeta}$ is injective. In particular,
$\rho_{\zeta}$ is injective.
\end{lem}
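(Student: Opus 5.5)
The plan is to fix $\zeta$ and the representation $\pi$ of \prettyref{prop:shift-rep}. I would then prove by an inner transfinite induction on $\alpha\leq\zeta$ that $\pi\circ\rho_{\alpha}$ is injective on $\mathcal{O}\left(\Lambda_{\alpha}\right)$. The case $\alpha=\zeta$ is the claim. Since $\pi\circ\rho_{\zeta}$ factors through $\rho_{\zeta}$, injectivity of $\rho_{\zeta}$ follows at once.

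The base case $\alpha=0$ uses $\mathcal{O}\left(\Lambda_{0}\right)\cong c_{0}\left(\Lambda_{0}\right)$. It is enough that $\pi\left(T_{v}\right)\neq0$ for every $v$, and \prettyref{lem:boundary-paths-exist} gives some $f\in v\partial\Lambda$, so $\pi\left(T_{v}\right)\xi_{f,0}=\xi_{f,0}\neq0$.

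For a limit $\alpha$, \prettyref{prop:inductive-limit} presents $\mathcal{O}\left(\Lambda_{\alpha}\right)$ as the inductive limit of the $\mathcal{O}\left(\Lambda_{\beta}\right)$ with $\beta<\alpha$. An injective $*$-homomorphism is isometric, so $\pi\circ\rho_{\alpha}$ is isometric on the dense union $\bigcup_{\beta<\alpha}\rho_{\beta}^{\alpha}\left(\mathcal{O}\left(\Lambda_{\beta}\right)\right)$. Hence it is isometric, and so injective, on all of $\mathcal{O}\left(\Lambda_{\alpha}\right)$.

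For a successor $\alpha+1\leq\zeta$ (so $\alpha<\zeta$), the Inductive Assumptions identify $\mathcal{O}\left(\Lambda_{\alpha+1}\right)$ with $\mathcal{O}\left(X_{\alpha}\right)$ via $\psi_{\alpha}\times\rho_{\alpha}^{\alpha+1}$. I would then apply Katsura's gauge-invariant uniqueness theorem \cite[Theorem 6.4]{KATIDEAL2} to the covariant representation $\left(\pi\circ\rho_{\alpha+1}\circ\psi_{\alpha},\,\pi\circ\rho_{\alpha}\right)$. It has three hypotheses.

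First, $\pi\circ\rho_{\alpha}$ must be injective, which is the inner induction hypothesis.

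Second, there must be a gauge action. I would implement it by the unitaries $U_{z}\xi_{f,n}=z^{n_{\alpha}}\xi_{f,n}$, which only read the $\alpha$-coordinate. By the formula for $m_{\alpha}$ and property (2) of \prettyref{prop:shift-func}, $\mathrm{Ad}\,U_{z}$ multiplies $\pi\left(T_{e}\right)$ by $z^{v(e)_{\alpha}}$. This gives $z$ for $e\in\Lambda^{\omega^{\alpha}}$ and $1$ for $e\in\Lambda_{\alpha}$, as required, provided that $v(ef)_{\alpha}-v(f)_{\alpha}=v(e)_{\alpha}$ whenever $L(ef)>\omega^{\alpha+1}$. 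That identity is property (3) of \prettyref{prop:shift-func}, which needs $ef$ to be $\alpha$-cancellative; by \prettyref{cor:non-returning-equiv} this holds exactly when $f$ is. For non-$\alpha$-cancellative boundary paths the identity may fail. So I would restrict to the closed subspace $H'$ spanned by $\xi_{gf,n}$, where $f$ is one of the good boundary paths from \prettyref{prop:good-boundary-paths-exist} and $g\in\Lambda_{\zeta}$. By \prettyref{cor:non-returning-equiv}, $H'$ is invariant under $\pi\rho_{\zeta}$, so the compression to $H'$ is a subrepresentation. The base case still works there, since good paths exist at every vertex, and on $H'$ the action $\mathrm{Ad}\,U_{z}$ is a genuine gauge action. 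Running the whole induction with this compression is enough, because injectivity of a subrepresentation implies injectivity of $\pi\circ\rho_{\zeta}$.

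Third, and this is the step I expect to be the main obstacle, one needs the ideal condition: if $a\in\mathcal{O}\left(\Lambda_{\alpha}\right)$ and $\pi\rho_{\alpha}(a)$ lies in the image of $\left(\pi\rho_{\alpha+1}\psi_{\alpha}\right)^{(1)}\left(\mathcal{K}(X_{\alpha})\right)$, then $a\in J_{\alpha}$. My plan is to verify the two parts of $J_{\alpha}$ separately.

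For compactness of $\varphi_{\alpha}(a)$ I would use \prettyref{cor:compactness}(3) together with \prettyref{lem:left-action-isometry}. The idea is to test $\varphi_{\alpha}(a)$ against $\delta_{f}$ for all but finitely many $f\in\Lambda^{\omega^{\alpha}}$ by looking at $\pi\rho_{\alpha}(a)\xi_{fh,n}$ for good boundary paths $fh$. A finite-rank element $\sum T_{e}bT_{e'}^{*}$ only sees finitely many $e'$, and the $\mathbb{Z}$-coordinates record how many $\omega^{\alpha}$-blocks are consumed, which separates these contributions. Where cycles might cause collisions, \prettyref{cor:compact-loop} and condition (C) should rule them out.

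For $a\in(\ker\varphi_{\alpha})^{\perp}$, I would argue as follows. If $ab=0$ for some $b\in\ker\varphi_{\alpha}$, the boundary paths of length at most $\omega^{\alpha}$ starting at non-$\alpha$-regular vertices should detect $b$, using the last clause of Inductive Assumption (4) about $\mathcal{O}(\Lambda_{0})\cap J_{\alpha}$. Failing that, I would fall back on the inclusion $\mathcal{O}(\Lambda_{\beta})\cap\left(J_{\alpha}+\psi_{\beta}^{(1)}(\mathcal{K}(X_{\beta}))\right)\subseteq J_{\beta}$ to reduce to smaller ordinals.
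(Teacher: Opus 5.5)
Your base case, limit step, and the idea of implementing the level-$\alpha$ gauge action by $U_z\xi_{f,n}=z^{n_\alpha}\xi_{f,n}$ all agree with the paper. The successor step, however, fails because you use a \emph{single} subspace $H'$ for every level. \prettyref{cor:non-returning-equiv} only controls prefixes $p\in\Lambda_{\alpha+1}$. For $g\in\Lambda_{\zeta}\setminus\Lambda_{\alpha+1}$, which exists whenever $\alpha+1<\zeta$, one has $L(gf)>\omega^{\alpha+1}$. By \prettyref{lem:cancellative-length-independent}, $gf$ is then $\alpha$-cancellative iff $g(\omega^{\alpha+1})$ is, and the goodness of $f$ says nothing about that. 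Condition (C) does allow $k\in\Lambda^{\omega^{\alpha+1}}$ and $e\in\Lambda^{\omega^{\alpha}}$ with $ek=k$, as long as $r(k)$ is not $(\alpha+1)$-regular. For example, take $\alpha=0$ and $\zeta=2$: a loop $e$ of length $1$ at $v$, a path $k\in v\Lambda^{\omega}$ with $ek=k$, and an edge $b\in v\Lambda^{1}$ with $s(b)\Lambda^{\omega}=\emptyset$. For a good $f\in s(k)\partial\Lambda$, the vector $\xi_{kf,n}$ lies in $H'$. Since $ekf=kf$ and $L(kf)>\omega^{\alpha+1}$, the formula of \prettyref{prop:shift-rep} gives $\pi(T_e)\xi_{kf,n}=\xi_{kf,m}$ with $m_\alpha=n_\alpha$. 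Hence $U_z\pi(T_e)U_z^{*}\neq z\,\pi(T_e)$ on $H'$, so $\mathrm{Ad}\,U_z$ is not a gauge action there and \cite[Theorem 6.4]{KATIDEAL2} cannot be applied. Shrinking $H'$ does not help either. Any subspace spanned by basis vectors, invariant under $\pi\rho_{\zeta}$, and still seeing the vertex $s(k)$ (as your base case requires) contains such a vector.

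The missing idea is to let the subspace depend on the level, as the paper does. Let $H_\alpha$ be spanned by the $\xi_{f,n}$ such that, for every $\beta\geq\alpha$, either $L(f)\leq\omega^{\beta+1}$ or $f$ is $\beta$-cancellative; there is no condition below $\alpha$. This space has three properties:
\begin{itemize}
\item $H_\alpha$ is invariant under $\pi\rho_{\alpha+1}$. This is exactly what \prettyref{cor:non-returning-equiv} gives, and it is all the gauge-invariant uniqueness theorem at level $\alpha$ needs.
\item $U_z$ implements $\Gamma_\alpha$ on $H_\alpha$.
\item $H_\beta\subseteq H_\alpha$ for $\beta\leq\alpha$.
\end{itemize}
You then induct on injectivity of the compressions $\pi_\alpha=\pi\rho_{\alpha+1}|_{H_\alpha}$. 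Injectivity of $\pi_\beta$ gives injectivity of $\pi_\alpha\circ\rho^{\alpha+1}_{\beta+1}$, and the base case lives in $H_0$, which contains the paths of \prettyref{prop:good-boundary-paths-exist}. Only at the very end do you pass to $\pi\rho_\zeta$ on the whole space, using your successor/limit argument.

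Separately, your third hypothesis is not a hypothesis of \cite[Theorem 6.4]{KATIDEAL2}. For a representation $(\psi,\pi)$ with $\pi$ injective, $\psi$ is isometric. So $\pi(a)=\psi^{(1)}(S)$ forces $\varphi(a)=S$, and $\pi(ab)=\psi^{(1)}(S\varphi(b))=0$ for $b\in\ker\varphi$. Hence $\{a:\pi(a)\in\psi^{(1)}(\mathcal{K}(X))\}\subseteq J_X$ automatically, and the step you flagged as the main obstacle (for which you only gave a heuristic sketch) can simply be dropped.
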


\begin{proof}
For each $\alpha<\zeta$, define $H_{\alpha}$ to be the following
closed subspace of $\ell^{2}\left(\partial\Lambda\times\mathbb{Z}^{[0,\zeta)}\right)$:
\[
H_{\alpha}=\overline{\mathrm{span}}\left\{ \xi_{f,n}:\text{for each }\beta\geq\alpha,L\left(f\right)\leq\omega^{\beta+1}\text{ or }f\text{ is }\beta\text{-cancellative}\right\} 
\]
Note that $H_{\alpha}$ is an invariant subspace for $\pi\circ\rho_{\alpha+1}$:
for $e\in\Lambda_{\beta+1}$, $L\left(ef\right)\leq\omega^{\beta+1}$
iff $L\left(f\right)\leq\omega^{\beta+1}$, and by \prettyref{cor:non-returning-equiv},
$ef$ is $\beta$-cancellative iff $f$ is $\beta$-cancellative.
Let $\pi_{\alpha}:\mathcal{O}\left(\Lambda_{\alpha+1}\right)\rightarrow B\left(H_{\alpha}\right)$
be the restriction of $\pi\circ\rho_{\alpha+1}$ to this subspace. 

We will use transfinite induction to show $\pi_{\alpha}$ is injective.
Suppose $\pi_{\beta}$ is injective for each $\beta<\alpha$. Since
$H_{\beta}\subseteq H_{\alpha}$, we also have $\pi_{\alpha}\circ\rho_{\beta+1}^{\alpha+1}$
is injective. If $\alpha$ is a successor ordinal (that is, $\alpha=\beta+1$
for some $\beta<\alpha$), then this implies $\pi_{\alpha}\circ\rho_{\alpha}^{\alpha+1}$
is injective. If, on the other hand, $\alpha$ is a non-zero limit
ordinal, then by \prettyref{prop:inductive-limit}, $\mathcal{O}\left(\Lambda_{\alpha}\right)$
is the inductive limit of the maps $\rho_{\beta}^{\beta+1}:\mathcal{O}\left(\Lambda_{\beta}\right)\rightarrow\mathcal{O}\left(\Lambda_{\beta+1}\right)$.
By the \prettyref{assumptions:inductive-assumptions}, each of these
maps is injective, and $\mathcal{O}\left(\Lambda_{\alpha}\right)=\overline{\cup_{\beta<\alpha}\mathcal{O}\left(\Lambda_{\beta+1}\right)}$.
Therefore each $\pi_{\alpha}\circ\rho_{\beta+1}^{\alpha+1}$ is an
isometry, which implies injectivity of $\pi_{\alpha}\circ\rho_{\alpha}^{\alpha+1}$.
Finally, if $\alpha=0$ then \prettyref{prop:good-boundary-paths-exist}
implies that for each $v\in\Lambda_{0}$ there exists $\xi_{f,n}\in H_{0}$
such that $\pi_{0}\left(T_{v}\right)\xi_{f,n}=\xi_{f,n}$. In particular,
$\pi_{0}\left(T_{v}\right)\not=0$ for each $v\in\Lambda_{0}$, and
since $\mathcal{O}\left(\Lambda_{0}\right)\cong c_{0}\left(\Lambda_{0}\right)$,
$\pi_{0}\circ\rho_{0}^{1}$ is injective.

In any case, we have $\pi_{\alpha}\circ\rho_{\alpha}^{\alpha+1}$
is injective. By \prettyref{assumptions:inductive-assumptions}, $\left(\psi_{\alpha},\rho_{\alpha}^{\alpha+1}\right)$
is a universal covariant representation of $X_{\alpha}$, so it suffices
now to construct a gauge action on the image of $\pi_{\alpha}$ and
apply the gauge-invariant uniqueness theorem. To do so, define for
each $z\in\mathbb{T}$ a unitary $U_{z}\in\mathcal{U}\left(H_{\alpha}\right)$
such that
\[
U_{z}\xi_{f,n}=z^{n_{\alpha}}\xi_{f,n}
\]
Then for $e\in\Lambda^{\omega^{\alpha}}$ and $\xi_{f,n}$ a basis
vector of $H_{\alpha}$ satisfying $r\left(f\right)=s\left(e\right)$,
if $f$ is $\alpha$-cancellative, $v\left(ef\right)_{\alpha}-v\left(f\right)_{\alpha}=v\left(e\right)_{\alpha}=1$.
If $f$ is not $\alpha$-cancellative, then $L\left(f\right)\leq\omega^{\alpha+1}$,
so in any case,
\[
U_{z}\pi_{\alpha}\left(T_{e}\right)U_{z}^{*}\xi_{f,n}=z^{-n_{\alpha}}U_{z}\pi_{\alpha}\left(T_{e}\right)\xi_{f,n}=z^{-n_{\alpha}}U_{z}\xi_{ef,m}=z\xi_{ef,m}=z\pi_{\alpha}\left(T_{e}\right)\xi_{f,n}
\]
where $m_{\alpha}=n_{\alpha}+1$. Similarly if $e\in\Lambda_{\alpha}$,
$\left(\mathrm{Ad}\:U_{z}\circ\pi_{\alpha}\right)\left(T_{e}\right)=\pi_{\alpha}\left(T_{e}\right)$.
Thus $\mathrm{Ad}\:U_{z}\circ\pi_{\alpha}=\pi_{\alpha}\circ\Gamma_{\alpha,z}$,
and since $\pi_{\alpha}\circ\rho_{\alpha}^{\alpha+1}$ is injective,
the gauge-invariant uniqueness theorem \cite[Theorem 6.4]{KATIDEAL2}
implies $\pi_{\alpha}$ is injective.

Thus $\pi\circ\rho_{\alpha+1}$ is injective for each $\alpha<\zeta$.
If $\zeta$ is a successor ordinal, this immediately implies $\pi\circ\rho_{\zeta}$
is injective. Otherwise, $\zeta$ is a limit ordinal, and $\mathcal{O}\left(\Lambda_{\zeta}\right)$
is the inductive limit of maps $\rho_{\alpha}^{\alpha+1}:\mathcal{O}\left(\Lambda_{\alpha}\right)\rightarrow\mathcal{O}\left(\Lambda_{\alpha+1}\right)$,
which are all injective by \prettyref{assumptions:inductive-assumptions}.
Thus $\mathcal{O}\left(\Lambda_{\zeta}\right)=\overline{\cup_{\alpha<\zeta}\mathcal{O}\left(\Lambda_{\alpha+1}\right)}$,
and since each $\pi\circ\rho_{\zeta}\circ\rho_{\alpha+1}^{\zeta}=\pi\circ\rho_{\alpha+1}$
is isometric, $\pi\circ\rho_{\zeta}$ is injective. Since $\pi\circ\rho_{\zeta}$
is injective, indeed $\rho_{\zeta}$ is injective. 
\end{proof}
As $\rho_{\zeta}$ is injective, we regard $\mathcal{O}\left(\Lambda_{\zeta}\right)$
as a subalgebra of $\mathcal{O}\left(\Lambda\right)$ for the rest
of this section.
\begin{cor}
\label{cor:extend-action}For $\alpha<\zeta$, define
\[
\mathcal{D}_{\alpha+1}=C^{*}\left(T_{p}T_{q}^{*}:p,q\in\Lambda_{\zeta},p\in\Lambda_{\alpha+1}\text{ iff }q\in\Lambda_{\alpha+1},p\Lambda^{*}\cup q\Lambda^{*}\subseteq C_{\alpha}\right)
\]
Then for each generator $T_{p}T_{q}^{*}$ and basis vector $\xi_{f,n}\in\ell^{2}\left(\partial\Lambda\times\mathbb{Z}^{[0,\zeta)}\right)$,
$\pi\left(T_{p}T_{q}^{*}\right)\xi_{f,n}=\xi_{pq^{-1}f,m}$ for $m\in\mathbb{Z}^{[0,\zeta)}$
satisfying $m_{\alpha}=n_{\alpha}-v\left(q\right)_{\alpha}+v\left(p\right)_{\alpha}$.
Moreover, there exists a continuous action $\overline{\Gamma}_{\alpha}:\mathbb{T}\rightarrow\mathrm{Aut}\left(\mathcal{D}_{\alpha+1}\right)$
defined by $\overline{\Gamma}_{\alpha,z}\left(T_{p}T_{q}^{*}\right)=z^{v\left(p\right)_{\alpha}-v\left(q\right)_{\alpha}}T_{p}T_{q}^{*}$
which agrees with $\Gamma_{\alpha}$ on $\mathcal{D}_{\alpha+1}\cap\mathcal{O}\left(\Lambda_{\alpha+1}\right)$.
\end{cor}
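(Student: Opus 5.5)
The plan has two parts: first verify the formula for $\pi(T_pT_q^*)\xi_{f,n}$ on generators, then obtain $\overline{\Gamma}_\alpha$ by conjugating with a unitary in the representation $\pi$. Since $\pi\circ\rho_\zeta$ is injective by \prettyref{lem:injective}, the automorphisms can be built spatially.

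For the formula, I would first handle $T_p$ alone for $p\in\Lambda_\zeta$. Write $p$ as a finite composition of paths in $\Lambda^{\omega^*}$ using Cantor normal form, and apply the defining formula of \prettyref{prop:shift-rep} one factor at a time. For a basis vector $\xi_{g,k}$ with $r(g)=s(p)$, the $\alpha$-coordinate changes as follows.
\begin{itemize}
\item If $L(pg)\le\omega^{\alpha+1}$, each factor $e$ adds $v(e)_\alpha$. Here $p\in\Lambda_{\alpha+1}$ is $\alpha$-cancellative, so by property (3) of \prettyref{prop:shift-func} these sum to $v(p)_\alpha$.
\item If $L(pg)>\omega^{\alpha+1}$, the increments $v(e\,\cdot)_\alpha-v(\cdot)_\alpha$ telescope to $v(pg)_\alpha-v(g)_\alpha$. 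There is a mixed case where early factors have $L\le\omega^{\alpha+1}$ and later ones do not; I will handle it by the same telescoping, using property (1).
\end{itemize}

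The hypothesis $p\Lambda^*\subseteq C_\alpha$ makes $pg$ $\alpha$-cancellative. I then need $v(pg)_\alpha-v(g)_\alpha=v(p)_\alpha$. When $d(p)<\omega^{\alpha+1}$, this follows from property (3) of \prettyref{prop:shift-func} with $\beta=d(p)$. When $d(p)\ge\omega^{\alpha+1}$, I reduce via property (1) to the truncation $(pg)(\omega^{\alpha+1})=p(\omega^{\alpha+1})$, which is a path, so both sides are computed inside $p$. Here I expect to need that $L(g)$ and $L(pg)$ are compared with $\omega^{\alpha+1}$ consistently. For $q$, the adjoint formula in \prettyref{prop:shift-rep} subtracts $v(q)_\alpha$ by the same argument, using $q\Lambda^*\subseteq C_\alpha$. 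Composing the two steps gives $m_\alpha=n_\alpha-v(q)_\alpha+v(p)_\alpha$. I expect this case bookkeeping, particularly the regime where exactly one of $L(f)$ and $L(q^{-1}f)$ exceeds $\omega^{\alpha+1}$, to be the main obstacle. I will try to resolve it by always passing to truncations at $\omega^{\alpha+1}$ and invoking \prettyref{lem:cancellative-length-independent}.

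For the action, define unitaries $U_z\xi_{f,n}=z^{n_\alpha}\xi_{f,n}$ on all of $\ell^2(\partial\Lambda\times\mathbb{Z}^{[0,\zeta)})$. By the formula, $U_z\pi(T_pT_q^*)U_z^*=z^{v(p)_\alpha-v(q)_\alpha}\pi(T_pT_q^*)$ for every generator of $\mathcal{D}_{\alpha+1}$. Hence $\mathrm{Ad}\,U_z$ preserves $\pi(\mathcal{D}_{\alpha+1})$. Because $\pi$ is injective on $\mathcal{O}(\Lambda_\zeta)\supseteq\mathcal{D}_{\alpha+1}$, setting $\overline{\Gamma}_{\alpha,z}=\pi^{-1}\circ\mathrm{Ad}\,U_z\circ\pi$ defines automorphisms of $\mathcal{D}_{\alpha+1}$ with the required formula. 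Continuity holds on generators and so extends to $\mathcal{D}_{\alpha+1}$ by an $\varepsilon/3$ argument, as in \prettyref{lem:alg-auts}.

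Finally, I need agreement with $\Gamma_\alpha$ on $\mathcal{D}_{\alpha+1}\cap\mathcal{O}(\Lambda_{\alpha+1})$. The direct route is to check on spanning elements $T_pT_q^*$ with $p,q\in\Lambda_{\alpha+1}$. There $\Gamma_{\alpha,z}$ multiplies by $z^{k-l}$, where $k,l$ are the $\omega^\alpha$-coefficients of $d(p),d(q)$, and these coefficients equal $v(p)_\alpha$ and $v(q)_\alpha$ by property (2). However, the intersection might not be spanned by such generators. A cleaner route is to compare $\mathrm{Ad}\,U_z\circ\pi$ with $\pi\circ\Gamma_{\alpha,z}$ on all of $\mathcal{O}(\Lambda_{\alpha+1})$, where both are homomorphisms. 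On generators $T_e$ they agree by the computation in the proof of \prettyref{lem:injective}, which carries over from $H_\alpha$ to the full space because $e\in\Lambda_{\alpha+1}$ is $\alpha$-cancellative. Injectivity of $\pi$ then gives agreement on the intersection.
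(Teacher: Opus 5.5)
Your overall strategy is the paper's: conjugate by $U_z\xi_{f,n}=z^{n_\alpha}\xi_{f,n}$ and pull back through the injective $\pi$. The computation on generators, however, has a genuine gap.

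When $d(p)\ge\omega^{\alpha+1}$, property (1) of Lemma~\ref{prop:shift-func} gives $v(pg)_\alpha=v(p(\omega^{\alpha+1}))_\alpha=v(p)_\alpha$. Your identity $v(pg)_\alpha-v(g)_\alpha=v(p)_\alpha$ would therefore force $v(g)_\alpha=0$ for every $g\in s(p)\partial\Lambda$. The phrase ``both sides are computed inside $p$'' overlooks this term. The identity cannot hold in general: by properties (2) and (3), $v(kg)_\alpha=v(g)_\alpha+1$ whenever $k\in\Lambda^{\omega^\alpha}$ and $kg$ is $\alpha$-cancellative, so $v(\cdot)_\alpha$ does not vanish on boundary paths. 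Hence $\pi(T_p)$ alone is not a uniform shift by $v(p)_\alpha$. This is exactly why $T_p=T_pT_{s(p)}^*$ is excluded from $\mathcal{D}_{\alpha+1}$ when $p\notin\Lambda_{\alpha+1}$. Your argument never uses the hypothesis ``$p\in\Lambda_{\alpha+1}$ iff $q\in\Lambda_{\alpha+1}$'', and without it the conclusion is false.

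The paper instead evaluates $T_pT_q^*$ as a whole on $\xi_{f,n}$ with $f\in q\partial\Lambda$, so that $f$ is $\alpha$-cancellative.
\begin{enumerate}
\item If $p,q\notin\Lambda_{\alpha+1}$, the two halves shift by $v(q^{-1}f)_\alpha-v(f)_\alpha$ and $v(pq^{-1}f)_\alpha-v(q^{-1}f)_\alpha$. The uncontrolled term $v(q^{-1}f)_\alpha$ cancels, and property (1) gives $v(f)_\alpha=v(q)_\alpha$ and $v(pq^{-1}f)_\alpha=v(p)_\alpha$.
\item If $p,q\in\Lambda_{\alpha+1}$, prefixing by $p$ or $q$ never moves $L$ across $\omega^{\alpha+1}$, so no mixed case arises. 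Property (3) applied to $f$ and $pq^{-1}f$ (or additivity on short paths) then gives the shift $v(p)_\alpha-v(q)_\alpha$.
\end{enumerate}

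The mixed regime you flag cannot be repaired by truncating at $\omega^{\alpha+1}$. When $p$ or $q$ lies outside $\Lambda_{\alpha+1}$ and has several $\Lambda^{\omega^{*}}$-factors, the telescoping leaves $v$-values of intermediate paths of length exactly $\omega^{\alpha+1}$. These cancel only if those paths are $\alpha$-cancellative. The hypothesis $p\Lambda^*\cup q\Lambda^*\subseteq C_\alpha$ constrains only $p(\omega^{\alpha+1})$ and $q(\omega^{\alpha+1})$ (Lemma~\ref{lem:cancellative-length-independent}).

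In fact the displayed formula fails there. Take $\alpha=0$ and $\zeta\ge 2$. Let $x\in\Lambda^{\omega}$ be $0$-cancellative (for instance, its finite prefixes have distinct sources). Let $h$ be a loop edge at $w=s(x)$ with $w\Lambda^{\omega}=\emptyset$; this is compatible with condition (C). Then:
\begin{enumerate}
\item $h^\omega\in\partial\Lambda$ and $hh^\omega=h^\omega$.
\item $p=xh$ and $q=x$ give a generator of $\mathcal{D}_{1}$ with $v(p)_0=v(q)_0=v(x)_0$ by property (1).
\item In $\pi(T_x)\pi(T_h)\pi(T_x)^*\xi_{xh^\omega,n}$ the contributions of $T_x^*$ and $T_x$ cancel.
\item $T_h$ adds $v(h)_0=1$ because $L(hh^\omega)=\omega$, so the output has $0$-th coordinate $n_0+1\neq n_0+v(p)_0-v(q)_0$.
\end{enumerate}
The paper's one-line treatment of $p,q\notin\Lambda_{\alpha+1}$ tacitly uses the single-factor formula $m_\alpha=n_\alpha+v(pq^{-1}f)_\alpha-v(f)_\alpha$, so it does not cover this subcase either.

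Finally, your ``cleaner route'' to agreement with $\Gamma_\alpha$ fails as stated. The computation in Lemma~\ref{lem:injective} works on $H_\alpha$ because there every $f$ with $L(f)>\omega^{\alpha+1}$ is $\alpha$-cancellative; cancellativity of $e$ is beside the point. On the full space, for $e\in\Lambda^{\omega^\alpha}$, $\pi(T_e)$ shifts such $f$ by $v(ef)_\alpha-v(f)_\alpha$, which is $0$ if $ef=f$. So $\mathrm{Ad}\,U_z\circ\pi\neq\pi\circ\Gamma_{\alpha,z}$ on $\mathcal{O}(\Lambda_{\alpha+1})$ in general. The paper only checks agreement on generators $T_pT_q^*$ with $p,q\in\Lambda_{\alpha+1}$ via property (2), which is your ``direct route''. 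Your worry that the intersection need not be spanned by such generators is not addressed there.
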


\begin{proof}
For each $z\in\mathbb{T}$, define a unitary $U_{z}$ on $H=\ell^{2}\left(\partial\Lambda\times\mathbb{Z}^{[0,\zeta)}\right)$
by $U_{z}\xi_{f,n}=z^{n_{\alpha}}\xi_{f,n}$. We will show that 
\[
U_{z}\pi\left(T_{p}T_{q}^{*}\right)U_{z}^{*}=\pi\left(z^{v\left(p\right)_{\alpha}-v\left(q\right)_{\alpha}}T_{p}T_{q}^{*}\right)
\]
for $p,q$ as in the definition of $\mathcal{D}_{\alpha+1}$. Since
$\pi$ is injective on $\mathcal{D}_{\alpha+1}\subseteq\mathcal{O}\left(\Lambda_{\zeta}\right)$
by \prettyref{lem:injective} and $z^{v\left(p\right)_{\alpha}-v\left(q\right)_{\alpha}}T_{p}T_{q}^{*}\in\mathcal{D}_{\alpha+1}$,
this will complete the proof that $\overline{\Gamma}_{\alpha}$ is
well-defined. Selecting an arbitrary basis vector $\xi_{f,n}$, we
have $\pi\left(T_{p}T_{q}^{*}\right)\xi_{f,n}=0$ unless $f\in q\partial\Lambda$,
in which case $f$ is $\alpha$-cancellative. Therefore, it suffices
to show that the operators agree on $\xi_{f,n}$ when $f$ is $\alpha$-cancellative.
Note that either $p$ and $q$ both belong to $\Lambda_{\alpha+1}$,
or $p$ and $q$ both belong to $\Lambda\backslash\Lambda_{\alpha+1}$.
We handle the two cases separately. If $p,q\in\Lambda_{\alpha+1}$,
then $v\left(pq^{-1}f\right)_{\alpha}=v\left(p\right)_{\alpha}-v\left(q\right)_{\alpha}+v\left(f\right)_{\alpha}$,
so
\begin{align*}
U_{z}\pi\left(T_{p}T_{q}^{*}\right)U_{z}^{*}\xi_{f,n} & =z^{-n_{\alpha}}U_{z}\pi\left(T_{p}T_{q}^{*}\right)\xi_{f,n}\\
 & =z^{-n_{\alpha}}U_{z}\xi_{pq^{-1}f,m}\\
 & =z^{-n_{\alpha}+n_{\alpha}+v\left(pq^{-1}f\right)_{\alpha}-v\left(f\right)_{\alpha}}\xi_{pq^{-1}f,m}\\
 & =z^{v\left(p\right)_{\alpha}-v\left(q\right)_{\alpha}}\pi\left(T_{p}T_{q}^{*}\right)\xi_{f,n}
\end{align*}
If $p,q\in\Lambda\backslash\Lambda_{\alpha+1}$, then $v\left(pq^{-1}f\right)_{\alpha}=v\left(p\right)_{\alpha}$,
and since $f\in q\partial\Lambda$, $v\left(f\right)_{\alpha}=v\left(q\right)_{\alpha}$.
Therefore, the same equations hold. Finally, to see that $\overline{\Gamma}_{\alpha}$
agrees with $\Gamma_{\alpha}$ on $D_{\alpha}\cap\mathcal{O}\left(\Lambda_{\alpha+1}\right)$,
note that $v\left(p\right)_{\alpha}=1$ for $p\in\Lambda^{\omega^{\alpha}}$.
\end{proof}
Next we tackle the problem of proving $J_{\zeta}$ is generated by
$\left\{ T_{v}:v\in\Lambda_{0}\text{ is }\zeta\text{-regular}\right\} $.
Since $\mathcal{O}\left(\Lambda_{0}\right)$ is fixed by $\Gamma_{\alpha}$
for every $\alpha$, the ideal generated by these $\zeta$-regular
vertices is gauge-invariant. Our strategy is to prove that $J_{\zeta}$
is also gauge-invariant and use the classification of gauge-invariant
ideals of Cuntz-Pimsner algebras.
\begin{lem}
\label{lem:awesome-lemma}If $\alpha<\zeta$, $a\in\mathcal{O}\left(\Lambda_{\alpha+1}\right)$,
$\varphi_{\zeta}\left(a\right)\in\mathcal{K}\left(X_{\zeta}\right)$,
and $h\in\Lambda_{\alpha+1}\backslash\Lambda_{\alpha}$ with $s\left(h\right)=r\left(h\right)$,
then $\left(\varphi_{\zeta}\left(aT_{h}^{n}\right)\right)_{n\in\mathbb{N}}$
has a convergent subsequence.
\end{lem}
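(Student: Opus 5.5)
The plan is to approximate $\varphi_{\zeta}\left(a\right)$ by compact operators of the special form $\varphi_{\zeta}\left(a\right)s_{F}$. Here $s_{F}=\sum_{f\in F}\theta_{\delta_{f},\delta_{f}}$ for finite $F\subseteq\Lambda^{\omega^{\zeta}}$ is the approximate identity for $\mathcal{K}\left(X_{\zeta}\right)$ from the proof of \prettyref{prop:left-action-factors}. For such an approximant, composing with $\varphi_{\zeta}\left(T_{h}^{n}\right)$ gives a sequence that is eventually periodic in $n$. A diagonal argument then produces a Cauchy subsequence.

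\emph{Step 1: choice of approximants.} Since $\varphi_{\zeta}\left(a\right)\in\mathcal{K}\left(X_{\zeta}\right)$ and $s_{F}$ is an approximate identity, for each $j$ choose finite $F_{j}$ with
\[
\left\Vert \varphi_{\zeta}\left(a\right)-\varphi_{\zeta}\left(a\right)s_{F_{j}}\right\Vert <2^{-j}.
\]
Then
\[
\varphi_{\zeta}\left(a\right)s_{F_{j}}\varphi_{\zeta}\left(T_{h}^{n}\right)=\sum_{f\in F_{j}}\theta_{\varphi_{\zeta}\left(a\right)\delta_{f},\,\varphi_{\zeta}\left(T_{h}^{*}\right)^{n}\delta_{f}},
\]
and $\left\Vert \varphi_{\zeta}\left(aT_{h}^{n}\right)-\varphi_{\zeta}\left(a\right)s_{F_{j}}\varphi_{\zeta}\left(T_{h}^{n}\right)\right\Vert <2^{-j}$ uniformly in $n$, because $\left\Vert \varphi_{\zeta}\left(T_{h}^{n}\right)\right\Vert \leq1$. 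By \prettyref{prop:left-action}, $\varphi_{\zeta}\left(T_{h}^{*}\right)^{n}\delta_{f}=\delta_{h^{-n}f}$ if $f\in h^{n}\Lambda$ and is $0$ otherwise.

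\emph{Step 2: eventual periodicity.} For each $f\in F_{j}$, exactly one of three things happens.
\begin{enumerate}
\item $f\notin h^{N}\Lambda$ for some $N$. Then the $f$-term vanishes for all $n\geq N$.
\item $f\in\bigcap_{n}h^{n}\Lambda^{\omega^{\zeta}}$ and $\varphi_{\zeta}\left(a\right)\delta_{f}=0$. Then the term is identically $0$.
\item $f\in\bigcap_{n}h^{n}\Lambda^{\omega^{\zeta}}$ and $\varphi_{\zeta}\left(a\right)\delta_{f}\neq0$. By \prettyref{cor:compact-loop}, applied with $a$ viewed in $\mathcal{O}\left(\Lambda_{\zeta}\right)$ via the inclusion from the \prettyref{assumptions:inductive-assumptions}, there is $k_{f}>0$ with $f=h^{k_{f}}f$. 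By left cancellation $h^{-\left(n+k_{f}\right)}f=h^{-n}f$, so the term is periodic in $n$ with period $k_{f}$.
\end{enumerate}
Let $K_{j}$ be the least common multiple of the $k_{f}$ over all $f$ in case (3), and let $N_{j}$ be large enough to kill every case-(1) term. Then for each residue $r$ mod $K_{j}$, the operator $\varphi_{\zeta}\left(a\right)s_{F_{j}}\varphi_{\zeta}\left(T_{h}^{n}\right)$ is constant for $n\geq N_{j}$ with $n\equiv r$. Consequently, any two indices $n,n'\geq N_{j}$ in the same residue class satisfy $\left\Vert \varphi_{\zeta}\left(aT_{h}^{n}\right)-\varphi_{\zeta}\left(aT_{h}^{n'}\right)\right\Vert <2^{1-j}$.

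\emph{Step 3: diagonalization.} Inductively choose infinite sets $\mathbb{N}\supseteq A_{1}\supseteq A_{2}\supseteq\cdots$. Take $A_{j}$ to be the intersection of $A_{j-1}\cap[N_{j},\infty)$ with a residue class mod $K_{j}$ that meets $A_{j-1}$ infinitely often; one exists by pigeonhole. Picking $n_{j}\in A_{j}$ strictly increasing gives a subsequence with $\left\Vert \varphi_{\zeta}\left(aT_{h}^{n_{i}}\right)-\varphi_{\zeta}\left(aT_{h}^{n_{i'}}\right)\right\Vert <2^{1-j}$ for $i,i'\geq j$. This subsequence is Cauchy and hence converges; its limit lies in $\mathcal{K}\left(X_{\zeta}\right)$, which is closed.

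The main obstacle is Step 2, specifically the vectors $f$ lying in every $h^{n}\Lambda$. For these, the vectors $\delta_{h^{-n}f}$ neither vanish nor escape to infinity in a controlled way, and only \prettyref{cor:compact-loop} forces them to be either annihilated by $\varphi_{\zeta}\left(a\right)$ or periodic. This is exactly why the approximants must be taken as $\varphi_{\zeta}\left(a\right)s_{F}$ rather than arbitrary finite-rank operators: the left vectors $\varphi_{\zeta}\left(a\right)\delta_{f}$ let the corollary apply directly. A minor point to check is that $\delta_{h^{-n}f}$ is computed with the correct source projection $T_{s\left(f\right)}$, which holds since $s\left(h^{-n}f\right)=s\left(f\right)$.
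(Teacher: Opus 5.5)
Your proof is correct and follows the same strategy as the paper. Both use a finite set of basis paths coming from compactness, \prettyref{cor:compact-loop} to sort those paths into escaping, annihilated and periodic ones, and a common multiple of the periods. The difference is only in bookkeeping: you move $\varphi_\zeta(T_h^n)$ onto the right-hand vectors of the finite-rank approximant $\varphi_\zeta(a)s_F$, where it acts as $\delta_f\mapsto\delta_{h^{-n}f}$, and extract the subsequence by pigeonhole on residue classes, whereas the paper splits $X_\zeta$ into the pieces $C\oplus D$ preserved by $Q=\varphi_\zeta(T_h^n)$ (using the compactness criterion on $\mathcal{O}(\Lambda_\zeta)$-spans), proves $\|\varphi_\zeta(aT_h^n)-\varphi_\zeta(aT_h^{mn})\|\le\varepsilon$ for all $m\ge1$, and then passes to the products $n_0n_1n_2\cdots$.
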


\begin{proof}
It suffices to prove that for $\varepsilon>0$ there exists positive
$n\in\mathbb{N}$ such that for all positive $m\in\mathbb{N}$,
\[
\left\Vert \varphi_{\zeta}\left(aT_{h}^{n}\right)-\varphi_{\zeta}\left(aT_{h}^{mn}\right)\right\Vert \leq\varepsilon
\]
To see why, let $\left(\varepsilon_{k}\right)_{k\in\mathbb{N}}$ be
a sequence in $\mathbb{R}_{>0}$ converging to $0$, and for each
$\varepsilon_{k}$ select $n_{k}$ satisfying the above inequality.
Then the sequence
\[
\varphi_{\zeta}\left(aT_{h}^{n_{0}}\right),\varphi_{\zeta}\left(aT_{h}^{n_{0}n_{1}}\right),\varphi_{\zeta}\left(aT_{h}^{n_{0}n_{1}n_{2}}\right),\dots
\]
is Cauchy, and hence converges in $\mathcal{K}\left(X_{\zeta}\right)$.
Thus we fix $\varepsilon>0$ and focus on constructing $n$ satisfying
the desired inequality for all positive $m\in\mathbb{N}$.

Let $S=\varphi_{\zeta}\left(aT_{s\left(h\right)}\right)$. By \prettyref{cor:compactness},
there exists finite $F\subseteq\Lambda^{\omega^{\zeta}}$ such that
if 
\[
\eta\in\overline{\mathrm{span}}_{\mathcal{O}\left(\Lambda_{\zeta}\right)}\left\{ \delta_{g}:g\in\Lambda^{\omega^{\zeta}}\backslash F\right\} 
\]
 and $\left\Vert \eta\right\Vert \leq1$, $\left\Vert S\eta\right\Vert <\frac{1}{2}\varepsilon$.
By \prettyref{cor:compact-loop}, if $f\in\cap_{k\in\mathbb{N}}h^{k}\Lambda$
and $h^{k}f\not=f$ for all $k\in\mathbb{N}$, then $S\delta_{f}=0$.
Hence we may assume without loss of generality that $F$ contains
no such $f$. We also assume that each $f\in F$ satisfies $r\left(f\right)=s\left(h\right)$,
since otherwise $S\delta_{f}=0$. Hence for each $f\in F$, we have
one of two possibilities:
\begin{enumerate}
\item There exists $k>0$ such that $f\not\in h^{k}\Lambda$.
\item $f\in\cap_{k\in\mathbb{N}}h^{k}\Lambda$ and $h^{m}f=f$ for some
$m\in\mathbb{N}$.
\end{enumerate}
Suppose for $f\in F$, (1) occurs. Define $n_{f}>0$ such that $f\not\in h^{n_{f}}\Lambda$.
If (2) occurs for $f\in F$, define $n_{f}>0$ such that $h^{n_{f}}f=f$.
Select $n$ to be a positive common multiple of $\left\{ n_{f}:f\in F\right\} $.
Define $G=\left\{ g\in F:h^{m}g=g\text{ for some }m>0\right\} $.
Now we make the following observation:
\begin{claim*}
For $g\in\Lambda^{\omega^{\zeta}}$, $g\in G$ iff $h^{n}g\in G$.
Moreover, if $g\not\in G$, then $h^{n}g\not\in F$.
\end{claim*}
First assume $g\in G$ and $h^{m}g=g$ for $m>0$. Then $h^{mk}g=g$
for all $k\in\mathbb{N}$, and $g\in\cap_{k\in\mathbb{N}}h^{k}\Lambda$.
By construction of $n$, $h^{n}g=g\in G$. Now assume $g\not\in G$
and $f=h^{n}g\in F$. Since $f\in h^{n}\Lambda\subseteq h^{n_{f}}\Lambda$,
we must have by the construction of $n$ $f\in\cap_{k\in\mathbb{N}}h^{k}\Lambda$,
and hence that $h^{n}f=f$. Thus $h^{2n}g=h^{n}g$, and by left cancellation,
$h^{n}g=g\in F$. However, this contradicts the construction of $G$.
Since $G\subseteq F$, we also have $h^{n}g\not\in G$, proving the
claim.

Now define subspaces
\begin{align*}
C & =\overline{\mathrm{span}}_{\mathcal{O}\left(\Lambda_{\zeta}\right)}\left\{ \delta_{g}:g\in G\right\} \\
D & =\overline{\mathrm{span}}_{\mathcal{O}\left(\Lambda_{\zeta}\right)}\left\{ \delta_{g}:g\in\Lambda^{\omega^{\zeta}}\backslash G\right\} 
\end{align*}
By the above claim, $C$ and $D$ are closed orthogonal invariant
subspaces for the operator $Q=\varphi_{\zeta}\left(T_{h}^{n}\right)$.
For $\eta\in X_{\zeta}$ with $\left\Vert \eta\right\Vert \leq1$,
let $\eta=\eta_{1}+\eta_{2}$ with $\eta_{1}\in C$ and $\eta_{2}\in D$.
Since $C\perp D$, $\left\Vert \eta_{2}\right\Vert \leq1$. Then $\left\Vert Q\eta_{2}\right\Vert \leq1$
since $\left\Vert T_{h}^{n}\right\Vert \leq1$, and
\[
\left\{ Q\eta_{2},Q^{n}\eta_{2}\right\} \subseteq\overline{\mathrm{span}}_{\mathcal{O}\left(\Lambda_{\zeta}\right)}\left\{ \delta_{g}:g\in\Lambda^{\omega^{\zeta}}\backslash F\right\} 
\]
Then by the definition of $F$,
\begin{align*}
\left\Vert SQ\eta-SQ^{n}\eta\right\Vert  & \leq\left\Vert SQ\eta_{1}-SQ^{n}\eta_{1}\right\Vert +\left\Vert SQ\eta_{2}-SQ^{n}\eta_{2}\right\Vert \\
 & =\left\Vert SQ\eta_{2}-SQ^{n}\eta_{2}\right\Vert \\
 & \leq\left\Vert SQ\eta_{2}\right\Vert +\left\Vert SQ^{n}\eta_{2}\right\Vert \\
 & <\frac{\varepsilon}{2}+\frac{\varepsilon}{2}=\varepsilon
\end{align*}
\end{proof}
\begin{lem}
\label{lem:non-returning-zero}If $\alpha<\zeta$ and $a\in\mathcal{O}\left(\Lambda_{\alpha+1}\right)\cap J_{\zeta}$,
then for each $f\in\Lambda^{\omega^{\zeta}}$ which is not $\alpha$-cancellative
and $z\in\mathbb{T}$, $\left(\varphi_{\zeta}\circ\Gamma_{\alpha,z}\right)\left(a\right)\delta_{f}=0$.
\end{lem}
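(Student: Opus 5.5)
The plan is to reduce to a vector $\delta_{f'}$ fixed by a loop $h$ and then decompose into spectral subspaces of the gauge action $\Gamma_\alpha$. By \prettyref{lem:returning-loops} there are $g,h\in\Lambda_{\alpha+1}$ with $r(g)=r(f)$, $s(g)=r(h)=s(h)$, $d(h)\geq\omega^{\alpha}$, and $hg^{-1}f=g^{-1}f$. Set $f'=g^{-1}f$. Since $d(g)<\omega^{\zeta}$ we have $f'\in\Lambda^{\omega^{\zeta}}$, and $h^{n}f'=f'$ for all $n$, so $\varphi_{\zeta}(T_h^{n})\delta_{f'}=\delta_{f'}$. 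Moreover $h\in\Lambda_{\alpha+1}\backslash\Lambda_\alpha$, and $c:=v(h)_\alpha\geq1$ by \prettyref{prop:shift-func}. Since $\Gamma_{\alpha,z}(T_g)=z^{v(g)_\alpha}T_g$ and $\delta_f=\varphi_\zeta(T_g)\delta_{f'}$, we get
\[
\varphi_\zeta(\Gamma_{\alpha,z}(a))\delta_f=z^{-v(g)_\alpha}\,\varphi_\zeta(\Gamma_{\alpha,z}(b))\delta_{f'},\qquad b=aT_g .
\]
Here $b\in J_\zeta\cap\mathcal{O}(\Lambda_{\alpha+1})$ because $J_\zeta$ is an ideal, so $\varphi_\zeta(b)$ is compact. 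Note that \prettyref{cor:compact-loop} gives nothing directly, since $h^nf'=f'$ is exactly its second alternative.

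Next I pass to Fourier coefficients. Let $E_k(x)=\int_{\mathbb{T}}z^{-k}\Gamma_{\alpha,z}(x)\,dz$; this uses continuity from \prettyref{lem:alg-auts}. Because $\Gamma_{\alpha,z}(b)$ is the Cesàro limit of $\sum_k z^kE_k(b)$, it suffices to show $\varphi_\zeta(E_k(b))\delta_{f'}=0$ for every $k\in\mathbb{Z}$. Using $T_h^n$ fixing $\delta_{f'}$, we have
\[
\varphi_\zeta(E_k(b))\delta_{f'}=\varphi_\zeta(E_k(b)T_h^{n})\delta_{f'}=\varphi_\zeta(E_{k+nc}(bT_h^{n}))\delta_{f'}\quad\text{for all }n .
\]
Now \prettyref{lem:awesome-lemma}, applied to $b$ and $h$, yields a subsequence $n_j$ such that $\varphi_\zeta(bT_h^{n_j})$ converges, say to $K$. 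I would then argue in a Riemann--Lebesgue fashion. First approximate the limit by $\varphi_\zeta(y)$ with $y$ a finite sum of spectral components. Then $E_{k+n_jc}(y)=0$ for large $j$, so $\|\varphi_\zeta(E_{k+n_jc}(bT_h^{n_j}))\|\to0$. Hence $\varphi_\zeta(E_k(b))\delta_{f'}=0$.

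The main obstacle is the transfer step. Convergence from \prettyref{lem:awesome-lemma} holds only in the norm of $\varphi_\zeta$, so I need the estimate $\|\varphi_\zeta(E_m(x))\|\leq\|\varphi_\zeta(x)\|$ for $x\in\mathcal{O}(\Lambda_{\alpha+1})$. This would follow if $\ker\varphi_\zeta\cap\mathcal{O}(\Lambda_{\alpha+1})$ is $\Gamma_\alpha$-invariant, or more concretely if $\varphi_\zeta\circ\Gamma_{\alpha,z}$ is implemented by unitaries on $X_\zeta$. My first attempt would be the latter, built from \prettyref{prop:left-action-factors}: $\varphi_\zeta$ factors through $\sigma_\zeta$ on $\ell^2(\Lambda^{\omega^\zeta}v)$. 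I would then mimic \prettyref{cor:extend-action} with the shift function $v$ from \prettyref{prop:shift-func}, taking $\Lambda^{\omega^{\zeta}}\times\mathbb{Z}$ in place of $\Lambda^{\omega^{\zeta}}$ and the unitary $W_z\xi_{e,n}=z^{n}\xi_{e,n}$. This dilated representation implements $\Gamma_\alpha$ and dominates the norm of $\sigma_\zeta$. If the non-$\alpha$-cancellative paths spoil this, I would fall back on the inductive assumption that $J_{\alpha+1}$ is $\Gamma_\alpha$-invariant, together with the kernel description of $\varphi_\zeta$ via \prettyref{cor:compactness}.
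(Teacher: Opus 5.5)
Your reduction to $f'=g^{-1}f$ with $h^{n}f'=f'$ is fine, and so are the identity $\varphi_\zeta(E_k(b))\delta_{f'}=\varphi_\zeta(E_{k+nc}(bT_h^{n}))\delta_{f'}$, the Ces\`aro reduction and the appeal to \prettyref{lem:awesome-lemma}. The gap is the transfer step. The estimate $\|\varphi_\zeta(E_m(x))\|\leq\|\varphi_\zeta(x)\|$ for all $x\in\mathcal{O}(\Lambda_{\alpha+1})$, which you say you need, is false in general, even under condition (C). So no route can establish it.

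For a counterexample, take the category of \prettyref{fig:example1-fig} and adjoin a vertex $u$ and an edge $k$ with $s(k)=u$, $r(k)=v$. The paths $k\in v\Lambda_1$ and $fk\in w\Lambda_1$ have source $u$, and $u\Lambda^{\omega}=\emptyset$. Hence no vertex is $1$-regular, and condition (C) holds. We still have $\Lambda^{\omega}=\{g,fg\}$. With $\zeta=1$ and $\alpha=0$, this gives $\varphi_1(T_{ef}-T_v)=0$, since both operators fix $\delta_g$ and kill $\delta_{fg}$. But $E_0(T_{ef}-T_v)=-T_v$, and $\varphi_1(T_v)\delta_g=\delta_g\neq0$.

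So $\ker\varphi_\zeta\cap\mathcal{O}(\Lambda_{\alpha+1})$ need not be $\Gamma_\alpha$-invariant, and $\varphi_\zeta\circ\Gamma_{\alpha,z}$ need not be unitarily implementable on $X_\zeta$. The obstruction is exactly the non-$\alpha$-cancellative vectors such as $\delta_{f'}$. Your other routes also fail:
\begin{itemize}
\item A dilation $\tilde\sigma$ on $\ell^2(\Lambda^{\omega^\zeta}\times\mathbb{Z})$ that implements $\Gamma_\alpha$ and dominates $\sigma_\zeta$ only gives $\|\sigma_\zeta(E_m(x))\|\leq\|\tilde\sigma(x)\|$, and $\|\tilde\sigma(x)\|$ is not bounded by $\|\sigma_\zeta(x)\|$.
\item The fallback via $\Gamma_\alpha$-invariance of $J_{\alpha+1}$ concerns $X_{\alpha+1}$, not $\ker\varphi_\zeta$.
\end{itemize}

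The missing idea is the half of the definition of $J_\zeta$ that you did not use. Besides compactness, $J_\zeta\subseteq(\ker\varphi_\zeta)^{\perp}$, so $J_\zeta\cap\ker\varphi_\zeta=0$ and $\varphi_\zeta$ is isometric on $J_\zeta$. Every $bT_h^{n}$ lies in $J_\zeta$. So the subsequence from \prettyref{lem:awesome-lemma} is Cauchy in $\mathcal{O}(\Lambda_{\alpha+1})$ itself and converges in norm to some $x$. Now work in the algebra, where each $E_m$ is contractive. Then $\|E_k(b)T_h^{n_j}\|=\|E_{k+n_jc}(bT_h^{n_j})\|\leq\|bT_h^{n_j}-x\|+\|E_{k+n_jc}(x)\|\to0$ by Riemann--Lebesgue. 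Hence $\varphi_\zeta(E_k(b))\delta_{f'}=\lim_j\varphi_\zeta(E_k(b)T_h^{n_j})\delta_{f'}=0$, and your argument closes.

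This is the mechanism of the paper's proof, which dispenses with Fourier coefficients. It uses the same isometry to get a norm limit of $aT_h^{n_k}$. It then shows that this limit is $0$ by comparing $\varepsilon^{-1}\int_0^{\varepsilon}\Gamma_{\alpha,e^{it}}(\cdot)\,dt$ with Riemann--Lebesgue. Therefore $\Gamma_{\alpha,z}(a)T_h^{n_k}\to0$, and for $f$ with $hf=f$ one gets $\varphi_\zeta(\Gamma_{\alpha,z}(a))\delta_f=\varphi_\zeta(\Gamma_{\alpha,z}(a)T_h^{n_k})\delta_f\to0$.
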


\begin{proof}
Let $\alpha<\zeta$ and $a\in\mathcal{O}\left(\Lambda_{\alpha+1}\right)\cap J_{\zeta}$
be given. Let $f\in\Lambda^{\omega^{\zeta}}$ such that $f$ is not
$\alpha$-cancellative, and by \prettyref{lem:returning-loops}, choose
$g,h\in\Lambda_{\alpha+1}$ such that $d\left(h\right)\geq\omega^{\alpha}$
and $hg^{-1}f=g^{-1}f$. Since $g\in\Lambda_{\alpha+1}$ and $J_{\zeta}$
is an ideal, we have $aT_{g}\in\mathcal{O}\left(\Lambda_{\alpha+1}\right)\cap J_{\zeta}$.
Moreover, $g^{-1}f$ is not $\alpha$-cancellative, and $\varphi_{\zeta}\left(aT_{g}\right)\delta_{g^{-1}f}=\varphi_{\zeta}\left(a\right)\delta_{f}$.
Thus it suffices to prove $\varphi_{\zeta}\left(a\right)\delta_{f}=0$
if $hf=f$, and we assume without loss of generality $hf=f$.

Applying \prettyref{lem:awesome-lemma}, we have a convergent sequence
$\varphi_{\zeta}\left(aT_{h}^{n_{k}}\right)_{k\in\mathbb{N}}$ with
$\left(n_{k}\right)_{k\in\mathbb{N}}$ increasing. Since $\varphi_{\zeta}$
is injective on $J_{\zeta}$, $\left(aT_{h}^{n_{k}}\right)_{k\in\mathbb{N}}$
converges to some $b\in\mathcal{O}\left(\Lambda_{\alpha+1}\right)$.
Since $\Gamma_{\alpha}$ defined in \prettyref{lem:alg-auts} is continuous,
we have
\begin{equation}
\lim_{\varepsilon\rightarrow0}\varepsilon^{-1}\int_{0}^{\varepsilon}\Gamma_{\alpha,e^{it}}\left(b\right)\:dt=\Gamma_{\alpha,1}\left(b\right)=b\label{eq:lim_int}
\end{equation}
On the other hand, $d\left(h\right)\geq\omega^{\alpha}$ so there
exists positive $m\in\mathbb{N}$ such that
\begin{align*}
\int_{0}^{\varepsilon}\Gamma_{\alpha,e^{it}}\left(b\right)\:dt & =\int_{0}^{\varepsilon}\Gamma_{\alpha,e^{it}}\left(\lim_{k\rightarrow\infty}aT_{h}^{n_{k}}\right)\:dt=\int_{0}^{\varepsilon}\lim_{k\rightarrow\infty}e^{imn_{k}t}\Gamma_{\alpha,e^{it}}\left(a\right)T_{h}^{n_{k}}\:dt\\
 & =\lim_{k\rightarrow\infty}\int_{0}^{\varepsilon}e^{imn_{k}t}\Gamma_{\alpha,e^{it}}\left(a\right)\:dt\:T_{h}^{n_{k}}=0
\end{align*}
Since $\left\Vert e^{imn_{k}t}\Gamma_{\alpha,e^{it}}\left(a\right)T_{h}^{n_{k}}\right\Vert =\left\Vert a\right\Vert $,
we apply the dominated convergence theorem to pull the limit out of
the integral. The final equality follows from the Riemann-Lebesgue
lemma. Hence by \prettyref{eq:lim_int}, $b=0$. We also have
\[
0=\Gamma_{\alpha,z}\left(b\right)=\lim_{k\rightarrow\infty}z^{mn_{k}}\Gamma_{\alpha,z}\left(a\right)T_{h}^{n_{k}}
\]
Therefore $\left(\Gamma_{\alpha,z}\left(a\right)T_{h}^{n_{k}}\right)_{k\in\mathbb{N}}$
also converges to $0$, and
\[
\left(\varphi_{\zeta}\circ\Gamma_{\alpha,z}\right)\left(a\right)\delta_{f}=\varphi_{\zeta}\left(\Gamma_{\alpha,z}\left(a\right)T_{h}^{n_{k}}\right)\delta_{f}=0
\]
\end{proof}
\begin{lem}
\label{lem:katsura-ideal-generators}For all $\alpha<\zeta$ and $V\subseteq[\alpha,\zeta)$,
$\mathcal{O}\left(\Lambda_{\alpha}\right)\cap\bigcap_{\gamma\in V}J_{\gamma}$
is the smallest ideal in $\mathcal{O}\left(\Lambda_{\alpha}\right)$
containing $\left\{ T_{v}:v\in\Lambda_{0}\text{ is }\gamma\text{-regular for all }\gamma\in V\right\} $.
\end{lem}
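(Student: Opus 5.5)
The plan is transfinite induction on $\alpha$, with $V\subseteq[\alpha,\zeta)$ arbitrary at each stage. Write $I_{\alpha,V}=\mathcal{O}\left(\Lambda_{\alpha}\right)\cap\bigcap_{\gamma\in V}J_{\gamma}$ and let $K_{\alpha,V}$ be the ideal of $\mathcal{O}\left(\Lambda_{\alpha}\right)$ generated by $\left\{ T_{v}:v\text{ is }\gamma\text{-regular for all }\gamma\in V\right\}$. By the \prettyref{assumptions:inductive-assumptions} (4), each such $T_{v}$ lies in every $J_{\gamma}$, $\gamma\in V$, so $K_{\alpha,V}\subseteq I_{\alpha,V}$. (If $V=\emptyset$, both sides are $\mathcal{O}\left(\Lambda_{\alpha}\right)$, since every vertex $v$ qualifies and the $T_{v}$ generate an essential ideal containing an approximate unit.) The work is the reverse inclusion.

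\emph{Base case $\alpha=0$.} Here $\mathcal{O}\left(\Lambda_{0}\right)\cong c_{0}\left(\Lambda_{0}\right)$, so every ideal is $c_{0}$ of a set of vertices. By assumption (4), $\mathcal{O}\left(\Lambda_{0}\right)\cap J_{\gamma}$ is spanned by the $T_{v}$ with $v$ $\gamma$-regular. Intersecting over $\gamma\in V$ therefore gives exactly $K_{0,V}$.

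\emph{Limit case.} If $\alpha$ is a limit, \prettyref{prop:inductive-limit} and injectivity of the connecting maps give $\mathcal{O}\left(\Lambda_{\alpha}\right)=\overline{\bigcup_{\beta<\alpha}\mathcal{O}\left(\Lambda_{\beta}\right)}$. The standard fact that an ideal in an inductive limit is the closure of the union of its intersections with the building blocks then gives $I_{\alpha,V}=\overline{\bigcup_{\beta<\alpha}I_{\beta,V}}$. Since $V\subseteq[\beta,\zeta)$, the inductive hypothesis yields $I_{\beta,V}=K_{\beta,V}\subseteq K_{\alpha,V}$.

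\emph{Successor case $\alpha=\beta+1$.} We use $\mathcal{O}\left(\Lambda_{\beta+1}\right)\cong\mathcal{O}\left(X_{\beta}\right)$ from assumption (3), together with Katsura's classification of gauge-invariant ideals of Cuntz--Pimsner algebras: such an ideal $I$ is determined by the pair
\[
\left(I\cap\mathcal{O}\left(\Lambda_{\beta}\right),\ \mathcal{O}\left(\Lambda_{\beta}\right)\cap\left(I+\psi_{\beta}^{(1)}\left(\mathcal{K}\left(X_{\beta}\right)\right)\right)\right).
\]
The ideal $K_{\beta+1,V}$ is generated by $\Gamma_{\beta}$-fixed projections, so it is gauge-invariant.

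\emph{Step 1: gauge invariance of $I_{\beta+1,V}$.} This is the main obstacle. For $\gamma=\beta+1$ it is assumption (4). For larger $\gamma$, I would first try to show that $\Gamma_{\beta}$ preserves $\mathcal{O}\left(\Lambda_{\beta+1}\right)\cap J_{\gamma}$ by transporting the statement up to $\mathcal{O}\left(\Lambda_{\gamma}\right)$, using the extended actions $\overline{\Gamma}_{\beta}$ of \prettyref{cor:extend-action}. Failing that, the fallback is the argument pattern of \prettyref{lem:non-returning-zero}: average over $\mathbb{T}$ and check membership in $J_{\gamma}$ via the compactness criteria of \prettyref{cor:compactness}.

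\emph{Step 2: the first coordinate.} Compute $I_{\beta+1,V}\cap\mathcal{O}\left(\Lambda_{\beta}\right)=I_{\beta,V}$, which equals $K_{\beta,V}$ by induction since $V\subseteq[\beta,\zeta)$. Since $K_{\beta,V}\subseteq K_{\beta+1,V}\cap\mathcal{O}\left(\Lambda_{\beta}\right)\subseteq I_{\beta,V}$, the first coordinates of $K_{\beta+1,V}$ and $I_{\beta+1,V}$ agree.

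\emph{Step 3: the second coordinate.} By assumption (4), $\mathcal{O}\left(\Lambda_{\beta}\right)\cap\left(J_{\gamma}+\psi_{\beta}^{(1)}\left(\mathcal{K}\left(X_{\beta}\right)\right)\right)\subseteq J_{\beta}$. So the second coordinate of $I_{\beta+1,V}$ lies in $\mathcal{O}\left(\Lambda_{\beta}\right)\cap J_{\beta}\cap\left(I_{\beta+1,V}+\psi^{(1)}\left(\mathcal{K}\right)\right)$. Using covariance on $J_{\beta}$, its elements coincide modulo $I_{\beta+1,V}$ with compact images, so they lie in $J_{\beta}\cap\bigcap_{\gamma\in V}J_{\gamma}$ up to that identification. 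Applying the inductive hypothesis with $V\cup\{\beta\}$ identifies this with $K_{\beta,V\cup\{\beta\}}$. The same bound holds for $K_{\beta+1,V}$: it contains $T_{v}=\sum_{e\in v\Lambda^{\omega^{\beta}}}T_{e}T_{e}^{*}$ for the relevant $v$, and these $v$ are $\beta$-regular by \prettyref{lem:regular-hereditary}. Hence both pairs coincide, and Katsura's classification gives $I_{\beta+1,V}=K_{\beta+1,V}$.

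I expect Step 1 to be the delicate point, along with verifying in Step 3 that the second coordinates really coincide rather than merely being nested.
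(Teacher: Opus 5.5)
Your skeleton matches the paper: transfinite induction, the inductive-limit argument at limit ordinals, and Katsura's classification of gauge-invariant ideals at successors, with the first coordinate handled by the inductive hypothesis. However, two steps do not go through as written.

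\textbf{Step 3 is wrong.} You identify the second coordinate with $K_{\beta,V\cup\{\beta\}}$. Since every $\gamma$-regular vertex with $\gamma>\beta$ is $\beta$-regular (Lemma~\ref{lem:regular-hereditary}), this is just $K_{\beta,V}=I_{\beta,V}$, i.e.\ the first coordinate. That cannot be right. For any ideal $\mathcal{I}$ of $\mathcal{O}(\Lambda_{\beta+1})$, covariance of $(\psi_\beta,\rho_\beta^{\beta+1})$ gives
\[
J_\beta\subseteq\mathcal{O}(\Lambda_\beta)\cap\psi_\beta^{(1)}(\mathcal{K}(X_\beta))\subseteq\mathcal{O}(\Lambda_\beta)\cap\bigl(\mathcal{I}+\psi_\beta^{(1)}(\mathcal{K}(X_\beta))\bigr).
\]
So the second coordinate contains $T_v$ for every $\beta$-regular $v$. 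That includes vertices that fail to be $\gamma$-regular for some $\gamma\in V$, and by the last clause of assumption (4) such $T_v$ are not in $I_{\beta,V}$. Your sentence about elements ``coinciding modulo $I_{\beta+1,V}$ with compact images'' does not produce a valid inclusion.

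The missing idea is exactly this lower bound. Combined with the upper bound from assumption (4) that you already wrote down (for $V\neq\emptyset$), it gives the sandwich
\[
J_\beta\subseteq\mathcal{O}(\Lambda_\beta)\cap\bigl(K_{\beta+1,V}+\psi_\beta^{(1)}(\mathcal{K}(X_\beta))\bigr)\subseteq\mathcal{O}(\Lambda_\beta)\cap\bigl(I_{\beta+1,V}+\psi_\beta^{(1)}(\mathcal{K}(X_\beta))\bigr)\subseteq J_\beta .
\]
Hence both second coordinates equal $J_\beta$, the smallest value any ideal can have. This is precisely the paper's argument (``as small as possible''). No second appeal to the inductive hypothesis is needed, and your worry about ``coincide versus nested'' disappears.

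\textbf{Step 1 is left unresolved.} You only list candidate strategies and carry out neither. The paper does not prove this inside the lemma. It takes gauge-invariance of $\mathcal{O}(\Lambda_{\beta+1})\cap\bigcap_{\gamma\in V}J_\gamma$ from the induction on $\zeta$. The real input is that each $\mathcal{O}(\Lambda_{\beta+1})\cap J_\gamma$ with $\gamma<\zeta$ is $\Gamma_\beta$-invariant:
\begin{itemize}
\item for $\gamma=\beta+1$ this is assumption (4), which as listed only covers $J_{\beta+1}$ itself;
\item for larger $\gamma$ it is what Proposition~\ref{prop:katsura-ideal-gauge-invariant} established at the earlier stage $\gamma$.
\end{itemize}
Invariance of the intersection then follows because an intersection of gauge-invariant ideals is gauge-invariant.

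Rederiving this via $\overline{\Gamma}_\beta$ or by averaging would mean repeating that proposition's machinery for each $\gamma$: the algebras $\mathcal{A}_\beta$, the faithful expectations, and Lemma~\ref{lem:katsura-sandwich}. Neither of your sketches, as stated, supplies the required membership in $J_\gamma$. If you import Step 1 from the induction and replace Step 3 by the sandwich above, your proof coincides with the paper's.
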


\begin{proof}
For each $\delta<\zeta$, define $\mathcal{J}_{\delta}=\mathcal{O}\left(\Lambda_{\delta}\right)\cap\bigcap_{\gamma\in V}J_{\gamma}$.
If $\delta$ is a successor ordinal, then $\mathcal{O}\left(\Lambda_{\delta}\right)$
is a Cuntz-Pimsner algebra and $\mathcal{J}_{\delta}$ gauge-invariant
by \prettyref{assumptions:inductive-assumptions}. We use transfinite
induction on the classification of gauge-invariant ideals to prove
that $\mathcal{J_{\alpha}}$ is the ideal in $\mathcal{O}\left(\Lambda_{\alpha}\right)$
generated by $\left\{ T_{v}:v\in\Lambda_{0}\text{ is }\gamma\text{-regular for all }\gamma\in V\right\} $.
Suppose $\beta\leq\alpha$ and for all $\delta<\beta$, $\mathcal{J_{\delta}}$
is the smallest gauge-invariant ideal of $\mathcal{O}\left(\Lambda_{\delta}\right)$
containing $\mathcal{J}_{0}$. We will show $\mathcal{J}_{\beta}$
is the smallest gauge-invariant ideal in $\mathcal{O}\left(\Lambda_{\beta}\right)$
containing $\mathcal{J}_{0}$. 

Clearly this is true if $\beta=0$, so assume $\beta>0$. If $\beta$
is limit ordinal, then $\mathcal{O}\left(\Lambda_{\beta}\right)=\overline{\bigcup_{\delta<\beta}\mathcal{O}\left(\Lambda_{\delta}\right)}$,
hence by \cite[II.8.2.4]{Encyclopaedia}, $\mathcal{J}_{\beta}=\overline{\bigcup_{\delta<\beta}\mathcal{J}_{\delta}}$.
For an ideal $\mathcal{I}$ of $\mathcal{O}\left(\Lambda_{\beta}\right)$
containing $\mathcal{J}_{0}$, we have $\mathcal{J}_{\delta}\subseteq\mathcal{I}$
for every $\delta<\beta$. Therefore $\mathcal{J}_{\beta}\subseteq\mathcal{I}$,
and $\mathcal{J}_{\beta}$ is generated in $\mathcal{O}\left(\Lambda_{\beta}\right)$
by $\mathcal{J}_{0}$.

Otherwise, let $\beta=\delta+1$ be a successor ordinal. By \cite[Theorem 8.6]{GAUGE-INV-IDEALS},
the gauge-invariant ideals $\mathcal{I}$ of $\mathcal{O}\left(\Lambda_{\beta}\right)$
are uniquely determined by $\mathcal{O}\left(\Lambda_{\delta}\right)\cap\mathcal{I}$
and $\mathcal{O}\left(\Lambda_{\delta}\right)\cap\left(\mathcal{I}+\psi_{\delta}^{(1)}\left(\mathcal{K}\left(X_{\delta}\right)\right)\right)$.
If $\mathcal{I}$ is gauge-invariant, then
\[
J_{\delta}=\psi_{\delta}^{(1)}\left(\mathcal{K}\left(X_{\delta}\right)\right)\cap\mathcal{O}\left(\Lambda_{\delta}\right)\subseteq\left(\mathcal{I}+\psi_{\delta}^{(1)}\left(\mathcal{K}\left(X_{\delta}\right)\right)\right)\cap\mathcal{O}\left(\Lambda_{\delta}\right)
\]
By \prettyref{assumptions:inductive-assumptions},
\[
\mathcal{O}\left(\Lambda_{\delta}\right)\cap\left(\mathcal{J}_{\beta}+\psi_{\delta}^{(1)}\left(\mathcal{K}\left(X_{\delta}\right)\right)\right)\subseteq J_{\delta}
\]
Therefore, $\mathcal{O}\left(\Lambda_{\delta}\right)\cap\left(\mathcal{J}_{\beta}+\psi_{\delta}^{(1)}\left(\mathcal{K}\left(X_{\delta}\right)\right)\right)$
is as small as possible, and $\mathcal{J}_{\beta}$ is the smallest
gauge-invariant ideal containing $\mathcal{J}_{\beta}\cap\mathcal{O}\left(\Lambda_{\delta}\right)=\mathcal{J}_{\delta}$.
Since $\mathcal{J}_{\delta}$ is the smallest gauge-invariant ideal
in $\mathcal{O}\left(\Lambda_{\delta}\right)$ containing $\mathcal{J}_{0}$,
$\mathcal{J}_{\beta}$ is the smallest gauge-invariant ideal containing
$\mathcal{J}_{0}$, and by \prettyref{assumptions:inductive-assumptions},
the smallest gauge-invariant ideal containing $\left\{ T_{v}:v\in\Lambda_{0}\text{ is }\gamma\text{-regular for all }\gamma\in V\right\} $.
This set is gauge-invariant, hence $\mathcal{J}_{\beta}$ is the smallest
ideal generated by $\mathcal{J}_{0}$. Then by transfinite induction,
$\mathcal{J}_{\alpha}$ is generated by $\left\{ T_{v}:v\in\Lambda_{0}\text{ is }\gamma\text{-regular for all }\gamma\in V\right\} $.
\end{proof}
\begin{prop}
\label{prop:katsura-ideal-form}For all $\alpha<\zeta$ and $V\subseteq[\alpha,\zeta)$,
\[
\mathcal{O}\left(\Lambda_{\alpha}\right)\cap\bigcap_{\gamma\in V}J_{\gamma}=\overline{\mathrm{span}}\left\{ T_{p}T_{q}^{*}:p,q\in\Lambda_{\beta},s\left(p\right)=s\left(q\right)\text{ is }\gamma\text{-regular for all }\gamma\in V\right\} 
\]
\end{prop}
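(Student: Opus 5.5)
The strategy is to combine \prettyref{lem:katsura-ideal-generators} with a direct computation of the ideal generated by vertex projections. (The $\Lambda_\beta$ in the statement should read $\Lambda_\alpha$.) Write $W=\left\{ v\in\Lambda_{0}:v\text{ is }\gamma\text{-regular for all }\gamma\in V\right\}$ and let $\mathcal{S}$ be the closed span on the right-hand side. By \prettyref{lem:katsura-ideal-generators}, the left-hand side is the ideal $\mathcal{I}_W$ of $\mathcal{O}(\Lambda_\alpha)$ generated by $\{T_v:v\in W\}$. It therefore suffices to prove $\mathcal{I}_W=\mathcal{S}$.

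For $\mathcal{S}\subseteq\mathcal{I}_W$, take $p,q\in\Lambda_\alpha$ with $s(p)=s(q)=v\in W$. Since $T_pT_q^*=T_pT_vT_q^*$ lies in the ideal generated by $T_v$, the inclusion follows, and $\mathcal{I}_W$ is closed.

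For $\mathcal{I}_W\subseteq\mathcal{S}$, it is enough to show that $\mathcal{S}$ is a closed two-sided ideal containing each $T_v$ with $v\in W$. Containment is immediate, taking $p=q=v$. For the ideal property, \prettyref{prop:ordgraph-closed-span} and continuity reduce the problem to showing that $T_eT_f^*\,T_pT_q^*\in\mathcal{S}$ for all $e,f\in\Lambda_\alpha$. The left product then follows from the right one by taking adjoints, since $\mathcal{S}$ is self-adjoint. Compute $T_f^*T_p$ using relation (3) and \prettyref{prop:relation-3}. If $f\Lambda\cap p\Lambda=\emptyset$, the product is $0$. If $p=fg$, then $T_f^*T_p=T_g$, and the product is $T_{eg}T_q^*$ with $s(eg)=s(p)\in W$. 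If $f=pg$, then $T_f^*T_p=T_g^*$, and the product is $T_eT_{qg}^*$ with $s(qg)=s(g)=s(f)$. Here $s(g)$ must be shown to lie in $W$. For each $\gamma\in V$ we have $\gamma\geq\alpha$, so $g\in\Lambda_\alpha\subseteq\Lambda_\gamma$, and $r(g)=s(p)$ is $\gamma$-regular. \prettyref{lem:regular-hereditary}(1) then gives that $s(g)$ is $\gamma$-regular. The equality $s(eg)=s(qg)$ required in the definition of $\mathcal{S}$ holds in each case. In all cases the product lies in $\mathcal{S}$, so $\mathcal{S}$ is an ideal, and hence $\mathcal{I}_W\subseteq\mathcal{S}$.

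The step I expect to need the most care is the hereditary verification: the hypothesis $V\subseteq[\alpha,\zeta)$ is exactly what makes \prettyref{lem:regular-hereditary}(1) applicable to paths in $\Lambda_\alpha$. A second point to check is that the multiplication rules for $T_f^*T_p$ used above (via relations (1)–(3)) are valid in $\mathcal{O}(\Lambda_\alpha)$ for arbitrary paths, not only for generators of length a power of $\omega$. Everything else is a direct consequence of \prettyref{lem:katsura-ideal-generators}.
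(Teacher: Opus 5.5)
Your proposal is correct and is essentially the paper's proof. You identify the left-hand side with the ideal generated by $\{T_v : v\in W\}$ via \prettyref{lem:katsura-ideal-generators}, show the closed span is an ideal by the same case analysis (relation (3), \prettyref{prop:relation-3}, and \prettyref{lem:regular-hereditary} using $\Lambda_\alpha\subseteq\Lambda_\gamma$ for $\gamma\in V$), and get the reverse inclusion from $T_pT_q^*=T_pT_{s(p)}T_q^*$. The differences are cosmetic: the paper multiplies by single generators $T_r$ and $T_r^*$ on each side rather than by spanning elements $T_eT_f^*$, and your reading of $\Lambda_\beta$ as $\Lambda_\alpha$ matches how the result is used later in \prettyref{lem:katsura-sandwich}.
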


\begin{proof}
Let $\mathcal{I}$ be the set $\overline{\mathrm{span}}\left\{ T_{p}T_{q}^{*}:p,q\in\Lambda_{\beta},s\left(p\right)=s\left(q\right)\text{ is }\gamma\text{-regular}\right\} $.
In particular, we have $T_{v}=T_{v}^{*}T_{v}\in\mathcal{I}$ for each
$\gamma\in V$ and $v\in\Lambda_{0}$ which is $\gamma$-regular.
By \prettyref{lem:katsura-ideal-generators}, $\mathcal{J}=\mathcal{O}\left(\Lambda_{\beta}\right)\cap\bigcap_{\gamma\in V}J_{\gamma}$
is the smallest ideal of $\mathcal{O}\left(\Lambda_{\beta}\right)$
containing $\left\{ T_{v}:v\in\Lambda_{0},v\text{ is }\gamma\text{-regular for all }\gamma\in V\right\} $,
so if we prove $\mathcal{I}$ is an ideal in $\mathcal{O}\left(\Lambda_{\beta}\right)$,
we have proven $\mathcal{J}\subseteq\mathcal{I}$.

Let $p,q,r\in\Lambda_{\beta}$ such that $s\left(p\right)=s\left(q\right)$
is $\gamma$-regular. If $T_{r}T_{p}T_{q}^{*}\not=0$, then $s\left(r\right)=r\left(p\right)$,
$T_{r}T_{p}T_{q}^{*}=T_{rp}T_{q}^{*}$, and $s\left(rp\right)=s\left(p\right)$
is $\gamma$-regular. Likewise, if $T_{p}T_{q}^{*}T_{r}\not=0$, then
either $r\in q\Lambda_{\beta}$ or $q\in r\Lambda_{\beta}$. Suppose
$r=qh$ for some $h\in\Lambda_{\beta}$. Then $T_{p}T_{q}^{*}T_{r}=T_{p}T_{h}=T_{ph}$.
Then since $\Lambda_{\beta}\subseteq\Lambda_{\gamma}$, \prettyref{lem:regular-hereditary}
implies $s\left(ph\right)=s\left(h\right)$ is $\gamma$-regular.
Moreover, if $q=rh$, then $T_{p}T_{q}^{*}T_{r}=T_{p}T_{h}^{*}$ and
$s\left(h\right)=s\left(q\right)$ is $\gamma$-regular. Since $T_{r}^{*}T_{p}T_{q}^{*}=\left(T_{q}T_{p}^{*}T_{r}\right)^{*}$
and $T_{p}T_{q}^{*}T_{r}^{*}=\left(T_{r}T_{q}T_{p}^{*}\right)^{*}$,
\prettyref{prop:ordgraph-closed-span} implies $\mathcal{I}$ is an
ideal and $\mathcal{J}\subseteq\mathcal{I}$. Finally, note that for
each $p,q\in\Lambda_{\beta}$ such that $s\left(p\right)=s\left(q\right)$
is $\gamma$-regular, $T_{p}T_{q}^{*}=T_{p}T_{s\left(p\right)}T_{q}^{*}$,
and therefore $\mathcal{I}\subseteq\mathcal{J}$.
\end{proof}
\begin{prop}
\label{prop:conditional-expectations}For fixed $\alpha<\zeta$, define
for each $\beta\in(\alpha,\zeta]$ the following subalgebras of $\mathcal{O}\left(\Lambda_{\zeta}\right)$
\begin{align*}
\mathcal{A}_{\beta}=\overline{\mathrm{span}}\{ & T_{p}T_{q}^{*}:p,q\in\Lambda_{\zeta},v\left(p\right)_{\theta}=v\left(q\right)_{\theta}\text{ for all }\theta\in[\beta,\zeta),\\
 & p\Lambda^{*}\cup q\Lambda^{*}\subseteq C_{\gamma}\text{ for all }\gamma+1\in[\alpha+1,\zeta)\}
\end{align*}
Then for all $\alpha<\gamma\leq\beta\leq\zeta$, there exist commuting
conditional expectations $E_{\gamma}^{\beta}:\mathcal{A}_{\beta}\rightarrow\mathcal{A}_{\gamma}$
satisfying for $p,q$ specified as above
\[
E_{\gamma}^{\beta}\left(T_{p}T_{q}^{*}\right)=\begin{cases}
T_{p}T_{q}^{*} & \text{if }v\left(p\right)_{\theta}=v\left(q\right)_{\theta}\text{ for all }\theta\in[\gamma,\beta)\\
0 & \text{otherwise}
\end{cases}
\]
\end{prop}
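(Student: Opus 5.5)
The plan is to build the conditional expectations as averages of a torus action implemented spatially in the representation $\pi$ of \prettyref{prop:shift-rep}. The intended tool is \prettyref{lem:injective}: $\pi$ is injective on $\mathcal{O}\left(\Lambda_{\zeta}\right)\supseteq\mathcal{A}_{\beta}$, so every $\mathcal{A}_{\beta}$ can be studied inside $B\left(\ell^{2}\left(\partial\Lambda\times\mathbb{Z}^{[0,\zeta)}\right)\right)$.

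For each $\theta\in[\alpha+1,\zeta)$ and $z\in\mathbb{T}$, I would take the unitary $U_{\theta,z}\xi_{f,n}=z^{n_{\theta}}\xi_{f,n}$, exactly as in \prettyref{cor:extend-action}. For a generator $T_{p}T_{q}^{*}$ of $\mathcal{A}_{\beta}$ with $\theta=\gamma+1$, the cancellativity hypothesis $p\Lambda^{*}\cup q\Lambda^{*}\subseteq C_{\gamma}$ lets me repeat the computation of \prettyref{cor:extend-action}. That computation, which uses property (3) of \prettyref{prop:shift-func} together with $f\in q\partial\Lambda$ being $\gamma$-cancellative, should give
\[
U_{\theta,z}\pi\left(T_{p}T_{q}^{*}\right)U_{\theta,z}^{*}=z^{v\left(p\right)_{\theta}-v\left(q\right)_{\theta}}\pi\left(T_{p}T_{q}^{*}\right).
\]
The main obstacle is this step: the index $\theta$ of the shift must match the cancellativity index. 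The case split ($p,q\in\Lambda_{\theta+1}$ versus both outside) in \prettyref{cor:extend-action} needs $C_{\theta}$, so I must make sure the hypotheses in $\mathcal{A}_{\beta}$ supply cancellativity at the correct level for every $\theta\in[\alpha+1,\zeta)$. I also need the condition ``$p\in\Lambda_{\theta+1}$ iff $q\in\Lambda_{\theta+1}$''. If that condition fails, I expect $v\left(p\right)_{\theta}\neq v\left(q\right)_{\theta}$ to be the only case where the formula matters, and I would check that the shift formula still holds there through property (1) of \prettyref{prop:shift-func}. If this cannot be arranged, I would restrict attention to the coordinates for which it can, since only $\theta\geq\gamma>\alpha$ is needed.

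Given this formula, I would define, for a finite set $K\subseteq[\gamma,\beta)$, the map
\[
\Phi_{K}\left(x\right)=\int_{\mathbb{T}^{K}}\mathrm{Ad}\,U_{K,z}\left(x\right)\,dz,\qquad U_{K,z}=\prod_{\theta\in K}U_{\theta,z_{\theta}}.
\]
The unitaries $U_{\theta,z}$ commute because they are diagonal. By the formula, $\Phi_{K}$ maps each $\pi\left(T_{p}T_{q}^{*}\right)$ either to itself, when $v\left(p\right)_{\theta}=v\left(q\right)_{\theta}$ for all $\theta\in K$, or to $0$. Hence $\Phi_{K}$ maps $\pi\left(\mathcal{A}_{\beta}\right)$ into itself, and it is contractive, idempotent and $\pi\left(\mathcal{A}_{\gamma}\right)$-bimodular. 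Since each $\Phi_{K}$ is contractive and the generators stabilize once $K$ is large, the net $\left(\Phi_{K}\right)_{K}$ converges pointwise on the dense span of generators. Its limit extends to a contractive idempotent map onto $\pi\left(\mathcal{A}_{\gamma}\right)$. Note that for fixed $p,q$, only finitely many $\theta$ have $v\left(p\right)_{\theta}\neq v\left(q\right)_{\theta}$, since $v\left(p\right)_{\theta}=0$ for $\theta$ large by property (2) of \prettyref{prop:shift-func}.

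To finish, I would set $E_{\gamma}^{\beta}=\pi^{-1}\circ\lim_{K}\Phi_{K}\circ\pi$. This is a norm-one projection onto $\mathcal{A}_{\gamma}$, so by Tomiyama it is a conditional expectation, and it has the stated action on generators. Commutation of the different $E$'s follows because all the averaging operators come from commuting unitaries: on generators they are multiplications by indicator conditions, which commute, and the identity extends by continuity.
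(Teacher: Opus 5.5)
Your plan, implementing each degree-$\theta$ circle action by the diagonal unitaries $U_{\theta,z}$ on $\ell^{2}(\partial\Lambda\times\mathbb{Z}^{[0,\zeta)})$ and averaging, is close in spirit to the paper. The gap is that the formula $\mathrm{Ad}\,U_{\theta,z}(\pi(T_pT_q^*))=z^{v(p)_\theta-v(q)_\theta}\pi(T_pT_q^*)$ does \emph{not} hold for every generator of $\mathcal{A}_\beta$ and every $\theta\in[\gamma,\beta)$. Both fallbacks you propose for the obstacles you flagged also fail.

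\textbf{The top coordinate.} The computation of \prettyref{cor:extend-action} at coordinate $\theta$ needs $C_\theta$ at the same index, not $C_{\theta-1}$ as your pairing $\theta=\gamma+1$ suggests. The definition of $\mathcal{A}_\beta$ supplies $C_\theta$ exactly for $\theta\geq\alpha$ with $\theta+1<\zeta$. So the only missing level is $\theta=\nu$ when $\zeta=\nu+1$. For $\beta=\zeta$ and $\gamma\leq\nu$ this coordinate lies in $[\gamma,\beta)$, so it is at the top, not the bottom, and cannot be discarded.

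There the spatial mechanism genuinely breaks. Suppose $f\in\partial\Lambda$ has $L(f)>\omega^{\zeta}$ and $hf=f$ for some $h\in\Lambda_\zeta$ with $d(h)\geq\omega^\nu$. Telescoping in the definition of $\pi$ in \prettyref{prop:shift-rep} gives $\pi(T_h)\xi_{f,n}=\xi_{f,n}$, although $v(h)_\nu\geq 1$. So averaging over $U_{\nu,z}$ preserves the diagonal entry $1$ and cannot annihilate $\pi(T_h)$, as it must if $T_h=T_hT_{s(h)}^*$ is a generator of $\mathcal{A}_\zeta$. The paper handles this top step with the genuine gauge action $\Gamma_\nu$ of \prettyref{lem:alg-auts} on $\mathcal{O}(\Lambda_\zeta)$. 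In your framework you could instead restrict to the subspace $H_\nu$ from the proof of \prettyref{lem:injective}, where $\mathrm{Ad}\,U_{\nu,z}$ implements $\Gamma_\nu$ and $\pi$ stays faithful on $\mathcal{O}(\Lambda_{\nu+1})$.

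\textbf{The membership condition.} The computation in \prettyref{cor:extend-action} also uses that $p\in\Lambda_{\theta+1}$ iff $q\in\Lambda_{\theta+1}$. A generator of $\mathcal{A}_\beta$ is only guaranteed to satisfy this when $\theta+1\geq\beta$; this is the paper's inclusion $\mathcal{A}_\beta\subseteq\mathcal{B}_\beta$. Suppose the leading Cantor exponents satisfy $\gamma\leq e_p<e_q<\beta$. Then the condition fails for every $\theta\in[e_p,e_q)$. For such $\theta$, the $\theta$-th shift of $\pi(T_pT_q^*)$ on $\xi_{qg,n}$ with $L(g)>\omega^{\theta+1}$ is $v(p)_\theta-v(q)_\theta+v(g)_\theta$, which depends on $g$. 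So $\pi(T_pT_q^*)$ is not an eigenoperator of $\mathrm{Ad}\,U_{\theta,z}$, whatever $v(p)_\theta$ and $v(q)_\theta$ are, and property (1) of $v$ does not rescue it. Consequently, for $K$ containing such $\theta$ but not $e_q$, your $\Phi_K$ in general neither fixes nor kills this generator, and it need not preserve $\pi(\mathcal{A}_\beta)$.

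The paper avoids this by building $E^\beta_\gamma$ by transfinite recursion, $E^{\nu+1}_\gamma=E^\nu_\gamma\circ E^{\nu+1}_\nu$, on the auxiliary algebras $\mathcal{B}_\beta\subseteq\mathcal{D}_\beta$, always averaging the highest coordinate first. $E^{\nu+1}_\nu$ kills every generator with $p\in\Lambda_\nu$ and $q\notin\Lambda_\nu$, since their $\nu$-degrees differ. Its range therefore lies in $\mathcal{B}_\nu$, where the next averaging is legitimate. You could repair your simultaneous averaging similarly: such a generator has the witness coordinate $e_q\in[\gamma,\beta)$, where the formula holds with nonzero exponent, so a weakly defined $\Phi_K$ kills it by Fubini once $e_q\in K$. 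As written, though, this step is wrong.

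\textbf{Two smaller points.} You never check that $\mathcal{A}_\beta$ is closed under multiplication. This is needed for $\mathcal{A}_\gamma$ to be the range of a conditional expectation (and for Tomiyama); the paper verifies it via \prettyref{cor:non-returning-equiv} and property (3) of $v$. Also, your finiteness claim for $\{\theta: v(p)_\theta\neq v(q)_\theta\}$ is unjustified below the leading exponents, where $v(p)_\theta=v(p(\omega^{\theta+1}))_\theta$ is uncontrolled. You do not need it, since stabilization occurs once $K$ contains a single coordinate where the degrees differ.
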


\begin{proof}
We find it useful to first define for $\beta\in(\alpha,\zeta]$
\begin{align*}
\mathcal{B}_{\beta}=\overline{\mathrm{span}}\{ & T_{p}T_{q}^{*}:p,q\in\Lambda_{\zeta},p\in\Lambda_{\theta}\text{ iff }q\in\Lambda_{\theta}\text{ for all }\theta\in[\beta,\zeta),\\
 & p\Lambda^{*}\cup q\Lambda^{*}\subseteq C_{\gamma}\text{ for all }\gamma+1\in[\alpha+1,\zeta)\}
\end{align*}
Note that $\mathcal{B}_{\beta+1}\subseteq\mathcal{D}_{\beta+1}$ for
the algebra $\mathcal{D}_{\beta+1}$ defined in \prettyref{cor:extend-action}.
Now we show $\mathcal{A}_{\beta}\subseteq\mathcal{B}_{\beta}$. Let
$p,q\in\Lambda_{\zeta}$ be chosen so that $T_{p}T_{q}^{*}$ is a
generator for $\mathcal{A}_{\beta}$. Then for all $\theta\in[\beta,\zeta)$,
$v\left(p\right)_{\theta}=v\left(q\right)_{\theta}$. Suppose for
some $\theta\in[\beta,\zeta)$, $p\in\Lambda_{\theta}$ and $q\in\Lambda\backslash\Lambda_{\theta}$.
Then there exists $\gamma\in[\theta,\zeta)$ such that $q\in\Lambda_{\gamma+1}\backslash\Lambda_{\gamma}$.
Then $v\left(p\right)_{\gamma}=0\not=v\left(q\right)_{\gamma}$, a
contradiction, and hence $\mathcal{A}_{\beta}\subseteq\mathcal{B}_{\beta}$. 

Now we show $\mathcal{A}_{\beta}$ is indeed a $C^{*}$-algebra. Let
$p_{1},q_{1},p_{2},q_{2}\in\Lambda_{\zeta}$ be given as in the definition
of $\mathcal{A}_{\beta}$. By the preceding argument, $p_{1}\in\Lambda_{\theta+1}$
iff $q_{1}\in\Lambda_{\theta+1}$. Assume without loss of generality
that $T_{p_{1}}T_{q_{1}}^{*}T_{p_{2}}T_{q_{2}}^{*}=T_{p_{1}q_{1}^{-1}p_{2}}T_{q_{2}}^{*}$.
Then assuming $\theta+1<\zeta$, $p_{2}\Lambda^{*}\subseteq C_{\theta}$.
Hence $p_{2}$ is $\theta$-cancellative, and by \prettyref{cor:non-returning-equiv}
\[
v\left(p_{1}q_{1}^{-1}p_{2}\right)_{\theta}=\begin{cases}
v\left(p_{1}\right)_{\theta} & p_{1},q_{1}\in\Lambda\backslash\Lambda_{\theta+1}\\
v\left(p_{1}\right)_{\theta}-v\left(q_{1}\right)_{\theta}+v\left(p_{2}\right)_{\theta} & p_{1},q_{1}\in\Lambda_{\theta+1}
\end{cases}
\]
In the first case, $p_{2}\in q_{1}\Lambda$, hence $v\left(p_{1}\right)_{\theta}=v\left(q_{1}\right)_{\theta}=v\left(p_{2}\right)_{\theta}=v\left(q_{2}\right)_{\theta}$.
Otherwise, $v\left(p_{1}q_{1}^{-1}p_{2}\right)_{\theta}=v\left(p_{2}\right)_{\theta}=v\left(q_{2}\right)_{\theta}$.
If $\theta+1=\zeta$, then by \prettyref{lem:cancellative-length-independent}
we again have $v\left(p_{1}q_{1}^{-1}p_{2}\right)_{\theta}=v\left(p_{1}\right)_{\theta}-v\left(q_{1}\right)_{\theta}+v\left(p_{2}\right)_{\theta}=v\left(q_{2}\right)_{\theta}$.
Since $p_{1}q_{1}^{-1}p_{2}C_{\theta}\subseteq p_{1}C_{\theta}$,
$T_{p_{1}}T_{q_{1}}^{*}T_{p_{2}}T_{q_{2}}^{*}\in\mathcal{A}_{\beta}$,
and $\mathcal{A}_{\beta}$ is a $C^{*}$-algebra densely spanned by
such $T_{p}T_{q}^{*}$.

Instead of defining each $E_{\gamma}^{\beta}$ on the domain $\mathcal{A}_{\beta}$,
we first define it as a map $E_{\gamma}^{\beta}:\mathcal{B}_{\beta}\rightarrow\mathcal{B}_{\gamma}$.
We construct each $E_{\gamma}^{\beta}$ using transfinite recursion
starting at $\alpha$. Since $E_{\alpha+1}^{\alpha+1}=\mathrm{id}$,
the base case is satisfied. Suppose for some $\beta$, $E_{\gamma}^{\theta}$
exists for all $\alpha<\gamma\leq\theta<\beta$. First we consider
the case in which $\beta=\nu+1$ is a successor ordinal and $\beta<\zeta$.
Then $\mathcal{B}_{\beta}\subseteq\mathcal{D}_{\beta}$, and by \prettyref{cor:extend-action},
there is an action $\overline{\Gamma}_{\nu}:\mathbb{T}\rightarrow\mathrm{Aut}\left(\mathcal{D}_{\beta}\right)$
defined by $\overline{\Gamma}_{\nu,z}\left(T_{p}T_{q}^{*}\right)=z^{v\left(p\right)_{\nu}-v\left(q\right)_{\nu}}T_{p}T_{q}^{*}$,
so we may define
\[
E_{\nu}^{\beta}\left(a\right)=\int_{\mathbb{T}}\overline{\Gamma}_{\nu,z}\left(a\right)\:dz
\]
where we use the normalized Haar measure on $\mathbb{T}$. If $\beta=\nu+1=\zeta$,
then we instead define $E_{\nu}^{\beta}\left(a\right)=\int_{\mathbb{T}}\Gamma_{\nu,z}\left(a\right)\:dz$
using the usual gauge action defined in \prettyref{lem:alg-auts}.
By \prettyref{assumptions:inductive-assumptions}, $\mathcal{O}\left(\Lambda_{\zeta}\right)$
is a Cuntz-Pimsner algebra, and this is the usual conditional expectation
onto the fixed point algebra. Note that $E_{\nu}^{\beta}$ indeed
maps into $\mathcal{B}_{\nu}$. To see why, let $T_{p}T_{q}^{*}$
be a generator of $\mathcal{B}_{\beta}$. Then $E_{\nu}^{\beta}\left(T_{p}T_{q}^{*}\right)=0$
unless $v\left(p\right)_{\nu}=v\left(q\right)_{\nu}$. In that case,
supposing $p\in\Lambda_{\nu}$, we have $v\left(p\right)_{\nu}=v\left(q\right)_{\nu}$.
Since $q\in\Lambda_{\nu+1}$, this implies $q\in\Lambda_{\nu}$, and
$T_{p}T_{q}^{*}\in\mathcal{B}_{\nu}$. 

Now we handle the case in which $\beta$ is a limit ordinal. If $p,q\in\Lambda_{\beta}$,
then there exists $\theta<\beta$ such that $p,q\in\Lambda_{\theta}$,
and therefore $\mathcal{B}_{\beta}=\overline{\cup_{\theta<\beta}\mathcal{B}_{\theta}}$.
For a generator $T_{p}T_{q}^{*}$ of $\mathcal{B}_{\theta}$, $\left(E_{\gamma}^{\nu}\left(T_{p}T_{q}^{*}\right)\right)_{\nu<\beta}$
is eventually constant, and thus we may define $E_{\gamma}^{\beta}\left(a\right)=\lim_{\nu<\beta}E_{\beta}^{\nu}\left(a\right)$
for $a\in\cup_{\theta<\beta}\mathcal{B}_{\theta}$. For $a$ in the
algebraic span of generators $T_{p}T_{q}^{*}$, there is $\nu<\beta$
such that $E_{\gamma}^{\beta}\left(a\right)=E_{\gamma}^{\nu}\left(a\right)$.
Since each $E_{\gamma}^{\nu}$ is contractive, it follows that $E_{\gamma}^{\beta}$
is contractive, and that $E_{\gamma}^{\beta}$ may be extended to
a map on $\mathcal{B}_{\beta}$.

Now that we have defined each $E_{\gamma}^{\beta}$, we restrict the
domain of $E_{\gamma}^{\beta}$ to $\mathcal{A}_{\beta}$. Then the
range of $E_{\gamma}^{\beta}$ is $\mathcal{A}_{\gamma}$, and each
map $E_{\beta}^{\beta+1}$ is a conditional expectation. It follows
that if $E_{\gamma}^{\beta}$ is a conditional expectation, then $E_{\gamma}^{\beta+1}=E_{\gamma}^{\beta}\circ E_{\beta}^{\beta+1}$
is a conditional expectation. Finally, if $\beta$ is a limit ordinal
and for each $\theta<\beta$, $E_{\gamma}^{\theta}$ is a conditional
expectation, then for each $a\in\cup_{\theta<\beta}\left(\mathcal{B}_{\theta}\cap\mathcal{\mathcal{A}_{\beta}}\right)$
and $b,c\in\mathcal{A}_{\gamma}$, 
\[
E_{\gamma}^{\beta}\left(bac\right)=\lim_{\nu<\beta}E_{\gamma}^{\nu}\left(bac\right)=\lim_{\nu<\beta}bE_{\gamma}^{\nu}\left(a\right)c=bE_{\gamma}^{\beta}\left(a\right)c
\]
Thus $E_{\gamma}^{\beta}$ is a conditional expectation for every
$\alpha<\gamma\leq\beta\leq\zeta$. 
\end{proof}
\begin{lem}
The conditional expectations $E_{\gamma}^{\beta}:\mathcal{A}_{\beta}\rightarrow\mathcal{A}_{\gamma}$
defined in \prettyref{prop:conditional-expectations} are faithful.
\end{lem}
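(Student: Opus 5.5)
The plan is to prove faithfulness in three stages: first for the single-step expectations $E_{\nu}^{\nu+1}$, then for composites at successor stages, and finally at limit stages through an inductive-limit argument. Since $E_{\gamma}^{\beta}$ was built by transfinite recursion, I would run a transfinite induction on $\beta$ for fixed $\gamma$, proving that $E_{\gamma}^{\beta}(a^{*}a)=0$ implies $a=0$.

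\emph{Base case.} Each $E_{\nu}^{\nu+1}$ is, by construction, an average over $\mathbb{T}$ of a continuous action by automorphisms. For $\nu+1<\zeta$ this is $\overline{\Gamma}_{\nu}$ from \prettyref{cor:extend-action}; for $\nu+1=\zeta$ it is $\Gamma_{\nu}$ from \prettyref{lem:alg-auts}. Such an average is faithful by a standard argument. Suppose $a\geq0$ and $\int_{\mathbb{T}}\overline{\Gamma}_{\nu,z}(a)\,dz=0$. Apply any state $\phi$: the function $z\mapsto\phi(\overline{\Gamma}_{\nu,z}(a))$ is continuous and nonnegative with zero integral, so it vanishes identically. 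Taking $z=1$ gives $\phi(a)=0$ for all states $\phi$, hence $a=0$. One point needs checking here: $\overline{\Gamma}_{\nu}$ must restrict to an action on $\mathcal{A}_{\beta}\subseteq\mathcal{B}_{\beta}\subseteq\mathcal{D}_{\beta}$ that is continuous in $z$. Continuity holds on the generators $T_{p}T_{q}^{*}$, where the action is multiplication by a character, and it extends to all of $\mathcal{A}_{\beta}$ by an $\varepsilon/3$ argument as in \prettyref{lem:alg-auts}.

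\emph{Successor stages.} Here $E_{\gamma}^{\beta+1}=E_{\gamma}^{\beta}\circ E_{\beta}^{\beta+1}$, and a composition of faithful positive maps is faithful. Concretely, if $E_{\gamma}^{\beta}(E_{\beta}^{\beta+1}(a^{*}a))=0$, then positivity of $E_{\beta}^{\beta+1}(a^{*}a)$ and the inductive hypothesis give $E_{\beta}^{\beta+1}(a^{*}a)=0$, and the base case then gives $a=0$.

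\emph{Limit stages (the main obstacle).} Faithfulness does not automatically pass through the norm limit $E_{\gamma}^{\beta}(a)=\lim_{\nu<\beta}E_{\gamma}^{\nu}(a)$, because $\mathcal{A}_{\beta}$ is only the closure of $\bigcup_{\nu<\beta}(\mathcal{A}_{\beta}\cap\mathcal{B}_{\nu})$. My plan is to prove that for limit $\beta$ and each $\nu<\beta$ there is a conditional expectation $F_{\nu}:\mathcal{A}_{\beta}\rightarrow\mathcal{A}_{\beta}\cap\mathcal{B}_{\nu}$, killing the generators $T_{p}T_{q}^{*}$ with $p,q\notin\Lambda_{\nu}$, such that $E_{\gamma}^{\beta}=E_{\gamma}^{\nu}\circ F_{\nu}$ and $F_{\nu}(a)\to a$ as $\nu\to\beta$.

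The cleaner alternative, which I would try first, avoids $F_{\nu}$ entirely. It works in the faithful representation $\pi$ from \prettyref{prop:shift-rep}, which by \prettyref{lem:injective} is faithful on $\mathcal{O}(\Lambda_{\zeta})$. On generators, $\pi(T_{p}T_{q}^{*})\xi_{f,n}=\xi_{pq^{-1}f,m}$, where $m_{\theta}-n_{\theta}=v(p)_{\theta}-v(q)_{\theta}$ for every $\theta$ with $\alpha<\theta+1<\zeta$ (using \prettyref{cor:extend-action}, since the paths involved are cancellative). Then define the unitaries $U_{z}$, for $z\in\mathbb{T}^{[\gamma,\beta)}$, by $U_{z}\xi_{f,n}=\prod_{\theta}z_{\theta}^{n_{\theta}}\xi_{f,n}$. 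This yields a single action of the compact group $\mathbb{T}^{[\gamma,\beta)}$ on $\pi(\mathcal{A}_{\beta})$. Its average against Haar measure agrees with $E_{\gamma}^{\beta}$ on generators, and hence everywhere by continuity. Faithfulness then follows exactly as in the base case, with $\mathbb{T}$ replaced by the compact product group, where continuity of $z\mapsto\mathrm{Ad}\,U_{z}(x)$ is again checked on generators.

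The delicate points of this alternative are twofold. One must verify that $\mathrm{Ad}\,U_{z}$ preserves $\pi(\mathcal{A}_{\beta})$. One must also handle the top level $\theta=\zeta-1$ when $\zeta$ is a successor, where $\Gamma_{\zeta-1}$ replaces $\overline{\Gamma}$ and the length condition of \prettyref{lem:cancellative-length-independent} enters. If this direct argument works, it also covers the successor stages uniformly.
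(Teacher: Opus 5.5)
Your base case and successor steps are correct and match the paper, which also notes that each $E_\nu^{\nu+1}$ is an average over a $\mathbb{T}$-action and that composites of faithful expectations are faithful. The whole difficulty is the limit stage, and there your proposal does not yet give a proof.

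The $F_\nu$ route is only announced, never constructed. The ``cleaner alternative'' breaks at two points.

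\emph{Ill-defined unitaries.} For limit $\beta$ the set $[\gamma,\beta)$ is infinite, while $n$ runs over all of $\mathbb{Z}^{[0,\zeta)}$. So $\prod_\theta z_\theta^{n_\theta}$ is undefined for a general $z\in\mathbb{T}^{[\gamma,\beta)}$. Likewise, $z\mapsto\prod_\theta z_\theta^{v(p)_\theta-v(q)_\theta}$ is a continuous character only if $v(p)-v(q)$ is finitely supported on $[\gamma,\beta)$, which nothing guarantees.

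\emph{Wrong phase on generators (the more serious point).} Corollary~\ref{cor:extend-action} gives the shift $m_\theta-n_\theta=v(p)_\theta-v(q)_\theta$ only when $T_pT_q^*\in\mathcal{D}_{\theta+1}$. That requires $p\in\Lambda_{\theta+1}\Leftrightarrow q\in\Lambda_{\theta+1}$, not just cancellativity. Generators of $\mathcal{A}_\beta$ are only guaranteed to satisfy this equivalence for $\theta+1\geq\beta$. For example, take $T_q^*=T_{s(q)}T_q^*$ with $d(q)=\omega^{\theta+1}$, $\gamma\leq\theta$ and $\theta+1<\beta$. This can be a generator of $\mathcal{A}_\beta$. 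Yet by the adjoint formula in Proposition~\ref{prop:shift-rep}, its $\theta$-shift on $\xi_{f,n}$ is $v(q^{-1}f)_\theta-v(f)_\theta$, which in general depends on $f$. So $\mathrm{Ad}\,U_z$ need not act by scalars on generators, and you have no action on $\pi(\mathcal{A}_\beta)$. The claimed agreement of the average with $E_\gamma^\beta$ on generators is therefore unjustified.

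What you need requires no group action at all. Let $\Phi$ be the compression on $B(\ell^2(\partial\Lambda\times\mathbb{Z}^{[0,\zeta)}))$ that keeps the entry $\langle\xi_{f,n},x\,\xi_{g,m}\rangle$ when $n_\theta=m_\theta$ for all $\theta\in[\gamma,\beta)$ and kills it otherwise. Since $\langle\xi_{f,n},\Phi(x^*x)\xi_{f,n}\rangle=\|x\xi_{f,n}\|^2$, $\Phi$ is faithful on all of $B(H)$. Hence the identity $\pi\circ E_\gamma^\beta=\Phi\circ\pi$, together with injectivity of $\pi$ (Lemma~\ref{lem:injective}), finishes the proof.

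This is exactly the paper's argument. It proves the identity by transfinite induction on $\beta$. At a successor step only the top coordinate $\nu$ is averaged, and the element lies in $\mathcal{A}_{\nu+1}\subseteq\mathcal{D}_{\nu+1}$, where Corollary~\ref{cor:extend-action} legitimately applies. At a limit it uses that $E_\gamma^\eta(T_pT_q^*)$ is eventually constant in $\eta$.

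If you prefer to verify the identity directly on generators, you must add the missing case. Suppose the equivalence fails at $\theta+1$ for some $\theta\in[\gamma,\beta)$, say $p\in\Lambda_{\theta+1}$, and let $\omega^\eta\leq d(q)<\omega^{\eta+1}$. Then $\eta>\theta$, and $v(p)_\eta=0\neq v(q)_\eta$ forces $\eta<\beta$. Moreover $T_pT_q^*\in\mathcal{D}_{\eta+1}$, so the nonzero $\eta$-shift makes both $E_\gamma^\beta(T_pT_q^*)$ and $\Phi(\pi(T_pT_q^*))$ vanish. The contrapositive shows that when $v(p)=v(q)$ on $[\gamma,\beta)$, Corollary~\ref{cor:extend-action} applies at every coordinate of $[\gamma,\beta)$, so all those shifts are zero.
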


\begin{proof}
Since the expectations of the form $E_{\beta}^{\beta+1}$ arise by
averaging over an action by $\mathbb{T}$, we would be done if not
for the case $E_{\gamma}^{\beta}$ for a limit ordinal $\beta$. Instead,
we first prove that for every $\beta\in[\alpha+1,\zeta)$, $a\in\mathcal{A}_{\beta}$,
and pair of basis vectors $\xi_{f,n},\xi_{g,m}\in\ell^{2}\left(\partial\Lambda\times\mathbb{Z}^{[0,\zeta)}\right)$,
\begin{equation}
\left\langle \xi_{f,n},\pi\left(E_{\gamma}^{\beta}\left(a\right)\right)\xi_{g,m}\right\rangle =\begin{cases}
\left\langle \xi_{f,n},\pi\left(a\right)\xi_{g,m}\right\rangle  & n_{\theta}=m_{\theta}\text{ for all }\theta\in[\gamma,\beta)\\
0 & \text{otherwise}
\end{cases}\label{eq:inner_product}
\end{equation}
Clearly this equation is true when $\beta=\gamma=\alpha+1$. In order
to apply transfinite induction, suppose $\theta\in(\alpha,\zeta]$
and for all $\alpha<\gamma\leq\beta<\theta$, $E_{\gamma}^{\beta}$
satisfies \prettyref{eq:inner_product}. By linearity and continuity,
it suffices to prove \prettyref{eq:inner_product} when $a=T_{p}T_{q}^{*}$
for $p,q\in\Lambda_{\zeta}$ satisfying $v\left(p\right)_{\eta}=v\left(q\right)_{\eta}$
for all $\eta\in[\theta,\zeta)$ and $p\Lambda^{*}\cup q\Lambda^{*}\subseteq C_{\gamma}$
for all $\gamma+1\in[\alpha+1,\zeta)$. If $\theta=\nu+1$, then since
$\mathcal{A}_{\theta}\subseteq\mathcal{D}_{\theta}$, \prettyref{cor:extend-action}
implies
\begin{align*}
\left\langle \xi_{f,n},\pi\left(E_{\nu}^{\theta}\left(T_{p}T_{q}^{*}\right)\right)\xi_{g,m}\right\rangle  & =\int_{\mathbb{T}}\left\langle \xi_{f,n},U_{z}\pi\left(T_{p}T_{q}^{*}\right)U_{z}^{*}\xi_{g,m}\right\rangle \:dz\\
 & =\int_{\mathbb{T}}z^{m_{\nu}}\left\langle U_{z}^{*}\xi_{f,n},\pi\left(T_{p}T_{q}^{*}\right)\xi_{g,m}\right\rangle \:dz\\
 & =\int_{\mathbb{T}}z^{m_{\nu}-n_{\nu}}\left\langle \xi_{f,n},\pi\left(T_{p}T_{q}^{*}\right)\xi_{g,m}\right\rangle \:dz\\
 & =\begin{cases}
\left\langle \xi_{f,n},\pi\left(T_{p}T_{q}^{*}\right)\xi_{g,m}\right\rangle  & m_{\nu}=n_{\nu}\\
0 & \text{otherwise}
\end{cases}
\end{align*}
And since $E_{\gamma}^{\theta}=E_{\gamma}^{\nu}\circ E_{\nu}^{\theta}$,
\prettyref{eq:inner_product} holds for all $E_{\gamma}^{\theta}$.
If $\theta$ is a limit ordinal, then since $p\in\Lambda_{\theta}$
iff $q\in\Lambda_{\theta}$, there exists a successor ordinal $\nu<\theta$
such that $p\in\Lambda_{\eta}$ iff $q\in\Lambda_{\eta}$ for all
$\eta\in[\nu,\theta)$. Thus $T_{p}T_{q}^{*}\in\mathcal{D}_{\eta}$
for all such $\eta$. Since $\left(E_{\gamma}^{\eta}\left(T_{p}T_{q}^{*}\right)\right)_{\eta}$
is eventually constant, we may choose $\eta$ such that $E_{\gamma}^{\eta}\left(T_{p}T_{q}^{*}\right)=E_{\gamma}^{\beta}\left(T_{p}T_{q}^{*}\right)$,
and applying the inductive hypothesis, we see \prettyref{eq:inner_product}
holds for $E_{\gamma}^{\beta}$ as well.

Now, by \prettyref{eq:inner_product}, we see
\begin{align*}
\left\langle \xi_{f,n},\pi\left(E_{\gamma}^{\beta}\left(a^{*}a\right)\right)\xi_{f,n}\right\rangle  & =\left\langle \xi_{f,n},\pi\left(a^{*}a\right)\xi_{f,n}\right\rangle \\
 & =\left\langle \pi\left(a\right)\xi_{f,n},\pi\left(a\right)\xi_{f,n}\right\rangle 
\end{align*}
Therefore, if $E_{\gamma}^{\beta}\left(a^{*}a\right)=0$, then $\pi\left(a\right)=0$,
and hence $a^{*}a=0$ by injectivity of $\pi$. All that remains is
the case for $\beta=\zeta$. If $\zeta=\nu+1$, then $E_{\gamma}^{\zeta}=E_{\gamma}^{\nu}\circ E_{\nu}^{\zeta}$,
and $E_{\nu}^{\zeta}$ is faithful. If on the other hand $\zeta$
is a limit ordinal, then \prettyref{eq:inner_product} holds by the
same arguments as before, and $E_{\gamma}^{\zeta}$ is faithful.
\end{proof}
\begin{lem}
\label{lem:norm-in-next-alg}If $\alpha<\zeta$ and $f\in\Lambda^{\omega^{\zeta}}$,
then for $\eta\in\overline{\mathrm{span}}_{\mathbb{C}}\left\{ \delta_{e}:e\in\Lambda^{\omega^{\alpha}}r\left(f\right)\right\} $,
\[
\|\eta\|=\|\varphi_{\zeta}\left(\psi_{\alpha}\left(\eta\right)\right)\delta_{f}\|
\]
\end{lem}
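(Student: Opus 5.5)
The plan is to verify the identity on the algebraic $\mathbb{C}$-span of $\left\{ \delta_{e}:e\in\Lambda^{\omega^{\alpha}}r\left(f\right)\right\}$ by a direct computation, and then pass to the closure by continuity. Throughout, $\psi_{\alpha}\left(\eta\right)\in\mathcal{O}\left(\Lambda_{\alpha+1}\right)$ is regarded as an element of $\mathcal{O}\left(\Lambda_{\zeta}\right)$ via the injective map $\rho_{\alpha+1}^{\zeta}$ from the \prettyref{assumptions:inductive-assumptions}. This is what makes $\varphi_{\zeta}\left(\psi_{\alpha}\left(\eta\right)\right)$ meaningful.

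Fix a finite sum $\eta=\sum_{e}\lambda_{e}\delta_{e}$ with $\lambda_{e}\in\mathbb{C}$ and $s\left(e\right)=r\left(f\right)$.
\begin{enumerate}
\item By \prettyref{prop:correspondence-representation}, $\psi_{\alpha}\left(\eta\right)=\sum_{e}\lambda_{e}T_{e}$.
\item Each such $e$ lies in $\Lambda_{\zeta}$, since $\omega^{\alpha}<\omega^{\zeta}$. So the formula in \prettyref{prop:left-action} gives $\varphi_{\zeta}\left(T_{e}\right)\delta_{f}=\delta_{ef}$, and hence
\[
\varphi_{\zeta}\left(\psi_{\alpha}\left(\eta\right)\right)\delta_{f}=\sum_{e}\lambda_{e}\delta_{ef}.
\]
Here $ef\in\Lambda^{\omega^{\zeta}}$, because $\omega^{\alpha}+\omega^{\zeta}=\omega^{\zeta}$, and $s\left(ef\right)=s\left(f\right)$.
\end{enumerate}

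The key point, and the only non-formal step, is that $e\mapsto ef$ is injective on $\Lambda^{\omega^{\alpha}}r\left(f\right)$. Suppose $ef=e'f$ with $d\left(e\right)=d\left(e'\right)=\omega^{\alpha}$. Applying \prettyref{lem:path-factor}(1) with $\omega^{\alpha}\leq d\left(e\right)$ gives $e=\left(ef\right)\left(\omega^{\alpha}\right)=\left(e'f\right)\left(\omega^{\alpha}\right)=e'$. Uniqueness of the factorization in \prettyref{def:ordinal-graph} gives the same conclusion directly.

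Given injectivity, the vectors $\delta_{ef}$ are distinct elements of $C_{s\left(f\right)}\subseteq X_{\zeta}$. Equation \prettyref{eq:cv_norm} then yields
\[
\left\langle \varphi_{\zeta}\left(\psi_{\alpha}\left(\eta\right)\right)\delta_{f},\varphi_{\zeta}\left(\psi_{\alpha}\left(\eta\right)\right)\delta_{f}\right\rangle =\left(\sum_{e}\left|\lambda_{e}\right|^{2}\right)T_{s\left(f\right)}.
\]
Applying \prettyref{eq:cv_norm} in $X_{\alpha}$ in the same way gives $\left\langle \eta,\eta\right\rangle =\left(\sum_{e}\left|\lambda_{e}\right|^{2}\right)T_{r\left(f\right)}$. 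Each $T_{v}$ is a nonzero projection and therefore has norm $1$; its nonvanishing follows, for instance, from the boundary path representation of \prettyref{prop:boundary-path-rep} or from injectivity of the $\rho$ maps. Taking norms, both sides of the claimed identity equal $\left(\sum_{e}\left|\lambda_{e}\right|^{2}\right)^{1/2}$.

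Finally, both sides are continuous in $\eta$. The map $\psi_{\alpha}$ is contractive by \prettyref{prop:correspondence-representation}, and $\varphi_{\zeta}$ and $\rho_{\alpha+1}^{\zeta}$ are contractive $*$-homomorphisms. The identity therefore extends from the algebraic span to its closed span. I expect no real obstacle here; the only care needed is the injectivity of $e\mapsto ef$ and keeping track of which algebra $T_{e}$ lives in.
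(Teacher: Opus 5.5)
Your proposal is correct and follows essentially the same route as the paper. Both reduce to finite $\mathbb{C}$-linear combinations $\eta=\sum_{e}\lambda_{e}\delta_{e}$, compute $\varphi_{\zeta}(\psi_{\alpha}(\eta))\delta_{f}=\sum_{e}\lambda_{e}\delta_{ef}$, use $(ef)(\omega^{\alpha})=e$ to get injectivity of $e\mapsto ef$, and conclude from \prettyref{lem:left-action-isometry} that both norms equal $\bigl(\sum_{e}|\lambda_{e}|^{2}\bigr)^{1/2}$. Your extra remarks on $\|T_{v}\|=1$ and on passing to the closed span by continuity make explicit what the paper leaves tacit.
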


\begin{proof}
Let $f\in\Lambda^{\omega^{\zeta}}$ be arbitrary. It suffices to check
the equality for $\eta$ of the form
\[
\eta=\sum_{e\in\Lambda^{\omega^{\alpha}}r\left(f\right)}\lambda_{e}\delta_{e}
\]
with $\lambda_{e}\in\mathbb{C}$. Then
\begin{align*}
\left\Vert \eta\right\Vert  & =\left\Vert \sum_{e\in\Lambda^{\omega^{\alpha}}r\left(f\right)}\lambda_{e}\delta_{e}\right\Vert \\
\left\Vert \varphi_{\zeta}\left(\psi_{\alpha}\left(\eta\right)\right)\delta_{f}\right\Vert  & =\left\Vert \sum_{e\in\Lambda^{\omega^{\alpha}}r\left(f\right)}\lambda_{e}\delta_{ef}\right\Vert 
\end{align*}
For $e_{1},e_{2}\in\Lambda^{\omega^{\alpha}}$, note that $e_{1}f=e_{2}f$
implies $\left(e_{1}f\right)\left(\omega^{\alpha}\right)=e_{1}=\left(e_{2}f\right)\left(\omega^{\alpha}\right)=e_{2}$.
Thus by \prettyref{lem:left-action-isometry}, both norms are equal
to
\[
\left(\sum_{e\in\Lambda^{\omega^{\alpha}}r\left(f\right)}\left|\lambda_{e}\right|^{2}\right)^{1/2}
\]
\end{proof}
\begin{rem}
In the above proof, it is crucial that $\eta$ belongs to the $\mathbb{C}$-span
of $\left\{ \delta_{e}:e\in\Lambda^{\omega^{\alpha}}r\left(f\right)\right\} $,
and not just the $\mathcal{O}\left(\Lambda_{\alpha}\right)$-span.
\end{rem}

\begin{lem}
\label{lem:gauge-containment}For each $\alpha<\zeta$, $\mathcal{O}\left(\Lambda_{\alpha}\right)\cap\left(J_{\zeta}+\psi_{\alpha}^{(1)}\left(\mathcal{K}\left(X_{\alpha}\right)\right)\right)\subseteq J_{\alpha}$.
In particular, $\mathcal{O}\left(\Lambda_{\alpha}\right)\cap J_{\zeta}\subseteq J_{\alpha}$.
\end{lem}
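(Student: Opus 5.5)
We will prove the first inclusion directly; the ``in particular'' statement follows by taking the $\mathcal{K}$-summand to be zero. Let $a\in\mathcal{O}\left(\Lambda_{\alpha}\right)$ with $a=j+k$, where $j\in J_{\zeta}$ and $k\in\psi_{\alpha}^{(1)}\left(\mathcal{K}\left(X_{\alpha}\right)\right)$. By the \hyperref[assumptions:inductive-assumptions]{Inductive Assumptions}, $\psi_{\alpha}\times\rho_{\alpha}^{\alpha+1}$ is an isomorphism $\mathcal{O}\left(X_{\alpha}\right)\cong\mathcal{O}\left(\Lambda_{\alpha+1}\right)$. Katsura's standard identity $\pi_{u}(A)\cap\psi_{u}^{(1)}(\mathcal{K}(X))=\pi_{u}(J_{X})$ then gives $\mathcal{O}\left(\Lambda_{\alpha}\right)\cap\psi_{\alpha}^{(1)}\left(\mathcal{K}\left(X_{\alpha}\right)\right)=J_{\alpha}$. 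So it suffices to show that $\varphi_{\alpha}(a)\in\mathcal{K}(X_{\alpha})$ and $a\in(\ker\varphi_{\alpha})^{\perp}$.

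\textbf{Reduction.} Both $a$ and $k$ lie in $\mathcal{O}\left(\Lambda_{\alpha+1}\right)$, so $j=a-k\in\mathcal{O}\left(\Lambda_{\alpha+1}\right)\cap J_{\zeta}$. Averaging over $\Gamma_{\alpha}$ fixes $a$. Since $\psi_{\alpha}(x)\psi_{\alpha}(y)^{*}$ is $\Gamma_{\alpha}$-invariant, averaging also keeps $k$ in $\psi_{\alpha}^{(1)}(\mathcal{K})$. Hence we may assume $j$ is $\Gamma_{\alpha}$-fixed, and by \prettyref{lem:non-returning-zero} we have $\varphi_{\zeta}(j)\delta_{f}=0$ whenever $f\in\Lambda^{\omega^{\zeta}}$ is not $\alpha$-cancellative.

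\textbf{Transfer of compactness.} We aim to verify the criterion of \prettyref{cor:compactness}(3) for $\varphi_{\alpha}(a)$. Fix $\varepsilon>0$. Since $\varphi_{\zeta}(j)$ is compact, \prettyref{cor:compactness}(3) in $X_{\zeta}$ gives a finite $F'\subseteq\Lambda^{\omega^{\zeta}}$ outside of which $\varphi_{\zeta}(j)$ is $\varepsilon$-small on the $\mathbb{C}$-span. Approximating $k$ by $\sum\psi_{\alpha}(\delta_{e_{i}}a_{i})\psi_{\alpha}(\delta_{g_{i}}b_{i})^{*}$ gives a finite $F_{0}\subseteq\Lambda^{\omega^{\alpha}}$ such that $k\psi_{\alpha}(\eta)$ is $\varepsilon$-small for $\eta$ in the $\mathbb{C}$-span of $\{\delta_{e}:e\notin F_{0}\}$. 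For each $f\in\Lambda^{\omega^{\zeta}}$, \prettyref{lem:norm-in-next-alg} says that $\eta\mapsto\varphi_{\zeta}(\psi_{\alpha}(\eta))\delta_{f}$ is isometric on the $\mathbb{C}$-span of $\{\delta_{e}:e\in\Lambda^{\omega^{\alpha}}r(f)\}$. Only finitely many $e$ have $ef\in F'$, so let $F$ be $F_{0}$ together with the first segments $g(\omega^{\alpha})$ of the $g\in F'$. For $\eta$ supported off $F$ we then get
\[
\left\Vert \varphi_{\zeta}\left(a\psi_{\alpha}(\eta)\right)\delta_{f}\right\Vert \leq\left\Vert \varphi_{\zeta}(j)\varphi_{\zeta}(\psi_{\alpha}(\eta))\delta_{f}\right\Vert +\left\Vert \varphi_{\zeta}(k\psi_{\alpha}(\eta))\delta_{f}\right\Vert <2\varepsilon,
\]
uniformly in $f$. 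Moreover $a\psi_{\alpha}(\eta)=\psi_{\alpha}(\varphi_{\alpha}(a)\eta)$.

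\textbf{Main obstacle.} We must recover $\left\Vert \varphi_{\alpha}(a)\eta\right\Vert$ from $\sup_{f}\left\Vert \varphi_{\zeta}(\psi_{\alpha}(\varphi_{\alpha}(a)\eta))\delta_{f}\right\Vert$. The difficulty is that $\varphi_{\alpha}(a)\eta$ lies only in the $\mathcal{O}(\Lambda_{\alpha})$-span, so \prettyref{lem:norm-in-next-alg} does not apply directly. We would write $\left\Vert \varphi_{\zeta}(\psi_{\alpha}(x))\delta_{f}\right\Vert ^{2}=\left\Vert \langle\delta_{f},\varphi_{\zeta}(\langle x,x\rangle)\delta_{f}\rangle\right\Vert$. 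We then need the supremum over $f$ of these compressions to recover $\left\Vert \langle x,x\rangle\right\Vert$ for $\langle x,x\rangle\in\mathcal{O}(\Lambda_{\alpha})$, i.e.\ that $b\mapsto(\varphi_{\zeta}(b)\delta_{f})_{f}$ is faithful on the relevant hereditary piece. We would try to obtain this by passing through $\sigma_{\zeta}$ and the boundary-path representation $\pi$ of \prettyref{prop:shift-rep}, whose injectivity on $\mathcal{O}(\Lambda_{\zeta})$ is \prettyref{lem:injective}. For vertices $v$ that receive no path of $\Lambda^{\omega^{\zeta}}$, we would handle the claim separately using the generators of $J_{\zeta}$, since the ideal $J_{\zeta}$ only involves $\zeta$-regular vertices.

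\textbf{Orthogonality.} Granted the norm recovery, we obtain $\varphi_{\alpha}(a)\in\mathcal{K}(X_{\alpha})$. For $a\perp\ker\varphi_{\alpha}$, let $b\in\ker\varphi_{\alpha}$. Then $ab$ maps $X_{\alpha}$ to zero, so $kb\in\psi_{\alpha}^{(1)}(\mathcal{K})$ satisfies $kb=k'$ with $\psi_{\alpha}(x)^{*}kb=0$ for all $x$, which forces $kb=0$. Hence $ab=jb\in J_{\zeta}\cap\mathcal{O}(\Lambda_{\alpha})$. Applying the same norm-recovery argument to $jb$, tested on the vectors $\delta_{ef}$, gives $ab=0$.
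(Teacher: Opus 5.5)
Your outline tracks the paper's proof closely: the same finite sets $F'\subseteq\Lambda^{\omega^\zeta}$, $F_0\subseteq\Lambda^{\omega^\alpha}$ and $\{g(\omega^\alpha):g\in F'\}$, the transfer to $X_\zeta$ through Lemma~\ref{lem:norm-in-next-alg}, and then the kernel argument. However, the step you call the main obstacle is left unproved (``granted the norm recovery''), so as written the proof has a gap. The missing observation is elementary. $\varphi_\alpha(a)$ maps each $C_v=\overline{\mathrm{span}}_{\mathbb{C}}\{\delta_e:e\in\Lambda^{\omega^\alpha}v\}$ into itself, because $\varphi_\alpha(T_pT_q^*)\delta_e$ is either $0$ or $\delta_{pq^{-1}e}$ with $s(pq^{-1}e)=s(e)$, and these $T_pT_q^*$ span a dense subspace by Proposition~\ref{prop:ordgraph-closed-span}. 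This invariance is already implicit in Lemma~\ref{lem:left-action-isometry}. So decompose by source vertex, which Lemma~\ref{lem:norm-in-next-alg} requires anyway. For $\xi$ in the $\mathbb{C}$-span off your $F$ and $q\in v\Lambda^{\omega^\zeta}$, you have $\varphi_\alpha(a)P_v\xi\in C_v$, and Lemma~\ref{lem:norm-in-next-alg} gives directly $\|\varphi_\alpha(a)P_v\xi\|=\|\varphi_\zeta(a\psi_\alpha(P_v\xi))\delta_q\|<2\varepsilon$. These per-vertex bounds yield compactness of each $\varphi_\alpha(a)P_v$ and the $c_0$ condition of Corollary~\ref{cor:compactness}(2). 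Your proposed detour through faithfulness of $b\mapsto(\varphi_\zeta(b)\delta_f)_f$ on hereditary pieces is unnecessary. It is also doubtful, since $\varphi_\zeta$ need not be injective on $\mathcal{O}(\Lambda_\alpha)$; for example $\varphi_\zeta(T_v)=0$ when $v\Lambda^{\omega^\zeta}=\emptyset$.

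Vertices with $v\Lambda^{\omega^\zeta}=\emptyset$ do need separate treatment, but handling them ``using the generators of $J_\zeta$'' is circular. That $J_\zeta$ is generated by $\zeta$-regular vertex projections is Proposition~\ref{prop:katsura-ideal-generators}, whose proof relies on this very lemma, both directly and through Lemma~\ref{lem:katsura-sandwich}. The Inductive Assumptions only describe $J_\gamma$ for $\gamma<\zeta$. Instead, note that $T_h\in\ker\varphi_\zeta$ for every $h\in\Lambda^{\omega^\alpha}v$, so $j\psi_\alpha(P_v\xi)=0$ because $j\in(\ker\varphi_\zeta)^{\perp}$. Since $\psi_\alpha$ is isometric ($\rho_\alpha^{\alpha+1}$ is injective), this gives $\|\varphi_\alpha(a)P_v\xi\|=\|\psi_\alpha(\varphi_\alpha(a)P_v\xi)\|=\|k\psi_\alpha(P_v\xi)\|<\varepsilon$. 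The orthogonality step also needs no norm recovery. For $b\in\ker\varphi_\alpha$ and $h\in\Lambda^{\omega^\zeta}$, $\varphi_\zeta(b)\delta_h=\varphi_\zeta(\psi_\alpha(\varphi_\alpha(b)\delta_{h(\omega^\alpha)}))\delta_{h(\omega^\alpha)^{-1}h}=0$. Hence $b\in\ker\varphi_\zeta$ and $jb=0$, and together with your (correct) $kb=0$ this gives $ab=0$. Finally, your $\Gamma_\alpha$-averaging reduction does nothing: $j=a-k$ is already $\Gamma_\alpha$-fixed, and the conclusion you draw from Lemma~\ref{lem:non-returning-zero} is never used.
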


\begin{proof}
Let $\varepsilon>0$ be given and $a=b+c\in\mathcal{O}\left(\Lambda_{\alpha}\right)$
for $b\in J_{\zeta}$ and $c=\psi_{\alpha}^{(1)}\left(S\right)$.
We apply \prettyref{cor:compactness} part (3) to prove $\varphi_{\alpha}\left(a\right)P_{v}\in\mathcal{K}\left(X_{\alpha}\right)$
and then part (2) to prove $\varphi_{\alpha}\left(a\right)\in\mathcal{K}\left(X_{\alpha}\right)$.
Choose finite $F\subseteq\Lambda^{\omega^{\zeta}}$ such that for
all $\eta\in\mathrm{span}_{\mathbb{C}}\left\{ \delta_{f}:f\in\Lambda^{\omega^{\zeta}}\backslash F\right\} $
with $\left\Vert \eta\right\Vert \leq1$, $\left\Vert \varphi_{\zeta}\left(b\right)\eta\right\Vert <\frac{1}{2}\varepsilon$.
In addition, choose finite $G\subseteq\Lambda^{\omega^{\alpha}}$
such that for all $\mu\in\mathrm{span}_{\mathbb{C}}\left\{ \delta_{g}:g\in\Lambda^{\omega^{\alpha}}\backslash G\right\} $
with $\left\Vert \mu\right\Vert \leq1$, $\left\Vert S\mu\right\Vert <\frac{1}{2}\varepsilon$.
Define $H=\left\{ f\left(\omega^{\alpha}\right):f\in F\right\} $
and let $\xi\in\mathrm{span}_{\mathbb{C}}\left\{ \delta_{h}:h\in\Lambda^{\omega^{\alpha}}\backslash\left(G\cup H\right)\right\} $
with $\left\Vert \xi\right\Vert \leq1$. If $v\in\Lambda_{0}$ and
$v\Lambda^{\omega^{\zeta}}=\emptyset$, then $T_{h}\in\ker\varphi_{\zeta}$
for all $h\in\Lambda^{\omega^{\alpha}}v$. Hence 
\begin{align*}
\left\Vert \psi_{\alpha}\left(\varphi_{\alpha}\left(a\right)P_{v}\xi\right)\right\Vert  & =\left\Vert \left(b+c\right)\psi_{\alpha}\left(P_{v}\xi\right)\right\Vert \\
 & =\left\Vert c\psi_{\alpha}\left(P_{v}\xi\right)\right\Vert \\
 & =\left\Vert \psi_{\alpha}\left(SP_{v}\xi\right)\right\Vert <\frac{1}{2}\varepsilon
\end{align*}
On the other hand, if $q\in v\Lambda^{\omega^{\zeta}}$, then by \prettyref{lem:norm-in-next-alg},
\begin{align}
\left\Vert \varphi_{\alpha}\left(a\right)P_{v}\xi\right\Vert  & =\left\Vert \varphi_{\zeta}\left(a\psi_{\alpha}\left(P_{v}\xi\right)\right)\delta_{q}\right\Vert \nonumber \\
 & \leq\left\Vert \varphi_{\zeta}\left(b\right)\varphi_{\zeta}\left(\psi_{\alpha}\left(P_{v}\xi\right)\right)\delta_{q}\right\Vert +\left\Vert \varphi_{\zeta}\left(c\psi_{\alpha}\left(P_{v}\xi\right)\right)\delta_{q}\right\Vert \nonumber \\
 & <\frac{1}{2}\varepsilon+\left\Vert S\xi\right\Vert <\varepsilon\label{eq:left-action-ineq}
\end{align}
since $\left\Vert \varphi_{\zeta}\left(\psi_{\alpha}\left(P_{v}\xi\right)\right)\delta_{q}\right\Vert \leq\left\Vert \xi\right\Vert \leq1$.
Because $G\cup H$ is finite, $\varphi_{\alpha}\left(a\right)P_{v}\in\mathcal{K}\left(X_{\alpha}\right)$.

Let $V=\left\{ s\left(g\right):g\in G\right\} \cup\left\{ s\left(f\left(\omega^{\alpha}\right)\right):f\in F\right\} $
and suppose $v\in\Lambda_{0}\backslash V$, $\xi\in\mathrm{span}\left\{ \delta_{h}:h\in\Lambda^{\omega^{\alpha}}v\right\} $,
and $\left\Vert \xi\right\Vert \leq1$. If $v\Lambda^{\omega^{\zeta}}=\emptyset$,
then the same argument as before shows $\left\Vert \varphi_{\alpha}\left(a\right)\xi\right\Vert <\varepsilon$.
Moreover, if $q\in v\Lambda^{\omega^{\zeta}}$ and $h\in\Lambda^{\omega^{\alpha}}v$,
then $hq\not\in F$ and $h\not\in G$. Thus by \prettyref{eq:left-action-ineq},
$\left\Vert \varphi_{\alpha}\left(a\right)\xi\right\Vert <\varepsilon$.
Since $V$ is finite, we have that the map $v\mapsto\left\Vert \varphi_{\alpha}\left(a\right)P_{v}\right\Vert $
belongs to $c_{0}\left(\Lambda_{0}\right)$, and $\varphi_{\alpha}\left(a\right)\in\mathcal{K}\left(X_{\alpha}\right)$.

Finally, let $p\in\ker\varphi_{\alpha}$ be given. If $h\in\Lambda^{\omega^{\zeta}}$,
then
\[
\left\Vert \varphi_{\zeta}\left(p\right)\delta_{h}\right\Vert =\left\Vert \varphi_{\zeta}\left(pT_{h\left(\omega^{\alpha}\right)}\right)\delta_{h\left(\omega^{\alpha}\right)^{-1}h}\right\Vert =\left\Vert \varphi_{\zeta}\left(\psi_{\alpha}\left(\varphi_{\alpha}\left(p\right)\delta_{h\left(\omega^{\alpha}\right)}\right)\right)\delta_{h\left(\omega^{\alpha}\right)^{-1}h}\right\Vert =0
\]
Therefore $p\in\ker\varphi_{\zeta}$, and since $b\in J_{\zeta}$,
\[
ap=bp+cp=cp=\psi_{\alpha}^{(1)}\left(S\varphi_{\alpha}\left(p\right)\right)=0
\]
By definition, this tells us $a\in\left(\ker\varphi_{\alpha}\right)^{\perp}$. 
\end{proof}
\begin{lem}
\label{lem:katsura-sandwich}Suppose $\alpha<\zeta$, and let $\mathcal{A}_{\zeta}$
be the algebra defined in \prettyref{prop:conditional-expectations}.
Then for all $a\in\mathcal{O}\left(\Lambda_{\alpha+1}\right)\cap J_{\zeta}$
and $z\in\mathbb{T}$, $\Gamma_{\alpha,z}\left(a^{*}\right)\mathcal{O}\left(\Lambda_{\zeta}\right)\Gamma_{\alpha,z}\left(a\right)\subseteq\mathcal{A}_{\zeta}$
and $\Gamma_{\alpha,z}\left(a\right)\in\mathcal{A}_{\alpha+1}$.
\end{lem}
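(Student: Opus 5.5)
We need to show that for $a\in\mathcal{O}(\Lambda_{\alpha+1})\cap J_{\zeta}$, the element $\Gamma_{\alpha,z}(a)$ lies in $\mathcal{A}_{\alpha+1}$, and that $\Gamma_{\alpha,z}(a^{*})\mathcal{O}(\Lambda_{\zeta})\Gamma_{\alpha,z}(a)\subseteq\mathcal{A}_{\zeta}$. The plan is to reduce everything to spanning elements $T_{p}T_{q}^{*}$ and to use \prettyref{lem:non-returning-zero}. That lemma says $\Gamma_{\alpha,z}(a)$ annihilates every $\delta_{f}$ with $f\in\Lambda^{\omega^{\zeta}}$ not $\alpha$-cancellative, so it should "live" on cancellative paths.

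First, replace $a$ by $a_{z}=\Gamma_{\alpha,z}(a)$. By \prettyref{assumptions:inductive-assumptions}(4) and \prettyref{prop:katsura-ideal-form}, $\mathcal{O}(\Lambda_{\alpha+1})\cap J_{\zeta}$ is spanned by $T_{p}T_{q}^{*}$ with $p,q\in\Lambda_{\alpha+1}$ and $s(p)=s(q)$ $\zeta$-regular. Each such generator is fixed up to a scalar by $\Gamma_{\alpha}$, so $a_{z}$ lies in the same ideal. I will therefore assume $z=1$ and still have the conclusion of \prettyref{lem:non-returning-zero} for $a$ itself.

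The key observation is that if $s(p)=s(q)=w$ is $\zeta$-regular, then $w$ is $\gamma+1$-regular for all $\gamma+1\leq\zeta$ by \prettyref{lem:regular-hereditary}. Hence \prettyref{cor:alpha-reg-non-returning} shows every element of $w\Lambda^{*}$ is $\gamma$-cancellative for all $\gamma<\zeta$. By \prettyref{cor:non-returning-equiv}, since $p,q\in\Lambda_{\alpha+1}\subseteq\Lambda_{\gamma+1}$ for $\gamma\geq\alpha$, we get $p\Lambda^{*}\cup q\Lambda^{*}=p\,w\Lambda^{*}\cup q\,w\Lambda^{*}\subseteq C_{\gamma}$ for all $\gamma+1\in[\alpha+1,\zeta)$. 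Moreover, for $\theta\in[\alpha+1,\zeta)$ we have $v(p)_{\theta}=v(q)_{\theta}=0$ by \prettyref{prop:shift-func}(2). So every generator lies in $\mathcal{A}_{\alpha+1}$, and by closedness so does $a_{z}$. This seems to render the appeal to \prettyref{lem:non-returning-zero} unnecessary for the second claim. I would double-check that the ideal description is exactly what is available, since \prettyref{prop:katsura-ideal-form} is stated with $\alpha<\zeta$, and $\alpha+1<\zeta$ may fail when $\zeta=\alpha+1$. In that edge case $\mathcal{O}(\Lambda_{\alpha+1})=\mathcal{O}(\Lambda_{\zeta})$, and I would instead use the inductive hypothesis (4) directly, for $\alpha$ replaced by the predecessor, to get generation by regular vertices.

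For the sandwich claim, take a spanning element $T_{r}T_{t}^{*}$ of $\mathcal{O}(\Lambda_{\zeta})$ via \prettyref{prop:ordgraph-closed-span}, and generators $T_{q_{1}}T_{p_{1}}^{*}$ and $T_{p_{2}}T_{q_{2}}^{*}$ as above. The product $T_{q_{1}}T_{p_{1}}^{*}T_{r}T_{t}^{*}T_{p_{2}}T_{q_{2}}^{*}$ collapses, using relation (3) and \prettyref{prop:relation-3}, to either $0$ or some $T_{q_{1}x}T_{q_{2}y}^{*}$ with $x,y\in w\Lambda_{\zeta}$ having $\zeta$-regular sources. I will then verify the two conditions defining $\mathcal{A}_{\zeta}$. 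The cancellativity condition $q_{1}x\Lambda^{*}\subseteq q_{1}w\Lambda^{*}\subseteq C_{\gamma}$ follows as before. The condition on $v$ is vacuous for $\theta\in[\zeta,\zeta)$.

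The main obstacle is whether the collapse really keeps the outer factors $q_{1}$ and $q_{2}$ in front; cases where, for instance, $p_{1}\in r\Lambda$ produce $T_{q_{1}}T_{(r^{-1}p_{1})}^{*}T_{t}^{*}\cdots$, and these need care. I would handle them by noting that each term still begins with $T_{q_{1}x}$ and ends with $T_{q_{2}y}^{*}$, possibly with $x$ or $y$ a vertex. If the direct approach fails, I would fall back to \prettyref{lem:non-returning-zero}: pass to the faithful representation $\pi$, and show the sandwich is supported on $\xi_{f,n}$ with $f$ cancellative.
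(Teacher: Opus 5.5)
\textbf{The gap: the argument is circular.} Your first step asserts two things: that $\mathcal{O}(\Lambda_{\alpha+1})\cap J_{\zeta}$ is the closed span of $T_pT_q^*$ with $s(p)=s(q)$ $\zeta$-regular, and that this ideal is $\Gamma_\alpha$-invariant. Neither is available at this point of the induction. Item (4) of the \prettyref{assumptions:inductive-assumptions} describes $J_\gamma$ only for $\gamma<\zeta$. Likewise \prettyref{prop:katsura-ideal-form} and \prettyref{lem:katsura-ideal-generators} concern $\bigcap_{\gamma\in V}J_\gamma$ with $V\subseteq[\alpha,\zeta)$, so $J_\zeta$ itself is never covered. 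The vertex description of $J_\zeta$ is exactly \prettyref{prop:katsura-ideal-generators}, and the gauge invariance of $\mathcal{O}(\Lambda_{\alpha+1})\cap J_\zeta$ is \prettyref{prop:katsura-ideal-gauge-invariant}. The proof of the latter invokes the present lemma twice: the sandwich inclusion is needed to apply $E^{\zeta}_{\alpha+1}$, and $\Gamma_{\alpha,z}(a)\in\mathcal{A}_{\alpha+1}$ is needed for the conjugation identity.

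Your fallback does not rescue this. \prettyref{lem:non-returning-zero} only controls $\alpha$-cancellativity of $f\in\Lambda^{\omega^{\zeta}}$ under $\varphi_\zeta$. It says nothing about $C_\gamma$ for $\gamma>\alpha$. Moreover, a support statement in a representation does not by itself place an element in the particular closed span defining $\mathcal{A}_\zeta$. Incidentally, the edge case $\zeta=\alpha+1$ that you flagged is trivial: both index sets in the definition of $\mathcal{A}$ are empty, so $\mathcal{A}_{\alpha+1}=\mathcal{A}_\zeta=\mathcal{O}(\Lambda_\zeta)$.

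\textbf{The missing ingredient.} Use \prettyref{lem:gauge-containment} to push $a$ down to levels strictly below $\zeta$. For each $\gamma+1\in[\alpha+1,\zeta)$ it gives $\mathcal{O}(\Lambda_{\alpha+1})\cap J_\zeta\subseteq\mathcal{O}(\Lambda_{\gamma+1})\cap J_\zeta\subseteq J_{\gamma+1}$. Hence $a$ lies in $\mathcal{O}(\Lambda_{\alpha+1})\cap\bigcap_{\gamma+1\in[\alpha+1,\zeta)}J_{\gamma+1}$, and to this ideal \prettyref{lem:katsura-ideal-generators} and \prettyref{prop:katsura-ideal-form} legitimately apply. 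It is generated by vertex projections, hence $\Gamma_\alpha$-invariant, so it contains $\Gamma_{\alpha,z}(a)$. It is also the closed span of $T_pT_q^*$ with $p,q\in\Lambda_{\alpha+1}$ and $s(p)=s(q)$ $(\gamma+1)$-regular for every such $\gamma$.

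This weaker regularity is exactly what is needed. $\mathcal{A}_\beta$ only demands $p\Lambda^*\cup q\Lambda^*\subseteq C_\gamma$ for $\gamma+1<\zeta$. \prettyref{cor:alpha-reg-non-returning} together with \prettyref{cor:non-returning-equiv} converts $(\gamma+1)$-regularity of the common source into precisely that. With this substitution, the rest of your plan is essentially the paper's proof. That includes the vanishing of $v(\cdot)_\theta$ on $\Lambda_{\alpha+1}$ for $\theta\geq\alpha+1$. It also includes the collapse of $T_{q_1}T_{p_1}^*T_rT_t^*T_{p_2}T_{q_2}^*$ to a single $T_{q_1x}T_{q_2y}^*$ whose outer paths inherit cancellativity.
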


\begin{proof}
Since $\mathcal{O}\left(\Lambda_{\zeta}\right)$ is densely spanned
by elements $T_{p_{1}}T_{q_{1}}^{*}$ for $p_{1},q_{1}\in\Lambda_{\zeta}$,
we select such paths and prove $aT_{p_{1}}T_{q_{1}}^{*}a\subseteq\mathcal{A}_{\zeta}$.
Then there exists $\gamma<\zeta$ such that $p_{1},q_{1}\in\Lambda_{\theta}$
for all $\theta>\gamma$, and in particular, $v\left(p_{1}\right)_{\theta}=v\left(q_{1}\right)_{\theta}=0$.
Therefore, $T_{p_{1}}T_{q_{1}}^{*}\in\mathcal{A}_{\zeta}$ if $p_{1}\Lambda^{*}\cup q_{1}\Lambda^{*}\subseteq C_{\gamma}$
for all $\gamma+1\in[\alpha+1,\zeta)$.

For each $\gamma$ such that $\gamma+1\in[\alpha+1,\zeta)$, $J_{\zeta}\cap\mathcal{O}\left(\Lambda_{\alpha+1}\right)\subseteq J_{\gamma+1}$
by \prettyref{lem:gauge-containment}. Therefore $a\in\mathcal{O}\left(\Lambda_{\alpha+1}\right)\cap\bigcap_{\gamma+1\in[\alpha+1,\zeta)}J_{\gamma+1}$,
and by \prettyref{lem:katsura-ideal-generators}, this ideal is generated
by vertex projections. Thus $\Gamma_{\alpha,z}\left(a\right)$ also
belongs to this ideal, and by \prettyref{prop:katsura-ideal-form}
the ideal is densely spanned by $T_{p_{2}}T_{q_{2}}^{*}$ for $p_{2},q_{2}\in\Lambda_{\alpha+1}$
such that $s\left(p_{2}\right)=s\left(q_{2}\right)$ is $\gamma+1$-regular
for all $\gamma+1\in[\alpha+1,\zeta)$. Therefore by symmetry it suffices
to prove that for such $p_{1},q_{1},p_{2},q_{2}$, $T_{p_{1}}T_{q_{1}}^{*}T_{p_{2}}T_{q_{2}}^{*}=T_{p_{3}}T_{q_{3}}^{*}$
where $q_{3}\in\Lambda_{\zeta}$ satisfies $q_{3}\Lambda^{*}\subseteq C_{\gamma}$
for all $\gamma+1\in[\alpha+1,\zeta)$. If $T_{p_{1}}T_{q_{1}}^{*}T_{p_{2}}T_{q_{2}}^{*}\not=0$,
then either $p_{2}\in q_{1}\Lambda$ or $q_{1}\in p_{2}\Lambda$.
If $p_{2}\in q_{1}\Lambda$, then $T_{p_{1}}T_{q_{1}}^{*}T_{p_{2}}T_{q_{2}}^{*}=T_{p_{1}q_{1}^{-1}p_{2}}T_{q_{2}}^{*}$
and $s\left(q_{2}\right)=s\left(p_{2}\right)$ is $\gamma+1$-regular
for all $\gamma+1\in[\alpha+1,\zeta)$. By \prettyref{cor:alpha-reg-non-returning}
and \prettyref{cor:non-returning-equiv}, $q_{2}\Lambda^{*}\subseteq C_{\gamma}$
for every $\gamma+1\in[\alpha+1,\zeta)$. On the other hand, if $q_{1}\in p_{2}\Lambda$,
$T_{p_{1}}T_{q_{1}}^{*}T_{p_{2}}T_{q_{2}}^{*}=T_{p_{1}}T_{q_{2}p_{2}^{-1}q_{1}}^{*}$
and $q_{2}p_{2}^{-1}q_{1}\Lambda^{*}\subseteq q_{2}\Lambda^{*}\subseteq C_{\gamma}$
for every $\gamma+1\in[\alpha+1,\zeta)$.

To see $\Gamma_{\alpha,z}\left(a\right)\in\mathcal{A}_{\alpha+1}$,
note that for $p,q\in\Lambda_{\alpha+1}$ with $s\left(p\right)=s\left(q\right)$
$\gamma+1$-regular for all $\gamma+1\in[\alpha+1,\zeta)$, $p\Lambda^{*}\cup q\Lambda^{*}\subseteq C_{\gamma}$
for every $\gamma+1\in[\alpha+1,\zeta)$. Since $p,q\in\Lambda_{\alpha+1}$,
we also have $v\left(p\right)_{\theta}=v\left(q\right)_{\theta}=0$
for all $\theta\geq\alpha+1$. Hence $T_{p}T_{q}^{*}\in\mathcal{A}_{\alpha+1}$,
and by preceeding comments, $\Gamma_{\alpha,z}\left(a\right)$ is
approximated by linear combinations of such $T_{p}T_{q}^{*}$.
\end{proof}
\begin{lem}
\label{lem:expectation-kernel}Suppose $\alpha+1<\zeta$, and define
for each $z\in\mathbb{T}$ and $\theta\in[\alpha,\zeta)$ unitaries
$U_{\theta,z}\in\mathcal{U}\left(X_{\zeta}\right)$ by $U_{\theta,z}\delta_{f}=z^{v\left(f\right)_{\theta}}\delta_{f}$.
Then for all $\theta+1<\zeta$ and $b\in\mathcal{A}_{\theta+1}$,
\[
U_{\theta,z}\varphi_{\zeta}\left(b\right)U_{\theta,z}^{*}=\varphi_{\zeta}\left(\overline{\Gamma}_{\theta,z}\left(b\right)\right)
\]
Similarly, if $b\in\mathcal{A}_{\zeta}\cap J_{\zeta}$ and $\theta+1=\zeta$,
then $\mathrm{Ad}\:U_{\theta,z}\circ\varphi_{\zeta}=\varphi_{\zeta}\circ\Gamma_{\theta,z}$.
Furthermore, if $b\in\mathcal{A}_{\zeta}$ is positive and $b\in\ker\varphi_{\zeta}$,
then $E_{\alpha+1}^{\zeta}\left(b\right)\in\ker\varphi_{\zeta}$.
\end{lem}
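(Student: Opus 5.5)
The plan is to verify the conjugation formula on spanning elements $T_{p}T_{q}^{*}$, mirroring the computation in \prettyref{cor:extend-action}, and then to obtain the kernel statement from positivity. The final part will go through a comparison of diagonal matrix coefficients rather than through the conjugation formula.

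\emph{Unitaries and the first formula.} First, each $U_{\theta,z}$ is a well-defined unitary in $\mathcal{L}\left(X_{\zeta}\right)$. The vectors $\delta_{f}$ are mutually orthogonal, and $U_{\theta,z}$ acts by a scalar on each $\delta_{f}\cdot\mathcal{O}\left(\Lambda_{\zeta}\right)$. So it preserves inner products on $X_{\zeta}^{\circ}$, extends to $X_{\zeta}$, and has adjoint $U_{\theta,\bar{z}}$.

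For the formula itself, both sides are continuous and linear in $b$. So it suffices to take $b=T_{p}T_{q}^{*}$ with $p,q$ as in the definition of $\mathcal{A}_{\theta+1}$, and to test on $\delta_{f}$ for $f\in\Lambda^{\omega^{\zeta}}$. Since $\varphi_{\zeta}\left(T_{p}T_{q}^{*}\right)\delta_{f}=\delta_{pq^{-1}f}$ when $f\in q\Lambda$ and is $0$ otherwise, the claim reduces to
\[
v\left(pq^{-1}f\right)_{\theta}-v\left(f\right)_{\theta}=v\left(p\right)_{\theta}-v\left(q\right)_{\theta}.
\]
Because $q\Lambda^{*}\subseteq C_{\theta}$, the path $f$ is $\theta$-cancellative, and by \prettyref{cor:non-returning-equiv} so is $pq^{-1}f$. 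Now split into cases. If $p,q\in\Lambda_{\theta+1}$, apply property (3) of \prettyref{prop:shift-func} to $f=q\left(q^{-1}f\right)$ and to $pq^{-1}f$. If $p,q\notin\Lambda_{\theta+1}$, apply property (1), since $L(f)>\omega^{\theta+1}$. This gives $v\left(pq^{-1}f\right)_{\theta}=v\left(p\right)_{\theta}$ and $v\left(f\right)_{\theta}=v\left(q\right)_{\theta}$, exactly as in \prettyref{cor:extend-action}.

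\emph{The case $\theta+1=\zeta$.} By \prettyref{prop:katsura-ideal-form} (with $V=\{\zeta\}$, or directly from (2) of \prettyref{assumptions:inductive-assumptions}), elements of $J_{\zeta}$ are approximated by $T_{p}T_{q}^{*}$ with $s(p)=s(q)$ $\zeta$-regular. For such $q$ and any $f\in q\Lambda$, \prettyref{cor:alpha-reg-non-returning} shows that $q^{-1}f$ is $\theta$-cancellative, and hence so are $f$ and $pq^{-1}f$. By property (2) of \prettyref{prop:shift-func}, $\Gamma_{\theta,z}$ scales $T_{p}T_{q}^{*}$ by $z^{v(p)_{\theta}-v(q)_{\theta}}$, and the computation above applies verbatim. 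The mild gap here is that I pass from $b\in\mathcal{A}_{\zeta}\cap J_{\zeta}$ to these spanning elements of $J_{\zeta}$ by continuity of both sides.

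\emph{Kernel of the expectation.} This is the main obstacle. The conjugation formula at the top level is only available on $J_{\zeta}$, while $b$ is merely in $\ker\varphi_{\zeta}$. I will therefore avoid it and compare diagonal coefficients $\left\langle \delta_{f},\varphi_{\zeta}\left(\cdot\right)\delta_{f}\right\rangle $. The claim is that for each spanning $T_{p}T_{q}^{*}$ of $\mathcal{A}_{\zeta}$,
\[
\left\langle \delta_{f},\varphi_{\zeta}\left(E_{\alpha+1}^{\zeta}\left(T_{p}T_{q}^{*}\right)\right)\delta_{f}\right\rangle =\left\langle \delta_{f},\varphi_{\zeta}\left(T_{p}T_{q}^{*}\right)\delta_{f}\right\rangle .
\]
Both sides vanish unless $pq^{-1}f=f$. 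In that case the case analysis above, with $f$ $\theta$-cancellative for every $\theta+1\in[\alpha+1,\zeta)$, gives $v(p)_{\theta}=v(q)_{\theta}$ for all $\theta\in[\alpha+1,\zeta)$. (For $\theta+1=\zeta$, combine \prettyref{lem:cancellative-length-independent} with the hypotheses defining $\mathcal{A}_{\zeta}$.) The formula in \prettyref{prop:conditional-expectations} then shows that $E_{\alpha+1}^{\zeta}$ fixes $T_{p}T_{q}^{*}$. The identity extends to all of $\mathcal{A}_{\zeta}$ by linearity and continuity.

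Now let $b\geq0$ with $\varphi_{\zeta}(b)=0$. Then $c=\varphi_{\zeta}\left(E_{\alpha+1}^{\zeta}(b)\right)$ is positive, and every diagonal coefficient $\left\langle \delta_{f},c\,\delta_{f}\right\rangle $ vanishes. Hence $c^{1/2}\delta_{f}=0$ for all $f$. Since the $\delta_{f}$ generate $X_{\zeta}$ as a right module, this forces $c=0$, which is the last assertion.
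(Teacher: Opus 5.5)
Your argument for the first identity (the case $\theta+1<\zeta$) is the paper's argument.

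\textbf{Second identity: circular step.} The second identity, with $\theta+1=\zeta$, rests on a circular step. Proposition~\ref{prop:katsura-ideal-form} only allows $V\subseteq[\alpha,\zeta)$, and the Inductive Assumptions describe $J_{\gamma}$ only for $\gamma<\zeta$. The fact that $J_{\zeta}$ is generated by the $\zeta$-regular vertex projections is Proposition~\ref{prop:katsura-ideal-generators}. That proposition is derived from this very lemma, via Proposition~\ref{prop:katsura-ideal-gauge-invariant}, so no spanning set of $J_{\zeta}$ of the kind you use is available yet.

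The missing ingredient is Lemma~\ref{lem:non-returning-zero}, applied with $\theta$ in place of $\alpha$. For $b\in J_{\zeta}=\mathcal{O}(\Lambda_{\theta+1})\cap J_{\zeta}$, it shows that both $\varphi_{\zeta}(b)\delta_{f}$ and $\varphi_{\zeta}(\Gamma_{\theta,z}(b))\delta_{f}$ vanish whenever $f$ is not $\theta$-cancellative. For $\theta$-cancellative $f$, your generator computation is valid for \emph{every} $T_{p}T_{q}^{*}$ with $p,q\in\Lambda_{\zeta}$. It then passes to $b$ by continuity, since $U_{\theta,z}^{*}\delta_{f}\in\mathbb{C}\delta_{f}$. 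This is how the paper argues, and it needs no description of $J_{\zeta}$.

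\textbf{Last assertion: a different route.} Here your approach differs from the paper's. The paper proves the full matrix-coefficient formula for $\varphi_{\zeta}\circ E_{\gamma}^{\beta}$ by transfinite induction, feeding in the conjugation identities at successor steps. You instead read $E_{\alpha+1}^{\zeta}$ off its formula on generators. You then check combinatorially that $pq^{-1}f=f$ forces $v(p)_{\eta}=v(q)_{\eta}$ for $\eta\in[\alpha+1,\zeta)$. This is more direct, and for limit $\zeta$ it works once you add one missing case. Generators of $\mathcal{A}_{\zeta}$ allow the mixed case $p\in\Lambda_{\eta+1}$, $q\notin\Lambda_{\eta+1}$. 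At the level $\mu$ of the leading term of $d(q)$, both paths lie in $\Lambda_{\mu+1}$ with $v(p)_{\mu}=0\neq v(q)_{\mu}$, so your computation gives $pq^{-1}f\neq f$.

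\textbf{Last assertion: failure at successor $\zeta$.} When $\zeta=\theta+1$ is a successor, your parenthetical fails. Since $L(f)=\omega^{\zeta}+1$, Lemma~\ref{lem:cancellative-length-independent} says nothing about $f$. Also, $\mathcal{A}_{\zeta}$ only imposes $C_{\gamma}$ for $\gamma+1<\zeta$.

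Condition (C) permits the following configuration: a vertex $v$ that is not $\zeta$-regular, a cycle $h$ at $v$ with $d(h)=\omega^{\theta}$, and $v\Lambda^{\omega^{\zeta}}=\{f\}$ with $hf=f$. Suppose also $v\Lambda^{*}\subseteq C_{\gamma}$ for all $\gamma<\theta$; this is realizable already for $\zeta=2$, $\alpha=0$. Then $b=(T_{h}-T_{v})^{*}(T_{h}-T_{v})=2T_{v}-T_{h}-T_{h}^{*}$ is a positive element of $\mathcal{A}_{\zeta}\cap\ker\varphi_{\zeta}$. Yet $E_{\alpha+1}^{\zeta}(b)=2T_{v}$, and $\varphi_{\zeta}(2T_{v})\delta_{f}=2\delta_{f}\neq0$. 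So this step cannot be repaired, and the assertion as stated fails for successor $\zeta$.

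The paper's proof breaks at the same point. Its successor step at $\beta=\zeta$ uses $\varphi_{\zeta}\circ\Gamma_{\theta,z}=\mathrm{Ad}\,U_{\theta,z}\circ\varphi_{\zeta}$. That identity was established only on $\mathcal{A}_{\zeta}\cap J_{\zeta}$, and $J_{\zeta}\cap\ker\varphi_{\zeta}=0$.

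\textbf{What your argument does prove.} It shows that $\varphi_{\zeta}(E_{\alpha+1}^{\zeta}(b))\delta_{f}=0$ for every $\theta$-cancellative $f$. In Proposition~\ref{prop:katsura-ideal-gauge-invariant}, the expectation is flanked by $\Gamma_{\alpha,z}(a^{*})$ and $\Gamma_{\alpha,z}(a)$. There, $\varphi_{\zeta}(\Gamma_{\alpha,z}(a))=U_{\alpha,z}\varphi_{\zeta}(a)U_{\alpha,z}^{*}$ kills the remaining $\delta_{f}$ by Lemma~\ref{lem:non-returning-zero}. So this weaker statement is the one to prove.
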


\begin{proof}
Let $\theta+1<\zeta$, and suppose $p,q\in\Lambda^{\omega^{\zeta}}$
such that $p\in\Lambda_{\theta+1}$ iff $q\in\Lambda_{\theta+1}$
and $p\Lambda^{*}\cup q\Lambda^{*}\subseteq C_{\theta}$. If $p,q\in\Lambda_{\theta+1}$
and $\varphi_{\zeta}\left(T_{p}T_{q}^{*}\right)\delta_{f}\not=0$,
then $f$ is $\theta$-cancellative and $v\left(pq^{-1}f\right)_{\theta}=v\left(p\right)_{\theta}-v\left(q\right)_{\theta}+v\left(f\right)_{\theta}$.
If $p,q\in\Lambda\backslash\Lambda_{\theta+1}$, then $v\left(pq^{-1}f\right)_{\theta}=v\left(p\right)_{\theta}$
and $v\left(f\right)_{\theta}=v\left(pq^{-1}f\right)_{\theta}=v\left(q\right)_{\theta}$.
In any case,
\begin{align*}
U_{\theta,z}\varphi_{\zeta}\left(T_{p}T_{q}^{*}\right)U_{\theta,z}^{*}\delta_{f} & =z^{-v\left(f\right)_{\theta}}U_{\theta,z}\varphi_{\zeta}\left(T_{p}T_{q}^{*}\right)\delta_{f}\\
 & =z^{v\left(pq^{-1}f\right)_{\theta}-v\left(f\right)_{\theta}}\varphi_{\zeta}\left(T_{p}T_{q}^{*}\right)\delta_{f}\\
 & =\varphi_{\zeta}\left(\overline{\Gamma}_{\theta,z}\left(T_{p}T_{q}^{*}\right)\right)\delta_{f}
\end{align*}
from which the first claim follows. If $\theta+1=\zeta$ and $b\in\mathcal{A}_{\zeta}\cap J_{\zeta}$,
then by \prettyref{lem:non-returning-zero}, $\varphi_{\zeta}\left(b\right)\delta_{f}=0$
unless $f$ is $\theta$-cancellative, and the same calculations show
$\mathrm{Ad}\:U_{\theta,z}\circ\varphi_{\zeta}=\varphi_{\zeta}\circ\Gamma_{\theta,z}$.

To prove the final claim, we first prove by transfinite induction
that for every $b\in\mathcal{A}_{\beta}$ and $f,g\in\Lambda^{\omega^{\zeta}}$,
\begin{equation}
\left\langle \delta_{f},\varphi_{\zeta}\left(E_{\gamma}^{\beta}\left(b\right)\right)\delta_{g}\right\rangle =\begin{cases}
\left\langle \delta_{f},b\delta_{g}\right\rangle  & v\left(f\right)_{\theta}=v\left(g\right)_{\theta}\text{ for all }\theta\in[\gamma,\beta)\\
0 & \text{otherwise}
\end{cases}\label{eq:left-action-expectation}
\end{equation}
Clearly the claim is true if $\beta=\gamma$. Moreover, if $\beta=\nu+1$
and the claim is true for $E_{\gamma}^{\nu}$, then
\begin{align*}
\left\langle \delta_{f},\varphi_{\zeta}\left(E_{\gamma}^{\beta}\left(b\right)\right)\delta_{g}\right\rangle  & =\left\langle \delta_{f},\varphi_{\zeta}\left(E_{\gamma}^{\nu}\circ E_{\nu}^{\beta}\left(b\right)\right)\delta_{g}\right\rangle \\
 & =\begin{cases}
\left\langle \delta_{f},\varphi_{\zeta}\left(E_{\nu}^{\beta}\left(b\right)\right)\delta_{g}\right\rangle  & v\left(f\right)_{\theta}=v\left(g\right)_{\theta}\text{ for all }\theta\in[\gamma,\nu)\\
0 & \text{otherwise}
\end{cases}
\end{align*}
Computing $\left\langle \delta_{f},\varphi_{\zeta}\left(E_{\nu}^{\beta}\left(b\right)\right)\delta_{g}\right\rangle $
in the first case, we have
\begin{align*}
\left\langle \delta_{f},\varphi_{\zeta}\left(E_{\nu}^{\beta}\left(b\right)\right)\delta_{g}\right\rangle  & =\int_{\mathbb{T}}\left\langle \delta_{f},\varphi_{\zeta}\left(\overline{\Gamma}_{\nu,z}\left(b\right)\right)\delta_{g}\right\rangle \:dz\\
 & =\int_{\mathbb{T}}\left\langle \delta_{f},U_{\nu,z}\varphi_{\zeta}\left(b\right)U_{\nu,z}^{*}\delta_{g}\right\rangle \:dz\\
 & =\int_{\mathbb{T}}\left\langle U_{\nu,z}^{*}\delta_{f},\varphi_{\zeta}\left(b\right)U_{\nu,z}^{*}\delta_{g}\right\rangle \:dz\\
 & =\int_{\mathbb{T}}z^{v\left(g\right)_{\nu}-v\left(f\right)_{\nu}}\left\langle \delta_{f},\varphi_{\zeta}\left(b\right)\delta_{g}\right\rangle \:dz\\
 & =\begin{cases}
\left\langle \delta_{f},\varphi_{\zeta}\left(b\right)\delta_{g}\right\rangle  & v\left(g\right)_{\nu}=v\left(f\right)_{\nu}\\
0 & \text{otherwise}
\end{cases}
\end{align*}
Thus $E_{\gamma}^{\beta}$ satisfies \prettyref{eq:left-action-expectation}.
If $\beta$ is a limit ordinal and $T_{p}T_{q}^{*}$ is a generator
for $\mathcal{A}_{\beta}$, then either $E_{\gamma}^{\beta}\left(T_{p}T_{q}^{*}\right)=0$,
in which case \prettyref{eq:left-action-expectation} holds, or $\left(E_{\gamma}^{\eta}\left(T_{p}T_{q}^{*}\right)\right)_{\eta}$
is eventually constant. In this case, we may choose $\eta$ such that
$E_{\gamma}^{\eta}\left(T_{p}T_{q}^{*}\right)=E_{\gamma}^{\beta}\left(T_{p}T_{q}^{*}\right)$.
Therefore \prettyref{eq:left-action-expectation} is satisfied for
$b=T_{p}T_{q}^{*}$, and hence for arbitrary $b\in\mathcal{A}_{\beta}$.

Let $b\in\mathcal{A}_{\beta}\cap\ker\varphi_{\zeta}$ be positive.
Then
\[
\left\langle \delta_{f},\varphi_{\zeta}\left(E_{\alpha+1}^{\beta}\left(b\right)\right)\delta_{f}\right\rangle =\left\langle \delta_{f},\varphi_{\zeta}\left(b\right)\delta_{f}\right\rangle =0
\]
for all $f\in\Lambda^{\omega^{\zeta}}$. Since $E_{\alpha+1}^{\beta}\left(b\right)$
is postive, this implies $E_{\alpha+1}^{\beta}\left(b\right)\in\ker\varphi_{\zeta}$.
\end{proof}
\begin{prop}
\label{prop:katsura-ideal-gauge-invariant}For each $\alpha<\zeta$,
$\mathcal{O}\left(\Lambda_{\alpha+1}\right)\cap J_{\zeta}$ is a gauge-invariant
ideal of $\mathcal{O}\left(\Lambda_{\alpha+1}\right)$.
\end{prop}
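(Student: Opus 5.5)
The plan is to show separately that $c=\Gamma_{\alpha,z}(a)$ lies in each of the two pieces of $J_{\zeta}=\left(\ker\varphi_{\zeta}\right)^{\perp}\cap\varphi_{\zeta}^{-1}\left(\mathcal{K}\left(X_{\zeta}\right)\right)$, where $a\in\mathcal{O}\left(\Lambda_{\alpha+1}\right)\cap J_{\zeta}$ and $z\in\mathbb{T}$. Since $\mathcal{O}\left(\Lambda_{\alpha+1}\right)\cap J_{\zeta}$ is already an ideal of $\mathcal{O}\left(\Lambda_{\alpha+1}\right)$ and $\Gamma_{\alpha,z}$ preserves $\mathcal{O}\left(\Lambda_{\alpha+1}\right)$, this is all that is needed.

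\emph{Compactness.} By \prettyref{lem:katsura-sandwich}, both $a$ and $c$ belong to $\mathcal{A}_{\alpha+1}$, and $\mathcal{A}_{\alpha+1}\subseteq\mathcal{B}_{\alpha+1}\subseteq\mathcal{D}_{\alpha+1}$.
\begin{enumerate}
\item If $\alpha+1<\zeta$, \prettyref{lem:expectation-kernel} gives $\varphi_{\zeta}\left(\overline{\Gamma}_{\alpha,z}(a)\right)=U_{\alpha,z}\varphi_{\zeta}(a)U_{\alpha,z}^{*}$.
\item \prettyref{cor:extend-action} says $\overline{\Gamma}_{\alpha}$ agrees with $\Gamma_{\alpha}$ on $\mathcal{D}_{\alpha+1}\cap\mathcal{O}\left(\Lambda_{\alpha+1}\right)$, so this reads $\varphi_{\zeta}(c)=U_{\alpha,z}\varphi_{\zeta}(a)U_{\alpha,z}^{*}$.
\item If $\alpha+1=\zeta$, the second statement of \prettyref{lem:expectation-kernel} gives the same identity, since $a\in\mathcal{A}_{\zeta}\cap J_{\zeta}$. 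That statement relies on \prettyref{lem:non-returning-zero} to kill the non-$\alpha$-cancellative basis vectors.
\item Because $U_{\alpha,z}$ is a unitary in $\mathcal{L}\left(X_{\zeta}\right)$ and $\mathcal{K}\left(X_{\zeta}\right)$ is an ideal there, $\varphi_{\zeta}(c)$ is compact.
\end{enumerate}
The same identity shows $\varphi_{\zeta}$ intertwines $\Gamma_{\alpha,z}$ on $\mathcal{O}\left(\Lambda_{\alpha+1}\right)\cap J_{\zeta}$ with conjugation by $U_{\alpha,z}$. In particular, $\varphi_{\zeta}(y)=0$ iff $\varphi_{\zeta}\left(\Gamma_{\alpha,z}(y)\right)=0$ for $y$ in that ideal.

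\emph{Orthogonality to $\ker\varphi_{\zeta}$.} Let $p\in\ker\varphi_{\zeta}$; we want $cp=0$, or equivalently $b:=cpp^{*}c^{*}=0$.
\begin{enumerate}
\item Apply \prettyref{lem:katsura-sandwich} to $a^{*}$, which also lies in $\mathcal{O}\left(\Lambda_{\alpha+1}\right)\cap J_{\zeta}$. This gives $b\in\mathcal{A}_{\zeta}$.
\item $b$ is positive, and $b\in\ker\varphi_{\zeta}$ because the kernel is an ideal.
\item The last part of \prettyref{lem:expectation-kernel} then gives $E_{\alpha+1}^{\zeta}(b)\in\ker\varphi_{\zeta}$.
\item Since $E_{\alpha+1}^{\zeta}$ is faithful, it suffices to show $E_{\alpha+1}^{\zeta}(b)=0$.
\end{enumerate}
To prove $E_{\alpha+1}^{\zeta}(b)=0$, I would use that $c$ and $c^{*}$ lie in $\mathcal{A}_{\alpha+1}$, the range of the conditional expectation, which is a bimodule map. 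The aim is to write $E_{\alpha+1}^{\zeta}(b)$ as a limit of elements $c\,x\,c^{*}$ with $x\in\mathcal{A}_{\alpha+1}$. To get such $x$, approximate $pp^{*}$ by sums $\sum T_{p_{1}}T_{q_{1}}^{*}$. Sandwiching between $c$ and $c^{*}$ lands in $\mathcal{A}_{\zeta}$, because the cancellativity conditions are forced by the regular sources coming from \prettyref{prop:katsura-ideal-form}. Then use the explicit formula for $E_{\alpha+1}^{\zeta}$ on generators.

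If $E_{\alpha+1}^{\zeta}(b)$ can be placed in $c\,\mathcal{O}\left(\Lambda_{\alpha+1}\right)c^{*}\subseteq\mathcal{O}\left(\Lambda_{\alpha+1}\right)\cap J_{\zeta}$, the argument finishes as follows. Pull back by $\Gamma_{\alpha,\bar z}$; the intertwining remark above keeps the element in $\ker\varphi_{\zeta}$, and it lies in $a\,\mathcal{O}\left(\Lambda_{\alpha+1}\right)a^{*}\subseteq J_{\zeta}$. Since $J_{\zeta}\cap\ker\varphi_{\zeta}=0$, we get $E_{\alpha+1}^{\zeta}(b)=0$, hence $b=0$ and $cp=0$.

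\emph{Main obstacle.} The hard step is the last one: showing $E_{\alpha+1}^{\zeta}(b)$ lands in the corner $c\,\mathcal{O}\left(\Lambda_{\alpha+1}\right)c^{*}$, or at least in a gauge-transportable part of $J_{\zeta}$, even though $pp^{*}$ need not lie in $\mathcal{A}_{\zeta}$. My first attempt would work generator by generator: for $T_{p_{2}}T_{q_{2}}^{*}$ spanning $c$ and $T_{p_{1}}T_{q_{1}}^{*}$ approximating $pp^{*}$, track the $v(\cdot)_{\theta}$ indices for $\theta\geq\alpha+1$ through the products. The goal is to check that whatever survives $E_{\alpha+1}^{\zeta}$ factors through paths in $\Lambda_{\alpha+1}$ with sources that are $\gamma+1$-regular for all $\gamma+1\in[\alpha+1,\zeta)$.
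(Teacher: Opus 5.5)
\textbf{There is a genuine gap at the step you flag as the main obstacle.} Your compactness argument matches the paper's, and so does the skeleton of the orthogonality argument: reduce to a positive element of $\mathcal{A}_{\zeta}\cap\ker\varphi_{\zeta}$, apply $E_{\alpha+1}^{\zeta}$ via \prettyref{lem:expectation-kernel}, pull back by the gauge action, then use $J_{\zeta}\cap\ker\varphi_{\zeta}=\{0\}$ and faithfulness. But the step you leave open is the one that carries the proof, and your generator-by-generator plan aims at the wrong target. Since $pp^{*}$ need not lie in $\mathcal{A}_{\zeta}$, you cannot apply the bimodule property to $b=c\,pp^{*}c^{*}$ directly. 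There is also no reason to expect $E_{\alpha+1}^{\zeta}(b)\in c\,\mathcal{O}(\Lambda_{\alpha+1})c^{*}$: the range $\mathcal{A}_{\alpha+1}$ of the expectation contains elements $T_{p}T_{q}^{*}$ with $p,q\notin\Lambda_{\alpha+1}$. For the same reason, your intertwining remark, which only covers $\Gamma_{\alpha}$ on $\mathcal{O}(\Lambda_{\alpha+1})\cap J_{\zeta}$, is not enough to transport what actually comes out of $E_{\alpha+1}^{\zeta}$.

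\textbf{The missing idea is to factor $a$ inside the ideal before sandwiching.} The paper replaces $a$ by $e_{\lambda}a$ for an approximate unit $(e_{\lambda})$ of the ideal. Equivalently, use Cohen factorization to write $a=a_{1}a_{2}$ with $a_{1},a_{2}\in\mathcal{O}(\Lambda_{\alpha+1})\cap J_{\zeta}$, and set $c_{i}=\Gamma_{\alpha,z}(a_{i})$. Then $b=c_{1}mc_{1}^{*}$ where $m=c_{2}pp^{*}c_{2}^{*}\in\mathcal{A}_{\zeta}$ by \prettyref{lem:katsura-sandwich}, and $c_{1}\in\mathcal{A}_{\alpha+1}$. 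So the bimodule property gives $E_{\alpha+1}^{\zeta}(b)=c_{1}E_{\alpha+1}^{\zeta}(m)c_{1}^{*}$ with no computation on generators. This element lies in $\ker\varphi_{\zeta}$. To pull it back, use the extended action $\overline{\Gamma}_{\alpha,\bar z}$ on $\mathcal{D}_{\alpha+1}$, together with the identity $U_{\alpha,z}\varphi_{\zeta}(\cdot)U_{\alpha,z}^{*}=\varphi_{\zeta}\circ\overline{\Gamma}_{\alpha,z}$ on all of $\mathcal{A}_{\alpha+1}$ from \prettyref{lem:expectation-kernel}. Here $\overline{\Gamma}_{\alpha,\bar z}(c_{1})=a_{1}$, because $\overline{\Gamma}_{\alpha}$ agrees with $\Gamma_{\alpha}$ on $\mathcal{D}_{\alpha+1}\cap\mathcal{O}(\Lambda_{\alpha+1})$. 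This yields $a_{1}\,\overline{\Gamma}_{\alpha,\bar z}\bigl(E_{\alpha+1}^{\zeta}(m)\bigr)a_{1}^{*}\in J_{\zeta}\cap\ker\varphi_{\zeta}=\{0\}$. Hence $E_{\alpha+1}^{\zeta}(b)=0$, so $b=0$ by faithfulness, and therefore $cp=0$.
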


\begin{proof}
For $z\in\mathbb{T}$, define $U_{\alpha,z}\in\mathcal{U}\left(X_{\zeta}\right)$
by $U_{\alpha,z}\delta_{f}=z^{v\left(f\right)_{\alpha}}\delta_{f}$
as in \prettyref{lem:expectation-kernel}. Then for $a\in\mathcal{O}\left(\Lambda_{\alpha+1}\right)\cap J_{\zeta}$,
$\Gamma_{\alpha,z}\left(a\right)\in\mathcal{A}_{\alpha+1}$ by \prettyref{lem:katsura-sandwich},
so $U_{\alpha,z}\varphi_{\zeta}\left(a\right)U_{\alpha,z}^{*}=\varphi_{\zeta}\left(\Gamma_{\alpha,z}\left(a\right)\right)$.
Hence $\varphi_{\zeta}\left(\Gamma_{\alpha,z}\left(a\right)\right)\in\mathcal{K}\left(X_{\zeta}\right)$
since $\varphi_{\zeta}\left(a\right)\in\mathcal{K}\left(X_{\zeta}\right)$.

All that remains is to show for $b\Gamma_{\alpha,z}\left(a\right)=0$
for all $b\in\ker\varphi_{\zeta}$. Letting $\left(e_{\lambda}\right)$
be an approximate unit for $J_{\zeta}$, this is equivalent to showing
$\Gamma_{\alpha,z}\left(a^{*}e_{\lambda}^{*}\right)b^{*}b\Gamma_{\alpha,z}\left(e_{\lambda}a\right)=0$.
Applying \prettyref{lem:katsura-sandwich}, define
\begin{align*}
c & =E_{\alpha+1}^{\zeta}\left(\Gamma_{\alpha,z}\left(a^{*}e_{\lambda}^{*}\right)b^{*}b\Gamma_{\alpha,z}\left(e_{\lambda}a\right)\right)\\
 & =\Gamma_{\alpha,z}\left(a^{*}\right)E_{\alpha+1}^{\zeta}\left(\Gamma_{\alpha,z}\left(e_{\lambda}^{*}\right)b^{*}b\Gamma_{\alpha,z}\left(e_{\lambda}\right)\right)\Gamma_{\alpha,z}\left(a\right)
\end{align*}
and note that since $b\in\ker\varphi_{\zeta}$, \prettyref{lem:expectation-kernel}
implies $c\in\ker\varphi_{\zeta}$. Therefore $\varphi_{\zeta}\left(\overline{\Gamma}_{\alpha,\overline{z}}\left(c\right)\right)=U_{\alpha,z}^{*}\varphi_{\zeta}\left(c\right)U_{\alpha,z}=0$,
and $\overline{\Gamma}_{\alpha,z}\left(c\right)\in\ker\varphi_{\zeta}$.
However,
\[
\overline{\Gamma}_{\alpha,\overline{z}}\left(c\right)=a^{*}\overline{\Gamma}_{\alpha,\overline{z}}\left(E_{\alpha+1}^{\zeta}\left(\Gamma_{\alpha,z}\left(e_{\lambda}^{*}\right)b^{*}b\Gamma_{\alpha,z}\left(e_{\lambda}\right)\right)\right)a
\]
Since $a\in\left(\ker\varphi_{\zeta}\right)^{\perp}$, $\overline{\Gamma}_{\alpha,\overline{z}}\left(c\right)=0$,
and $c=0$. Since $E_{\alpha+1}^{\zeta}$ is faithful by \prettyref{prop:conditional-expectations},
$\Gamma_{\alpha,z}\left(a^{*}e_{\lambda}^{*}\right)b^{*}b\Gamma_{\alpha,z}\left(e_{\lambda}a\right)=0$,
as desired.
\end{proof}
\begin{lem}
\label{lem:katsura-ideal-vertices}For $v\in\Lambda_{0}$, $T_{v}\in J_{\zeta}$
if and only if $v$ is $\zeta$-regular.
\end{lem}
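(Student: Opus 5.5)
We prove the two implications separately, working in $X_{\zeta}$ and using the definition $J_{\zeta}=\left(\ker\varphi_{\zeta}\right)^{\perp}\cap\varphi_{\zeta}^{-1}\left(\mathcal{K}\left(X_{\zeta}\right)\right)$.

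\emph{Forward direction.} Suppose $v$ is $\zeta$-regular. Relation (4), applied as in the proof of \prettyref{prop:left-action}, gives
\[
\varphi_{\zeta}\left(T_{v}\right)=\sum_{e\in v\Lambda^{\omega^{\zeta}}}\theta_{\delta_{e},\delta_{e}} .
\]
This sum is finite because $v$ is $\zeta$-row-finite, so $\varphi_{\zeta}\left(T_{v}\right)\in\mathcal{K}\left(X_{\zeta}\right)$.

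Now let $b\in\ker\varphi_{\zeta}$; we must show $bT_{v}=0$. By \prettyref{prop:ordgraph-closed-span} and the relations, $bT_{v}$ lies in $\overline{\mathrm{span}}\{T_{p}T_{q}^{*}:q\in v\Lambda_{\zeta}\}$. The cleanest route is to show directly that $\varphi_{\zeta}$ is injective on the ideal $\mathcal{I}_{v}$ generated by $T_{v}$, i.e.\ on $\overline{\mathrm{span}}\{T_{p}T_{q}^{*}:r(q)=v\}$. For this, note that $s(q)$ is $\zeta$-regular for every $q\in v\Lambda_{\zeta}$ by \prettyref{lem:regular-hereditary}, so $T_{q}T_{q}^{*}=\sum_{e\in s(q)\Lambda^{\omega^{\zeta}}}T_{qe}T_{qe}^{*}$. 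Passing to the faithful representation $\pi$ of \prettyref{lem:injective}, every boundary path starting at $v$ has length greater than $\omega^{\zeta}$.

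The key estimate is then
\[
\left\Vert \pi(c)\right\Vert =\sup_{e\in\Lambda^{\omega^{\zeta}}}\left\Vert \varphi_{\zeta}(c)\delta_{e}\right\Vert \quad\text{for } c\in\mathcal{I}_{v}.
\]
To get it, decompose each basis vector $\xi_{f,n}$ with $f\in v\partial\Lambda$ as $f=f(\omega^{\zeta})\,f(\omega^{\zeta})^{-1}f$, and compare $\pi(c)$ acting on $\xi_{f,n}$ with $\sigma_{\zeta}(c)$ acting on $\xi_{f(\omega^{\zeta})}$ via \prettyref{lem:left-action-isometry}. Consequently $\varphi_{\zeta}(c)=0$ forces $c=0$ on $\mathcal{I}_{v}$. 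Since $bT_{v}\in\mathcal{I}_{v}$ and $\varphi_{\zeta}(bT_{v})=0$, we get $bT_{v}=0$.

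\emph{Main obstacle.} The difficulty in the forward direction is this comparison of norms. The $\mathbb{Z}^{[0,\zeta)}$ labels and the $v(\cdot)$ shifts in $\pi$ must not distort it. I would handle this by restricting $\pi$ to the range of $\pi(T_{v})$ and checking that the map $\xi_{f,n}\mapsto\xi_{f(\omega^{\zeta})}\otimes\xi_{f(\omega^{\zeta})^{-1}f,n'}$ intertwines the $\Lambda_{\zeta}$-generators, with $n'$ chosen using property (1) of \prettyref{prop:shift-func}.

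\emph{Converse.} Suppose $T_{v}\in J_{\zeta}$. Then $\varphi_{\zeta}(T_{v})$ is compact, which is the projection onto $\overline{\mathrm{span}}\{\delta_{e}:e\in v\Lambda^{\omega^{\zeta}}\}$. By \prettyref{cor:compactness}(3), a projection $\varphi_{\zeta}(T_{v})$ onto vectors $\delta_{e}$ each of norm one can only be compact if $v\Lambda^{\omega^{\zeta}}$ is finite; hence $v$ is $\zeta$-row-finite.

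For source-regularity, suppose some $g\in v\Lambda_{\zeta}$ has $s(g)\Lambda^{\omega^{\zeta}}=\emptyset$. Then $\varphi_{\zeta}(T_{s(g)})=0$, so $T_{s(g)}\in\ker\varphi_{\zeta}$. But then $T_{s(g)}T_{g}^{*}T_{v}=T_{g}^{*}\neq0$, contradicting $T_{v}\in(\ker\varphi_{\zeta})^{\perp}$; here we use that generators are nonzero by \prettyref{lem:injective}. Therefore $v$ is $\zeta$-source-regular, and hence $\zeta$-regular.
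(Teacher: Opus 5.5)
Your proof that $T_{v}\in J_{\zeta}$ forces $v$ to be $\zeta$-regular is the paper's argument. So is the compactness of $\varphi_{\zeta}(T_{v})$; that formula is just the definition of $\varphi_{\zeta}(T_{v})$, and relation (4) is not involved. The gap is in showing $T_{v}\in(\ker\varphi_{\zeta})^{\perp}$. Your key estimate $\Vert\pi(c)\Vert=\sup_{e}\Vert\varphi_{\zeta}(c)\delta_{e}\Vert$ is false. By \prettyref{lem:left-action-isometry}, the right side is a supremum of column norms of $\sigma_{\zeta}(c)$, not an operator norm.

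For instance, suppose $\zeta\geq1$ and the $\zeta$-regular vertex $v$ receives two distinct edges $g_{1}\neq g_{2}\in v\Lambda^{1}$ with a common source $w$. Then $c=T_{g_{1}}^{*}+T_{g_{2}}^{*}=cT_{v}$ lies in your $I_{v}$, and relations (1) and (3) give $cc^{*}=2T_{w}$, so $\Vert\pi(c)\Vert=\Vert c\Vert=\sqrt{2}$. Yet each $\varphi_{\zeta}(c)\delta_{e}$ is either $0$ or a single $\delta_{g_{i}^{-1}e}$, of norm at most $1$. Since you derive ``$\varphi_{\zeta}(c)=0$ forces $c=0$'' from this estimate, that step fails as written.

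There are also two smaller slips. First, $\overline{\mathrm{span}}\{T_{p}T_{q}^{*}:r(q)=v\}$ is the left ideal $\mathcal{O}(\Lambda_{\zeta})T_{v}$, not the ideal generated by $T_{v}$; the left ideal is what you actually use. Second, the range of $\pi(T_{v})$ is not invariant under $\pi(\mathcal{O}(\Lambda_{\zeta}))$, so there is no representation there to intertwine.

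What your comparison can honestly deliver, and all you need, is $\Vert\pi(c)\pi(T_{v})\Vert\leq\Vert\varphi_{\zeta}(c)\Vert$. To get it:

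\begin{enumerate}
\item Work on the invariant subspace $K=\overline{\mathrm{span}}\{\xi_{f,n}:L(f)>\omega^{\zeta}\}$. Use the unitary sending $\xi_{f,n}$ to $\xi_{f(\omega^{\zeta})}$ in the summand indexed by $(f(\omega^{\zeta})^{-1}f,\,n-v(f))$, where $v(\cdot)$ is the shift function of \prettyref{prop:shift-func}, not the vertex. Bijectivity uses \prettyref{lem:compose-boundary}.
\item For every $f$ indexing $K$ and every $\alpha<\zeta$ we have $L(ef)>\omega^{\alpha+1}$. So the definition of $\pi$ gives $m-v(ef)=n-v(f)$, and $\pi(c)|_{K}$ is unitarily equivalent to $\bigoplus_{(g,n')}\sigma_{\zeta,r(g)}(c)$.
\item Faithfulness of $\kappa_{\zeta}$ then gives $\pi(b)|_{K}=0$ for $b\in\ker\varphi_{\zeta}$.
\item $\zeta$-regularity puts the range of $\pi(T_{v})$ inside $K$, so $\pi(bT_{v})=0$, and \prettyref{lem:injective} gives $bT_{v}=0$.
\end{enumerate}

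This repair works, but the paper avoids $\pi$ altogether. By relation (4), $T_{v}=\sum_{f\in v\Lambda^{\omega^{\zeta}}}T_{f}T_{f}^{*}$ in $\mathcal{O}(\Lambda)$. Hence $\rho_{\zeta}(bT_{v})=\sum_{f}\psi_{\zeta}(\varphi_{\zeta}(b)\delta_{f})T_{f}^{*}=0$ by the representation property of $(\psi_{\zeta},\rho_{\zeta}^{\zeta+1})$ from \prettyref{prop:correspondence-representation}, and injectivity of $\rho_{\zeta}$ finishes. Both routes rest on \prettyref{lem:injective}, but the paper uses it only as a black box rather than reopening the construction of $\pi$.
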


\begin{proof}
First suppose $v\in\Lambda_{0}$ and $T_{v}\in J_{\zeta}$. By \prettyref{cor:compactness}
there exists finite $F\subseteq\Lambda^{\omega^{\zeta}}$ such that
for all $\xi\in\overline{\mathrm{span}}\left\{ \delta_{f}:f\in\Lambda^{\omega^{\zeta}}\backslash F\right\} $
with $\|\xi\|\leq1$, $\left\Vert \varphi_{\zeta}\left(T_{v}\right)\xi\right\Vert <1$.
Observe that for each $f\in v\Lambda^{\omega^{\zeta}}$, $\left\Vert \varphi_{\zeta}\left(T_{v}\right)\delta_{f}\right\Vert =\left\Vert \delta_{f}\right\Vert =\left\Vert T_{s\left(f\right)}\right\Vert =1$,
and this implies $f\in F$. Hence $v\Lambda^{\omega^{\zeta}}\subseteq F$,
and $v$ is $\zeta$-row-finite. Now suppose $e\in v\Lambda_{\zeta}$.
If $s\left(e\right)\Lambda^{\omega^{\zeta}}=\emptyset$, then $T_{e}\in\ker\varphi_{\zeta}$
and $T_{v}T_{e}=T_{e}=0$, which is a contradiction. Therefore $s\left(e\right)\Lambda^{\omega^{\zeta}}\not=\emptyset$,
and $v$ is $\zeta$-regular.

For the other implication, let $v\in\Lambda_{0}$ be $\zeta$-regular.
Define $F=v\Lambda^{\omega^{\zeta}}$, which is finite. Observe that
$\varphi_{\zeta}\left(T_{v}\right)\delta_{f}=0$ for all $f\in\Lambda^{\omega^{\zeta}}\backslash F$.
Hence if $\xi\in\overline{\mathrm{span}}\left\{ \delta_{f}:f\in\Lambda^{\omega^{\zeta}}\backslash F\right\} $,
$\varphi_{\zeta}\left(T_{v}\right)\xi=0$, and $\varphi_{\zeta}\left(T_{v}\right)\in\mathcal{K}\left(X_{\zeta}\right)$
by \prettyref{cor:compactness}. Now let $b\in\ker\varphi_{\zeta}$
be given. Then
\[
\rho_{\zeta}\left(bT_{v}\right)=\rho_{\zeta}\left(b\right)\sum_{f\in v\Lambda^{\omega^{\zeta}}}T_{f}T_{f}^{*}=\sum_{f\in v\Lambda^{\omega^{\zeta}}}\psi_{\zeta}\left(\varphi_{\zeta}\left(b\right)\delta_{f}\right)T_{f}^{*}=0
\]
Since $\rho_{\zeta}$ is injective by \prettyref{lem:injective},
we have $bT_{v}=0$, and $T_{v}\in\left(\ker\varphi_{\zeta}\right)^{\perp}$.
\end{proof}
\begin{prop}
\label{prop:katsura-ideal-generators}$J_{\zeta}$ is the smallest
ideal of $\mathcal{O}\left(\Lambda_{\zeta}\right)$ containing $\left\{ T_{v}:v\in\Lambda_{0}\text{ is }\zeta\text{-regular}\right\} $.
\end{prop}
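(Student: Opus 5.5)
Let $\mathcal{I}$ denote the ideal of $\mathcal{O}\left(\Lambda_{\zeta}\right)$ generated by $\left\{ T_{v}:v\in\Lambda_{0}\text{ is }\zeta\text{-regular}\right\} $. By \prettyref{lem:katsura-ideal-vertices}, each such $T_{v}$ lies in $J_{\zeta}$, so $\mathcal{I}\subseteq J_{\zeta}$. The work is the reverse inclusion. I would reduce it to the pieces $\mathcal{O}\left(\Lambda_{\alpha+1}\right)\cap J_{\zeta}$ for $\alpha<\zeta$, and then use the classification of gauge-invariant ideals exactly as in \prettyref{lem:katsura-ideal-generators}, with the single index set $V=\{\zeta\}$ in place of $V\subseteq[\alpha,\zeta)$.

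\emph{Reduction.} First, if $\zeta$ is a limit ordinal, then $\mathcal{O}\left(\Lambda_{\zeta}\right)=\overline{\bigcup_{\alpha<\zeta}\mathcal{O}\left(\Lambda_{\alpha+1}\right)}$. By \cite[II.8.2.4]{Encyclopaedia} this gives $J_{\zeta}=\overline{\bigcup_{\alpha<\zeta}\left(\mathcal{O}\left(\Lambda_{\alpha+1}\right)\cap J_{\zeta}\right)}$. Second, if $\zeta=\delta+1$, then $\mathcal{O}\left(\Lambda_{\zeta}\right)=\mathcal{O}\left(\Lambda_{\delta+1}\right)$ already. In both cases it suffices to show that $\mathcal{J}_{\alpha+1}:=\mathcal{O}\left(\Lambda_{\alpha+1}\right)\cap J_{\zeta}$ is contained in the ideal of $\mathcal{O}\left(\Lambda_{\alpha+1}\right)$ generated by the $\zeta$-regular vertex projections. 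Set $\mathcal{J}_{\beta}=\mathcal{O}\left(\Lambda_{\beta}\right)\cap J_{\zeta}$ for $\beta\leq\alpha+1$. The base case is $\mathcal{J}_{0}=\mathcal{O}\left(\Lambda_{0}\right)\cap J_{\zeta}$. Since $\mathcal{O}\left(\Lambda_{0}\right)\cong c_{0}\left(\Lambda_{0}\right)$, every ideal of it is spanned by the vertex projections it contains. \prettyref{lem:katsura-ideal-vertices} identifies these projections as exactly the $T_{v}$ with $v$ $\zeta$-regular.

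\emph{Transfinite induction on $\beta\leq\alpha+1$.} The claim is that $\mathcal{J}_{\beta}$ is the smallest gauge-invariant ideal of $\mathcal{O}\left(\Lambda_{\beta}\right)$ containing $\mathcal{J}_{0}$. The limit step is identical to the one in \prettyref{lem:katsura-ideal-generators}. For a successor $\beta=\delta+1$, gauge invariance of $\mathcal{J}_{\beta}$ under $\Gamma_{\delta}$ is exactly \prettyref{prop:katsura-ideal-gauge-invariant}, and this is where all the heavy machinery of the section enters. The classification of gauge-invariant ideals \cite[Theorem 8.6]{GAUGE-INV-IDEALS} then says $\mathcal{J}_{\beta}$ is determined by two data:
\[
\mathcal{O}\left(\Lambda_{\delta}\right)\cap\mathcal{J}_{\beta}=\mathcal{J}_{\delta}
\quad\text{and}\quad
\mathcal{O}\left(\Lambda_{\delta}\right)\cap\left(\mathcal{J}_{\beta}+\psi_{\delta}^{(1)}\left(\mathcal{K}\left(X_{\delta}\right)\right)\right).
\]
By \prettyref{lem:gauge-containment}, the second set is contained in $J_{\delta}$. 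It always contains $J_{\delta}$, because covariance gives $J_{\delta}\subseteq\psi_{\delta}^{(1)}\left(\mathcal{K}\left(X_{\delta}\right)\right)$. So the second invariant equals $J_{\delta}$, which is the minimal possible value. Hence $\mathcal{J}_{\beta}$ is the smallest gauge-invariant ideal with $\mathcal{O}\left(\Lambda_{\delta}\right)\cap\mathcal{J}_{\beta}=\mathcal{J}_{\delta}$. That ideal is the ideal generated by $\mathcal{J}_{\delta}$, and by the inductive hypothesis it is generated by $\mathcal{J}_{0}$. Finally, the ideal generated by vertex projections is automatically gauge-invariant, because each $\Gamma$ fixes $\mathcal{O}\left(\Lambda_{0}\right)$. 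So "smallest gauge-invariant ideal" and "smallest ideal" coincide.

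\emph{Main obstacle.} The delicate point is the case $\alpha+1=\zeta$ at a successor. There $\Gamma_{\delta}$ with $\delta+1=\zeta$ is the genuine gauge action of $\mathcal{O}\left(X_{\delta}\right)$. The invariance of $J_{\zeta}\cap\mathcal{O}\left(\Lambda_{\zeta}\right)=J_{\zeta}$ must come from \prettyref{prop:katsura-ideal-gauge-invariant} with $\alpha=\delta$, and that argument relies on the $\theta+1=\zeta$ clause of \prettyref{lem:expectation-kernel}. I would check carefully that this clause applies to the elements arising there. I would also confirm that the hypotheses of \cite[Theorem 8.6]{GAUGE-INV-IDEALS} match the inductive assumption that $\left(\psi_{\delta},\rho_{\delta}^{\delta+1}\right)$ is universal covariant. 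Once those checks are done, the rest is bookkeeping parallel to \prettyref{lem:katsura-ideal-generators}.
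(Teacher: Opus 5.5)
Your proposal is correct and is essentially the paper's proof: a transfinite induction showing that $J_{\zeta}\cap\mathcal{O}(\Lambda_{\beta})$ is the ideal generated by $J_{\zeta}\cap\mathcal{O}(\Lambda_{0})$, which \prettyref{lem:katsura-ideal-vertices} identifies with the $\zeta$-regular vertex projections. The limit step uses \cite[II.8.2.4]{Encyclopaedia}, and the successor step combines \prettyref{prop:katsura-ideal-gauge-invariant}, Katsura's classification of gauge-invariant ideals, and \prettyref{lem:gauge-containment} to force the second invariant to equal $J_{\delta}$. Your preliminary reduction to the pieces $\mathcal{O}(\Lambda_{\alpha+1})\cap J_{\zeta}$ is simply the paper's limit step at $\beta=\zeta$ carried out in advance.
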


\begin{proof}
We prove by transfinite induction that $J_{\zeta}$ is the smallest
ideal of $\mathcal{O}\left(\Lambda_{\zeta}\right)$ containing $J_{\zeta}\cap\mathcal{O}\left(\Lambda_{0}\right)$,
which is sufficient by \prettyref{lem:katsura-ideal-vertices}. Suppose
$\beta\leq\zeta$ and for all $\alpha<\beta$, $J_{\zeta}\cap\mathcal{O}\left(\Lambda_{\alpha}\right)$
is the smallest ideal of $\mathcal{O}\left(\Lambda_{\alpha}\right)$
containing $J_{\zeta}\cap\mathcal{O}\left(\Lambda_{0}\right)$. We
will show $J_{\zeta}\cap\mathcal{O}\left(\Lambda_{\beta}\right)$
is the smallest ideal of $\mathcal{O}\left(\Lambda_{\beta}\right)$
containing $J_{\zeta}\cap\mathcal{O}\left(\Lambda_{0}\right)$. Clearly
this is true if $\beta=0$, so we now assume $\beta>0$. If $\beta$
is a limit ordinal, then by \prettyref{prop:inductive-limit},
\[
\mathcal{O}\left(\Lambda_{\beta}\right)=\overline{\bigcup_{\alpha<\beta}\mathcal{O}\left(\Lambda_{\alpha}\right)}
\]
Hence by \cite[II.8.2.4]{Encyclopaedia},
\[
\mathcal{O}\left(\Lambda_{\beta}\right)\cap J_{\zeta}=\overline{\bigcup_{\alpha<\beta}J_{\zeta}\cap\mathcal{O}\left(\Lambda_{\alpha}\right)}
\]
For each ideal $\mathcal{I}$ of $\mathcal{O}\left(\Lambda_{\beta}\right)$
containing $J_{\zeta}\cap\mathcal{O}\left(\Lambda_{0}\right)$, the
indutive hypothesis implies $J_{\zeta}\cap\mathcal{O}\left(\Lambda_{\alpha}\right)\subseteq\mathcal{I}\cap\mathcal{O}\left(\Lambda_{\alpha}\right)$;
hence $\mathcal{O}\left(\Lambda_{\beta}\right)\cap J_{\zeta}\subseteq\mathcal{I}$,
proving the result. Now suppose $\beta$ is a successor ordinal and
$\beta=\alpha+1$. By \prettyref{prop:katsura-ideal-gauge-invariant},
$J_{\zeta}\cap\mathcal{O}\left(\Lambda_{\alpha+1}\right)$ is a gauge-invariant
ideal of $\mathcal{O}\left(\Lambda_{\alpha+1}\right)$. By \cite[Theorem 8.6]{GAUGE-INV-IDEALS},
the gauge-invariant ideals $\mathcal{I}$ of $\mathcal{O}\left(\Lambda_{\alpha+1}\right)$
are determined by $\mathcal{I}\cap\mathcal{O}\left(\Lambda_{\alpha}\right)$
and $\left(\mathcal{I}+\psi_{\alpha}^{(1)}\left(\mathcal{K}\left(X_{\alpha}\right)\right)\right)\cap\mathcal{O}\left(\Lambda_{\alpha}\right)$.
Note that for each gauge-invariant ideal $\mathcal{I}$ of $\mathcal{O}\left(\Lambda_{\alpha+1}\right)$,
\[
J_{\alpha}=\psi_{\alpha}^{(1)}\left(\mathcal{K}\left(X_{\alpha}\right)\right)\cap\mathcal{O}\left(\Lambda_{\alpha}\right)\subseteq\left(\mathcal{I}+\psi_{\alpha}^{(1)}\left(\mathcal{K}\left(X_{\alpha}\right)\right)\right)\cap\mathcal{O}\left(\Lambda_{\alpha}\right)
\]
Moreover, \prettyref{lem:gauge-containment} tells us
\[
\left(J_{\zeta}\cap\mathcal{O}\left(\Lambda_{\alpha+1}\right)+\psi_{\alpha}^{(1)}\left(\mathcal{K}\left(X_{\alpha}\right)\right)\right)\cap\mathcal{O}\left(\Lambda_{\alpha}\right)\subseteq J_{\alpha}
\]
Therefore $J_{\zeta}\cap\mathcal{O}\left(\Lambda_{\alpha+1}\right)$
is the smallest gauge-invariant ideal containing $J_{\zeta}\cap\mathcal{O}\left(\Lambda_{\alpha}\right)$.
Since $J_{\zeta}\cap\mathcal{O}\left(\Lambda_{\alpha}\right)$ as
an ideal of $\mathcal{O}\left(\Lambda_{\alpha}\right)$ is generated
by $J_{\zeta}\cap\mathcal{O}\left(\Lambda_{0}\right)$ and $J_{\zeta}\cap\mathcal{O}\left(\Lambda_{0}\right)$
is fixed by every gauge action, $J_{\zeta}\cap\mathcal{O}\left(\Lambda_{\alpha+1}\right)$
is the smallest ideal containing $J_{\zeta}\cap\mathcal{O}\left(\Lambda_{0}\right)$.
By induction, this proves $J_{\zeta}$ is the smallest ideal of $\mathcal{O}\left(\Lambda_{\zeta}\right)$
containing $J_{\zeta}\cap\mathcal{O}\left(\Lambda_{0}\right)$.
\end{proof}
\bibliographystyle{plain}
\bibliography{references/refs}

\end{document}